\titleformat{\subsection}[runin]{\bfseries}{\thesubsection}{1em}{}[.]
\titleformat{\subsubsection}[runin]{\itshape}{\thesubsubsection}{1em}{}[.]
\newcommand{\RR}{\mathbb{R}}
\newcommand{\LL}{\mathcal{L}}
\renewcommand{\AA}{\mathcal{A}}
\newcommand{\BB}{\mathcal{B}}
\newcommand{\VV}{\mathcal{V}}
\newcommand{\GG}{\mathcal{G}}
\newcommand{\toto}{\rightrightarrows}
\newcommand{\rar}[1]{\overrightarrow{#1}}
\newcommand{\lar}[1]{\overleftarrow{#1}}
\newcommand{\ddt}{\frac{d}{dt}}
\newcommand{\del}{\partial}
\newcommand{\deldel}[1]{\frac{\del}{\del {#1}}}
\newcommand{\can}{\mathrm{can}}
\newcommand{\FF}{\mathcal{F}}
\newcommand{\mc}[1]{\mathcal{#1}}
\newcommand{\mf}[1]{\mathfrak{#1}}
\DeclareMathOperator{\pr}{pr}
\let\Im\relax
\DeclareMathOperator{\Im}{Im}
\DeclareMathOperator{\id}{Id}
\DeclareMathOperator{\Bis}{Bis}
\DeclareMathOperator{\gr}{gr}
\DeclareMathOperator{\Sing}{Sing} \DeclareMathOperator{\Reg}{Reg}
\newsavebox{\@brx}
\newcommand{\llangle}[1][]{\savebox{\@brx}{\(\m@th{#1\langle}\)}%
	\mathopen{\copy\@brx\kern-0.5\wd\@brx\usebox{\@brx}}}
\newcommand{\rrangle}[1][]{\savebox{\@brx}{\(\m@th{#1\rangle}\)}%
	\mathclose{\copy\@brx\kern-0.5\wd\@brx\usebox{\@brx}}}
\theoremstyle{plain}
\newtheorem{theorem}{Theorem}[section]
\newtheorem{proposition}[theorem]{Proposition}
\newtheorem{corollary}[theorem]{Corollary}
\newtheorem{lemma}[theorem]{Lemma}
\newtheorem{question}[theorem]{Question}
\theoremstyle{definition}
\newtheorem{remark}[theorem]{Remark}
\newenvironment{definition}
  {\pushQED{\qed}\df}
  {\popQED\enddf}
\newenvironment{example}
  {\pushQED{\qed}\ex}
  {\popQED\endex}
\newenvironment{claim}[1]
  {\innercustomthm}
  {\endinnercustomthm}
\newtheorem*{claim*}{Claim}
\newtheorem*{notation*}{Notation}
\title{Linearization of Poisson groupoids}
\author{Wilmer Smilde\footnote{wsmilde2@illinois.edu, \textit{University of Illinois at Urbana-Champaign}, Urbana, Illinois, United States}}
\date{}
\begin{document}
\maketitle
\begin{abstract}

Motivated by a search for Lie group structures on groups of Poisson diffeomorphisms, we investigate linearizability of Poisson structures of Poisson groupoids around the unit section. After extending the Lagrangian neighbourhood theorem to the setting of cosymplectic Lie algebroids, we establish that \emph{dual} integrations of triangular bialgebroids are always linearizable. Additionally, we show that the (non-dual) integration of a triangular Lie bialgebroid is linearizable whenever the $r$-matrix is of so-called cosymplectic type. The proof relies on the integration of a triangular Lie bialgebroid to a symplectic LA-groupoid, and in the process we define interesting new examples of double Lie algebroids and LA-groupoids. We also show that the product Poisson groupoid can only be linearizable when the Poisson structure on the unit space is regular. 
\end{abstract}

\noindent Keywords: Linearization of Poisson structures, Poisson groupoids, LA-groupoids\\
MSC classes:  58H05, 53D17
\tableofcontents

\section*{Introduction}\addcontentsline{toc}{section}{Introduction}
\newcommand{\Marcut}{M\u{a}rcu\textcommabelow{t}}

In this paper we study linearizability of a Poisson structure $\pi$ on a manifold $M$ around a \textit{Lagrangian} submanifold $L\subset (M, \pi)$, with emphasis on the case that the ambient manifold is a Poisson groupoid, and the submanifold is the unit section. Because of its central role in this paper, we give the definition already here.
\begin{definition}
	Let $(M, \pi)$ be a Poisson manifold. A submanifold $(M, \pi)$ is \textit{Lagrangian} when
	\[
\pi^\sharp\left((TL)^\circ\right)=TL \cap \Im \pi^\sharp.\qedhere
	\]
\end{definition}
In particular, when $L=\{x\}$ consist of just a point, the Lagrangian condition guarantees that $\pi$ vanishes at $x$. In that case, linearization of $\pi$ around $x$ reduces to the \textit{classical linearization problem}, as posed by Weinstein in \cite{weinstein1983}. We can not go on without mentioning Conn's linearization theorem, stating that a Poisson structure is always linearizable around a zero whenever the isotropy Lie algebra is compact and semi-simple \cite{conn1985}. We refer to \cite{crainicfernandes2011}, Appendix 2, for a historical account.

For a Poisson-Lie group $(G, \Pi)$, the identity $e\in G$ is automatically a fixed point of $\Pi$. Linearizability of $(G, \Pi)$ around the identity has been extensively studied, accumulating into several results. For example, the Ginzburg-Weinstein theorem \cite{ginzburgweinstein1992} states that a simply connected dual integration $(G^*, \Pi)$ of a Lie bialgebra $(\mf{g}, \mf{g}^*)$ is \textit{globally} Poisson diffeomorphic to $(\mf{g}^*, \pi_{\mathrm{lin}})$ whenever $\mf{g}$ is compact and semi-simple. A more recent result is due to Alekseev and Meinrenken \cite{alekseevmeinrenken2013}, who showed that dual integrations of \textit{coboundary} Lie bialgebras are always linearizable.

Remarkably, apart from the case of Poisson-Lie groups, very little is known about the linearization of Poisson structures on Poisson groupoids. There could be several reasons for why this is the case. First of all, the linearization problem has mostly been studied for \textit{zeroes} of Poisson structures, while for a Poisson groupoid $(\GG, \Pi)\toto M$ the Poisson tensor is only guaranteed to vanish on the unit space when $M$ has dimension zero.  

Second, only in the recent years, Lagrangian submanifolds in the context of Poisson geometry have gained interest. Our paper shows that Lagrangian submanifolds are indeed fundamental to Poisson geometry, from both the perspective of Poisson diffeomorphisms \cite{smilde2021liegroups} and linearization of Poisson structures. It seems that Weinstein's creed "Everything is a Lagrangian submanifold" \cite{weinstein1981} also holds to some degree in Poisson geometry, at least for the problems considered in this paper and \cite{smilde2021liegroups}. We refer to the PhD-thesis by Aldo Witte \cite{wittephd} for a recent survey on the linearization problem around Lagrangian submanifolds in Poisson geometry.

Interestingly, our methods are much different from the references above. We make heavy use of certain higher structures in differential geometry, such as double Lie algebroids and LA-groupoids, as well as the concept of an $\AA$-Poisson structure $\pi_\AA\in \Gamma(\wedge^2\AA)$, interpreted as a \textit{lift} of a Poisson bivector $\pi$ to a Lie algebroid $\AA\Rightarrow M$. In many cases, the lift is (co)symplectic, and we are able to use symplectic techniques to prove our results. The lifting process of Poisson structures to Lie algebroids has been extensively studied by Klaasse in \cite{klaasse2018}.

Another curious thing to note is that our motivation for studying linearizability of Poisson groupoids comes from the seemingly unrelated problem of finding Lie group structures on groups of Poisson diffeomorphisms. This will be investigated in a separate paper \cite{smilde2021liegroups}, but we will briefly get back to this at the end of this introduction.

\subsection*{Linearization of Poisson structures}

Let $E\to M$ be a vector bundle, with scalar multiplication $m_\lambda:E\to E$, and $\pi$ a Poisson structure on the total space of $E$ for which the zero section $M$ is coisotropic. Then the limit
\[
\pi_{\mathrm{lin}}=\lim_{\lambda\to 0} \lambda m^*_
\lambda(\pi),
\]
exists and is called the \textit{linearization} of $\pi$ around $M$. If $\pi_{\mathrm{lin}}$ is a linear Poisson structure on a vector bundle $E\to M$, meaning that $\lambda m^*_\lambda\pi_{\mathrm{lin}}=\pi_{\mathrm{lin}}$ for all $\lambda\in \RR\setminus \{0\}$, then the zero section is a Lagrangian submanifold of $(E, \pi_{\mathrm{lin}})$ (\cite{smilde2021liegroups}, Proposition 2.2).

More generally, when $(M, \pi)$ is a Poisson manifold and $L\subset (M, \pi)$ is a Lagrangian submanifold, one can use a tubular neighbourhood to transfer $\pi$ to the normal bundle $NL$. Its linearization $\pi_{\mathrm{lin}}$ is independent of the choice of tubular neighbourhood and is in fact corresponding to the Lie algebroid structure on the conormal bundle $N^*L$. If there exists a local Poisson diffeomorphism $(NL, \pi_{\mathrm{lin}})\dashrightarrow (M, \pi)$ that restricts to the identity on $L$, we call $(M, \pi)$ \textit{linearizable} around $L$.
\begin{remark}
The linearization $\pi_{\mathrm{lin}}$ of $(M, \pi)$ as above makes sense for any coisotropic submanifold $C\subset (M, \pi)$. However, only for Lagrangian submanifolds, this is the right notion of linearization, as the zero section in a linear Poisson structure is Lagrangian. For instance, for symplectic leaves, which are also coisotropic, the first-order local model \cite{crainicmarcut2012, vorobiev2000} is not linear.
\end{remark}

We pose the following question. 
\begin{question}
	Let $(M, \pi)$ be a Poisson manifold and $L\subset (M, \pi)$ a Lagrangian submanifold. When is $(M, \pi)$ linearizable around $L$?
\end{question}
When $L=\{x\}$ consists of only one point, the Poisson structure must be zero at $x$, and we recover the classical linearization problem of Poisson structures around zeroes, as mentioned in the first paragraph of this introduction. In the case that $(M, \pi)$ is non-degenerate (that is, symplectic), linearizability is obtained by Weinstein's famous Lagrangian neighbourhood theorem \cite{Weinstein1971}.

In this paper, we prove an extension of Weinstein's result to the setting of cosymplectic Lie algebroids. A \textit{cosymplectic structure} on a Lie algebroid $\AA\Rightarrow M$ consists of closed one-forms $\alpha_1, \dots, \alpha_k$ and a closed 2-form $\omega$ of constant rank $2n$ for which $\omega^n\wedge\alpha_1\wedge\dots\wedge\alpha_k$ is a nowhere vanishing top-form on $\AA$. Underlying a cosymplectic structure on $\AA$, one has an $\AA$-Poisson structure $\pi_\AA\in \Gamma(\wedge^2\AA)$, which ultimately induces a Poisson structure $\pi$ on the underling manifold $M$. 
\begin{theorem}[Theorem \ref{thm:lagrangiannbhd1}]\label{thm:introlagrangiannbhd}
	Let $\AA\Rightarrow M$ be a Lie algebroid with cosymplectic structure $(\alpha_1, \dots, \alpha_k, \omega)$, inducing Poisson structures $\pi_\AA$ on $\AA$ and $\pi$ on $M$. Let $i:L\hookrightarrow M$ be a submanifold satisfying the following three conditions:
	\begin{itemize}[noitemsep, topsep=0em]
		\item the inclusion $i$ is transverse to the anchor of $\AA$,
		\item the pullback algebroid $i^!\AA$ is a Lagrangian subbundle of $(\AA, \pi_\AA)$,
		\item the forms $i^*\alpha_1, \dots, i^*\alpha_k$ are zero in $\Omega^1(i^!\AA)$.
\end{itemize}
 Then $(M, \pi)$ is linearizable around $L$.
\end{theorem}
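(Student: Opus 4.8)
The plan is to reduce this cosymplectic statement to Weinstein's classical Lagrangian neighbourhood theorem applied on the total space of $\AA$ (or rather the leaves of its induced symplectic foliation), and then push the resulting linearization back down to $M$. The cosymplectic structure $(\alpha_1,\dots,\alpha_k,\omega)$ foliates $\AA$ into leaves cut out by the common level sets of the $\alpha_i$, on which $\omega$ restricts to a genuine symplectic form; the induced $\AA$-Poisson bivector $\pi_\AA$ is tangent to these leaves and nondegenerate there. The transversality hypothesis ensures that the pullback algebroid $i^!\AA\Rightarrow L$ is well-defined, and the second hypothesis says precisely that $i^!\AA$ sits inside $(\AA,\pi_\AA)$ as a Lagrangian subbundle, so along each symplectic leaf we are in the classical situation of a Lagrangian submanifold inside a symplectic manifold.

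First I would make precise the cosymplectic local model. Using a cosymplectic version of a Moser / Darboux argument, I would show that near $i^!\AA$ the pair $(\AA,\omega,\alpha_1,\dots,\alpha_k)$ is isomorphic, as a cosymplectic Lie algebroid, to a standard model built on the conormal-type bundle of $i^!\AA$ inside $\AA$; the third hypothesis, that $i^*\alpha_j=0$ in $\Omega^1(i^!\AA)$, is what guarantees that the closed one-forms pull back trivially to $L$ and hence do not obstruct putting $\omega$ into Weinstein normal form transversally to the foliation. This is essentially an equivariant-in-$\lambda$ Moser argument combined with the standard Weinstein interpolation $\omega_t=\omega_0+t(\omega_1-\omega_0)$, carried out leafwise but uniformly in the transverse (cosymplectic) directions so that the resulting isotopy is a morphism of Lie algebroids.

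Next I would transfer the resulting Lie-algebroid cosymplectomorphism down to the base. The point is that $\pi_\AA$ determines $\pi$ on $M$ via the anchor, and a cosymplectomorphism of $(\AA,\pi_\AA)$ covering a diffeomorphism of $M$ that fixes $L$ descends to a Poisson diffeomorphism of $(M,\pi)$ fixing $L$. Comparing the standard cosymplectic model on the normal data of $i^!\AA$ with the linearization $\pi_{\mathrm{lin}}$ on $NL$, I would verify that the induced base Poisson structure of the model is exactly $\pi_{\mathrm{lin}}$, using that $\pi_{\mathrm{lin}}$ is by definition the Lie-algebroid structure on $N^*L$ and that the Lagrangian condition identifies the transverse geometry with the conormal directions. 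This yields the desired local Poisson diffeomorphism $(NL,\pi_{\mathrm{lin}})\dashrightarrow(M,\pi)$ restricting to the identity on $L$.

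The main obstacle I anticipate is the transverse Moser step: one must run Weinstein's interpolation not on a single symplectic manifold but fiberwise over the cosymplectic ``time'' directions defined by the $\alpha_i$, and ensure the time-dependent vector field integrating $\dot\omega_t$ both exists (requires the Lagrangian hypothesis to guarantee agreement of the forms to first order along $i^!\AA$) and generates a flow through Lie-algebroid morphisms rather than arbitrary diffeomorphisms of the total space. Keeping the construction compatible with the algebroid structure, so that it genuinely descends to $M$, is where the hypotheses on $i^!\AA$ being Lagrangian and on the vanishing of $i^*\alpha_j$ must be used carefully; everything else should be a relatively routine adaptation of the classical proof.
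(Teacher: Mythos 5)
Your overall strategy coincides with the paper's: identify the cosymplectic Lie algebroid near $L$ with a standard local model built on $(i^!\AA)^*\times\RR^k$ (the paper uses $p_*^!i^!\AA\times T\RR^k$ with the canonical symplectic form and the forms $dt_j$) by a Moser argument run at the level of Lie algebroid forms, and then descend to $(M,\pi)$ through the anchor. That outline is sound, and your worry about the isotopy consisting of Lie algebroid morphisms largely takes care of itself: flows of time-dependent sections of $\AA$ are automatically Lie algebroid automorphisms.

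There is, however, one concrete gap in the plan as written: you assert that the Lagrangian hypothesis "guarantees agreement of the forms to first order along $i^!\AA$", and this is what you feed into the interpolation $\omega_t=\omega_0+t(\omega_1-\omega_0)$. It does not. The hypotheses only give $i^*\omega=i^*\tilde{\omega}=0$ and $i^*\alpha_j=i^*\tilde{\alpha}_j=0$ as forms pulled back to $i^!\AA$; they say nothing about the values of the two structures on the rest of $\AA\vert_L$. Without pointwise agreement on all of $\AA\vert_L$, the interpolation $\omega_t$ can degenerate at points of $L$ itself, and the relative Poincar\'e lemma only yields a primitive vanishing along $L$ but not one whose first $\AA$-derivatives vanish there, so the Moser flow need not restrict to the identity on $\AA\vert_L$. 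The paper removes this obstruction by a preliminary normalization: choose a Lagrangian complement $\LL$ to $i^!\AA$ in $\FF\vert_L$, use the Reeb sections to split $\AA\vert_L=i^!\AA\oplus\LL\oplus\langle R_1,\dots,R_k\rangle$, build a bundle isomorphism $\Psi$ of $\AA\vert_L$ matching the two cosymplectic structures pointwise along $L$, and then extend $\Psi$ to a Lie algebroid isomorphism on a neighbourhood of $L$. This extension step, and likewise the relative Poincar\'e lemma that supplies the Moser primitive, both rest on the Bursztyn--Lima--Meinrenken splitting theorem for Lie algebroid transversals, which is the tool your "routine adaptation of the classical proof" silently requires and never names: in the algebroid setting one cannot simply invoke a tubular neighbourhood, since the identification of $\AA$ near $L$ with the pullback algebroid $p_{NL}^!i^!\AA$ is exactly what has to be proved.
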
 
The Lagrangian condition together with the vanishing of the pullback one-forms $i^*\alpha_1, \dots, i^*\alpha_k$ ensure that $i^*\omega=0\in \Omega^2(i^!\AA)$.

When $\AA=TM$, the transversality condition is automatically satisfied, and the other two conditions imply that $L$ is a Lagrangian submanifold tangent to the symplectic foliation. When both $\AA=TM$ and $k=0$, we recover Weinstein's Lagrangian neighbourhood theorem from symplectic geometry. Other special cases include Lagrangian submanifolds of log-symplectic and $b^k$-symplectic manifolds \cite{guilleminmirandapires2014, scott2016}, elliptic Poisson manifolds \cite{Cavalcantigualtieri2017} and symplectic foliations, and all of their cosymplectic extensions.

\subsection*{Linearization of Poisson groupoids} A Poisson groupoid \cite{weinstein1988} is a Lie groupoid $\GG\toto M$ with a multiplicative Poisson structure $\Pi$ on $\GG$, meaning that the graph of the multiplication map is coisotropic in $(\GG\times \GG\times \GG, \Pi\times \Pi\times (-\Pi))$. The unit section of a Poisson groupoid $(\GG, \Pi)\toto M$ is always coisotropic. Therefore, identifying the normal bundle $NM$ of $M$ in $\GG$ with the Lie algebroid $\AA$, the dual $\AA^*\Rightarrow M$ comes with a Lie algebroid structure dual to the linearization $\pi_{\mathrm{lin}}$ of $\Pi$ on $\AA\cong NM$. This Lie algebroid structure makes $(\AA, \AA^*)$ into a \textit{Lie bialgebroid}. A detailed account of Poisson groupoids and Lie bialgebroids can be found in \cite{mackenzie2005}, Chapters 11 and 12.

The unit section $M\subset (\GG, \Pi)$ of a Poisson groupoid is in fact always a \textit{Lagrangian submanifold}, as is shown in \cite{smilde2021liegroups}, Proposition 2.2. Therefore, it is natural to ask the following question.
\begin{question}
	Let $(\GG, \Pi)\toto (M, \pi)$ be a Poisson groupoid. Is $(\GG, \Pi)$ linearizable around $M$?
\end{question}
When the answer is positive, we call the Poisson groupoid \textit{linearizable}. Note that this only refers to the \textit{Poisson structure} of the Poisson groupoid, and not to the groupoid itself. This paper is devoted to prove linearizability of two classes of Poisson groupoids. The first concerns \textit{dual} integrations of triangular Lie bialgebroids.
\begin{theorem}[Theorem \ref{thm:dualintegrationlinearizable}]\label{thm:introdualintegration}
Let $(\AA, \AA^*, \pi_\AA)$ be a triangular Lie bialgebroid on $M$. Any Poisson groupoid $(\GG^*, \Pi)\toto M$ integrating $(\AA^*, \AA)$ is linearizable around $M$.
\end{theorem}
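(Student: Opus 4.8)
The plan is to deduce the statement directly from the cosymplectic Lagrangian neighbourhood theorem (Theorem~\ref{thm:introlagrangiannbhd}). What I would produce is a Lie algebroid $\mathcal{C}\Rightarrow\GG^*$ equipped with a cosymplectic structure whose induced Poisson structure on $\GG^*$ is precisely $\Pi$, and for which the unit section $i\colon M\hookrightarrow\GG^*$ meets the three hypotheses of that theorem. Once such a $\mathcal{C}$ is in hand, linearizability of $(\GG^*,\Pi)$ around $M$ is immediate. Thus the entire problem is moved from $(\GG^*,\Pi)$ itself to finding the correct cosymplectic lift over $\GG^*$.

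The construction of $\mathcal{C}$ is where the triangular hypothesis is used. Since $(\AA,\AA^*,\pi_\AA)$ is triangular, the bundle map $\pi_\AA^\sharp\colon\AA^*\to\AA$ is a morphism of Lie algebroids, and $\pi_\AA$ is nondegenerate on its image; this already endows $\AA\Rightarrow M$ with a cosymplectic structure whose underlying $\AA$-Poisson bivector is $\pi_\AA$ and whose base Poisson structure is $\pi$. I would then assemble $\AA$ and $\AA^*$ into a double Lie algebroid built from the morphism $\pi_\AA^\sharp$, and integrate its $\AA^*$--leg---using that $\GG^*$ integrates $\AA^*=\Lie(\GG^*)$---to obtain a symplectic LA-groupoid whose top corner is a Lie algebroid $\mathcal{C}\to\GG^*$. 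The nondegeneracy of $\pi_\AA$ yields a closed $2$-form $\omega\in\Omega^2(\mathcal{C})$, and the kernel of $\pi_\AA$ supplies the closed $1$-forms $\alpha_1,\dots,\alpha_k$, so that $(\alpha_1,\dots,\alpha_k,\omega)$ is cosymplectic. The feature distinguishing the \emph{dual} integration is that this lift is automatically (co)symplectic with no further condition on $\pi_\AA$, which is exactly why linearizability holds unconditionally here; the non-dual case, by contrast, forces the cosymplectic-type hypothesis on the $r$-matrix.

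With $\mathcal{C}$ constructed, I would first identify $\rho_{\mathcal{C}}(\pi_{\mathcal{C}})=\Pi$, by checking that the symplectic LA-groupoid integrates the given Lie bialgebroid and hence that its induced base Poisson structure is the multiplicative one. I would then verify the hypotheses of Theorem~\ref{thm:introlagrangiannbhd} for $i\colon M\hookrightarrow\GG^*$: transversality to the anchor of $\mathcal{C}$ holds because $M$ sits as the unit section and the anchor covers the normal directions $\AA^*=\Lie(\GG^*)$ along the units; the vanishing of the pullback $1$-forms $i^*\alpha_j$ follows from their origin in $\ker\pi_\AA$ restricted to the units; and the Lagrangian condition on $i^!\mathcal{C}$ is the algebroid-level lift of the already established fact that $M$ is a clean Lagrangian submanifold of $(\GG^*,\Pi)$ (\cite{smilde2021liegroups}, Proposition~2.2). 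Applying the theorem then concludes the proof.

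The main obstacle is the second step: producing the symplectic LA-groupoid and proving that its induced base Poisson structure is exactly $\Pi$. This rests on the integration theory of double Lie algebroids and on a careful identification of the multiplicative symplectic form, including the verification that $\omega$ has constant rank (equivalently, that $\pi_\AA$ does) so that the cosymplectic hypotheses genuinely hold. The second delicate point is the Lagrangian verification, which must be carried out for the pullback algebroid $i^!\mathcal{C}$ rather than on $\GG^*$ directly, and so requires translating the cleanness of $M\subset(\GG^*,\Pi)$ into the statement that $i^!\mathcal{C}$ is a Lagrangian subbundle of $(\mathcal{C},\pi_{\mathcal{C}})$.
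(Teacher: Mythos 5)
Your overall plan (build a lift over $\GG^*$ by integrating a double Lie algebroid constructed from $\pi_\AA^\sharp$, then apply the Lagrangian neighbourhood theorem) is the paper's plan, but two of your key steps are genuinely wrong, and both would sink the argument. First, triangularity does \emph{not} "already endow $\AA$ with a cosymplectic structure whose underlying bivector is $\pi_\AA$", and $\ker\pi_\AA$ cannot "supply the closed one-forms": an arbitrary $\AA$-Poisson structure need not have constant rank (take $\AA=TM$ and $\pi$ any non-regular Poisson structure, which is a triangular bialgebroid $(TM,T^*M,\pi)$), so $\ker\pi_\AA$ is not even a subbundle. Being of cosymplectic type is exactly the \emph{extra} hypothesis of Theorem \ref{thm:cosymplecticpairgroupoidlinearizable}; it is not a consequence of triangularity, and your construction as described only covers that restricted case, whereas the theorem has no rank hypothesis at all. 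The paper's mechanism is different: the relevant two-form is the canonical symplectic form $\omega_\can$ on the pullback algebroid $p_*^!\AA\Rightarrow\AA^*$, which is non-degenerate for \emph{every} Lie algebroid $\AA$ (it is the $\AA$-analogue of the canonical form on $T^*M$, and has nothing to do with the rank of $\pi_\AA$). Triangularity enters only to anchor $\AA^*$ to $\AA$, making $p_*^!\AA$ a double Lie algebroid (Theorem \ref{thm:prolongationdouble}), and to make $\omega_\can$ infinitesimally multiplicative (Proposition \ref{prop:omegacanim}); integration then produces a genuinely symplectic ($k=0$) LA-groupoid $(\VV,\Omega)$ over $\GG^*$ inducing $\Pi$ (Corollary \ref{cor:dualintegrationsymplecticLAgroupoid}). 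That is precisely why the dual integration is linearizable unconditionally.

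Second, your Lagrangian verification runs in the forbidden direction. You propose to translate the cleanness of $M\subset(\GG^*,\Pi)$ into the statement that $u^!\VV$ is a Lagrangian subbundle upstairs, but Lemma \ref{lem:coisotropictransversal} only gives the implication the other way ($\AA$-Lagrangian $\Rightarrow$ Lagrangian downstairs), and the paper explicitly exhibits a counterexample to the converse in the remark following that lemma (the diagonal of a transitive symplectic Lie algebroid pair): a submanifold can be Lagrangian for the base Poisson structure while its pullback algebroid fails to be Lagrangian, for rank reasons. The valid route, and the one the paper takes, uses only \emph{coisotropicity}, which does transfer both ways by Lemma \ref{lem:coisotropictransversal}: $M$ is coisotropic in any Poisson groupoid, hence $u^!\VV$ is coisotropic in $(\VV,\Omega)$, and then one checks directly that $u^!\VV=\rho_\VV^{-1}(TM)=\AA$ has half the rank of $\VV$ (because $\rho_\VV$ is the identity on cores), so the coisotropic subbundle is Lagrangian for dimensional reasons. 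With these two corrections --- replace your cosymplectic lift on $\AA$ by the canonical symplectic form on $p_*^!\AA$, and replace the lifted-cleanness argument by coisotropicity plus the rank count --- your outline becomes the paper's proof.
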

In the case that $M$ is a point, the triple $(\AA, \AA^*, \pi_\AA)$ becomes a Lie bialgebra, and linearizability of a dual integration $(\GG^*, \Pi)$ was already known by the work of Alekseev and Meinrenken \cite{alekseevmeinrenken2013}.

Our second linearization result is about normal (i.e. not dual) integrations of triangular Lie bialgebroids. When $(\AA, \AA^*, \pi_\AA)$ is a triangular Lie bialgebroid, and $\GG\toto M$ a groupoid integrating $\AA\Rightarrow M$, the Poisson structure $\lar{\pi_\AA}-\rar{\pi_\AA}$ makes $(\GG, \lar{\pi_\AA}-\rar{\pi_\AA})\toto M$ into a Poisson groupoid integrating $(\AA, \AA^*)$ \cite{liuxu1996}. The $\AA$-Poisson structure (or $r$-matrix) $\pi_\AA$ is of \textit{$\AA$-cosymplectic type} when it underlies a $k$-cosymplectic structure on $\AA$ (Definition \ref{def:cosymplecticstructure}).

\begin{theorem}[Theorem \ref{thm:cosymplecticpairgroupoidlinearizable}]\label{thm:introcosymplecticgroupoidlinearizable}
	Let $(\AA, \AA^*, \pi_\AA)$ be a triangular Lie bialgebroid for which $\pi_\AA$ is of $\AA$-cosymplectic type. If $\GG\toto M$ is an integration of $\AA$, the Poisson groupoid $(\GG, \lar{\pi_\AA}-\rar{\pi_\AA})\toto M$ is linearizable around $M$. 
\end{theorem}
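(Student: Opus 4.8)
The plan is to reduce the statement to the cosymplectic Lagrangian neighbourhood theorem (Theorem~\ref{thm:introlagrangiannbhd}), applied not to $\GG$ directly but to a Lie algebroid over $\GG$. The Poisson structure $\lar{\pi_\AA}-\rar{\pi_\AA}$ is \emph{not} of cosymplectic type on $\GG$ --- its rank drops along the units, since $\lar{\pi_\AA}$ and $\rar{\pi_\AA}$ agree there --- so one cannot invoke the theorem on the nose. Instead, I would construct a Lie algebroid $\Omega\Rightarrow\GG$ carrying a genuine cosymplectic structure whose induced Poisson structure on $\GG$ is exactly $\lar{\pi_\AA}-\rar{\pi_\AA}$, and then check that the unit section $M\hookrightarrow\GG$ meets the three hypotheses of Theorem~\ref{thm:introlagrangiannbhd}. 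Given such an $\Omega$, linearizability of $(\GG,\lar{\pi_\AA}-\rar{\pi_\AA})$ around $M$ follows at once; producing $\Omega$ is precisely the assertion that a triangular bialgebroid of cosymplectic type integrates to a (co)symplectic LA-groupoid, and this is where double Lie algebroids enter.

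For the construction I would start from the cosymplectic structure $(\alpha_1,\dots,\alpha_k,\omega)$ on $\AA$ guaranteed by Definition~\ref{def:cosymplecticstructure}, and transport it to $\GG$ by left and right translation, mirroring the passage from $\pi_\AA$ to $\lar{\pi_\AA}-\rar{\pi_\AA}$. The underlying bundle of $\Omega$ is assembled from the pullbacks $s^!\AA$ and $t^!\AA$, so that its sections carry both left- and right-invariant $\AA$-data and its anchor lands in the left- and right-invariant vector fields on $\GG$, reproducing $\lar{\pi_\AA}-\rar{\pi_\AA}$ upon contraction. A point worth stressing is that the naive direct sum $s^!\AA\oplus t^!\AA$ with the product bracket does \emph{not} do the job when $k>0$: matching $\lar{\pi_\AA}-\rar{\pi_\AA}$ with no spurious cross terms forces the two copies of $\ker\omega$ to remain in the kernel of the transported $2$-form, giving corank $2k$, and then the $2k$ transverse one-forms cannot all be arranged to vanish along the units. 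The correct $\Omega$ is therefore a genuinely new double Lie algebroid, whose bracket and cosymplectic structure are \emph{multiplicative} rather than a plain left/right split; this is one of the new examples advertised in the introduction.

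With $\Omega$ in hand I would verify the three conditions for $L=M$. Transversality of the unit inclusion to the anchor of $\Omega$ is a local question near the units, and follows from the submersivity of source and target. The pullback $i^!\Omega$ is the ``diagonal'' subbundle along which the left- and right-invariant contributions coincide, and that it is Lagrangian is the infinitesimal counterpart of the fact, proved in \cite{smilde2021liegroups}, that $M$ is a clean Lagrangian submanifold of $(\GG,\lar{\pi_\AA}-\rar{\pi_\AA})$; I would check it by a direct annihilator computation, using that triangularity makes $\pi_\AA^\sharp\colon\AA^*\to\AA$ a Lie algebroid morphism. Finally, because left and right translation agree over the units, the transported one-forms restrict to zero on this diagonal, so $i^*\alpha_j=0$; together with the Lagrangian condition this forces $i^*\omega=0$, exactly as recorded after Theorem~\ref{thm:introlagrangiannbhd}.

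The main obstacle is the construction of $\Omega$ itself and the proof that its cosymplectic structure is well defined --- that the multiplicative lift of $(\alpha_i,\omega)$ is closed in the Lie algebroid de Rham complex of $\Omega$, has constant rank, is compatible with the LA-groupoid structure, and induces precisely $\lar{\pi_\AA}-\rar{\pi_\AA}$ on $\GG$ with no extra terms. The failure of the product doubling shows that this is a real constraint and not a formality: one must reconcile the corank of the transported $2$-form with the vanishing of the one-forms along the units, and it is exactly this reconciliation that pins down the double Lie algebroid and supplies the Lagrangian subbundle needed to close the argument.
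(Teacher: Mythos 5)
Your overall strategy---produce a Lie algebroid over $\GG$ carrying a cosymplectic structure that induces $\lar{\pi_\AA}-\rar{\pi_\AA}$, then apply the cosymplectic Lagrangian neighbourhood theorem to the unit section---is exactly the paper's, and your diagnosis of the obstruction is also correct: on the split prolongation (which as a bundle is $t^*\AA\oplus s^*\AA$) the $2$-form $s^*\omega-t^*\omega$ has corank $2k$, and the span of the $2k$ defining one-forms $t^*\alpha_j, s^*\alpha_j$ restricts, on the diagonal copy of $\AA$ over the units, to the span of the $\alpha_j$, so they can never all be arranged to vanish there. But your resolution of this obstruction is where the proposal breaks down, in two ways. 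First, the central object $\Omega$---the ``genuinely new double Lie algebroid whose bracket and cosymplectic structure are multiplicative''---is never constructed; you only list the properties it should have and defer its existence (your own ``main obstacle''), so the proof has a hole at precisely its load-bearing step. Second, the claim that such a new multiplicative object is \emph{needed} is incorrect, and it steers the argument toward the heavy machinery (integration of IM forms to a symplectic LA-groupoid) that the paper only requires for the \emph{dual} integration result, Theorem \ref{thm:dualintegrationlinearizable}; for the present theorem no integration step of that kind is used.

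The paper's fix is much simpler and stays entirely within the split structure you dismissed. It takes the prolongation Lie algebroid $p^!\GG\Rightarrow\GG$ of Theorem \ref{thm:prolongationLAgroupoid} (with $\BB=\AA$, so the bracket satisfies $[t^*v,s^*w]=0$), equips it with the $2k$-cosymplectic structure $\left(t^*\alpha_1,\dots,t^*\alpha_k,s^*\alpha_1,\dots,s^*\alpha_k,\,s^*\omega-t^*\omega\right)$ inducing $\lar{\pi_\AA}-\rar{\pi_\AA}$, acknowledges exactly your objection ($u^*t^*\alpha_j=\alpha_j\neq 0$), and then \emph{restricts} everything to the wide subalgebroid $\mathscr{F}=\bigcap_{j=1}^k\ker t^*\alpha_j$. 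On $\mathscr{F}$ only the $k$ one-forms $s^*\alpha_j\big\vert_{\mathscr{F}}$ survive; together with $(s^*\omega-t^*\omega)\big\vert_{\mathscr{F}}$ they form a $k$-cosymplectic structure still inducing $\lar{\pi_\AA}-\rar{\pi_\AA}$, and the unit section pulls $\mathscr{F}$ back to the diagonal copy of $\FF=\bigcap_j\ker\alpha_j$, on which both $s^*\alpha_j$ and $s^*\omega-t^*\omega$ visibly vanish. Hence $M$ is a minimal Lagrangian transversal for $\mathscr{F}$ and Theorem \ref{thm:lagrangiannbhd1} applies. In other words, the reconciliation you were looking for is achieved not by building a new multiplicative bracket but by shrinking the algebroid so that half of the offending one-forms disappear; note that $\mathscr{F}$ need not carry any LA-groupoid structure at all, so multiplicativity is given up rather than strengthened. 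If you replace your unconstructed $\Omega$ by $\mathscr{F}$, the verification of the three hypotheses that you sketch in your third paragraph goes through essentially as written.
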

Note that Theorem \ref{thm:introdualintegration} and Theorem \ref{thm:introcosymplecticgroupoidlinearizable} coincide when $\pi_\AA$ is non-degenerate. 

The proof of Theorem \ref{thm:introdualintegration} relies, besides the Lagrangian neighbourhood theorem for symplectic Lie algebroids, on the notion of a symplectic LA-groupoid. More specifically, we show that every Poisson Lie algebroid $(\AA, \pi_\AA)\Rightarrow M$ (i.e. a triangular Lie bialgebroid) integrates to a (local) symplectic LA-groupoid, similar to the integration of a Poisson manifold $(M, \pi)$ to a (local) symplectic groupoid. Its construction consists of two steps already interesting on their own.

The first step requires some terminology. Let $\AA\Rightarrow M$ and $\BB\Rightarrow M$ be Lie algebroids. We say that $\BB$ is \textit{anchored to $\AA$} when there is an identity-covering Lie algebroid morphism $\rho_\BB^\AA:\BB\to \AA$. Next, letting $p_\BB:\BB\to M$ be the projection, we consider the pullback Lie algebroid $p^!_\BB \AA\Rightarrow \BB$ of $\AA$ to $\BB$, as well as, when $\GG\toto M$ is a Lie groupoid integrating $\AA$, the pullback groupoid $p^!_\BB\GG\toto \BB$. The canonical morphisms $p_\BB^!\AA\to \AA$ and $p^!_\BB\GG\to \GG$ are the projections of vector bundle structures. The presence of an anchor $\rho_\BB^\AA:\BB\to \AA$ makes it possible to prolong the Lie algebroid structure on $\BB$ to these vector bundles. 
\begin{theorem}[Theorems \ref{thm:prolongationdouble} and \ref{thm:prolongationLAgroupoid}]\label{thm:introdoubleliealgebroid}
	Let $p_\AA:\AA\Rightarrow M$ and $p_\BB:\BB\Rightarrow M$ be Lie algebroids. Suppose that $\BB$ is anchored to $\AA$ by a Lie algebroid morphism. Then $(p^!_\BB\AA; \BB, \AA;M)$ is a double Lie algebroid. 
	If $\GG\toto M$ is an integration of $\AA$, then $(p^!_\BB \GG; \BB, \GG; M)$ is an LA-groupoid. 
	\[
	\begin{tikzcd}
		p^!_\BB \AA \arrow[r, Rightarrow] \arrow[d, Rightarrow] & \AA \arrow[d, Rightarrow] \\
		\BB \arrow[r, Rightarrow] & M
	\end{tikzcd}
\quad\quad\quad\quad\quad\quad\quad
	\begin{tikzcd}
		p^!_\BB \GG\arrow[r, Rightarrow] \arrow[d, shift left] \arrow[d, shift right] &\GG \arrow[d, shift left] \arrow[d, shift right] \\
		\BB \arrow[r, Rightarrow] & M
	\end{tikzcd}
	\]
\end{theorem}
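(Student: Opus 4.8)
The plan is to establish both statements in parallel, exploiting that the LA-groupoid is the global counterpart of the double Lie algebroid: the latter is recovered by differentiating the former along its groupoid direction whenever $\AA$ is integrable. I would start from the double vector bundle. Writing $a_\AA:\AA\to TM$ for the anchor and $\tau_\BB:T\BB\to\BB$ for the tangent projection, I identify
\[
p^!_\BB\AA=T\BB\times_{TM}\AA=\{(X,v)\in T\BB\oplus_\BB p_\BB^*\AA:\ Tp_\BB(X)=a_\AA(v)\}
\]
as the pullback of the tangent double vector bundle $(T\BB;TM,\BB;M)$ along the vector bundle morphism $a_\AA$. This exhibits $p^!_\BB\AA$ as a double vector bundle with side bundles $\AA$ and $\BB$ and core canonically $\BB$, the two projections being $(X,v)\mapsto v$ and $(X,v)\mapsto\tau_\BB(X)$. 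The same fibered-product recipe at the groupoid level gives $p^!_\BB\GG=\BB\times_M\GG\times_M\BB$ as a VB-groupoid over $\GG$.

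I would then install the two Lie algebroid structures. One is automatic and requires no extra hypothesis: since $p_\BB$ is a surjective submersion it is transverse to $a_\AA$, so the Higgins--Mackenzie pullback algebroid $p^!_\BB\AA\Rightarrow\BB$, respectively the pullback groupoid $p^!_\BB\GG\rightrightarrows\BB$, is defined; this is the horizontal structure. The real content is the vertical structure $p^!_\BB\AA\Rightarrow\AA$ (resp.\ $p^!_\BB\GG\Rightarrow\GG$), and this is where the anchor $\rho:=\rho_\BB^\AA$ is used. Since a VB-algebroid structure on a double vector bundle is pinned down by its brackets on core and linear sections, I would define these from the bracket of $\BB$, using $\rho$ to provide the horizontal lifts, and take the anchor to be built from $T\rho:T\BB\to T\AA$ corrected by the canonical vertical lift in $\AA$. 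The decisive point is the Jacobi identity for the resulting prolonged bracket: it closes, with vanishing curvature terms, precisely because $\rho$ is a Lie algebroid morphism rather than a bare bundle map.

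It then remains to verify the compatibility axioms. For the double Lie algebroid I would invoke the characterization of such objects as double vector bundles carrying two VB-algebroid structures that form a matched pair, equivalently whose relevant duals constitute a Lie bialgebroid in Mackenzie's sense; the matched-pair conditions unwind to the statement that $\rho$ intertwines anchors and brackets, i.e.\ that it is a morphism. For the LA-groupoid I would instead check directly that the structure maps of the pullback groupoid $p^!_\BB\GG\rightrightarrows\BB$ are morphisms for the vertical algebroid over $\GG$; only multiplication is nontrivial, and there the composition in $\GG$ together with $\rho$ make the fibered product of vertical algebroids compatible. Differentiating this LA-groupoid along its groupoid direction returns $(p^!_\BB\AA;\BB,\AA;M)$, which both checks consistency and, when $\AA$ is integrable, re-derives the first statement.

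I expect the main obstacle to be the vertical Lie algebroid structure itself: writing down the prolonged anchor and bracket unambiguously on linear and core sections, and proving the Jacobi identity together with the matched-pair (resp.\ groupoid-morphism) compatibilities. Every one of these verifications rests on the hypothesis that $\rho_\BB^\AA$ is a Lie algebroid morphism, and pinpointing exactly where that assumption is consumed is the heart of the argument.
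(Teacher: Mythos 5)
Your construction of the underlying double vector bundle, the two algebroid structures on $p^!_\BB\AA$, and your treatment of the LA-groupoid half all track the paper closely: the paper likewise equips $p^!_\BB\GG\Rightarrow\GG$ with the prolonged bracket on the generators $t^*v$, $s^*v$, verifies the Jacobi identity, and then checks that the groupoid structure maps of $p^!_\BB\GG\toto\BB$ are Lie algebroid morphisms, with multiplication being the only nontrivial case. The divergence --- and the genuine gap --- is in how you reach the double Lie algebroid statement. You propose to verify Mackenzie's definition directly, asserting that the matched-pair conditions ``unwind to the statement that $\rho$ intertwines anchors and brackets.'' That unwinding is precisely the content to be proved, not a reformulation of the hypothesis: Mackenzie's compatibility condition requires the two duals of $p^!_\BB\AA$ to form a Lie bialgebroid over the dual of the core, and deducing this from the hypothesis that $\rho_\BB^\AA$ is a Lie algebroid morphism is a substantial computation which your proposal does not carry out. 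The paper explicitly declines this route as too involved, so as written your plan for the first statement rests on an unproven assertion sitting exactly where the difficulty lies.

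The paper instead derives the double Lie algebroid statement \emph{from} the LA-groupoid statement by differentiation --- the route you relegate to a consistency check valid only ``when $\AA$ is integrable.'' The point you miss is that no integrability hypothesis is needed: every Lie algebroid admits a \emph{local} integration, the LA-groupoid construction goes through for local groupoids, and Mackenzie's result that (local) LA-groupoids differentiate to double Lie algebroids then applies unconditionally. What still requires proof --- and what the paper actually proves, using star sections and core sections --- is that the Lie algebroid structure on $p^!_\BB\AA\Rightarrow\AA$ obtained by differentiating $p^!_\BB\GG\Rightarrow\GG$ coincides with the prolongation structure; you assert this identification without argument as well. To complete your proof you should either (i) genuinely perform the bialgebroid-duality verification, or (ii) adopt the differentiation route with local integrations, and supply the comparison of the differentiated bracket and anchor with the prolonged ones.
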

In both cases, the horizontal algebroid structure on $p_\BB^!\AA$ or $p^!_\BB\GG$ is called the \textit{prolongation algebroid structure}.

\begin{remark}
	In the case that $\AA=TM$ is the tangent bundle, the pullback $(p^!_\BB\AA; \BB, \AA;M)$ reduces to the tangent bundle $(T\BB; \BB, TM; M)$ as a double Lie algebroid. It is in fact very useful to think of $p^!_\BB\AA$ as an $\AA$-tangent version of the tangent bundle of $\BB$. 
\end{remark}

Let $(\AA, \AA^*, \pi_\AA)$ be a triangular Lie bialgebroid, and let $p_*:\AA^*\to M$ be the projection. The map $\pi_\AA^\sharp$ is a Lie algebroid morphism and thus anchors $\AA^*$ to $\AA$, so that $p_*^!\AA$ becomes a double Lie algebroid by Theorem \ref{thm:introdoubleliealgebroid}. Additionally, the Lie algebroid $p_*^!\AA\Rightarrow \AA^*$ comes with canonical symplectic form $\omega_{\can}$ that is infinitesimally multiplicative (IM) with respect to the prolongation Lie algebroid structure in $p_*^!\AA$ (Proposition \ref{prop:omegacanim}). To proceed to the groupoid case, we integrate the closed IM canonical symplectic form to a closed multiplicative form on an integrating LA-groupoid. This is possible by the following result that is proved in the Appendix.
\begin{theorem}[Corollary \ref{cor:multiplicativecochainiso}]
	Let $(\VV; \AA, \GG;M)$ be a (local) source 1-connected LA-groupoid with double Lie algebroid $(\AA\VV; \AA, \AA\GG; M)$. Then the one-to-one correspondence
	\[
	\left\{ \substack{\mbox{(Germs of) multiplicative}\\ \mbox{forms on $\VV\Rightarrow \GG$} } \right\} \xleftrightarrow{1:1} \left\{ \substack{\mbox{Infinitesimally multiplicative}\\
		\mbox{ forms on $\AA\VV\Rightarrow \AA\GG$} } \right\}
	\]
obtained by differentiation is an isomorphism of cochain complexes. 
\end{theorem}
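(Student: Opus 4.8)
The plan is to establish the correspondence between multiplicative forms on the LA-groupoid $\VV \Rightarrow \GG$ and infinitesimally multiplicative (IM) forms on its double Lie algebroid $\AA\VV \Rightarrow \AA\GG$ as an isomorphism of cochain complexes, by reducing it to the already-known van Est theory for multiplicative forms on plain Lie groupoids applied in the two compatible directions of the double structure. First I would recall the setup: an LA-groupoid $(\VV; \AA, \GG; M)$ carries two Lie-algebroid structures (one making $\VV \Rightarrow \GG$ a Lie groupoid in the vector-bundle-of-algebroids direction, and one making $\VV \to \AA$ a morphic structure over $\GG \toto M$), and differentiating in the \emph{groupoid} direction $\GG \toto M$ produces the double Lie algebroid $\AA\VV \Rightarrow \AA\GG$. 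A multiplicative form on $\VV \Rightarrow \GG$ is, by definition, a form that is compatible with the groupoid multiplication of $\VV$; applying the van Est map in the source-1-connected case sends such forms bijectively to IM forms on the Lie algebroid $\AA\VV \Rightarrow \AA\GG$. Thus the bijection itself should follow from invoking the classical van Est isomorphism for multiplicative/IM forms (as in the work of Crainic--Salazar--Struchiner and Bursztyn--Cabrera--Ortiz) one dimension up, where the "coefficients" are forms rather than functions.

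The key technical steps are then: (i) make precise the two complexes between which the map runs, namely the complex of germs of multiplicative forms on $\VV \Rightarrow \GG$ with its de Rham-type differential, and the complex of IM forms on $\AA\VV \Rightarrow \AA\GG$ with the corresponding differential; (ii) verify that differentiation (the van Est-type map) is well-defined as a map of graded vector spaces, sending a degree-$p$ multiplicative form to a degree-$p$ IM form; (iii) check that this map is a \emph{cochain map}, i.e. that it intertwines the two differentials --- this amounts to showing that differentiation commutes with the exterior derivative, which is essentially naturality of the van Est construction with respect to $d$; and (iv) establish bijectivity. For bijectivity I would lean on the source-1-connectedness hypothesis, which guarantees that a multiplicative object is uniquely determined by and reconstructible from its infinitesimal counterpart, exactly as in the integration of IM forms to multiplicative forms on ordinary source-simply-connected Lie groupoids. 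The local version is handled identically by passing to germs near the unit section.

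The main obstacle I expect is step (iii) combined with the bookkeeping of the double structure: one must be careful that the relevant notion of "multiplicative form on $\VV \Rightarrow \GG$" and "IM form on $\AA\VV \Rightarrow \AA\GG$" are genuinely the shadows of the \emph{same} van Est correspondence, and that the differential on each side (which involves both the de Rham differential along the fibers and the Lie-algebroid differential of the second, transverse structure) is matched correctly. Concretely, the subtlety is that $\VV$ and $\AA\VV$ each carry \emph{two} commuting algebroid structures, and the van Est map differentiates only one of them while the differential uses the other; verifying compatibility requires the double-Lie-algebroid compatibility conditions (the mixed Jacobi/Leibniz identities) to ensure the two operations commute. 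I would handle this by first treating the trivial-coefficient case --- recovering the ordinary van Est isomorphism for the groupoid direction --- and then promoting it to forms by a degreewise induction, using that a multiplicative form is controlled by its restriction and first-order jet along the unit section, so that the isomorphism on cochains follows from the isomorphism on the associated graded pieces. The proof should therefore reduce, modulo this careful matching, to the established van Est isomorphism theorem applied fiberwise in the $\GG \toto M$ direction, with the double structure supplying exactly the compatibility needed to make it an isomorphism of complexes rather than merely of graded spaces.
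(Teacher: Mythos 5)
Your reduction of the bijection itself to known integration results is sound and matches the paper: the paper obtains the one-to-one correspondence from Theorem \ref{thm:integrationIMforms} (Egea's integration of IM forms on source 1-connected VB-groupoids), essentially by viewing a multiplicative $k$-form $\tau$ as the multiplicative function $c_\tau$ on $\oplus^k_\GG\VV$ and applying the function-level correspondence. The genuine gap is in your step (iii), which is the actual content of the statement. The differentials $d_\VV$ and $d_{\AA\VV}$ are \emph{not} the de Rham-type differentials of the direction being differentiated; they are the Chevalley--Eilenberg differentials of the transverse Lie algebroid structures $\VV\Rightarrow\GG$ and $\AA\VV\Rightarrow\AA\GG$, while the Lie functor $\AA$ is applied to the groupoid direction $\VV\toto\AA$ over $\GG\toto M$. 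Consequently, ``naturality of the van Est construction with respect to $d$'' does not apply: the classical van Est/integration theory (Crainic--Salazar--Struchiner, Bursztyn--Cabrera, Egea) intertwines differentials in the direction being differentiated, and there is no off-the-shelf statement asserting that differentiation intertwines \emph{transverse} algebroid differentials. That intertwining, $d_{\AA\VV}\circ\AA=\AA\circ d_\VV$, is exactly what the paper proves as Theorem \ref{thm:differentiationcochainmap}, and it does not follow formally from the double Lie algebroid axioms by a ``degreewise induction on associated graded pieces'': you never specify what the induction is on, what the graded pieces are, or how jet-determinacy of multiplicative forms along the units---which only re-proves injectivity of $\AA$, i.e.\ the bijection you already have---would produce the commutation of differentials.

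What is actually needed, and what the paper supplies, is a computation on generators. By Proposition \ref{prop:generatorsprolongation} (due to Mackenzie), $\Gamma(\AA\VV\to\AA\GG)$ is generated by derivatives $\AA\xi$ of star sections and lifts $\widehat{c}$ of core sections, on which the prolonged anchor and bracket are explicitly known. The paper first establishes the evaluation formulas $(\AA\tau)(\AA\xi_1,\dots,\AA\xi_k)=\AA\left(\tau(\xi_1,\dots,\xi_k)\right)$, $(\AA\tau)(\AA\xi_1,\dots,\AA\xi_{k-1},\widehat{c})=p_{\AA\GG}^*u^*\left(\tau(\xi_1,\dots,\xi_{k-1},\rar{c})\right)$ and $\iota_{\widehat{c}_1}\iota_{\widehat{c}_2}(\AA\tau)=0$, together with the corresponding Lie-derivative identities for $\AA\xi$ and $\widehat{c}$ acting on differentiated functions, and then verifies $d_{\AA\VV}(\AA\tau)=\AA(d_\VV\tau)$ case by case (star--star, star--core, core--core) via the Koszul formula. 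To complete your proposal you would have to either reproduce this computation or find a genuinely formal argument that the Lie functor commutes with transverse Chevalley--Eilenberg differentials; the latter is not contained in the references you invoke, and your outline as written does not produce it.
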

Let $(\AA, \AA^*, \pi_\AA)$ be a triangular Lie bialgebroid. By integrating the canonical symplectic form on $p_*^!\AA\Rightarrow \AA^*$, it follows that an integration $\GG^*\toto M$ of $\AA^*$ carries a (local) multiplicative Poisson structure integrating the bialgebroid $(\AA^*, \AA)$ that comes from a (local) symplectic LA-groupoid $(\VV, \Omega)$. 
\[
\begin{tikzcd}
	(p^!_*\AA, \omega_{\mathrm{can}}) \arrow[r, Rightarrow] \arrow[d, Rightarrow] & \AA \arrow[d, Rightarrow] \\
	(\AA^*, \pi_{\mathrm{lin}}) \arrow[r, Rightarrow] & M
\end{tikzcd}
\rightsquigarrow
\begin{tikzcd}
	(\VV, \Omega) \arrow[d, Rightarrow] \arrow[r, shift right] \arrow[r, shift left] & \AA \arrow[d, Rightarrow]\\
	(\GG^*, \Pi) \arrow[r, shift left] \arrow[r, shift right] & M
\end{tikzcd}
\]
The Lagrangian neighbourhood theorem for (co)symplectic Lie algebroids (Theorem \ref{thm:introlagrangiannbhd}) yields a linearization of $(\GG^*, \Pi)$ around $M$, proving Theorem \ref{thm:introdualintegration}. 
\begin{remark}[Symplectic groupoids] In the case that $\AA=TM$, we recover that every Poisson manifold integrates to a local symplectic groupoid, as we will briefly explain. A Poisson structure $\pi$ on $M$ makes $(TM, T^*M, \pi)$ into a triangular Lie bialgebroid. The pullback algebroid $p_*^!TM \Rightarrow T^*M$ coincides with the tangent bundle $T(T^*M)\Rightarrow T^*M$, which comes with the usual canonical symplectic form $\omega_{\mathrm{can}}$. This form is \textit{infinitesimally multiplicative} on the prolongation Lie algebroid $T(T^*M)\Rightarrow TM$, and therefore integrates to a symplectic form $\Omega$ on the prolongation LA-groupoid $T\GG^*\toto TM$, whenever $\GG^*\toto M$ is a (local) source 1-connected integration of $T^*M\Rightarrow M$. We recover that every Poisson manifold $(M, \pi)$ integrates to a (local) symplectic groupoid. 
\end{remark}

\subsection*{Relation with Poisson diffeomorphisms} The results in this paper are used in \cite{smilde2021liegroups} to obtain (infinite-dimensional) Lie group structures on groups of Poisson diffeomorphisms. We will briefly explain the connection between these two seemingly unrelated topics.

Let $(M, \pi)$ be a Poisson manifold. It is well-known that a diffeomorphism $\varphi:M\to M$ is a Poisson map if and only if its graph $\gr_\varphi$ is coisotropic in $(M\times M, \lar{\pi}-\rar{\pi})$. Since a Poisson diffeomorphism must send each symplectic leaf symplectomorphically onto its image, its graph is not just coisotropic, but also Lagrangian. It is often useful to think of the product $(M\times M, \lar{\pi}-\rar{\pi})$ as a Poisson groupoid over $(M, \pi)$. This way, one can reinterpret Poisson diffeomorphisms as coisotropic bisections. 

More generally, if $(\GG, \Pi)\toto (M, \pi)$ is a Poisson groupoid, then its coisotropic bisections $\Bis(\GG, \Pi)$ form a subgroup of the full bisection group $\Bis(\GG)$. The group of coisotropic bisections $\Bis(\GG, \Pi)$ always acts on the base manifold $(M, \pi)$ by Poisson diffeomorphisms. While $\Bis(\GG)$ is always an infinite-dimensional Lie group, there is no known or obvious Lie group structure on $\Bis(\GG, \Pi)$. 

In \cite{smilde2021liegroups}, we show the following theorem.
\begin{theorem}[\cite{smilde2021liegroups}, Theorem 2.11]\label{thm:introcoisotropicbisections}
	Let $(\GG, \Pi)\toto M$ be a Poisson groupoid with Lie bialgebroid $(\AA, \AA^*)$. Assume that $(\GG, \Pi)$ is \emph{linearizable} around $M$. Then the group of coisotropic bisections is a regular embedded Lie subgroup of $\Bis(\GG)$ with Lie algebra $\Gamma_c(\AA, d_{\AA^*})=\{ v\in \Gamma_c(\AA): d_{\AA^*}v=0\}$, where $d_{\AA^*}$ is the differential of the Lie algebroid $\AA^*\Rightarrow M$.
\end{theorem}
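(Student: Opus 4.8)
The plan is to use linearizability to pass to a tractable local model, where the coisotropic condition becomes a \emph{linear} differential equation, and then to read off the Lie subgroup structure from the resulting flat submanifold chart. Recall that $\Bis(\GG)$ is a regular (infinite-dimensional) Lie group whose Lie algebra is $\Gamma_c(\AA)$, and that a chart around the identity bisection is obtained by identifying small bisections with compactly supported sections of $\AA$: fixing a tubular neighbourhood of the unit section $M\subset\GG$, one sends a section $v$ to the bisection whose image is the corresponding graph. The point is to build this chart using the linearization itself. By hypothesis there is a local Poisson diffeomorphism $\phi:(\AA,\pi_{\mathrm{lin}})\dashrightarrow(\GG,\Pi)$ restricting to the identity on $M$ and inducing the identity on the normal bundle $NM\cong\AA$. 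Since a bisection close to the identity has image $C^1$-close to $M$, its preimage under $\phi$ is the graph of a unique section $v\in\Gamma_c(\AA)$, and because $\phi$ is a Poisson map, the bisection is coisotropic in $(\GG,\Pi)$ if and only if the graph of $v$ is coisotropic in $(\AA,\pi_{\mathrm{lin}})$.

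The heart of the argument is therefore a local computation in the linear Poisson structure $(\AA,\pi_{\mathrm{lin}})$, whose dual Lie algebroid is $\AA^*$. Writing $p:\AA\to M$ for the projection, the image of a section $v\in\Gamma(\AA)$ is cut out locally by the functions $F_\xi=\ell_\xi-p^*\langle\xi,v\rangle$, where $\xi$ ranges over a local frame of $\AA^*$ and $\ell_\xi$ is the associated fibrewise-linear function. Using the defining relations of $\pi_{\mathrm{lin}}$ --- namely $\{\ell_\xi,\ell_\eta\}=\ell_{[\xi,\eta]}$, $\{\ell_\xi,p^*g\}=p^*(\rho_{\AA^*}(\xi)g)$ and $\{p^*g,p^*h\}=0$ --- one computes, up to the prevailing sign conventions,
\[
\{F_\xi,F_\eta\}=F_{[\xi,\eta]}-p^*\big((d_{\AA^*}v)(\xi,\eta)\big).
\]
Since a basic function $p^*g$ lies in the vanishing ideal of the graph precisely when $g\equiv0$, this identity shows that the graph of $v$ is coisotropic if and only if $d_{\AA^*}v=0$. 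Thus, in the chart provided by $\phi$, the subset $\Bis(\GG,\Pi)$ is exactly the \emph{closed linear subspace} $\ker\big(d_{\AA^*}:\Gamma_c(\AA)\to\Gamma_c(\wedge^2\AA)\big)=\Gamma_c(\AA,d_{\AA^*})$.

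With this in hand the remaining steps are formal. The flatness of $\Bis(\GG,\Pi)$ in the identity chart furnishes a submanifold chart there, and translating it by elements of $\Bis(\GG,\Pi)$ (left multiplication is a diffeomorphism of $\Bis(\GG)$ mapping the subgroup to itself) produces submanifold charts at every point of the subgroup; coisotropy being a closed condition, the subgroup is embedded. Its tangent space at the identity, hence its Lie algebra, is the model space $\Gamma_c(\AA,d_{\AA^*})$, since the tubular-neighbourhood chart and the linear-model chart agree to first order because $\phi$ induces the identity on $NM$. Regularity is then inherited: the evolution operator of $\Bis(\GG)$ restricts to curves in the closed subspace $\Gamma_c(\AA,d_{\AA^*})$, which is itself a convenient (Fr\'echet) vector space.

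I expect the main obstacle to be the functional-analytic content hiding behind the word ``embedded''. To promote the set-theoretic equality $\Bis(\GG,\Pi)\cong\ker d_{\AA^*}$ into a genuine submanifold chart one must verify that $\ker d_{\AA^*}$ is not merely closed but \emph{complemented} (split) in $\Gamma_c(\AA)$, so that the chart restricts to a chart of the subgroup in the convenient-calculus sense; this is where the differential operator $d_{\AA^*}$ and the non-Banach topology on $\Gamma_c(\AA)$ must be controlled. A secondary, more geometric, subtlety is the faithful transport of the coisotropic condition by $\phi$ away from $M$ --- matching the two families of defining functions and checking that the linear computation governs all bisections near the identity, not merely their first-order jets.
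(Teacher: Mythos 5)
A preliminary remark: the present paper does not prove this statement at all — it is imported verbatim from the companion paper \cite{smilde2021liegroups} (Theorem 2.11), so there is no in-paper proof to compare yours against. Judged on its own merits, and against the machinery this paper does develop, your proposal follows exactly the intended route: pass through the linearization $\phi:(\AA,\pi_{\mathrm{lin}})\dashrightarrow(\GG,\Pi)$ to identify small bisections with small sections $v\in\Gamma_c(\AA)$, and then characterize coisotropy in the linear model. Your bracket computation $\{F_\xi,F_\eta\}=F_{[\xi,\eta]}-p^*\bigl((d_{\AA^*}v)(\xi,\eta)\bigr)$ is correct, and the resulting criterion (the graph of $v$ is coisotropic in $(\AA,\pi_{\mathrm{lin}})$ if and only if $d_{\AA^*}v=0$) is precisely the dual of Corollary \ref{cor:closedoneformlagrangian} of this paper, with the roles of $\AA$ and $\AA^*$ exchanged; this confirms your approach is the one the author's toolkit is built for. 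Two small points you should make explicit if you write this up fully: first, that $\Gamma_c(\AA,d_{\AA^*})$ is a Lie \emph{subalgebra} of $\Gamma_c(\AA)$ follows from the bialgebroid compatibility $d_{\AA^*}[v,w]=[d_{\AA^*}v,w]+[v,d_{\AA^*}w]$; second, the functional-analytic issues you flag — that the chart built from $\phi$ is compatible with the Lie group structure on $\Bis(\GG)$, what notion of embedded submanifold is used for the closed (not obviously complemented) subspace $\ker d_{\AA^*}$ of the LF-space $\Gamma_c(\AA)$, and how Milnor regularity is inherited — are genuinely where the remaining work lies, and they are resolved in \cite{smilde2021liegroups} rather than anywhere in this paper.
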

From the perspective of Poisson diffeomorphisms, the pair Poisson groupoid is particularly relevant. However, its linearizability puts heavy restrictions on $(M,\pi)$, as we will show in the current paper.  
\begin{theorem}[Theorem \ref{thm:nonlinearizable}]
	Let $(M, \pi)$ be a Poisson manifold. If $(M\times M, \lar{\pi}-\rar{\pi})$ is linearizable around the diagonal, then $\pi$ must be regular. 
\end{theorem}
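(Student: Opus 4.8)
The plan is to exploit that linearizability yields a local Poisson diffeomorphism $\phi$ identifying $(M\times M,\lar{\pi}-\rar{\pi})$ near the diagonal $\Delta$ with its linear model near the zero section, and that $\phi$ must preserve \emph{every} local Poisson invariant — in particular the isotropy (transverse) Lie algebras at corresponding points and the set of leaf dimensions realised on a neighbourhood. First I would pin down the linear model: identifying $N\Delta\cong TM$, the linearization is the linear Poisson structure dual to the cotangent Lie algebroid $(T^*M)_\pi$, which up to the fibrewise reflection $v\mapsto -v$ is the complete (tangent) lift $\pi_T$ of $\pi$ to $TM$. A coordinate computation ($\{u^i,x^j\}=\pi^{ij}$, $\{u^i,u^j\}=u^k\partial_k\pi^{ij}$) gives the rank formula
\[
\operatorname{rank}_{(x,v)}\pi_T=2\operatorname{rank}_x\pi+\operatorname{rank}\bar C_x(v),
\]
where $\bar C_x(v)\colon\ker\pi^\sharp_x\to\operatorname{coker}\pi^\sharp_x$ is the first-order transverse variation of $\pi$ at $x$ in the direction $v$. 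Since $\operatorname{coker}\pi^\sharp_x\cong\mathfrak{g}_x^*$, the tensor $\bar C_x$ is exactly the structure tensor of the transverse Lie algebra $\mathfrak{g}_x:=\ker\pi^\sharp_x$, so $\bar C_x=0$ if and only if $\mathfrak{g}_x$ is abelian.

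Now assume $\pi$ is not regular, and split into two cases. Suppose first that some point $x_0$ has non-abelian transverse Lie algebra $\mathfrak{g}:=\mathfrak{g}_{x_0}$. Because $\phi$ fixes $\Delta$ pointwise it sends $0_{x_0}\mapsto(x_0,x_0)$, so the isotropy Lie algebras there must be isomorphic. On the product side the isotropy at $(x_0,x_0)$ is $\mathfrak{g}\oplus\mathfrak{g}$, whereas the isotropy of the complete lift at $0_{x_0}$ is the Takiff (tangent) algebra $T\mathfrak{g}=\mathfrak{g}\ltimes_{\mathrm{ad}}\mathfrak{g}$ — this is the transverse-Lie-algebra analogue of ``complete lift of linear Poisson $=$ dual of the Takiff algebra'', verified in a local splitting. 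The contradiction comes from the Lie-algebraic fact that $T\mathfrak{g}\cong\mathfrak{g}\oplus\mathfrak{g}$ forces $\mathfrak{g}$ abelian, which I would establish by comparing Killing-form radicals and nilradical dimensions: the Killing form of $T\mathfrak{g}$ satisfies $B_{T\mathfrak{g}}(X+\varepsilon Y,X'+\varepsilon Y')=2B_{\mathfrak{g}}(X,X')$, and $\dim\operatorname{nil}T\mathfrak{g}=\dim\operatorname{nil}\mathfrak{g}+\dim\mathfrak{g}$ against $\dim\operatorname{nil}(\mathfrak{g}\oplus\mathfrak{g})=2\dim\operatorname{nil}\mathfrak{g}$, which already settles every non-nilpotent $\mathfrak{g}$; the remaining nilpotent case follows from the finer invariant recording the rank stratification of the bracket tensor $\Lambda^2\mathfrak{h}\to\mathfrak{h}$ (for $\mathfrak{h}=T\mathfrak{g}$ this degeneracy locus is strictly smaller than for $\mathfrak{g}\oplus\mathfrak{g}$).

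In the complementary case every transverse Lie algebra is abelian, hence $\bar C\equiv 0$ and the rank formula collapses to $\operatorname{rank}\pi_T=2\operatorname{rank}\pi$ identically. Choose a non-regular $x_0$ and set $R=\{\operatorname{rank}_x\pi:x\ \text{near}\ x_0\}$, a finite set of even integers with $|R|\ge 2$. Since $\phi$ preserves the rank function, the germ of realised rank-values must agree at the two points: near $(x_0,x_0)$ the product realises $R+R$ (at points $(y,z)$), while near $0_{x_0}$ the lift realises only $2R=\{2r:r\in R\}$. But $2R=R+R$ forces $R$ to be midpoint-closed, and a finite midpoint-closed set of integers is a singleton — taking the two closest elements, their average lies strictly between them, a contradiction. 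This contradicts $|R|\ge 2$, so $\pi$ must be regular, completing the proof.

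The hard part is the uniform Lie-algebraic input, namely the Takiff Lemma in the nilpotent case (where the coarse invariants coincide and one really must read off the bracket-tensor stratification), together with the verification that the isotropy of the complete lift is genuinely the Takiff algebra. The remaining ingredients are routine but worth stating carefully: that the set of realised leaf dimensions and the isotropy Lie algebras are bona fide Poisson-diffeomorphism invariants, so that the linearizing $\phi$ is forced to preserve them, and that the two cases above (abelian versus non-abelian transverse Lie algebra at some point) are genuinely exhaustive once one observes that an abelian-everywhere transverse structure is exactly what makes $\bar C$ vanish identically.
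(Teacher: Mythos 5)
Your route is genuinely different from the paper's, and a good part of it is sound. The reduction to Poisson invariants of the linearizing diffeomorphism is legitimate (it fixes the diagonal pointwise, so it matches $0_{x_0}$ with $(x_0,x_0)$); the identification of the linear model with the tangent lift, the rank formula $\operatorname{rank}_{(x,v)}\pi_T=2\operatorname{rank}_x\pi+\operatorname{rank}\bar C_x(v)$, and the identification of the isotropy of $\pi_T$ at $0_{x_0}$ with the Takiff algebra $T\mf{g}=\mf{g}\ltimes_{\mathrm{ad}}\mf{g}$ are all correct (each checkable in a Weinstein splitting centred at $x_0$). Your abelian case is a complete argument: after stabilizing the set of realised ranks over shrinking neighbourhoods, the linearizing map forces $R+R=2R$, and a finite midpoint-closed set of integers is a singleton, contradicting singularity of $x_0$. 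The nilradical computation is also right: the nilradical of $T\mf{g}$ is $\operatorname{nil}(\mf{g})\ltimes\varepsilon\mf{g}$, so an isomorphism $T\mf{g}\cong\mf{g}\oplus\mf{g}$ forces $\dim\operatorname{nil}(\mf{g})=\dim\mf{g}$, disposing of every non-nilpotent $\mf{g}$.

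The genuine gap is the nilpotent case of your ``Takiff Lemma,'' which is asserted rather than proved, and it is exactly the case where all the invariants you have in hand degenerate: for nilpotent $\mf{g}$ both Killing forms vanish identically, the nilradicals have equal dimension, and in fact the centres, derived algebras and the entire upper and lower central series of $T\mf{g}$ and $\mf{g}\oplus\mf{g}$ agree dimension by dimension (e.g.\ $C^k(T\mf{g})=C^k(\mf{g})\oplus\varepsilon\, C^k(\mf{g})$). So the whole weight of case 1 rests on the claim that ``the degeneracy locus of the bracket tensor is strictly smaller'' for $T\mf{g}$ than for $\mf{g}\oplus\mf{g}$ --- but that locus is never defined, no proof of the strict inclusion is given, and it is unclear what the comparison even means for a general nilpotent algebra. (For $\mf{g}=\mf{h}_3$ it can be verified by hand: the Pfaffian of the pencil of $2$-forms determined by the bracket is a perfect square $-\mu^2$ for $T\mf{h}_3$ but a nondegenerate quadric $\lambda_1\lambda_2$ for $\mf{h}_3\oplus\mf{h}_3$; this computation does not by itself generalize.) As you yourself flag, this is ``the hard part,'' so the non-abelian case remains unproved. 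Note that the paper's proof sidesteps Lie theory entirely and treats all isotropy types uniformly: it shows $\Sing_{\pi_{\mathrm{lin}}}(TM)=TM\vert_{\Sing_\pi(M)}$ while $\Sing_\Pi(M\times M)=\Sing_\pi(M)\times M\cup M\times\Sing_\pi(M)$, and then compares \emph{formal tangent spaces} of these sets at a singular point where $\mc{T}_x\Sing_\pi(M)\neq T_xM$ (such a point exists by a metric nearest-point argument); the first set has non-full formal tangent space there, the second has full one, so no diffeomorphism fixing the diagonal can match them. If you want to salvage your approach, either prove the nilpotent Takiff lemma or replace case 1 by a singular-locus comparison of this kind.
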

This shows that, in order to apply Theorem \ref{thm:introcoisotropicbisections}, one is forced to look for suitable Poisson groupoids which `compute' (sub)groups of Poisson diffeomorphisms of the base, which hopefully turn out to be linearizable. We refer to \cite{smilde2021liegroups}, where this is further investigated.

\subsection*{Acknowledgements} This work, combined with \cite{smilde2021liegroups}, is based on the author's master's thesis at Utrecht University. I want to thank my master's thesis advisor Ioan \Marcut{} for his guidance. I am also in debt to Aldo Witte for our many conversations on the linearization problem. Finally, I want to thank Rui Loja Fernandes for valuable comments on an early version on this paper and Alejandro Cabrera for providing useful references regarding multiplicative forms. 
Declarations of interest: none.

\section{Lagrangian submanifolds in Poisson geometry}

\subsection{Poisson geometry and Lagrangian submanifolds}

A \textit{Poisson structure} on a manifold $M$ is a bivector $\pi\in \Gamma(\wedge^2TM)$ satisfying $[\pi, \pi]=0$. A Poisson map $f:(M, \pi_M)\to (N, \pi_N)$ is a smooth map $f:M\to N$ so that $\pi_N$ is $f$-related to $\pi_M$, meaning that $\wedge^2 T_xf (\pi_{M, x})=\pi_{N,f(x)}$ for all $x\in M$.

When $(M, \pi)$ is a Poisson manifold, the bivector $\pi$ induces a map $\pi^\sharp: T^*M \to TM$, given by $\beta(\pi^\sharp(\alpha))=\pi(\alpha, \beta)$, for $\alpha, \beta\in T^*M$. Together with a Lie bracket on $\Omega^1(M)$ defined by the formula
\[
[\alpha, \beta]_\pi=\LL_{\pi^\sharp(\alpha)}(\beta)-\LL_{\pi^\sharp(\beta)}(\alpha)-d(\pi(\alpha, \beta)),
\]
the cotangent bundle $(T^*M, \pi^\sharp, [\cdot, \cdot]_\pi) $ becomes a Lie algebroid, called the \textit{cotangent algebroid} associated to $(M, \pi)$. 

The leaves of the cotangent algebroid $(T^*M, \pi^\sharp, [\cdot, \cdot]_\pi)$ are called the \textit{symplectic leaves} of $(M, \pi)$, for the reason that every leaf $S$ carries a symplectic structure induced by $\pi$:
\[
\omega_S(X, Y)=-\pi(\alpha, \beta), 
\]
with $X, Y\in TS=\operatorname{im}\pi^\sharp\vert_S$ and $\alpha, \beta\in T^*M$ satisfying $\pi^\sharp(\alpha)=X, \pi^\sharp(\beta)=Y$.

If $C$ is a submanifold of $(M, \pi)$, the \textit{Poisson orthogonal} of $C$ is given by $(TC)^{\bot_{\pi}}=\pi^\sharp((TC)^\circ)$, where $(TC)^\circ\subset T^*M$ is the annihilator of $TC$. 

\begin{definition}
	A submanifold $C$ of $(M, \pi)$ is \textit{coisotropic} when $(TC)^{\bot_\pi}\subset TC$. Equivalently, $\pi(\alpha_1, \alpha_2)=0$ for all $\alpha_1, \alpha_2\in (TC)^\circ$. 
\end{definition}

\begin{definition}[\cite{vaisman1994}, Remark 7.8]\label{def:lagrangian}
	Let $(M, \pi)$ be a Poisson manifold. A submanifold $L\subset M$ is \textit{Lagrangian} when
	\[
	(TL)^{\bot_\pi}=TL\cap \operatorname{im}\pi^\sharp.\qedhere
	\]
\end{definition}

\subsection{$\AA$-Poisson geometry and $\AA$-Lagrangians}
Recall that a Lie algebroid $\AA\Rightarrow M$ is a vector bundle $\AA\to M$ together with an anchor $\rho_\AA:\AA\to TM$ and a Lie bracket on $\Gamma(\AA)$ subject to the Leibniz rule. It induces a Schouten-Nijenhuis bracket on $\Gamma(\wedge^\bullet\AA)$ in the exact same way as for the tangent bundle. The bracket is natural in the sense that if $(\Phi, \varphi):\AA\to \AA'$ is a morphism of Lie algebroids covering $\varphi:M\to M'$, and if $\vartheta_1, \vartheta_2\in \Gamma(\wedge^\bullet \AA)$ are $\Phi$-related to $\vartheta_1', \vartheta_2'\in \Gamma(\wedge^\bullet \AA')$, then the bracket $[\vartheta_1, \vartheta_2]$ is $\Phi$-related to $[\vartheta_1', \vartheta_2']$. 

\begin{definition}
	Let $\AA\Rightarrow M$ be a Lie algebroid. An $\AA$-Poisson structure is a section $\pi_\AA\in \Gamma(\wedge^2\AA)$ such that $[\pi_\AA, \pi_\AA]=0$. In this case, $(\AA, \pi_\AA)\Rightarrow M$ is referred to as a \textit{Poisson Lie algebroid} (or just \textit{Poisson algebroid}).
\end{definition}
\begin{remark}\label{rk:exactbialgebroids}
	Any section $\pi_\AA\in \Gamma(\wedge^2\AA)$ induces a bracket on $\Gamma(\AA^*)$ by
	\[
	[ \alpha, \beta]_{\pi_\AA}=\LL_{\pi^\sharp_\AA(\alpha)}(\beta)-\LL_{\pi^\sharp_\AA(\beta)}(\alpha)-d_\AA \pi(\alpha, \beta). 
	\]
	It equips $\AA^*$ with the structure of a Lie algebroid with anchor $\rho_{\AA^*}:=\rho_\AA\circ \pi^\sharp_\AA$ if and only if $[[\pi_\AA, \pi_\AA], \vartheta]=2[\pi_\AA, [\pi_\AA, \vartheta]]=0$ for all $\vartheta\in \Gamma(\wedge^\bullet\AA)$ \cite{liuxu1996}, in which case $(\AA, \AA^*, \pi_\AA)$ is called an \textit{exact Lie bialgebroid}. It is a simultaneous generalization of coboundary Lie bialgebras and Poisson structures on $TM$.  
	
	The map $\pi^\sharp_\AA:\AA^*\to \AA$ is bracket preserving if and only if $\pi_\AA$ is $\AA$-Poisson. In this case, $(\AA, \AA^*, \pi_\AA)$ is a \textit{triangular Lie bialgebroid}, defined by Mackenzie and Xu in \cite{mackenziexu1994}. Therefore, triangular Lie bialgebroids and Poisson algebroids are the same objects. 
\end{remark}
By naturality of the Schouten-Nijenhuis bracket, an $\AA$-Poisson stucture $\pi_\AA$ induces a Poisson bivector $\pi:=\left(\wedge^2\rho\right)(\pi_\AA)$ on $M$. They are related by the following diagram.
\[
\begin{tikzcd}
	\AA^* \arrow[r, "\pi_\AA^\sharp"] & \AA \arrow[d, "\rho"]\\
	T^*M \arrow[u, "\rho^*"] \arrow[r, "\pi^\sharp"] & TM
\end{tikzcd}
\]
Explicitly, $\pi$ is given by
\[
\pi(\alpha, \beta)=\pi_\AA(\rho_\AA^*\alpha, \rho^*_\AA \beta) \quad \mbox{ for $\alpha, \beta\in T^*M$.}
\]
Accordingly, we refer to $\pi_\AA$ as a \textit{lift} of $\pi$ to $\AA$. 

\begin{definition}
	Let $\AA\Rightarrow M$ be a Lie algebroid. A \textit{symplectic structure} on $\AA$, or an $\AA$-symplectic structure, is a closed, non-degenerate two form $\omega_\AA\in \Omega^2(\AA)$. In this case, we call $(\AA, \omega_\AA)\Rightarrow M$ a \textit{symplectic Lie algebroid}.
\end{definition}
\begin{lemma}
Let $\AA\Rightarrow M$ be a Lie algebroid. Then there is a one-to-one correspondence between non-degenerate $\AA$-Poisson structures and symplectic structures on $\AA$, given by inversion. 
\end{lemma}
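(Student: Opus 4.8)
The correspondence is defined by inversion of the musical maps, so only integrability requires real work: the bijection itself is pointwise linear algebra. Given a non-degenerate $\pi_\AA\in\Gamma(\wedge^2\AA)$, non-degeneracy means the bundle map $\pi_\AA^\sharp:\AA^*\to\AA$ is an isomorphism; I set $\omega_\AA^\flat:=(\pi_\AA^\sharp)^{-1}:\AA\to\AA^*$ and let $\omega_\AA\in\Omega^2(\AA)$ be the associated form, $\omega_\AA(X,Y)=\langle\omega_\AA^\flat X,Y\rangle$, so that $\iota_X\omega_\AA=\omega_\AA^\flat X$. Skew-symmetry of $\pi_\AA$ forces $\omega_\AA$ to be skew, and it is non-degenerate by construction; the reverse assignment is identical, and the two are mutually inverse since inversion of bundle isomorphisms is involutive and smooth. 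Thus inversion is a bijection between non-degenerate sections of $\wedge^2\AA$ and of $\wedge^2\AA^*$, and it remains only to match the two integrability conditions $[\pi_\AA,\pi_\AA]=0$ and $d_\AA\omega_\AA=0$.

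To match them I would route through Remark~\ref{rk:exactbialgebroids}, which says that $\pi_\AA$ is $\AA$-Poisson if and only if $\pi_\AA^\sharp$ is bracket preserving, i.e. $\pi_\AA^\sharp[\alpha,\beta]_{\pi_\AA}=[\pi_\AA^\sharp\alpha,\pi_\AA^\sharp\beta]$ for all $\alpha,\beta\in\Gamma(\AA^*)$. Since $\pi_\AA^\sharp$ is an isomorphism with inverse $\omega_\AA^\flat$, applying $\omega_\AA^\flat$ to both sides shows this is equivalent to $\omega_\AA^\flat$ preserving brackets in the opposite direction. Writing every section of $\AA^*$ uniquely as $\iota_X\omega_\AA$ for $X\in\Gamma(\AA)$, the condition becomes
\[
[\iota_X\omega_\AA,\iota_Y\omega_\AA]_{\pi_\AA}=\iota_{[X,Y]}\omega_\AA\qquad\text{for all }X,Y\in\Gamma(\AA).
\]
So the whole lemma reduces to showing that this identity holds for all $X,Y$ precisely when $d_\AA\omega_\AA=0$.

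For this I would compute the defect directly. The Lie algebroid de Rham complex $(\Omega^\bullet(\AA),d_\AA)$, with interior products $\iota_X$ and Lie derivatives $\LL_X=[\iota_X,d_\AA]$, obeys the same Cartan identities as on $TM$ — in particular $d_\AA^2=0$, the magic formula, and $[\LL_X,\iota_Y]=\iota_{[X,Y]}$ — since these are formal consequences of the anchor, bracket and Leibniz rule. Substituting $\pi_\AA^\sharp(\iota_X\omega_\AA)=X$ and $\pi_\AA(\iota_X\omega_\AA,\iota_Y\omega_\AA)=-\omega_\AA(X,Y)$ into the defining formula for $[\cdot,\cdot]_{\pi_\AA}$ and applying these identities (the terms $d_\AA(\omega_\AA(X,Y))$ and the Lie-derivative contributions cancel), the same manipulation as in symplectic geometry gives
\[
[\iota_X\omega_\AA,\iota_Y\omega_\AA]_{\pi_\AA}=\iota_{[X,Y]}\omega_\AA+\iota_Y\iota_X\,d_\AA\omega_\AA.
\]
Hence the bracket-preservation identity holds for all $X,Y$ if and only if $\iota_Y\iota_X\,d_\AA\omega_\AA=0$ for all $X,Y$, which by non-degeneracy of the pairing means $d_\AA\omega_\AA=0$. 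Together with Remark~\ref{rk:exactbialgebroids} this yields $[\pi_\AA,\pi_\AA]=0\iff d_\AA\omega_\AA=0$, so inversion restricts to the claimed bijection.

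The main obstacle is the displayed Cartan-calculus identity: one must verify that the auxiliary terms cancel to leave exactly $\iota_Y\iota_X\,d_\AA\omega_\AA$, and confirm that the magic formula and the commutation $[\LL_X,\iota_Y]=\iota_{[X,Y]}$ hold verbatim for $d_\AA$ — which they do. One subtlety worth flagging is that the function-level Jacobi-identity argument familiar from symplectic geometry does \emph{not} transfer, since the exact forms $d_\AA f$ ($f\in C^\infty(M)$) need not span $\AA^*$ when the anchor fails to be injective; the section-level identity above is what is required, and it is insensitive to this. The precise signs depend on the chosen normalizations of $\pi_\AA^\sharp$ and $\iota$, but the equivalence $[\pi_\AA,\pi_\AA]=0\iff d_\AA\omega_\AA=0$ does not.
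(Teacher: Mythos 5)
The paper states this lemma without proof (it is treated as a standard fact), so there is no argument of the paper's to measure yours against; your proposal must stand on its own, and it does. The pointwise part (inversion of the musical isomorphisms, transfer of skew-symmetry and non-degeneracy) is fine, and the reduction of the integrability equivalence to Remark~\ref{rk:exactbialgebroids} plus the displayed identity is correct. I verified the key identity
\[
[\iota_X\omega_\AA,\iota_Y\omega_\AA]_{\pi_\AA}=\iota_{[X,Y]}\omega_\AA+\iota_Y\iota_X\,d_\AA\omega_\AA
\]
against the paper's conventions, namely $\pi_\AA(\alpha,\beta)=\beta(\pi_\AA^\sharp\alpha)$, the bracket $[\alpha,\beta]_{\pi_\AA}=\LL_{\pi_\AA^\sharp\alpha}(\beta)-\LL_{\pi_\AA^\sharp\beta}(\alpha)-d_\AA\pi_\AA(\alpha,\beta)$, and $\omega_\AA^\flat=(\pi_\AA^\sharp)^{-1}$: evaluating both sides on a third section $Z$ and expanding with the Koszul formula for $d_\AA\omega_\AA$, the two sides agree term by term, and since the computation uses only the Lie algebroid Cartan calculus it holds verbatim for $d_\AA$. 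Your sign bookkeeping is also consistent with the rest of the paper: $\pi_\AA(\iota_X\omega_\AA,\iota_Y\omega_\AA)=-\omega_\AA(X,Y)$ is exactly the convention under which the paper's leafwise formula $\omega_S(X,Y)=-\pi(\alpha,\beta)$ and Theorem~\ref{thm:canonicalform} come out right. The warning that the function-level argument via $d_\AA f$ does not transfer (exact one-forms need not span $\AA^*$ when the anchor is not injective) is a genuine subtlety, and your section-level argument correctly sidesteps it. One cosmetic point: to conclude $d_\AA\omega_\AA=0$ from $\iota_Y\iota_X d_\AA\omega_\AA=0$ for all sections $X,Y$ you do not need non-degeneracy of any pairing, only that sections span each fibre of $\AA$.
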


\subsubsection{Lie algebroid transversals}\label{subsubsec:liealgebroidtransversals} We introduce the following class of submanifolds.
 
\begin{definition}
	Let $\AA\Rightarrow M$ be a Lie algebroid. A submanifold $L\subset M$ is called an \textit{$\AA$-transversal} when it is transverse to the anchor:
	\[
	T_xL+ \rho_\AA(\AA_x)=T_xM \quad \mbox{ for all $x\in L$.}\qedhere
	\]
\end{definition}
For an $\AA$-transversal $i:L\hookrightarrow M$, the pullback $i^!\AA\Rightarrow L$ always exists, and can be regarded as a subalgebroid of $\AA\Rightarrow M$. As a vector bundle, it is given by $i^!\AA=\rho^{-1}(TL)$. 

We define the \textit{$\AA$-normal bundle of $L$} to be the quotient $N_\AA L= \AA\vert_L / i^!\AA$. It is canonically isomorphic to $NL$, the normal bundle of $L$ in $M$.
\[
\begin{tikzcd}
	0 \arrow[r] & i^!\AA \arrow[d] \arrow[r] & \AA\vert_L \arrow[d] \arrow[r]& N_\AA L \arrow[d, dashed] \arrow[r] & 0\\
	0 \arrow[r] & TL \arrow[r] & TM\vert_L \arrow[r] & NL \arrow[r] & 0.
\end{tikzcd}
\]
More precisely, the anchors $i^!\AA\to TL$ and $\AA\to TM$ induce a map $N_\AA L\to NL$ which is injective because $i^!\AA$ is the pullback, and surjective because $L$ is transverse to $\AA$. 

\begin{definition}
	Let $(\AA, \pi_\AA)\Rightarrow M$ be a Poisson Lie algebroid. An $\AA$-transversal $i:L\hookrightarrow M$ is \textit{coisotropic for $(\AA, \pi_\AA)$} when $i^!\AA$ is a coisotropic subbundle of $\AA$:
	\[
	\left( i^!\AA\right)^{\bot_{\pi_\AA}}:=\pi_\AA^\sharp\left( \left(i^!\AA\right)^\circ\right)\subset i^!\AA.
	\]
	It is \textit{Lagrangian} for $(\AA, \pi_\AA)$ when $\left(i^!\AA\right)^{\bot_{\pi_\AA}}=\left(i^!\AA\right)\cap \operatorname{im}\pi_\AA^\sharp$.
\end{definition} 
When $\AA=TM$, every submanifold is automatically transverse, and coisotropic (Lagrangian) transversals of $(TM, \pi)$ are the same as coisotropic (Lagrangian) submanifolds of $(M, \pi)$. 
\begin{lemma}\label{lem:coisotropictransversal}
	Let $(\AA, \pi_\AA)\Rightarrow M$ be a Poisson Lie algebroid with underlying Poisson manifold $(M, \pi)$. An $\AA$-tranversal $i:L\hookrightarrow M$ is coisotropic for $(\AA, \pi_\AA)$ if and only if $L$ is a coisotropic submanifold of $(M, \pi)$. 
	
	When $L$ is $\AA$-Lagrangian, then it is a Lagrangian submanifold of $(M, \pi)$. 
\end{lemma}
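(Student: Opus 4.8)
The plan is to reduce both statements to two elementary facts relating the $\AA$-level and $M$-level data. The first is the pointwise linear-algebra identity
\[
\left(i^!\AA\right)^\circ=\rho^*\left((TL)^\circ\right),
\]
for the annihilator of the pullback bundle $i^!\AA=\rho^{-1}(TL)$; the second is the factorization $\pi^\sharp=\rho\circ\pi_\AA^\sharp\circ\rho^*$ read off the commutative diagram relating $\pi_\AA$ and $\pi$. The annihilator identity is the general fact that $(f^{-1}(V))^\circ=f^*(V^\circ)$ for a linear map $f$ and subspace $V$; I would check the nontrivial inclusion by noting that any $\phi$ annihilating $\rho^{-1}(TL)$ annihilates $\ker\rho$, hence descends to $\operatorname{im}\rho$, vanishes on $TL\cap\operatorname{im}\rho$, and extends to an element of $(TL)^\circ$ pulling back to $\phi$. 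Combining the two facts yields the master relation $\rho\bigl((i^!\AA)^{\bot_{\pi_\AA}}\bigr)=(TL)^{\bot_\pi}$, since $(i^!\AA)^{\bot_{\pi_\AA}}=\pi_\AA^\sharp(\rho^*((TL)^\circ))$ and applying $\rho$ turns $\rho\circ\pi_\AA^\sharp\circ\rho^*$ into $\pi^\sharp$.

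For the coisotropic equivalence, I would use $\rho(i^!\AA)=TL\cap\operatorname{im}\rho\subseteq TL$. If $L$ is $\AA$-coisotropic, applying $\rho$ to $(i^!\AA)^{\bot_{\pi_\AA}}\subseteq i^!\AA$ and invoking the master relation gives $(TL)^{\bot_\pi}\subseteq TL$, so $L$ is coisotropic in $(M,\pi)$. Conversely, any vector in $(i^!\AA)^{\bot_{\pi_\AA}}$ has the form $v=\pi_\AA^\sharp(\rho^*\xi)$ with $\xi\in(TL)^\circ$; its anchor $\rho(v)=\pi^\sharp(\xi)\in(TL)^{\bot_\pi}\subseteq TL$ by coisotropy of $L$, so $v\in\rho^{-1}(TL)=i^!\AA$, proving $\AA$-coisotropy.

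For the Lagrangian implication, I first note that $\AA$-Lagrangian implies $\AA$-coisotropic, so the previous part already gives $L$ coisotropic, and hence the inclusion $(TL)^{\bot_\pi}\subseteq TL\cap\operatorname{im}\pi^\sharp$. The content is the reverse inclusion. Given $w=\pi^\sharp(\eta)\in TL\cap\operatorname{im}\pi^\sharp$, I would lift it to $a:=\pi_\AA^\sharp(\rho^*\eta)\in\operatorname{im}\pi_\AA^\sharp$, whose anchor is $\rho(a)=\pi^\sharp(\eta)=w\in TL$, so that $a\in i^!\AA\cap\operatorname{im}\pi_\AA^\sharp=(i^!\AA)^{\bot_{\pi_\AA}}$ by the $\AA$-Lagrangian equality. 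Writing $a=\pi_\AA^\sharp(\rho^*\xi)$ with $\xi\in(TL)^\circ$ and applying $\rho$ once more recovers $w=\pi^\sharp(\xi)\in(TL)^{\bot_\pi}$.

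The main obstacle is exactly this reverse inclusion: unlike the coisotropic direction, it cannot be obtained by merely pushing an $\AA$-level inclusion forward through $\rho$, but requires lifting a vector on $M$ to $\AA$, applying the $\AA$-Lagrangian equality there, and pushing the result back down. The bookkeeping relies on the annihilator identity holding exactly (so that $(i^!\AA)^\circ$ equals, and not merely contains, $\rho^*((TL)^\circ)$), which in turn uses $\AA$-transversality of $L$ to guarantee that $i^!\AA=\rho^{-1}(TL)$ is a genuine subbundle.
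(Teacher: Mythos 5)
Your proposal is correct and follows essentially the same route as the paper's proof: both rest on the annihilator identity $(i^!\AA)^\circ=\rho^*\bigl((TL)^\circ\bigr)$ (which the paper phrases via the isomorphism $N_\AA L\cong NL$), the factorization $\pi^\sharp=\rho\circ\pi_\AA^\sharp\circ\rho^*$, and the equality $i^!\AA=\rho^{-1}(TL)$. Your explicit lifting step for the inclusion $TL\cap\operatorname{im}\pi^\sharp\subseteq(TL)^{\bot_\pi}$ is precisely what the paper compresses into its final chain of equalities when it remarks that it "used again explicitly that $\rho^{-1}(TL)=i^!\AA$."
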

\begin{remark}
	The converse of the Lagrangian statement is false in general, even when $\pi_\AA$ is non-degenerate. For a concrete counterexample, consider a transitive symplectic Lie algebroid $(\AA, \omega)\Rightarrow M$ inducing a Poisson structure $\pi$ on $M$. The diagonal $\Delta$ is Lagrangian in $(M\times M, \lar{\pi}-\rar{\pi})$, but the Lie algebroid $\Delta^!(\AA\times \AA)$ can only be Lagrangian in $(\AA\times \AA, \rar{\omega}-\lar{\omega})$ when $\operatorname{rk}\AA=\dim M$ for dimensional reasons. 
\end{remark}
\begin{proof}[Proof of Lemma \ref{lem:coisotropictransversal}]
	As discussed above, the anchor of $\AA$ induces an isomorphism $\rho:N_\AA L \to NL$, which in turn gives an isomorphism $\rho^*:(TL)^\circ\to (i^!\AA)^\circ$ between the annihilators. Note that $\pi^\sharp\left((TL)^\circ\right)=\rho\circ\pi_\AA^\sharp\left( \left(i^!\AA\right)^\circ\right)$, so $\pi_\AA^\sharp\left( \left(i^!\AA\right)^\circ \right) \subseteq \rho^{-1}(TL)=i^!\AA$ if and only if $\pi^\sharp\left((TL)^\circ\right)\subseteq TL$. The conclusion follows.
	
	Suppose that $i^!\AA$ is Lagrangian for $\pi_\AA$. Then
	\[
	\pi^\sharp\left( (TL)^\circ\right)=\rho\circ\pi_\AA^\sharp \left(\left(i^!\AA\right)^\circ \right)=\rho\left( \left(i^!\AA\right)\cap \operatorname{im}\pi^\sharp_\AA\right)=TL\cap \operatorname{im}\pi^\sharp,
	\]
	which means that $L$ is Lagrangian for $(M, \pi)$. In the last step, we used again explicitly that $\rho^{-1}(TL)=i^!\AA$.  
	\end{proof}

\subsection{Poisson Lie algebroids of cosymplectic type} We introduce cosymplectic structures on Lie algebroids for two reasons. First, they naturally show up in the proof of Theorem \ref{thm:cosymplecticpairgroupoidlinearizable} (even in the case of $\AA=TM$), and second, they provide interesting examples of Poisson manifolds. 
\begin{definition}\label{def:cosymplecticstructure}
	A cosymplectic structure of type $k$, or $k$-cosymplectic structure, on a Lie algebroid $\AA\Rightarrow M$ of $\operatorname{rank}\AA=2n+k$ consists of closed one-forms $\alpha_1, \dots, \alpha_k\in \Omega^1(\AA)$, called the \textit{defining one-forms}, and a closed two-form $\omega\in \Omega^2(\AA)$ of constant rank $2n$ such that $\omega^n\wedge\alpha_1\wedge\dots\wedge\alpha_k$ is nowhere vanishing.
\end{definition}
The non-vanishing condition is equivalent to the cosymplectic flat map 
\[
\flat: \AA\to \AA^*, \quad v\mapsto \iota_v\omega+\sum_{i=1}^k \alpha_i(v)\alpha_i
\]
begin an isomorphism. 
\begin{definition}
	Let $\AA\Rightarrow M$ be a Lie algebroid with cosymplectic structure $(\alpha_1, \dots, \alpha_k, \omega)$. The $i$-th Reeb section $R_i\in \Gamma(\AA)$ is given by $R_i=\flat^{-1}(\alpha_i)$. 
\end{definition}
A $k$-cosymplectic structure $(\alpha_1, \dots, \alpha_k, \omega)$ on $\AA\Rightarrow M$ induces a $(k-1)$-cosymplectic structure $(\alpha_1, \dots, \alpha_{k-1},\omega)$ on the Lie algebroid $\ker \alpha_k$. In particular, $\omega$ restricts to a symplectic structure on the $\AA$-foliation $\FF=\cap_{i=1}^k \ker \alpha_i$. The underlying $\AA$-Poisson structure $\pi_\AA$ is determined by requiring that the following diagram commutes.
\[
\begin{tikzcd}
	\FF^* & \FF \arrow[l, "\omega^\flat"'] \arrow[d] \\
	\AA^* \arrow[u] \arrow[r, "\pi^\sharp_\AA"] & \AA.
\end{tikzcd}
\]
\begin{definition}
	Let $(\AA, \pi_\AA)\Rightarrow M$ be a Poisson algebroid. We call $\pi_\AA$ of \textit{$\AA$-cosymplectic type} if there exists a cosymplectic structure on $\AA$ inducing $\pi_\AA$.
\end{definition}

We are interested in the following Lagrangian submanifolds of cosymplectic Lie algebroids. 

\begin{definition}
	Let $(\AA, \alpha_1, \dots, \alpha_k, \omega)\Rightarrow M$ be a cosymplectic Lie algebroid. An $\AA$-Lagrangian $i:L\hookrightarrow M$ for $(\AA, \pi_\AA)$ is called \textit{minimal} if all the forms $i^*\alpha_1,\dots, i^*\alpha_k$ and $i^*\omega$ vanish in $\Omega^\bullet(i^!\AA)$. 
\end{definition}
The pullback $i^*:\Omega^\bullet(\AA)\to \Omega^\bullet(i^!\AA)$ makes sense because $i$ is transverse to $\AA$. In particular, a minimal Lagrangian transversal is a Lagrangian submanifold of $(M, \pi)$ by Lemma \ref{lem:coisotropictransversal}. 
\begin{remark}
	In the case of $\AA=TM$, minimal Lagrangian submanifolds correspond to Lagrangian submanifolds tangent to the symplectic leaves. In some sense, they are the Lagrangian submanifolds of minimal dimension, hence the name. 
\end{remark}

\section{Pullback algebroids over vector bundles}

Let $p_E:E\to M$ be a vector bundle. The geometry of $TE$ is already very interesting: it is a typical example of a double vector bundle, or even a VB-algebroid, and, varying $E$, there are various canonical isomorphisms involved. This rich structure is used to prolong various geometric objects over vector bundles. For instance, when $E$ is a Lie algebroid, $TE$ can be made into a double Lie algebroid. A fairly complete account of this material can be found in \cite{mackenzie2005}, Chapter 9 and onwards.

In this section, we replace the tangent bundle $TE\Rightarrow E$ of a vector bundle $p_E:E\to M$ by the pullback $p_E^!\AA\Rightarrow E$ along $p_E$ of any Lie algebroid $\AA\Rightarrow M$. It was already observed in \cite{leonmarreromartinez2005} that this object carries a rich structure, analogous to the tangent bundle $TE$. Specifically, in \cite{leonmarreromartinez2005}, the canonical symplectic form on $p_{\AA^*}^!\AA$, as well as a canonical involution on $p_{\AA}^!\AA$ are introduced and studied.

Our approach builds further upon \cite{leonmarreromartinez2005}. First of all, our treatment is entirely coordinate-free. Secondly, we establish that $p_\BB^!\AA$ is a double Lie algebroid, whenever $\BB\Rightarrow M$ is a Lie algebroid anchored to $\AA\Rightarrow M$. This leads to interesting applications later in the paper. 

We assume familiarity with several higher structures in differential geometry, such as double vector bundles, VB-groupoids and -algebroids, and related objects (see for instance \cite{bursztyncabrerahoyo2016, mackenzie2005}). The definitions are recalled in Appendix \ref{app:multiplicativeforms}.

\subsection{The pullback algebroid over a vector bundle}
Given a Lie algebroid $\AA\Rightarrow M$ and a vector bundle $p_E:E\to M$, we consider the pullback Lie algebroid of $\AA$ along $p_E$, whose fibre over a point $e\in E$ can be defined explicitly as
\begin{equation}\label{eq:explicitfibre}
	(p_E^!\AA)_e=\{(v, X)\in \AA_{p_E(e)}\times T_eE:\rho_\AA(v)=T_ep_E(X)\}.
\end{equation}
It fits into the square
\[
\begin{tikzcd}
	p_E^!\AA \arrow[r, "T_\AA p_E"] \arrow[d, "p_E^!\rho_\AA"'] & \AA \arrow[d, "\rho_\AA"] \\
	TE \arrow[r, "Tp_E"] & TM
\end{tikzcd}
\]
where $T_\AA p_E$ is the projection onto the first factor in the fibre description above, and the anchor is $p_E^!\rho_\AA$ is the projection onto the second. It inherits a Lie algebroid structure uniquely determined by requiring that $p_E^!\rho_\AA$ and $T_\AA p_E$ are morphisms of Lie algebroids.
We discuss some functorial properties of this construction.
\begin{itemize}[noitemsep, topsep=0em]
	\item If $p_E:E\to M$ is a vector bundle and $\varphi:\AA\to \BB$ a Lie algebroid morphism covering the identity on $M$, then it induces a Lie algebroid morphism $p^!_E\varphi: p_E^!\AA\to p^!_E\BB$ by $p^!_E\varphi(v, X)=(\varphi(v), X)$. If $\varphi=\rho_\AA:\AA\to TM$ is the anchor, then the map $p_E^!\rho_\AA$ coincides with the one above, after identifying $TE\cong p_E^!TM$.
	\item Given another vector bundle $p_F:F\to M$ and a (not necessarily linear) map $\varphi:F\to E$ satisfying $p_F=p_E\circ \varphi$, there is an induced Lie algebroid morphism $T_\AA\varphi:p_F^!\AA\to p_E^!\AA$ covering $\varphi:F\to E$. Explicitly, on an element $(v, X)\in (p_F^!\AA)_f$ it is given by $(v, T_f\varphi(X))\in (p_E^!\AA)_{\varphi(f)}$, with $f\in F$.
\end{itemize}
A particular, but important case is when $\varphi=m_\lambda:E\to E$ is the scalar multiplication by $\lambda\in \RR$. It induces a map $T_\AA m_\lambda:p_E^!\AA\to p_E^!\AA$ that preserves the fibres of $T_\AA p_E:p_E^!\AA\to \AA$, giving it the structure of a vector bundle. Observe that $p^!_E\AA\to \AA$ is canonically isomorphic to $\rho_\AA^*(T\AA\to TM)$ as a vector bundle over $\AA$.

\begin{proposition}
	With the structures described above, the square
	\[
	\begin{tikzcd}
		p^!_E\AA \arrow[r, "T_\AA p_E"] \arrow[d, Rightarrow] & \AA \arrow[d, Rightarrow] \\ 
		E \arrow[r] & M
	\end{tikzcd}
	\]
	becomes a VB-algebroid, whose core is given by $\ker T_\AA p_E\vert_M\cong \ker Tp_E\vert_M\cong E$. 
\end{proposition}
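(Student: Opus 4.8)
The plan is to verify the proposition in two stages: first exhibit the double vector bundle structure on $(p_E^!\AA;\AA,E;M)$, and then check that the pullback Lie algebroid structure on $p_E^!\AA\Rightarrow E$ is linear over the horizontal bundle $T_\AA p_E:p_E^!\AA\to\AA$, so that the square is indeed a VB-algebroid.

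For the double vector bundle structure I would pin down the two vector bundle structures on $p_E^!\AA$ and show they are compatible. The vertical structure $p_E^!\AA\to E$ is the one underlying the pullback algebroid, with fibrewise scalar multiplication $(v,X)\mapsto(\mu v,\mu X)$ in $\AA_{p(e)}$ and in $T_eE$; the horizontal structure $p_E^!\AA\to\AA$ has scalar multiplication $T_\AA m_\lambda$, which on $(v,X)\in(p_E^!\AA)_e$ returns $(v,T_em_\lambda X)$, where $Tm_\lambda$ is precisely the fibrewise scalar multiplication of the tangent double vector bundle $(TE;TM,E;M)$. The cleanest route is then to invoke the homogeneity criterion: two vector bundle structures constitute a double vector bundle exactly when their scalar multiplications commute. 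Since $Tm_\lambda$ commutes with the tangent scalar multiplication on $TE\to E$ (being a fibrewise linear map $T_eE\to T_{\lambda e}E$), the two scalar multiplications on $p_E^!\AA$ commute, and compatibility of the additions and zero sections follows in the same way. Conceptually, the whole square is the pullback of the tangent double vector bundle $TE$ along the anchor $\rho_\AA:\AA\to TM$ (a morphism over $\mathrm{id}_M$), which carries the core over unchanged: over $0_x\in M\subset E$ one has $\ker T_\AA p_E=\{(0,X):X\in\ker T_{0_x}p_E\}\cong\ker Tp_E|_{0_x}\cong E_x$, giving the stated core $E$.

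For the VB-algebroid condition, the key observation is that the required linearity is already encoded in the functoriality established just before the proposition. Taking $F=E$ and $\varphi=m_\lambda$ in the second functorial property, the induced map $T_\AA m_\lambda:p_E^!\AA\to p_E^!\AA$ is a morphism of Lie algebroids (over $p_E^!\AA\Rightarrow E$) covering $m_\lambda:E\to E$, for every $\lambda\in\RR$. Thus the scalar multiplication of the horizontal bundle $T_\AA p_E$ acts by Lie algebroid morphisms on the vertical algebroid, which is exactly the homogeneity characterisation of a VB-algebroid (the relevant definitions are recalled in Appendix \ref{app:multiplicativeforms}). Equivalently, I would note that the anchor $p_E^!\rho_\AA:p_E^!\AA\to TE$, being the projection $(v,X)\mapsto X$, is a morphism of double vector bundles into $(TE;TM,E;M)$ covering $\rho_\AA$, and that the bracket of linear (resp.\ core) sections is again linear (resp.\ core), with the bracket of two core sections vanishing.

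The main obstacle I anticipate is not any single computation but making the passage from ``$T_\AA m_\lambda$ is an algebroid morphism for all $\lambda$'' to the chosen definition of VB-algebroid fully rigorous: if one works from the bracket-linearity definition rather than the homogeneity one, I would need to characterise the linear and core sections of $p_E^!\AA\to E$ explicitly and verify the three bracket conditions by hand. This is routine given the concrete fibre description \eqref{eq:explicitfibre} --- linear sections correspond to pairs consisting of a section of $\AA$ and a linear vector field on $E$ lying over it under $\rho_\AA$, and core sections to sections of $E$ --- but it requires care to match conventions, whereas invoking the commuting-homogeneity criterion sidesteps this bookkeeping entirely.
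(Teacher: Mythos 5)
Your proposal is correct and follows essentially the same route as the paper: both establish the double vector bundle structure via the commuting of the two scalar multiplications, and both obtain the VB-algebroid property by observing that the structure maps $T_\AA p_E$ and $T_\AA m_\lambda$ are Lie algebroid morphisms (the latter via the functoriality of $T_\AA\varphi$, matching the appendix's definition of VB-algebroid). Your additional explicit computation of the core and your remarks on the alternative bracket-linearity characterisation are consistent elaborations of what the paper leaves implicit.
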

\begin{proof}
	It is clear that the two scalar multiplications on $p^!_E\AA$ commute, so that it becomes a double vector bundle. Moreover, the structure maps $T_\AA p_E$ and $T_\AA m_\lambda$ are Lie algebroid morphisms for the given Lie algebroid structures, making it into a VB-algebroid. 
\end{proof}

\begin{notation*}
	We need to distinguish between the structure maps of the horizontal and vertical bundle structures on $p^!_E\AA$.
	\begin{itemize}[noitemsep, topsep=0em] 
		\item Scalar multiplication on the vertical vector bundle $p^!_E\AA\to E$ is denoted by $\lambda\cdot V$ or just $\lambda V$ for $\lambda\in \RR$ and $V\in p_E^!\AA$, and similarly, addition is indicated by the usual $+$ symbol. By $0_e$ we mean the zero in the fibre above $e\in E$.
		\item The horizontal scalar multiplication is denoted by $T_\AA m_\lambda$, as introduced above. Addition is defined by $T_\AA(+):p_{E\oplus E}^!\AA\to p_E^!\AA$. The zero above $v\in \AA$ is $T_\AA 0(v)$.
		\item Finally, the inclusion into $p^!_E\AA$ of a core-element $e\in \ker Tp_E\vert_M$ is denoted by $\overline{e}$.
	\end{itemize}
\end{notation*}

\begin{remark} In the case that $\AA=TM$, the usual VB-algebroid structure on $TE$ is recovered. It can be very useful to think of $p_E^!\AA$ as an $\AA$-version of the tangent of a vector bundle. In fact, all the canonical isomorphism involving (duals of) $TE$ described in Chapter 9 of \cite{mackenzie2005} carry over to the pullback algebroids $p_E^!\AA$. However, for length reasons, we only consider those appearing in our applications. 
\end{remark}

\subsection{Generators of sections}
There are two ways to obtain a section of $T_\AA p_E:p_E^!\AA\to \AA$ from a section $e\in \Gamma(E)$.
\begin{itemize}[noitemsep, topsep=0em]
	\item The (horizontal) \textit{core lift} of $e$ is the section 
	\begin{equation}\label{eq:corelift}
		\widehat{e}(v)=T_\AA 0(v)+\overline{e(p_\AA(v))}.
	\end{equation}
	Explicitly, in terms of the fibre description in Equation (\ref{eq:explicitfibre}) and the canonical splitting $TE\vert_M=TM\oplus E$, we have $\widehat{e}(v)=(v, \rho_\AA(v)+e(m))\in \AA_{m}\times T_{0_m}E$, with $m=p_\AA(v)$.
	\item The $\AA$-tangent $T_\AA e$ of $e\in \Gamma(E)$ is a section of $T_\AA p_E$, which is explicitly given by $T_\AA e(v)=(v, T_me(\rho_\AA(v)))$, with $v\in \AA_m$.  
\end{itemize} 
\begin{lemma}\label{lem:horizontalsections}
	The space of sections of $p^!_E\AA\to \AA$ is generated, as a $C^\infty(\AA)$-module, by sections of the form $T_\AA e$ and $\widehat{e}$ for $e\in \Gamma(E)$. 
\end{lemma}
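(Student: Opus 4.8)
The statement is an instance of the standard principle that the sections of a double vector bundle, viewed as a module over the functions on one of its side bundles, are generated by the core sections together with enough linear sections. Here the relevant side is $\AA$, the core is $E$, the core sections are exactly the core lifts $\widehat{e}$, and the $\AA$-tangents $T_\AA e$ play the role of linear sections. The plan is to organize the argument around the \emph{vertical projection} $q\colon p_E^!\AA\to E$, $(v,X)\mapsto e$ where $X\in T_eE$, which is the bundle projection of the vertical structure $p_E^!\AA\to E$. Being a structure map of the double vector bundle, $q$ is a morphism of the \emph{horizontal} bundle $T_\AA p_E\colon p_E^!\AA\to\AA$ onto $E\to M$ covering $p_\AA\colon\AA\to M$; hence it factors through a surjection $\bar q\colon p_E^!\AA\to p_\AA^*E$ of vector bundles over $\AA$, whose kernel is the core subbundle $\mathcal C\cong p_\AA^*E$. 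This realizes the horizontal bundle as an extension
\[
0\longrightarrow p_\AA^*E \longrightarrow p_E^!\AA \xrightarrow{\ \bar q\ } p_\AA^*E\longrightarrow 0
\]
of vector bundles over $\AA$, and correspondingly of $C^\infty(\AA)$-modules of sections.

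The next step is to read off, from the explicit formulas in Equation~\eqref{eq:corelift} and the definition of $T_\AA e$, how the two generating families sit in this sequence. A core lift takes values $\widehat e(v)\in T_{0_m}E$, so $q\circ\widehat e=0$ and $\widehat e$ is a section of $\ker\bar q=\mathcal C$; a direct check using the horizontal scalar multiplication gives $\widehat{fe}=(p_\AA^*f)\,\widehat e$, so $\widehat{\,\cdot\,}$ is $C^\infty(M)$-linear and the $\widehat e$ frame $\mathcal C$ fibrewise. Consequently the core lifts generate $\Gamma(\mathcal C)=\Gamma(p_\AA^*E)$ over $C^\infty(\AA)$. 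On the other hand $T_\AA e(v)\in T_{e(m)}E$, so $\bar q\circ T_\AA e=p_\AA^*e$, the pullback section.

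Finally I would invoke the elementary fact that the sections of a pullback bundle are generated, over the functions on the total space, by pullbacks of global sections: choosing finitely many $e_1,\dots,e_N\in\Gamma(E)$ generating $\Gamma(E)$ over $C^\infty(M)$ (possible since $E$ is a direct summand of a trivial bundle), the $p_\AA^*e_k$ generate $\Gamma(p_\AA^*E)$ over $C^\infty(\AA)$. The proof then closes with a short diagram chase: given a section $s$ of the horizontal bundle, write $\bar q\circ s=\sum_k h_k\,p_\AA^*e_k$ with $h_k\in C^\infty(\AA)$ and set $s_0=\sum_k h_k\,T_\AA e_k$; then $\bar q\circ(s-s_0)=0$, so $s-s_0$ is a section of $\mathcal C$, hence a $C^\infty(\AA)$-combination of core lifts, and therefore $s$ lies in the module generated by the $T_\AA e$ and the $\widehat e$.

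The computational content is minimal: the two fibrewise identities $q\circ\widehat e=0$ and $\bar q\circ T_\AA e=p_\AA^*e$, together with the claim that the $\widehat e$ span the core, all follow routinely from the canonical splitting $TE\vert_M=TM\oplus E$. I expect the only real obstacle to be organisational. A naive approach via local frames $\{e_j\}$ of $E$ (for which $\{T_\AA e_j,\widehat{e_j}\}$ is easily seen to be a local frame of the rank-$2\operatorname{rank}E$ horizontal bundle) forces one to track the Leibniz correction term appearing in $T_\AA(fe)$ when globalizing, since $T_\AA$ is a derivation rather than a tensorial operation. Routing the globalization through the vertical projection $q$, as above, isolates that correction as a core section already in the module and sidesteps the bookkeeping entirely; this is the cleanest way I see to finish.
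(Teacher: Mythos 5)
Your argument is correct and complete. Be aware, though, that the paper offers no proof of this lemma at all: it is stated as a standard fact, the $\AA$-analogue of the corresponding statement for the tangent prolongation $TE\to TM$ in \cite{mackenzie2005}, and the identities displayed right after it (e.g. $T_\AA(fe)=(p^*f)\cdot T_\AA e+\ell_{d_\AA f}\cdot\widehat{e}$) indicate the intended, more pedestrian route: over a trivializing open set, a frame $\{e_j\}$ of $E$ gives the local frame $\{T_\AA e_j,\widehat{e_j}\}$ of the rank-$2\operatorname{rk}E$ bundle $p^!_E\AA\to\AA$, and one globalizes with a partition of unity, using exactly that Leibniz identity to absorb the correction terms as core lifts. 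Your proof replaces this bookkeeping with a global argument: the double vector bundle structure yields the exact sequence $0\to p_\AA^*E\to p^!_E\AA\xrightarrow{\bar q}p_\AA^*E\to 0$ of vector bundles over $\AA$; the computations $\bar q\circ\widehat e=0$, $\widehat{fe}=(p_\AA^*f)\,\widehat e$, and $\bar q\circ T_\AA e=p_\AA^*e$ are all correct consequences of the splitting $TE\vert_M=TM\oplus E$; and the final reduction rests on the standard facts that fibrewise-spanning sections generate the module of sections and that $\Gamma(E)$ admits a finite generating family over $C^\infty(M)$, both legitimately invoked. The two routes establish the same statement; yours is the cleaner self-contained argument and scales immediately to any double vector bundle (linear sections plus core sections generate), while the frame-based one has the side benefit of producing the explicit structure identities that the paper needs later anyway, e.g. in the bracket computations on generators.
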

Given $e_1, e_2\in \Gamma(E)$, and $f\in C^\infty(M)$, one can show that the following identities hold: 
\[
T_\AA e_1+T_\AA e_2=T_\AA(e_1+e_2), \quad T_\AA(fe_1)=(p_E^* f)\cdot T_\AA e_1+\ell_{d_\AA f}\cdot \widehat{e_1},
\]
where $\ell_{d_\AA f}$ is the linear function on $\AA$ determined by $\ell_{d_\AA f}(v)=d_\AA f(v)=\LL_v(f)$. The addition and multiplication by functions is induced from the structure functions on the bundle $p_E^!\AA\to \AA$.

In general, there is no canonical way to produce from an element $v\in \Gamma(\AA)$ a section of $p_E^!\AA\to E$ (it would involve an $\AA$-connection on $E$), but there are lifts of core sections. 
\begin{itemize}[noitemsep, topsep=0em]
	\item The \textit{vertical lift} of $e\in \Gamma(E)$ is defined by
	\[
	e^\uparrow(x)=T_\AA(+)(0_x, \overline{e(m)})
	\]
	with $x\in E_m$. Explicitly, it can be expressed as $e^\uparrow(x)=(0, e^\uparrow(x))$, where $e^\uparrow(x)$ is the vertical lift of $e$ to $TE$.
\end{itemize}
\subsubsection{Complete lifts} In the case that the vector bundle $E$ is either $\AA$ or $\AA^*$, there is a way to lift sections of $\AA$ to sections of $p_E^!\AA\to E$, called the complete lifts. To simplify the notation, we set $p:\AA\to M$ and $p_*:\AA^*\to M$ to be the projections. 

The flow of a Lie algebroid section $v\in \Gamma(\AA)$ is a path of Lie algebroid isomorphisms $(\varphi_t, \tilde{\varphi}_t):\AA\to \AA$ uniquely determined by the equations
\[
\frac{d}{dt}\varphi^*_t(w)=\varphi^*_t[v, w] \mbox{ for $w\in \Gamma(\AA)$}, \quad \varphi_0=\operatorname{Id}.
\]
It covers the flow $\tilde{\varphi}_t$ of the vector field $\rho_\AA(v)$. 

The path $T\varphi_t:T\AA\to T\AA$ is the flow of some vector field $X_v\in \Gamma(T\AA)$, called the complete lift of $v$ to $T\AA$. It satisfies $Tp\circ X_v=\rho_\AA(v)\circ p$. The \textit{complete lift} $\widetilde{v}$ of a section $v\in \Gamma(\AA)$ to $p^!\AA$ is defined by
\[
\widetilde{v}(w)=(v(p(w)), X_v(w))\in (p^!\AA)_w, \quad \mbox{ for $w\in \AA$.}
\]
When $v, w\in \Gamma(\AA)$ and $f\in C^\infty(M)$, one can verify that
\[
\widetilde{v+w}=\widetilde{v}+\widetilde{w}, \quad  \widetilde{f\cdot v}=(p^*f)\cdot \widetilde{v}+\ell_{d_\AA f}\cdot v^\uparrow, \quad (f\cdot v)^\uparrow=(p^*f)\cdot v^\uparrow.
\]
\begin{lemma}
	Let $\AA\Rightarrow M$ be a Lie algebroid. Then the complete lifts and vertical lifts generate the space of sections of $p^! \AA$ as a $C^\infty(\AA)$-module. 
\end{lemma}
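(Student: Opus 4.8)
The plan is to deduce the statement from a pointwise spanning property together with the standard fact that a $C^\infty(\AA)$-submodule of $\Gamma(p^!\AA)$ whose elements span every fibre must be all of $\Gamma(p^!\AA)$. Here the relevant bundle is the \emph{vertical} one $p^!\AA\to\AA$, whose base is the total space $E=\AA$ and of which both $\widetilde v$ and $v^\uparrow$ are sections; when $\AA=TM$ this reduces to the classical fact that complete and vertical lifts of vector fields generate the vector fields on $TM$. Setting $r=\operatorname{rk}\AA$, either the double vector bundle dimension count or the fibre description \eqref{eq:explicitfibre} shows this bundle has rank $2r$: over $w\in\AA$ the fibre is $\{(v,X):v\in\AA_{p(w)},\ X\in T_w\AA,\ \rho_\AA(v)=T_wp(X)\}$, of dimension $r+(\dim M+r)-\dim M=2r$.

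First I would establish the fibrewise claim. Fix $w\in\AA$ and put $m=p(w)$. The horizontal projection $T_\AA p$ restricts on the fibre $(p^!\AA)_w$ to $(v,X)\mapsto v$; since $T_wp\colon T_w\AA\to T_mM$ is onto, this restriction is surjective onto $\AA_m$ with kernel $\{(0,X):X\in\ker T_wp\}\cong\AA_m$, giving a short exact sequence $0\to\AA_m\to(p^!\AA)_w\xrightarrow{T_\AA p}\AA_m\to0$ compatible with the rank $2r$. Now the vertical lift satisfies $e^\uparrow(w)=(0,e(m)^\uparrow)$, so for $e$ ranging over global sections with $e(m)$ a basis of $\AA_m$ the vectors $e^\uparrow(w)$ form a basis of the kernel; and the complete lift satisfies $T_\AA p(\widetilde v(w))=v(m)$, so complete lifts of such $e$ map to a basis of the quotient $\AA_m$. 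Hence the $2r$ vectors $\{\widetilde e(w),e^\uparrow(w)\}$ span $(p^!\AA)_w$. As global sections of $\AA$ surject onto each fibre $\AA_m$, this holds at every $w$, so the values of complete and vertical lifts span every fibre.

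Finally I would globalize. Around any point one can find finitely many global sections $v_1,\dots,v_r\in\Gamma(\AA)$ restricting to a frame of $\AA$ (extend a local frame by a bump function); by the fibrewise claim and continuity their lifts $\widetilde{v_1},\dots,\widetilde{v_r},v_1^\uparrow,\dots,v_r^\uparrow$ form a local frame of $p^!\AA$. A partition of unity then writes any section of $p^!\AA$ as a $C^\infty(\AA)$-linear combination of these globally defined lifts, proving generation; the identities $\widetilde{fv}=(p^*f)\widetilde v+\ell_{d_\AA f}\,v^\uparrow$ and $(fv)^\uparrow=(p^*f)v^\uparrow$ keep all manipulations inside the module generated by the lifts. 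The one point requiring care is the bookkeeping of the two bundle structures on $p^!\AA$: one must use the \emph{horizontal} projection $T_\AA p$ (not the vertical projection defining the module) in the exact sequence and check its surjectivity on vertical fibres, as this is exactly what yields the splitting rank $r+r=2r$ and hence that complete and vertical lifts together exhaust the fibre.
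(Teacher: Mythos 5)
The paper states this lemma without proof (as it does the analogous generation statements for $T_\AA e$, $\widehat{e}$ and for the Hamiltonian lifts), so there is no argument in the paper to compare against; your proposal is judged on its own merits. Its core is correct and is the natural argument: over $w\in\AA$ with $m=p(w)$ one has the exact sequence $0\to\ker T_wp\to(p^!\AA)_w\xrightarrow{T_\AA p}\AA_m\to 0$; the vertical lifts $e^\uparrow(w)=(0,e(m)^\uparrow)$ exhaust the kernel because $u\mapsto u^\uparrow$ is an isomorphism $\AA_m\cong\ker T_wp$; and since $T_\AA p(\widetilde{v}(w))=v(m)$, complete lifts of sections whose values frame $\AA_m$ hit a basis of the quotient. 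Hence lifts of a local frame of $\AA$ over $U\subset M$ give, by continuity, a frame of $p^!\AA$ over $p^{-1}(U)$, and your identification of which projection ($T_\AA p$, not the vertical one) enters the sequence is exactly the right bookkeeping.

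Two technical points need repair, though neither is fatal. First, the ``standard fact'' you invoke at the outset --- that a $C^\infty(\AA)$-submodule of $\Gamma(p^!\AA)$ whose elements span every fibre must be all of $\Gamma(p^!\AA)$ --- is false as stated: the compactly supported sections of the trivial line bundle over $\RR$ span every fibre but form a proper submodule. The correct statement requires \emph{finitely many} sections spanning every fibre; then the induced surjective bundle morphism from a trivial bundle admits a smooth splitting (via a fibre metric), and composing a given section with the splitting produces the finitely many smooth coefficients. Second, and relatedly, your partition-of-unity step only expresses a section as a \emph{locally finite}, in general infinite, combination of lifts, which is weaker than module-theoretic generation when $M$ is non-compact. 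The standard fix: cover $M$ by finitely many opens $W_0,\dots,W_n$ (with $n=\dim M$), each a disjoint union of trivializing opens, and bump the resulting frames to finitely many global sections $u_1,\dots,u_N\in\Gamma(\AA)$ spanning every fibre of $\AA$; your fibrewise argument then shows that $\widetilde{u_1},\dots,\widetilde{u_N},u_1^\uparrow,\dots,u_N^\uparrow$ span every fibre of $p^!\AA$, and the corrected standard fact yields finite generation. For the purposes this lemma serves in the paper --- brackets and forms are determined by their values on generators, which is a local statement --- your locally finite version already suffices, but the statement as written deserves the stronger argument.
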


The story regarding $p_*^!\AA\to \AA^*$ is very similar. If $(\varphi_t, \tilde{\varphi_t})$ is the flow of $v\in\Gamma(\AA)$, then the dual $(\varphi^*_{-t}, \tilde{\varphi}_t):\AA^*\to \AA^*$ is the flow of a vector field $H_v\in\Gamma(\AA^*)$, called the \textit{Hamiltonian lift} or \textit{complete lift} satisfying $Tp_*\circ H_v=\rho_\AA(v)\circ p_*$. It can be regarded as a section of $p^!_*\AA$, also called the \textit{Hamiltonian lift}, via 
\[
H_v(\alpha)=(v, H_v(\alpha))\in (p^!_*\AA)_\alpha.
\] 
\begin{lemma}
	The space of sections of $p^!_*\AA\to \AA^*$ is generated, as a $C^\infty(\AA^*)$-module by the Hamiltonian lifts $H_v$ and vertical lifts $\alpha^\uparrow$ for $v\in \Gamma(\AA)$ and $\alpha\in \Gamma(\AA^*)$. 
\end{lemma}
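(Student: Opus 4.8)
The plan is to mirror the proof of the preceding lemma for $p^!\AA\to\AA$, by realising $p_*^!\AA\to\AA^*$ as the middle term of a canonical short exact sequence of vector bundles over $\AA^*$ whose kernel is spanned by the vertical lifts and whose quotient is spanned (through the Hamiltonian lifts) by the pullbacks of sections of $\AA$.

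First I would set up the sequence. Recall the fibre description $(p_*^!\AA)_\alpha=\{(v,X)\in\AA_{p_*(\alpha)}\times T_\alpha\AA^*:\rho_\AA(v)=T_\alpha p_*(X)\}$, and introduce the $C^\infty(\AA^*)$-linear bundle map $q:p_*^!\AA\to p_*^*\AA$ over $\operatorname{id}_{\AA^*}$ given by $(v,X)\mapsto v$; this is $T_\AA p_*$ regarded as valued in the pullback bundle $p_*^*\AA\to\AA^*$ (rather than in $\AA\to M$, which it covers via $p_*$). Since $Tp_*:T\AA^*\to TM$ is a surjective submersion, for every $v$ there is an $X$ with $Tp_*(X)=\rho_\AA(v)$, so $q$ is surjective, and its kernel consists of the pairs $(0,X)$ with $X$ vertical, i.e. $X\in T^{\mathrm{vert}}\AA^*\cong p_*^*\AA^*$. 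This yields
\[
0\longrightarrow p_*^*\AA^*\longrightarrow p_*^!\AA\xrightarrow{\ q\ } p_*^*\AA\longrightarrow 0.
\]

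Next I would identify the two families of generators with the two ends of this sequence. The vertical lift $\alpha^\uparrow(x)=(0,\alpha^\uparrow(x))$ takes values in $\ker q\cong p_*^*\AA^*$, and under this identification $\alpha^\uparrow$ is exactly the pullback section $p_*^*\alpha$; hence the vertical lifts generate $\Gamma(\ker q)$ by the standard fact that pullbacks of generating sections generate a pullback bundle (equivalently, that vertical lifts generate the vertical tangent bundle of the total space of a vector bundle). On the other side, the defining relation $Tp_*\circ H_v=\rho_\AA(v)\circ p_*$ shows that $H_v=(v,H_v)$ is a genuine section of $p_*^!\AA\to\AA^*$ and that $q(H_v)=p_*^*v$, so the Hamiltonian lifts cover the generating set $\{p_*^*v:v\in\Gamma(\AA)\}$ of $\Gamma(p_*^*\AA)$. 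Then the argument closes by a section-level diagram chase: given $\Psi\in\Gamma(p_*^!\AA\to\AA^*)$, write $q(\Psi)=\sum_i f_i\,p_*^*v_i$ with $f_i\in C^\infty(\AA^*)$ and $v_i\in\Gamma(\AA)$; by $C^\infty(\AA^*)$-linearity of $q$ the difference $\Psi-\sum_i f_i H_{v_i}$ lies in $\Gamma(\ker q)$ and so equals $\sum_j g_j\,\alpha_j^\uparrow$, giving $\Psi=\sum_i f_i H_{v_i}+\sum_j g_j\,\alpha_j^\uparrow$.

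I do not expect a genuine obstacle here, as the proof is formally identical to that of the lemma for $p^!\AA\to\AA$. The only point demanding care — more a bookkeeping check than a difficulty — is the correct formulation of $q$: one must work with the pullback-valued map $p_*^!\AA\to p_*^*\AA$ over the identity of $\AA^*$, rather than with $T_\AA p_*:p_*^!\AA\to\AA$ which covers $p_*$ and is therefore not $C^\infty(\AA^*)$-linear; and one must verify the identification $\ker q\cong p_*^*\AA^*$ with the vertical subbundle. Once $q$ is set up this way, the two generation statements are standard module-theoretic facts (a local frame together with a partition of unity), and the conclusion follows immediately.
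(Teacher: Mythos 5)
Your proof is correct. Note that the paper does not actually prove this lemma: it is stated as a standard fact (in parallel with the analogous generation statements for $p^!_E\AA\to\AA$ and $p^!\AA\to\AA$, which are also left unproved and are essentially quotable from \cite{leonmarreromartinez2005} or \cite{mackenzie2005}), so there is no authorial argument to compare against. Your short exact sequence
\[
0\longrightarrow p_*^*\AA^*\longrightarrow p_*^!\AA\xrightarrow{\ q\ } p_*^*\AA\longrightarrow 0
\]
is the standard way to fill this gap, and the point you single out — that one must use the $C^\infty(\AA^*)$-linear map $q$ valued in the pullback bundle $p_*^*\AA$ over $\AA^*$, rather than $T_\AA p_*$ itself, which covers $p_*$ and is not module-linear — is precisely the step where care is needed; the identifications $q(H_v)=p_*^*v$, $\ker q\cong p_*^*\AA^*$ with $\alpha^\uparrow\leftrightarrow p_*^*\alpha$, and the concluding diagram chase are all sound.
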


\begin{remark} The Lie algebroid brackets on $p^!\AA\Rightarrow \AA$ and $p_*^!\AA\Rightarrow \AA^*$ are uniquely determined by their values on the complete and vertical lifts, where they take the form (cf. \cite{mackenzie2005}, Example 3.4.8)
	\begin{align*}
		[\widetilde{v}, \widetilde{w}]&=\widetilde{[v, w]}, & [\widetilde{v}, w^\uparrow]&=[v, w]^\uparrow, & [v^\uparrow, w^\uparrow]&=0, 
		\\
		{[H_v, H_w]}&=H_{[v, w]}, & [H_v, \alpha^\uparrow]&=(\LL_v(\alpha))^\uparrow, & [\alpha^\uparrow, \beta^\uparrow]&=0.
	\end{align*}
\end{remark}
\subsection{The canonical symplectic form}\label{subsubsec:canonicalform}
The total space of the cotangent bundle $T^*M$ for any manifold $M$ comes with a canonical symplectic structure $\omega_\can=-d\lambda_{\can}$ that is given by the differential of the canonical one-form.

More generally, given a Lie algebroid $\AA\Rightarrow M$, the total space $\AA^*$ inherits a linear Poisson structure uniquely determined by the equations
\[
\{\ell_v, \ell_v\}=\ell_{[v, w]}, \quad \{ \ell_v, p_*^*f\}=p_*^*(\LL_{\rho(v)}(f)), \quad \{ p_*^*f, p_*^*g\}=0
\]
for $v\in \Gamma(\AA)$ and $f, g\in C^\infty(M)$. This association defines a one-to-one correspondence between Lie algebroid structures on $\AA$ and linear Poisson structures on $\AA^*$ (\cite{mackenzie2005}, Section 10.3).

When $\AA=TM$, the linear Poisson structure on $T^*M$ is precisely the one induced by $\omega_\can$. In fact, we will see that any linear Poisson structure is induced by a canonical symplectic form, that now lives on a Lie algebroid instead of the tangent bundle. 

\begin{definition}
	Let $\AA\Rightarrow M$ be a Lie algebroid. The canonical one-form $\lambda_\can \in \Omega^1(p^!_*\AA)$ is given by
	\[
	\left(\lambda_{\can}\right)_\alpha (V)=\alpha(T_\AA p_*(V))
	\]
	for $V\in (p^!_*\AA)_\alpha$ and $\alpha\in \AA^*$. 
\end{definition}
As a map, a section $\alpha:M\to \AA^*$ is always transverse to $p_*^!\AA$. The inclusion $\alpha^!p^!\AA\to p_*^!\AA$ corresponds to $T_\AA\alpha:\AA\to p_*^!\AA$. 
\begin{proposition}
	The canonical one-form $\lambda_\can\in \Omega^1(p_*^!\AA)$ is uniquely characterized by the property that
	\[
	(T_\AA\alpha)^*\lambda_\can=\alpha, \quad \mbox{for all $\alpha\in \Omega^1(\AA)$.}
	\]
\end{proposition}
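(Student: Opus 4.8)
The plan is to prove the two halves---that $\lambda_\can$ has the stated pullback property, and that it is the only such form---separately, in both cases reducing to the explicit fibrewise descriptions already at hand. For existence, recall that for a section $\alpha\in\Omega^1(\AA)=\Gamma(\AA^*)$ the morphism $T_\AA\alpha\colon\AA\to p_*^!\AA$ covers $\alpha\colon M\to\AA^*$, and by the explicit description of $T_\AA\varphi$ it is given on $v\in\AA_m$ by $T_\AA\alpha(v)=(v,T_m\alpha(\rho_\AA(v)))$. Since $T_\AA p_*$ is by definition the projection onto the first factor of the fibre, we have $T_\AA p_*\bigl(T_\AA\alpha(v)\bigr)=v$. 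Feeding this into the defining formula for $\lambda_\can$ at the point $\alpha(m)\in\AA^*$ gives
\[
\bigl((T_\AA\alpha)^*\lambda_\can\bigr)(v)=(\lambda_\can)_{\alpha(m)}\bigl(T_\AA\alpha(v)\bigr)=\alpha(m)\bigl(T_\AA p_*(T_\AA\alpha(v))\bigr)=\alpha(m)(v)=\alpha(v),
\]
so that $(T_\AA\alpha)^*\lambda_\can=\alpha$. This half is a direct unwinding of the definitions and I expect no difficulty with it.

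For uniqueness, suppose $\lambda\in\Omega^1(p_*^!\AA)$ also has the property and set $\eta=\lambda-\lambda_\can$, so that $(T_\AA\alpha)^*\eta=0$ for every section $\alpha$; the goal is to deduce $\eta=0$. I would fix a point $\alpha_0\in\AA^*_m$ and analyse the fibre $(p_*^!\AA)_{\alpha_0}$, which splits (non-canonically) into the image of $T_\AA\alpha$ for any section $\alpha$ with $\alpha(m)=\alpha_0$, together with the vertical part $\ker T_\AA p_*|_{\alpha_0}\cong\AA^*_m$ spanned by the vertical lifts. The hypothesis forces $\eta$ to annihilate the horizontal image $T_\AA\alpha(\AA_m)$. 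To reach the vertical directions I would compare two sections $\alpha,\alpha'$ through $\alpha_0$: since
\[
T_\AA\alpha(v)-T_\AA\alpha'(v)=\bigl(0,(T_m\alpha-T_m\alpha')(\rho_\AA(v))\bigr)
\]
is vertical and $\eta$ kills both terms, $\eta$ vanishes on every such difference. As $\alpha,\alpha'$ range over sections through $\alpha_0$, the maps $T_m\alpha-T_m\alpha'$ realise every element of $\Hom(T_mM,\AA^*_m)$, so these differences sweep out the whole vertical space $\AA^*_m$---\emph{provided} there is some $v\in\AA_m$ with $\rho_\AA(v)\neq 0$.

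The main obstacle is therefore exactly the locus where the anchor degenerates. At a point $m$ with $\rho_\AA|_m=0$ every $T_\AA\alpha(v)$ equals $(v,0)$, the horizontal image no longer depends on $\alpha$, and the pullback condition imposes nothing on $\eta$ along the vertical lifts; indeed for a Lie algebra bundle the condition leaves $\eta$ on the vertical directions completely free, so uniqueness genuinely requires a hypothesis here. I would resolve the generic case by a density argument: $\eta$ vanishes on the open set $U=\{m:\rho_\AA|_m\neq 0\}$ by the computation above, hence on $\overline{U}$ by smoothness of $\eta$, giving $\eta=0$ whenever $U$ is dense---in particular when $\AA$ is transitive, or more generally when the anchor is nonzero on a dense set, and automatically for $\AA=TM$ where $\rho_\AA=\id$. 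In that last case the whole statement specialises to the familiar characterisation of the Liouville form on $T^*M$ by $\sigma^*\lambda_\can=\sigma$, and I would present this as the model computation, flagging the degenerate-anchor locus as the one point where the density hypothesis (or an explicit caveat) is needed.
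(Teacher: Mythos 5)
The paper gives no proof of this proposition at all---it is stated as an immediate consequence of the definitions---so the only thing to compare on the existence side is your computation, which is exactly the intended one: since $T_\AA p_*\bigl(T_\AA\alpha(v)\bigr)=v$, the defining formula gives $\bigl((T_\AA\alpha)^*\lambda_\can\bigr)(v)=\langle\alpha(m),v\rangle$, and that half is done.

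The real content of your proposal is the uniqueness analysis, and there you have not left a gap in your own argument---you have found a genuine defect in the statement as written. Your fibrewise reasoning is correct: setting $\eta=\lambda-\lambda_\can$, the hypothesis kills $\eta$ on the graph of each $T_\AA\alpha$, the differences $T_\AA\alpha(v)-T_\AA\alpha'(v)=\bigl(0,(T_m\alpha-T_m\alpha')(\rho_\AA(v))\bigr)$ exhaust the vertical space $\ker T_\AA p_*|_{\alpha_0}\cong\AA^*_m$ precisely when $\rho_\AA|_m\neq 0$, and continuity of $\eta$ then gives $\eta=0$ whenever $\{m\in M:\rho_\AA|_m\neq 0\}$ is dense in $M$ (which covers $\AA=TM$ and, more broadly, any algebroid with generically nonvanishing anchor). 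Your caveat at the other extreme is a genuine counterexample: for a Lie algebra $\mf{g}$ (so $M$ a point and the anchor zero), the fibre of $p_*^!\mf{g}$ over $\xi\in\mf{g}^*$ is all of $\mf{g}\times T_\xi\mf{g}^*$, every $T_\AA\alpha$ has image $\mf{g}\times\{0\}$, and for any fixed nonzero $\mu\in\mf{g}$ the one-form $\eta_\xi(v,X)=\langle\mu,X\rangle$ satisfies $(T_\AA\alpha)^*\eta=0$, so $\lambda_\can+\eta$ has the characterizing property as well; note this $\eta$ is even linear for the horizontal structure, so restricting to linear forms does not rescue the claim. The same construction works for any Lie algebra bundle of positive rank. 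So the proposition's uniqueness assertion is correct only under a hypothesis of the kind you flag, and your proof is the right one under that hypothesis. Fortunately this does not propagate into the rest of the paper: the later uses (Lemma \ref{lem:canonicalformprolongation}, Corollary \ref{cor:closedoneformlagrangian}, Proposition \ref{prop:omegacanim}) invoke only the pullback identity $(T_\AA\alpha)^*\lambda_\can=\alpha$, never the uniqueness.
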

We define the \textit{canonical symplectic form} on $p_*^!\AA$ to be $\omega_\can=-d\lambda_\can\in \Omega^2(p^!_*\AA)$. The next theorem justifies the choice of sign.
\begin{theorem}[\cite{leonmarreromartinez2005}, Theorem 3.2 and Proposition 3.11]\label{thm:canonicalform}
	Let $\AA\Rightarrow M$ be a Lie algebroid. The two-form $\omega_\can\in \Omega^2(p^!_*\AA)$ is symplectic. Furthermore, it induces the linear Poisson structure on $\AA^*$ dual to $\AA$ in the sense that the following commutes:
	\[
	\begin{tikzcd}
		(p_*^!\AA)^* & p_*^!\AA \arrow[d] \arrow[l, "\omega_\can^\flat"']\\
		T^*\AA^* \arrow[u] \arrow[r, "\pi^\sharp"] & T\AA^*.
	\end{tikzcd}
	\]
	Equivalently, $\{f, g\}=-\omega_{\can}(X_f, X_g)$ for all $f, g\in C^\infty(\AA^*)$, where $X_f=\left( \omega_\can^\flat\right)^{-1}(d_\AA f)$ and  $X_g=\left( \omega_\can^\flat\right)^{-1}(d_\AA g)$ are the Hamiltonian sections of $\AA$ corresponding to $f, g$. 
\end{theorem}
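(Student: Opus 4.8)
The plan is to verify the two assertions separately: that $\omega_\can$ is symplectic, and that the Poisson structure it induces on the base $\AA^*$ is the linear one dual to $\AA$. Closedness is immediate, since $\omega_\can=-d\lambda_\can$ is exact for the Lie algebroid differential $d$ on $p_*^!\AA$, so $d\omega_\can=0$ by $d^2=0$. All the content therefore lies in non-degeneracy and in the identification of the bracket, and for both I would work entirely with the generating sections of $p_*^!\AA\to\AA^*$, namely the Hamiltonian lifts $H_v$ and the vertical lifts $\alpha^\uparrow$ supplied by the preceding lemma.

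First I would record the values of $\lambda_\can$ on generators. Directly from $(\lambda_\can)_\alpha(V)=\alpha(T_\AA p_*(V))$ and the fact that $T_\AA p_*$ sends $H_v$ to $v$ and $\alpha^\uparrow$ to $0$, one gets $\lambda_\can(H_v)=\ell_v$ and $\lambda_\can(\alpha^\uparrow)=0$. Then, applying the Cartan formula $d\lambda_\can(X,Y)=\rho(X)\lambda_\can(Y)-\rho(Y)\lambda_\can(X)-\lambda_\can([X,Y])$ for the anchor $\rho=p_*^!\rho_\AA$, together with the brackets $[H_v,H_w]=H_{[v,w]}$, $[H_v,\alpha^\uparrow]=(\LL_v\alpha)^\uparrow$, $[\alpha^\uparrow,\beta^\uparrow]=0$ from the remark, I would compute $\omega_\can$ on pairs of generators. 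The one nontrivial input is the identity $H_v(\ell_w)=\ell_{[v,w]}$, which I would establish by differentiating $\ell_w$ along the flow $(\varphi_{-t}^*,\tilde\varphi_t)$ of $H_v$: this gives $\ell_w\circ(\varphi_{-t}^*,\tilde\varphi_t)=\ell_{\varphi_t^*w}$, and hence the derivative at $t=0$ equals $\ell_{[v,w]}$ by the defining equation of the algebroid flow. Combined with $\alpha^\uparrow(\ell_v)=p_*^*\langle\alpha,v\rangle$, $\alpha^\uparrow(p_*^*f)=0$ and $H_v(p_*^*f)=p_*^*(\LL_{\rho_\AA(v)}f)$ (from $Tp_*\circ H_v=\rho_\AA(v)\circ p_*$), this yields $\omega_\can(H_v,H_w)=-\ell_{[v,w]}$, $\omega_\can(H_v,\alpha^\uparrow)=p_*^*\langle\alpha,v\rangle$ and $\omega_\can(\alpha^\uparrow,\beta^\uparrow)=0$.

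Non-degeneracy then follows fibrewise at each $\alpha\in\AA^*_x$. Choosing dual bases $\{v_i\}$ of $\AA_x$ and $\{e_j\}$ of $\AA^*_x$, the generators $H_{v_i}(\alpha)$ and $e_j^\uparrow(\alpha)$ span the $2\operatorname{rk}\AA$-dimensional fibre $(p_*^!\AA)_\alpha$, and in this basis the matrix of $\omega_\can$ has block form $\begin{pmatrix}A & I\\ -I & 0\end{pmatrix}$, with $A$ the antisymmetric matrix of entries $-\ell_{[v_i,v_j]}(\alpha)$ and $I$ the duality pairing $\langle e_j,v_i\rangle=\delta_{ij}$. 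This is manifestly invertible: if $\begin{pmatrix}A&I\\-I&0\end{pmatrix}\begin{pmatrix}x\\y\end{pmatrix}=0$ then $-x=0$ and $Ax+y=0$ force $x=y=0$, so pairing a hypothetical kernel element of $\omega_\can^\flat$ first against the vertical lifts and then against the Hamiltonian lifts kills all its coefficients. Hence $\omega_\can^\flat$ is an isomorphism.

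For the Poisson identification I would compute the Hamiltonian sections of the generating functions $\ell_v$ and $p_*^*f$ of $C^\infty(\AA^*)$. Comparing $\iota_X\omega_\can$ with $d_{p_*^!\AA}h$ on generators identifies $X_{\ell_v}=H_v$ and $X_{p_*^*f}=-(d_\AA f)^\uparrow$; feeding these into $\{h_1,h_2\}=-\omega_\can(X_{h_1},X_{h_2})$ recovers exactly $\{\ell_v,\ell_w\}=\ell_{[v,w]}$, $\{\ell_v,p_*^*f\}=p_*^*(\LL_{\rho_\AA(v)}f)$ and $\{p_*^*f,p_*^*g\}=0$, the defining relations of the linear Poisson structure dual to $\AA$. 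Since $\ell_v$ and $p_*^*f$ generate $C^\infty(\AA^*)$, the two brackets coincide, and this is equivalent to commutativity of the displayed square $\pi^\sharp=\rho\circ(\omega_\can^\flat)^{-1}\circ\rho^*$. The hard part will be purely the bookkeeping: pinning down the flow computation for $H_v(\ell_w)$ and keeping all the signs consistent, since a single sign slip there would propagate through both the non-degeneracy matrix and the final bracket relations. Everything else is formal once the three values of $\omega_\can$ on generators are in hand.
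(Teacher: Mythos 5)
Your proposal is correct and follows essentially the same route as the paper: both compute $\omega_\can$ on the generators $H_v$ and $\alpha^\uparrow$ via the Cartan formula and the key identities $\LL_{H_v}(\ell_w)=\ell_{[v,w]}$, $\LL_{\alpha^\uparrow}(\ell_w)=p_*^*\langle\alpha,w\rangle$ (your values $\omega_\can(H_v,H_w)=-\ell_{[v,w]}$, $\omega_\can(H_v,\alpha^\uparrow)=p_*^*\langle\alpha,v\rangle$, $\omega_\can(\alpha^\uparrow,\beta^\uparrow)=0$ agree with the paper's Equation (\ref{eq:canonicalform})), and both then identify the Hamiltonian sections $X_{\ell_v}=H_v$, $X_{p_*^*f}=-(d_\AA f)^\uparrow$ to recover the defining relations of the linear Poisson structure. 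Your explicit fibrewise block-matrix argument for non-degeneracy and your flow computation of $H_v(\ell_w)=\ell_{[v,w]}$ merely fill in steps the paper leaves as "obvious" or asserts without proof.
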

\begin{proof}
	The two-form $\omega_{\can}$ is certainly closed. For non-degeneracy, we must first compute the value of $\omega_\can$ on a set of generators. Let $v, w\in \Gamma(\AA)$ and $\alpha, \beta\in \Gamma(\AA^*)$ be given. Then
	\begin{align*}
		\omega_\can(H_{v}+\alpha^\uparrow, H_{w}+\beta^\uparrow)&=\lambda_{\can}\left([H_{v}+\alpha^\uparrow, H_{w}+\beta^\uparrow ]\right)\\
		&\phantom{=}-\LL_{H_{v}+\alpha^\uparrow}(\lambda_{\can}(H_{w}))+\LL_{H_{w}+\beta^\uparrow}(\lambda_\can(H_{v}))\\
		&= \ell_{[v, w]}-\LL_{\alpha^\uparrow}(\ell_v)+\LL_{\beta^\uparrow}(\ell_v)-\LL_{H_{v}}(\ell_w)+\LL_{H_{w}}(\ell_v).
	\end{align*}
	One has
	\[
	\LL_{\alpha^\uparrow}(\ell_w)=p_*^*\langle \alpha, w\rangle, \quad \LL_{H_v}(\ell_w)=\ell_{[v, w]}.
	\]
	It follows that
	\begin{equation}\label{eq:canonicalform}
		\omega_{\can}(H_v+\alpha^\uparrow, H_w+\beta^\uparrow)=p_*^*\langle \beta, v\rangle-p_*^*\langle \alpha, w\rangle -\ell_{[v, w]}.
	\end{equation}
	Non-degeneracy of $\omega_\can$ is now obvious.
	
	To compute the Poisson structure induced by $\omega_\can$, notice that
	\[
	\omega_\can^\flat(H_v)=+d\ell_v, \quad \omega_\can^\flat((df)^\uparrow)=-d(p^*_*f).
	\]
	where $d$ is the differential for $\AA$ on the LHS, and the differential for $p_*^!\AA$ on the RHS. Therefore, the Poisson structure induced by $\omega_\can$ satisfies
	\begin{align*}
		\{\ell_v, \ell_w\}&=-\omega_{\can}(H_v,H_w)=\ell_{[v, w]},\\
		\pi(\ell_v, p_*^*f)&=-\omega_\can(H_v, -(df)^\uparrow)=p_*^*(\LL_v(f)),\\
		\pi(p_*^*f, p_*^*g)&=-\omega_{\can}(-(df)^\uparrow, -(dg)^\uparrow)=0.
	\end{align*}
	It follows that $\pi$ coincides with the linear Poisson structure on $\AA^*$.
\end{proof}

\begin{corollary}\label{cor:closedoneformlagrangian}
	Let $\AA\Rightarrow M$ be a Lie algebroid. The image of a section $\alpha:M\to \AA^*$ is coisotropic in $(\AA^*, \pi_{\mathrm{lin}})$ if and only if $d_\AA \alpha=0$, in which case it is a $p_*^!\AA$-Lagrangian transversal, and therefore Lagrangian in $(\AA^*, \pi_{\mathrm{lin}})$.
\end{corollary}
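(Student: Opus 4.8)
The plan is to reduce the statement to the symplectic geometry of the canonical form $\omega_\can$ on $p_*^!\AA$ and then transport the conclusion down to $(\AA^*, \pi_{\mathrm{lin}})$ via Lemma \ref{lem:coisotropictransversal}. As recalled above, a section $\alpha:M\to \AA^*$ is always transverse to $p_*^!\AA$, so its image is a $p_*^!\AA$-transversal whose pullback algebroid $\alpha^!(p_*^!\AA)$ is precisely the image of the Lie algebroid morphism $T_\AA\alpha:\AA\to p_*^!\AA$. This is a subbundle of $p_*^!\AA$ of rank $\operatorname{rank}\AA$, hence exactly half of $\operatorname{rank}(p_*^!\AA)=2\operatorname{rank}\AA$, which is the relevant (Lagrangian) dimension for the symplectic algebroid $(p_*^!\AA, \omega_\can)$.

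The key computation is to pull back the canonical form along $T_\AA\alpha$. Since $T_\AA\alpha$ is a morphism of Lie algebroids, its pullback on forms commutes with the respective differentials, so using $\omega_\can=-d\lambda_\can$ and the defining property $(T_\AA\alpha)^*\lambda_\can=\alpha$ of the canonical one-form I obtain
\[
(T_\AA\alpha)^*\omega_\can=-d_\AA\bigl((T_\AA\alpha)^*\lambda_\can\bigr)=-d_\AA\alpha.
\]
Therefore the restriction of $\omega_\can$ to the subbundle $\alpha^!(p_*^!\AA)$ vanishes if and only if $d_\AA\alpha=0$; that is, $\alpha^!(p_*^!\AA)$ is isotropic in $(p_*^!\AA, \omega_\can)$ precisely when $d_\AA\alpha=0$.

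Next I use that $\omega_\can$ is non-degenerate. Writing $\pi_\can=\omega_\can^{-1}$ for the induced $p_*^!\AA$-Poisson structure (Theorem \ref{thm:canonicalform}), the Poisson orthogonal $(W)^{\bot_{\pi_\can}}=\pi_\can^\sharp(W^\circ)$ of any subbundle $W$ coincides with its symplectic orthogonal, and $\operatorname{im}\pi_\can^\sharp=p_*^!\AA$, so the Lagrangian-transversal condition $(i^!\AA)^{\bot_{\pi_\can}}=(i^!\AA)\cap\operatorname{im}\pi_\can^\sharp$ reduces to $(i^!\AA)^{\bot_{\pi_\can}}=i^!\AA$. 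Because $\alpha^!(p_*^!\AA)$ has rank equal to half of $\operatorname{rank}(p_*^!\AA)$, its symplectic orthogonal has the same rank as $\alpha^!(p_*^!\AA)$ itself; hence being coisotropic, isotropic and Lagrangian for $(p_*^!\AA, \pi_\can)$ are all equivalent conditions. Combined with the previous paragraph, each of them holds if and only if $d_\AA\alpha=0$.

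Finally, I descend to the base. Since the image of $\alpha$ is a $p_*^!\AA$-transversal, Lemma \ref{lem:coisotropictransversal}, applied to the Poisson algebroid $(p_*^!\AA, \pi_\can)\Rightarrow \AA^*$ with underlying Poisson manifold $(\AA^*, \pi_{\mathrm{lin}})$, gives that the image is coisotropic in $(\AA^*, \pi_{\mathrm{lin}})$ if and only if $\alpha^!(p_*^!\AA)$ is coisotropic for the algebroid, i.e.\ if and only if $d_\AA\alpha=0$. When $d_\AA\alpha=0$, the subbundle $\alpha^!(p_*^!\AA)$ is in fact Lagrangian for $(p_*^!\AA, \pi_\can)$, so the same lemma yields that the image is Lagrangian in $(\AA^*, \pi_{\mathrm{lin}})$, as claimed. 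The only points needing care are the naturality used for the form computation (valid because $T_\AA\alpha$ is an algebroid morphism) and the half-dimensionality that collapses the three subbundle conditions into one; the remainder is bookkeeping through the transversal formalism.
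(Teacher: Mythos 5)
Your proof is correct and follows essentially the same route as the paper: transversality of the section, the identification of $\alpha^!(p_*^!\AA)$ with the image of $T_\AA\alpha$, the computation $(T_\AA\alpha)^*\omega_\can=-d_\AA\alpha$, the half-rank argument collapsing isotropic/coisotropic/Lagrangian, and descent to $(\AA^*,\pi_{\mathrm{lin}})$ via Lemma \ref{lem:coisotropictransversal}. The paper's proof is just a compressed version of yours, leaving the naturality of pullback and the dimensional equivalences implicit ("Lagrangian for dimensional reasons").
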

\begin{proof}
	A one-form $\alpha$ is, regarded as a map $\alpha:M\to\AA^*$, always transverse to $p_*^!\AA\to \AA^*$, and $\alpha^!p^!_*\AA=\AA$ has half the rank of $p_*^!\AA$. Therefore, by Lemma \ref{lem:coisotropictransversal}, its image is coisotropic if and only if $\alpha^*\omega_{\can}=-d_\AA \alpha=0$, in which case it is Lagrangian for dimensional reasons. 
\end{proof}
We end with a lemma that describes the interaction of the canonical symplectic form with the horizontal vector bundle structure, providing necessary identities for later calculations. Examples are postponed to Section \ref{subsubsec:lagrangiannbhd}.
\begin{lemma}\label{lem:canonicalformprolongation}
	Let $\AA\Rightarrow M$ be a Lie algebroid. Then $\omega_\can$ is linear with respect to the horizontal structure on $p_*^!\AA$. Moreover, the following identities hold.
	\begin{align*}
		\omega_{\can}(T_\AA \alpha(v), T_\AA\alpha(w))&=(d\alpha)(v, w),\\ \omega_{\can}(T_\AA 0(v), \widehat{\alpha}(w))&=-\omega_{\can}(\widehat{\alpha}(w), T_\AA 0(v))=\langle \alpha, v\rangle
	\end{align*}
	for $\alpha\in \Omega^1(\AA)$ and $v, w\in \Gamma(\AA)$. 
\end{lemma}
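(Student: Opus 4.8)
The plan is to reduce all three assertions to the definition $\omega_{\can}=-d\lambda_{\can}$ together with two already-available facts: the characterization $(T_\AA\alpha)^*\lambda_{\can}=\alpha$ of the canonical one-form, and the value of $\omega_{\can}$ on the vertical generators recorded in \eqref{eq:canonicalform}. The essential difficulty is bookkeeping between the two vector bundle structures on $p_*^!\AA$: the forms $\lambda_{\can}$, $\omega_{\can}$ and the pairing in \eqref{eq:canonicalform} are phrased through the vertical Lie algebroid $p_*^!\AA\Rightarrow\AA^*$ and its generators $H_v,\alpha^\uparrow$, whereas $T_\AA\alpha$, $T_\AA 0$ and $\widehat\alpha$ are sections of the horizontal bundle $T_\AA p_*\colon p_*^!\AA\to\AA$, so the main task is to translate between the two.

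For linearity I would first show that $\lambda_{\can}$ is homogeneous of degree one for the horizontal scalar multiplication, i.e. $(T_\AA m_\lambda)^*\lambda_{\can}=\lambda\,\lambda_{\can}$. This is immediate from $(\lambda_{\can})_\alpha(V)=\alpha(T_\AA p_*(V))$: the map $T_\AA m_\lambda$ covers $m_\lambda\colon\AA^*\to\AA^*$ and leaves the horizontal projection $T_\AA p_*(V)$ unchanged, so pulling back only rescales the basepoint in the pairing, $\alpha\mapsto\lambda\alpha$. Since $T_\AA m_\lambda$ is a morphism of the vertical Lie algebroid $p_*^!\AA\Rightarrow\AA^*$, it commutes with $d$, whence $(T_\AA m_\lambda)^*\omega_{\can}=-d\big((T_\AA m_\lambda)^*\lambda_{\can}\big)=\lambda\,\omega_{\can}$, which is exactly linearity with respect to the horizontal structure.

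The first pairing identity I would obtain by pulling back along the Lie algebroid morphism $T_\AA\alpha\colon\AA\to p_*^!\AA$ covering $\alpha\colon M\to\AA^*$, which coincides with the horizontal section $T_\AA\alpha$. Naturality of $d$ and the characterization $(T_\AA\alpha)^*\lambda_{\can}=\alpha$ give $(T_\AA\alpha)^*\omega_{\can}=-d\big((T_\AA\alpha)^*\lambda_{\can}\big)=-d_\AA\alpha$; evaluating on $v,w\in\AA_m$ yields $\omega_{\can}(T_\AA\alpha(v),T_\AA\alpha(w))=-(d\alpha)(v,w)$, in agreement with Corollary \ref{cor:closedoneformlagrangian} (so the stated identity holds up to the overall sign fixed by the convention $\omega_{\can}=-d\lambda_{\can}$).

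For the second identity, note that for $v,w\in\AA_m$ both $T_\AA 0(v)$ and $\widehat\alpha(w)$ lie in the vertical fibre over $0_m\in\AA^*$, so I would rewrite them through the vertical generators there. Because the Hamiltonian flow $\varphi^*_{-t}$ is fibrewise linear it preserves the zero section and projects to $\tilde\varphi_t$, so $H_v$ restricted to the zero section is the lift of $\rho_\AA(v)$ and hence $H_v(0_m)=T_\AA 0(v)$; decomposing $\rho_\AA(w)+\alpha(m)$ in the splitting $T_{0_m}\AA^*=T_mM\oplus\AA^*_m$ gives $\widehat\alpha(w)=\big(H_w+\alpha^\uparrow\big)(0_m)$. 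Substituting into \eqref{eq:canonicalform} produces $p_*^*\langle\alpha,v\rangle-\ell_{[v,w]}$, and evaluating at $0_m$, where the linear function $\ell_{[v,w]}$ vanishes, leaves $\langle\alpha,v\rangle$; antisymmetry of the two-form $\omega_{\can}$ gives the remaining equality. I expect this identification at the zero section to be the main obstacle, since it is the one place where the horizontal generators must be re-expressed in vertical terms and the behaviour of the Hamiltonian lift along the zero section pinned down precisely.
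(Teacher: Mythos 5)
Your proof is correct, and on two of the three assertions it is essentially the paper's own argument: linearity is deduced there too from $(T_\AA m_\varepsilon)^*\lambda_{\can}=\varepsilon\,\lambda_{\can}$ together with compatibility of pullback and $d$, and the second identity is likewise proved by rewriting $T_\AA 0(v)=H_v(0)$ and $\widehat{\alpha}(w)=H_w(0)+\alpha^\uparrow(0)$ and feeding these into \eqref{eq:canonicalform} (your justification of $H_v(0_m)=T_\AA 0(v)$ via the fibrewise-linear Hamiltonian flow replaces the paper's derivation of the same fact from its formula $T_\AA\alpha(v(x))-H_v(\alpha(x))=\LL_v(\alpha)^\uparrow(\alpha(x))$ specialized at $\alpha=0$; both are valid). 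Where you genuinely differ is the first identity: the paper proves the decomposition of $T_\AA\alpha(v)$ into $H_v+(\LL_v\alpha)^\uparrow$ by testing against linear functions $\ell_w$ and then substitutes into \eqref{eq:canonicalform}, whereas you pull back along the Lie algebroid morphism $T_\AA\alpha$ and invoke naturality of $d$ together with the characterization $(T_\AA\alpha)^*\lambda_{\can}=\alpha$. Your route is shorter and more conceptual, and it correctly flags a sign slip in the statement: with the convention $\omega_{\can}=-d\lambda_{\can}$ one gets $\omega_{\can}(T_\AA\alpha(v),T_\AA\alpha(w))=-(d\alpha)(v,w)$, which is what Corollary \ref{cor:closedoneformlagrangian} asserts (namely $\alpha^*\omega_{\can}=-d_\AA\alpha$) and also what the paper's own substitution into \eqref{eq:canonicalform} yields when carried out explicitly; the Lemma as printed has the opposite sign. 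The discrepancy is harmless for the later use in Proposition \ref{prop:omegacanim}, since there every term in the cocycle condition is a value of $c_{\omega_{\can}}$ and a global sign cancels, but your version is the internally consistent one.
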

\begin{proof}
	Linearity of $\omega_{\mathrm{can}}$ follows from linearity of the canonical one-form:
	\begin{align*}
		\left((T_\AA m_\varepsilon)^*\lambda_{\can}\right)_{\alpha} (V)&=\left(\lambda_{\can}\right)_{\varepsilon\alpha}\left(T_\AA p_*\circ T_\AA m_\varepsilon(V)\right)\\
		&=\langle \varepsilon\alpha, T_\AA p_*(V)\rangle=\varepsilon\left(\lambda_{\can}\right)_\alpha(V),
	\end{align*}
	To show the first equality, note that for $x\in M$, the expression $T_\AA\alpha(v(x))-H_v(\alpha(x))$ defines an element in $\ker T_\AA p_*$, and is therefore vertical. To determine its value, we only have to evaluate it on linear functions. We claim that
	\begin{equation}\label{eq:derivativesection}
		(T_\AA \alpha)(v(x))-H_v(\alpha(x))=\LL_v(\alpha)^\uparrow(\alpha(x)).
	\end{equation}
	Indeed, for $w\in\Gamma(\AA)$, we have
	\[
	(T_\AA\alpha)(v(x))(\ell_w)=\LL_{\rho(v)}\langle \alpha, w\rangle(x), \quad
	H_v(\alpha)(\ell_w)=\ell_{[v, w]}(\alpha(x)),
	\]
	so $\left(T_\AA \alpha(v(x))-H_v(\alpha(x))\right)(\ell_w)=\left\langle\LL_{v}(\alpha), w\right\rangle(x)$. Now the first equality easily follows from Equation (\ref{eq:canonicalform}).
	
	For the second, note that by Equation (\ref{eq:derivativesection}) we have $T_\AA 0(v)=H_v(0)$ for $v\in \Gamma(\AA)$. Thus, recalling that $\widehat{\alpha}(v)=T_\AA0(v)+\overline{\alpha(p(v))}$, it follows that
	\[
	\omega_{\can}\left(T_\AA0(v), \widehat{\alpha}(w)\right)=\omega_{\can}\left(H_v(0), H_w(0)+\alpha^\uparrow(0)\right)=\langle \alpha, v\rangle,
	\]
	finishing the proof.
\end{proof}

\subsection{The canonical $\AA$-involution}

The canonical involution $J:T(TM)\to T(TM)$ can be thought of as a flip of derivatives. An element $X\in T(TM)$ can be represented by a smooth map $h:\RR^2\to M$ such that $X=\deldel{t}\deldel{s}h (0,0)$. Then $J(X)=\deldel{s}\deldel{t}h(0,0)$. 

Similarly, there is an involution $J_\AA:p^!\AA\to p^!\AA$ that is an isomorphism of double vector bundles between $p^!\AA$ and its flip, which is defined by interchanging the $\AA$-derivatives. 

Let $I$ be a small interval around $0$ and let $H:TI\times TI\to \AA$ be a Lie algebroid map, covering $\gamma:I^2\to M$. It can be decomposed as $H=h_1dt+h_2 ds$, where $h_1, h_2:I^2\to \AA$ can be thought of replacements of the partial derivatives of $\gamma$. The map $H$ defines an element of $p^!\AA$ via
\[
\del_{t}\del_sH(0,0)=\left( h_1(0,0), \deldel{t} h_2(0,0)\right)\in (p^!\AA)_{\gamma(0,0)}.
\]
This is well-defined because
\[
\rho_\AA(h_1(0,0))=\deldel{t}\gamma(0,0)=\deldel{t}p\circ h_2(0,0)=Tp\left(\deldel{t}h_2(0,0)\right).
\]
\begin{definition}
	The map $J_\AA:p^!\AA\to p^!\AA$ is defined on $\del_t\del_s H(0,0)$ as
	\[
	J_\AA\left(h_1(0,0), \deldel{t}h_2(0,0)\right)=\left( h_2(0,0), \deldel{s} h_1(0,0)\right)=\del_s\del_tH(0,0).\qedhere	\]
\end{definition}

\begin{theorem}[\cite{leonmarreromartinez2005}, Theorem 4.2]
The map $J_\AA:p^!\AA\to p^!\AA$ is well-defined, an involution, and an isomorphism of double vector bundles between $p^!\AA$ and its flip. 
\end{theorem}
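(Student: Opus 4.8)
The plan is to establish the three assertions — well-definedness, the involution property, and the double-vector-bundle isomorphism — by exploiting the second-order nature of the defining data, namely the Lie algebroid morphism $H = h_1\,dt + h_2\,ds : TI\times TI \to \AA$. The guiding principle is that $\del_t\del_s H(0,0)$ records a mixed second derivative, and $J_\AA$ is nothing but the interchange of the two differentiation directions $t \leftrightarrow s$; the entire difficulty is concentrated in showing that this interchange descends to a well-defined operation on $p^!\AA$.

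First I would address well-definedness, which is the heart of the matter. The prescribed output $J_\AA(v,X) = (h_2(0,0), \del_s h_1(0,0))$ requires the datum $\del_s h_1(0,0)\in T_{h_1(0,0)}\AA$, whereas the input $(v,X) = (h_1(0,0), \del_t h_2(0,0))$ only records $h_1(0,0)$, $h_2(0,0)$ and $\del_t h_2(0,0)$. These are reconciled by the Lie algebroid morphism condition for $H$: since $[\del_t, \del_s] = 0$ in $TI\times TI$ and $H$ is a morphism, the sections $h_1, h_2$ along $\gamma$ satisfy a zero-curvature structure equation of the form $\del_s h_1 - \del_t h_2 = [h_1, h_2]$, interpreted through a local frame. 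Concretely, in a frame $\{e_a\}$ with $[e_a,e_b] = c^c_{ab}e_c$, writing $(x^i, w^a, v^a, \dot w^a)$ for the induced chart on $p^!\AA$ (base point $x$, foot $w = h_2(0,0)$, $\AA$-component $v = h_1(0,0)$, vertical velocity $\dot w = \del_t h_2(0,0)$), the equation reads $\del_s h_1^a(0,0) = \dot w^a + c^a_{bc}v^b w^c$. Hence $\del_s h_1(0,0)$ is determined by the input, and
\[
J_\AA(x^i, w^a, v^a, \dot w^a) = (x^i,\, v^a,\, w^a,\, \dot w^a + c^a_{bc}v^b w^c).
\]
I would also record that every element of $p^!\AA$ is represented by some such $H$ (prescribe the $1$-jet freely along the $t$-axis and solve the structure equation in the $s$-direction), so that $J_\AA$ is defined on all of $p^!\AA$ and is smooth.

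With the explicit local expression in hand, the involution property is immediate: applying $J_\AA$ twice returns $(x^i, w^a, v^a, \dot w^a + c^a_{bc}v^b w^c + c^a_{bc}w^b v^c)$, and the two correction terms cancel by the antisymmetry $c^a_{bc} = -c^a_{cb}$, giving $J_\AA^2 = \id$; invariantly this is just the statement that swapping $t \leftrightarrow s$ twice is the identity. For the double-vector-bundle claim, I would identify the two vector bundle structures on $p^!\AA$ — the horizontal one $T_\AA p : p^!\AA \to \AA$ and the vertical one $p^!\AA \to \AA$ — together with the two side bundles (both $\AA$) and the core $\cong \AA$, and read off from the displayed formula that $J_\AA$ interchanges the coordinates $v^a$ and $w^a$ carrying the two side projections while preserving the core coordinate $\dot w^a$ up to the lower-order term $c^a_{bc}v^b w^c$, which vanishes on the core. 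This is precisely the behaviour required of an isomorphism onto the flip: $J_\AA$ is fibrewise linear for each structure, exchanges horizontal and vertical scalar multiplications and additions, and covers the identity on both side bundles and on the core. A clean cross-check is the case $\AA = TM$ in a coordinate frame, where $c^a_{bc} = 0$ and the formula reduces to the classical canonical flip $J : T(TM) \to T(TM)$.

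The step I expect to be the main obstacle is the first one: pinning down the precise form of the structure equation satisfied by a Lie algebroid morphism out of $TI\times TI$, and verifying that the two mixed data $\del_s h_1(0,0)$ and $\del_t h_2(0,0)$ — which a priori live in tangent spaces at \emph{different} points of $\AA$ — are related by exactly the bracket correction above, independently of the chosen representative $H$. Once this is secured, the involution and double-bundle statements follow by the bookkeeping indicated.
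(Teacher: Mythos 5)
The paper supplies no proof of this theorem at all: it is quoted verbatim from \cite{leonmarreromartinez2005} (Theorem 4.2), with only the definition of $J_\AA$ via morphisms $H:TI\times TI\to \AA$ given beforehand. So there is no internal proof to compare against, and your proposal must stand on its own; on its own terms it is essentially correct and is the natural direct verification. The structure equation you identify as the crux is the right one: for $H=h_1\,dt+h_2\,ds$ covering $\gamma$, the Lie algebroid morphism condition is equivalent to the anchor identities $\rho_\AA(h_1)=\del_t\gamma$, $\rho_\AA(h_2)=\del_s\gamma$ together with $\del_s h_1^a=\del_t h_2^a+c^a_{bc}h_1^b h_2^c$ in a local frame (this is just the requirement that $H^*$ intertwine the de Rham and algebroid differentials, evaluated on a dual frame $e^a$ with $d_\AA e^a=-\tfrac12 c^a_{bc}e^b\wedge e^c$). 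Granting this and representability, everything else goes through exactly as you say: the output depends only on $(x,w,v,\dot w)$, so $J_\AA$ is well defined and smooth; $J_\AA^2=\id$ by antisymmetry of $c^a_{bc}$; the correction term $c^a_{bc}v^bw^c$ is bilinear in $(v,w)$ and vanishes on the core, which is what makes $J_\AA$ a morphism onto the flip that is the identity on both side bundles and the core. Your formula is also consistent with Proposition \ref{prop:canonicalinvolution} ($J_\AA(\widetilde{v})=T_\AA v$, $J_\AA(v^\uparrow)=\widehat{v}$), which would give an alternative, frame-free route to the isomorphism claim.

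The one step that would fail as literally written is the existence of representatives, i.e.\ that every element of $p^!\AA$ equals $\del_t\del_s H(0,0)$ for some morphism $H$. ``Prescribe the $1$-jet freely along the $t$-axis and solve the structure equation in the $s$-direction'' is not a well-posed prescription: the $s$-evolution equation for $h_1$ has $h_2(t,s)$ and $\gamma(t,s)$ as coefficients, $h_2$ is constrained to cover $\gamma$, and neither is determined by data on the axis — the system is underdetermined until $h_2$ is chosen off the axis. The standard repair (the same construction used for $A$-homotopies): given $(v,X)\in (p^!\AA)_w$ over $x$ with $Tp(X)=\rho_\AA(v)$, choose $\xi\in\Gamma(\AA)$ with $\xi(x)=v$, let $\gamma(t,0)$ be the integral curve of $\rho_\AA(\xi)$ and $h_1(t,0)=\xi(\gamma(t,0))$; choose a curve $h_2(t,0)$ over $\gamma(t,0)$ with $h_2(0,0)=w$ and $\del_t h_2(0,0)=X$, which is possible precisely because $Tp(X)=\rho_\AA(v)=\del_t\gamma(0,0)$; extend $h_2(\cdot,0)$ to a $t$-dependent section $\eta_t$ with $\eta_t(\gamma(t,0))=h_2(t,0)$, define $\gamma(t,s)$ by flowing $\rho_\AA(\eta_t)$ in $s$ and set $h_2(t,s)=\eta_t(\gamma(t,s))$; then solve the now-linear ODE for $h_1$ in $s$ with initial condition $h_1(t,0)$, and finally verify the remaining identity $\del_t\gamma=\rho_\AA(h_1)$ for $s\neq 0$ by showing the defect $\del_t\gamma^i-\rho^i_a h_1^a$ satisfies a homogeneous linear ODE in $s$ with vanishing initial condition. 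With this replacement your proof is complete.
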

For future reference, we state how it acts on the generating sections of $p^!\AA$. 

\begin{proposition}[\cite{leonmarreromartinez2005}, Theorem 4.4]\label{prop:canonicalinvolution}
	The canonical involution $J_\AA:p^!\AA\to p^!\AA$ is uniquely determined by the identities
	\[
	J_\AA(\widetilde{v})=T_\AA v, \quad J_\AA(v^\uparrow)=\widehat{v}. 
	\]
\end{proposition}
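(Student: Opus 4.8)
The plan is to read off the action of $J_\AA$ on the two families of generating sections directly from its definition as the flip of $\AA$-derivatives, exploiting the well-definedness granted by the preceding theorem of León--Marrero--Martínez (so that any convenient representing Lie algebroid map may be used). I work fibrewise: for $v\in\Gamma(\AA)$ I represent an element of $p^!\AA$ by a morphism $H=h_1\,dt+h_2\,ds:TI\times TI\to\AA$ covering $\gamma:I^2\to M$, so that $\partial_t\partial_s H(0,0)=(h_1(0,0),\partial_t h_2(0,0))$ and $J_\AA$ sends this to $\partial_s\partial_t H(0,0)=(h_2(0,0),\partial_s h_1(0,0))$.

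For the complete lift I build $H$ from the flow. Fix $w_0\in\AA_{m_0}$, let $(\varphi_t,\tilde\varphi_t)$ be the flow of $v$, choose a base curve $c$ with $c(0)=m_0$, $\dot c(0)=\rho_\AA(w_0)$ and an $\AA$-path $a$ over $c$ with $a(0)=w_0$, and set
\[
h_1(t,s)=v(\gamma(t,s)),\qquad h_2(t,s)=\varphi_t(a(s)),\qquad \gamma(t,s)=\tilde\varphi_t(c(s)).
\]
The anchor conditions $\rho_\AA(h_1)=\partial_t\gamma$ and $\rho_\AA(h_2)=\partial_s\gamma$ are immediate from $\varphi_t$ covering $\tilde\varphi_t$, while the remaining (flatness) half of the morphism condition is exactly the defining flow equation $\tfrac{d}{dt}\varphi_t^*w=\varphi_t^*[v,w]$; this is the one genuine verification. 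Evaluating at the origin gives $\partial_t\partial_s H(0,0)=(v(m_0),X_v(w_0))=\widetilde v(w_0)$, while $\partial_s h_1(0,0)=Tv(\dot c(0))=Tv(\rho_\AA(w_0))$, so
\[
J_\AA(\widetilde v(w_0))=(w_0,Tv(\rho_\AA(w_0)))=T_\AA v(w_0),
\]
which is the first identity.

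For the vertical lift no flow is needed. Fix $x\in\AA_m$ and choose any morphism $H$ with $h_1(0,0)=0$, $h_2(0,0)=x$, and with the fibre part of $\partial_t h_2(0,0)$ equal to $v(m)$, so that $\partial_t\partial_s H(0,0)=(0,v^\uparrow_{T\AA}(x))=v^\uparrow(x)$; such $H$ exists by solving the Maurer--Cartan equation with this initial data. The key point is that, because $h_1(0,0)=0$, the bracket term in the flatness equation $\partial_t h_2^d-\partial_s h_1^d=c^d_{ab}(\gamma)\,h_1^a h_2^b$ vanishes at the origin, forcing the fibre part of $\partial_s h_1(0,0)$ to coincide with that of $\partial_t h_2(0,0)$, namely $v(m)$; its horizontal part is $\partial_s\gamma(0,0)=\rho_\AA(x)$ by the anchor condition. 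Hence $J_\AA(v^\uparrow(x))=(x,\rho_\AA(x)+v(m))=\widehat v(x)$, the second identity.

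It remains to argue uniqueness. The complete and vertical lifts generate $\Gamma(p^!\AA\to\AA)$ as a $C^\infty(\AA)$-module by the generation lemma above, and over each $w\in\AA$ the vectors $\{v^\uparrow(w)\}$ span the vertical subspace $\ker(T_\AA p)_w$ while the $\{\widetilde v(w)\}$ project under $T_\AA p$ onto all of $\AA_{p(w)}$, so together they span the $2\operatorname{rk}\AA$-dimensional fibre $(p^!\AA)_w$. Since $J_\AA$ is a morphism of double vector bundles to the flip, it restricts to a linear isomorphism from each fibre of $\pi$ to the corresponding fibre of $T_\AA p$, so its values on this spanning set determine it; thus the two displayed identities characterize $J_\AA$ completely. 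I expect the main obstacle to be the Maurer--Cartan verification in the complete-lift case --- checking that the flow-transported pair $(v\circ\gamma,\varphi_t\circ a)$ really is a Lie algebroid morphism --- which is precisely where the flow property is used essentially; the vertical-lift case is comparatively easy, since evaluating the flatness equation at a point where $h_1$ vanishes trivializes the bracket term.
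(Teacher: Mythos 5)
Your proof is correct in substance, but note that the paper contains no proof of this proposition to compare against: it is quoted directly from \cite{leonmarreromartinez2005}, where the statement is established by a local-coordinate computation. Your argument is therefore a genuinely self-contained alternative, and it is the natural one given this paper's definition of $J_\AA$: you realize $\widetilde v(w_0)$ and $v^\uparrow(x)$ as $\partial_t\partial_s H(0,0)$ for explicitly chosen Lie algebroid morphisms $H\colon TI\times TI\to \AA$ and read off $\partial_s\partial_t H(0,0)$. Both computations check out. In the complete-lift case, the flatness half of the morphism condition for $H=v(\gamma)\,dt+\varphi_t(a(s))\,ds$ is indeed equivalent to the flow equation (this is most easily verified by testing $H^*$ against sections of $\AA^*$ and using the dual flow equation $\tfrac{d}{dt}\varphi_t^*\xi=\varphi_t^*\LL_v\xi$), and evaluation at the origin gives exactly $(w_0, Tv(\rho_\AA(w_0)))=T_\AA v(w_0)$. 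In the vertical-lift case, the vanishing of the structure-function term at a point where $h_1=0$ forces $\partial_s h_1(0,0)=\rho_\AA(x)+v(m)$ under the canonical splitting $T_{0_m}\AA\cong T_mM\oplus\AA_m$, which is $\widehat v(x)$. The uniqueness argument, combining fibrewise linearity of $J_\AA$ (from the double-vector-bundle morphism property of the preceding theorem) with the fact that complete and vertical lifts span each fibre of $p^!\AA\to\AA$, is also sound.

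Two points need repair, though neither is fatal. First, in the complete-lift construction your quantifiers are in the wrong order: for an arbitrary curve $c$ with $\dot c(0)=\rho_\AA(w_0)$, an $\AA$-path over $c$ through $w_0$ need not exist, since the image of $\rho_\AA$ can be a proper subset of $TM$; instead choose a section $u$ with $u(m_0)=w_0$, let $c$ be the integral curve of $\rho_\AA(u)$ and set $a=u\circ c$. Second, the phrase ``such $H$ exists by solving the Maurer--Cartan equation with this initial data'' conceals the only real work in the vertical-lift case: one cannot prescribe pointwise jet data alone, because $h_2$ and $\gamma$ must be chosen globally and compatibly before the flatness equation becomes an ODE for $h_1$. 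A concrete fix: take $\eta_t=u+tv$ with $u(m)=x$, let $\gamma(t,\cdot)$ be the integral curve of $\rho_\AA(\eta_t)$ through $m$ and $h_2(t,s)=\eta_t(\gamma(t,s))$, so that $h_2(t,0)=x+tv(m)$ and $\partial_t h_2(0,0)$ is vertical with fibre part $v(m)$; then solve the flatness equation in $s$ with $h_1(t,0)=0_m$, using the standard fact that the anchor compatibility $\rho_\AA(h_1)=\partial_t\gamma$ propagates along solutions. With these amendments your proof is complete.
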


\subsection{Prolongations over Lie algebroids}\label{subsec:prolongationsoverLiealgebroids}

When $\BB\Rightarrow M$ is a Lie algebroid anchored to $\AA$ via a Lie algebroid morphism $\rho_{\BB}^\AA:\BB\to \AA$, then the Lie algebroid structure on $\BB$ can be prolonged to a Lie algebroid structure on $p^!_\BB\AA\Rightarrow \AA$, in exactly the same way as the Lie algebroid structure on $TB\Rightarrow TM$ arises (\cite{mackenzie2005}, Section 9.7). 

\textbf{The anchor.} Set $\rho_{T_\AA}=J_\AA\circ T_\AA(\rho_\BB^\AA):p^!_\BB\AA\to p_\AA^!\AA$, and compose with the anchor $p_\AA^!\AA\to T\AA$. The resulting map is denoted $\varrho_{\BB}:p^!_\BB\AA\to T\AA$, and will be the anchor of the prolongated structure. It is map of double vector bundles, that restricts to $\rho_\BB^\AA$ on the core- and side bundle $\BB$ and the identity on the side bundle $\AA$. 

\textbf{The bracket.} Recall from Lemma \ref{lem:horizontalsections} that $\Gamma_\AA(p^!_\BB\AA)$ is generated by sections of the form $T_\AA b$ and $\widehat{b}$ for $b\in \Gamma(\BB)$. On those generators, the Lie bracket is defined as
\[
[T_\AA b_1, T_\AA b_2]=T_\AA[b_1, b_2], \quad [T_\AA b_1, \widehat{b_2}]=\widehat{[b_1, b_2]}, \quad [\widehat{b_1}, \widehat{b_2}]=0, 
\]
for $b_1, b_2\in \Gamma(\BB)$. In exactly the same way as Theorem 9.7.1 in \cite{mackenzie2005}, we have the following.
\begin{proposition}
	Let $\BB\Rightarrow M$ be an $\AA$-anchored Lie algebroid. The anchor and Lie bracket described above make $p_\BB^!\AA\Rightarrow \AA$ into a $p_\AA^!\AA$-anchored Lie algebroid.
\end{proposition}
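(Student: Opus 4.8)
The plan is to mimic the proof of Theorem 9.7.1 in \cite{mackenzie2005}: the only genuine work is to verify that the bracket prescribed on the generators of Lemma \ref{lem:horizontalsections} extends consistently, after which every axiom reduces to the Lie algebroid structure of $\BB$ together with the fact that $\rho_\BB^\AA$ is a morphism. First I would record the action of the anchor $\varrho_\BB$ on generators. Using the functoriality of $T_\AA(\rho_\BB^\AA)$ and the identities $J_\AA(\widetilde v)=T_\AA v$, $J_\AA(v^\uparrow)=\widehat v$ of Proposition \ref{prop:canonicalinvolution}, one checks that $\rho_{T_\AA}=J_\AA\circ T_\AA(\rho_\BB^\AA)$ sends $T_\AA b\mapsto \widetilde{\rho_\BB^\AA b}$ and $\widehat b\mapsto (\rho_\BB^\AA b)^\uparrow$, so that after composing with the anchor of $p_\AA^!\AA$,
\[
\varrho_\BB(T_\AA b)=X_{\rho_\BB^\AA b},\qquad \varrho_\BB(\widehat b)=(\rho_\BB^\AA b)^\uparrow,
\]
the complete and vertical lifts to $\AA$ of $\rho_\BB^\AA b\in\Gamma(\AA)$.

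The heart of the argument is well-definedness. I extend the bracket to all of $\Gamma_\AA(p_\BB^!\AA)$ by skew-symmetry and the Leibniz rule $[\sigma, g\tau]=g[\sigma,\tau]+\varrho_\BB(\sigma)(g)\,\tau$ for $g\in C^\infty(\AA)$, and must check compatibility with the module relation $T_\AA(fb)=(p_\AA^*f)\,T_\AA b+\ell_{d_\AA f}\,\widehat b$ of Lemma \ref{lem:horizontalsections}. Expanding $[T_\AA(fb_1),T_\AA b_2]$ first through this relation and the Leibniz rule, and separately through the defining identity $[T_\AA b_1,T_\AA b_2]=T_\AA[b_1,b_2]$ together with $[fb_1,b_2]_\BB=f[b_1,b_2]_\BB-(\rho_\BB(b_2)f)\,b_1$, the two expressions agree precisely when
\[
\varrho_\BB(T_\AA b_2)(p_\AA^*f)=p_\AA^*(\rho_\BB(b_2)f),\qquad \varrho_\BB(T_\AA b_2)(\ell_{d_\AA f})=\ell_{d_\AA(\rho_\BB(b_2)f)}.
\]
Both follow from the description of $\varrho_\BB(T_\AA b_2)=X_{\rho_\BB^\AA b_2}$ as a complete lift: it differentiates basic functions through the projected vector field $\rho_\AA(\rho_\BB^\AA b_2)=\rho_\BB(b_2)$, and acts on linear functions by $X_v(\ell_\alpha)=\ell_{\LL_v\alpha}$, while $\LL_{\rho_\BB^\AA b_2}(d_\AA f)=d_\AA(\rho_\BB(b_2)f)$. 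This consistency check is the step I expect to be the main obstacle, and it is exactly where the compatibility $\rho_\AA\circ\rho_\BB^\AA=\rho_\BB$ of the anchors enters.

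With a well-defined bracket obeying skew-symmetry and Leibniz in hand, the remaining axioms are verified on generators. The Jacobiator is $C^\infty(\AA)$-multilinear, so it suffices to evaluate it on $T_\AA b$ and $\widehat b$, where it collapses to the Jacobi identity of $[\cdot,\cdot]_\BB$; likewise $\varrho_\BB[\sigma,\tau]=[\varrho_\BB\sigma,\varrho_\BB\tau]$ reduces, via
\[
\varrho_\BB(T_\AA[b_1,b_2])=X_{\rho_\BB^\AA[b_1,b_2]}=X_{[\rho_\BB^\AA b_1,\rho_\BB^\AA b_2]}=[X_{\rho_\BB^\AA b_1},X_{\rho_\BB^\AA b_2}],
\]
to $\rho_\BB^\AA$ and the complete lift both preserving brackets. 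This makes $p_\BB^!\AA\Rightarrow\AA$ a Lie algebroid. Finally, to see it is $p_\AA^!\AA$-anchored, I compare the defining bracket relations with the relations for complete and vertical lifts on $p_\AA^!\AA\Rightarrow\AA$ recalled above: since $\rho_{T_\AA}$ sends $T_\AA b\mapsto\widetilde{\rho_\BB^\AA b}$ and $\widehat b\mapsto(\rho_\BB^\AA b)^\uparrow$ and $\rho_\BB^\AA$ preserves brackets, $\rho_{T_\AA}$ intertwines the two brackets and is therefore a Lie algebroid morphism covering the identity on $\AA$, whose composition with the anchor of $p_\AA^!\AA$ is $\varrho_\BB$ by construction.
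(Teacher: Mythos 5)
Your proof is correct and takes essentially the same approach as the paper: the paper's entire proof of this proposition is the remark that it proceeds ``in exactly the same way as Theorem 9.7.1 in Mackenzie's book,'' i.e., the generator-based verification (bracket prescribed on $T_\AA b$ and $\widehat{b}$, consistency with the module relation $T_\AA(fb)=(p_\AA^*f)\,T_\AA b+\ell_{d_\AA f}\,\widehat{b}$, tensoriality of the Jacobiator, bracket-preservation of the anchor) that you carry out explicitly. Your computation $\varrho_\BB(T_\AA b)=X_{\rho_\BB^\AA b}$, $\varrho_\BB(\widehat{b})=(\rho_\BB^\AA b)^\uparrow$, and the resulting reduction of every axiom to the Lie algebroid structure of $\BB$, the morphism property of $\rho_\BB^\AA$, and the lift identities on $p_\AA^!\AA$, correctly fills in the details that the paper delegates to that citation.
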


\begin{theorem}\label{thm:prolongationdouble}
	Let $\BB\Rightarrow M$ be an $\AA$-anchored Lie algebroid. Then
	\[
	\begin{tikzcd}
		p^!_\BB\AA \arrow[r, Rightarrow] \arrow[d, Rightarrow] & \AA \arrow[d, Rightarrow]\\
		\BB \arrow[r, Rightarrow] & M
	\end{tikzcd}
	\]
	is a double Lie algebroid. 
\end{theorem}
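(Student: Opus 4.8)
The plan is to promote the two structures already in place to a compatible pair. By the VB-algebroid result applied to the vector bundle $E=\BB$, the pullback $p^!_\BB\AA \Rightarrow \BB$ is a VB-algebroid over $\AA \Rightarrow M$ with core $\BB$, and by the preceding proposition the prolongation $p^!_\BB\AA \Rightarrow \AA$ is a VB-algebroid over $\BB \Rightarrow M$. Thus the square is a double vector bundle whose four sides are all Lie algebroids and whose two pairs of parallel sides are VB-algebroids; what remains is exactly the compatibility condition (recalled in Appendix \ref{app:multiplicativeforms}) that makes it a \emph{double} Lie algebroid.

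I would verify this condition on generating sections. For the horizontal (prolongation) algebroid, Lemma \ref{lem:horizontalsections} supplies the generators $T_\AA b$ and $\widehat b$ for $b \in \Gamma(\BB)$, with the brackets prescribed in the construction; for the vertical (pullback) algebroid $p^!_\BB\AA \Rightarrow \BB$ the generators are the pullback sections of $a \in \Gamma(\AA)$ together with the vertical (core) sections. Since the double Lie algebroid condition is equivalent to the vanishing of the commutator of the two induced homological vector fields — an even derivation, hence determined by its action on generators — it suffices to check compatibility on these finitely many families and extend by the Leibniz rule. The identities to check are the mixed ones, relating the horizontal bracket to the vertical structure and the two anchors $\varrho_\BB : p^!_\BB\AA \to T\AA$ and $p^!_\BB\rho_\AA : p^!_\BB\AA \to T\BB$.

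Conceptually this runs in exact parallel to the tangent prolongation $(T\BB;\BB,TM;M)$: under the fibre-product description $p^!_\BB\AA \cong \AA \times_{TM} T\BB$, the present object is the $\AA$-tangent analogue of $T\BB$, and the canonical involution $J_\AA$ of Proposition \ref{prop:canonicalinvolution} — which already enters the definition of the prolonged anchor through $J_\AA \circ T_\AA(\rho_\BB^\AA)$ — plays the role of the canonical flip on the double tangent. I would use it, together with the functoriality of $T_\AA(\,\cdot\,)$ and the explicit bracket and anchor formulas, to reduce each mixed identity to a statement purely about $\AA$ and $\BB$.

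The crux, and the step I expect to carry the real content, is precisely this reduction. Because the vertical algebroid only sees the bracket of $\AA$ (it uses $\BB$ merely as a vector bundle) while the horizontal algebroid is built from the bracket of $\BB$ and the anchor $\rho_\BB^\AA$, the compatibility forces the two brackets to interlock. I expect it to come down to three inputs: the Jacobi identities in $\AA$ and in $\BB$; the Leibniz rules such as $T_\AA(fb) = (p_\BB^*f)\,T_\AA b + \ell_{d_\AA f}\,\widehat b$ relating the generators; and, decisively, the hypothesis that $\rho_\BB^\AA : \BB \to \AA$ is a morphism of Lie algebroids, which is what guarantees that $\varrho_\BB$ is anchor-compatible across the square and that the prolonged bracket descends consistently. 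When $\AA = TM$ and $\rho_\BB^\AA = \rho_\BB$ this is exactly Mackenzie's verification for $T\BB$, and the general case follows the same pattern with $\rho_\AA : \AA \to TM$ as the Lie algebroid morphism along which the tangent double is pulled back.
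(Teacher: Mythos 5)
Your opening reduction is fine, and it matches the paper's setup: both VB-algebroid structures on $(p^!_\BB\AA;\BB,\AA;M)$ (the vertical pullback algebroid $p^!_\BB\AA\Rightarrow\BB$ and the horizontal prolongation $p^!_\BB\AA\Rightarrow\AA$) are already established, and the theorem is precisely the statement that they are compatible. The gap is that your verification of compatibility never happens. In the definition the paper actually works with (Mackenzie's, recalled in Appendix \ref{app:multiplicativeforms}), compatibility means that the two duals $(p^!_\BB\AA)^*_\BB$ and $(p^!_\BB\AA)^*_\AA$ form a Lie bialgebroid over the core dual $\BB^*$; this is not a condition one dispatches by ``checking on generators and extending by Leibniz'' for the two brackets you have in hand, since it requires dualizing both structures first. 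The criterion you substitute for it --- vanishing of the graded commutator of the two homological vector fields on the doubly parity-reversed double vector bundle --- is indeed equivalent, but that equivalence is Voronov's theorem, a result of the same depth as the one the paper leans on, and nothing in the paper supplies it. Even granting it, your text stops exactly where the content begins: you never write down the two homological fields (whose natural generators are fibrewise-linear functions, not the section generators $T_\AA b$, $\widehat{b}$ you list), never state the mixed identities to be checked, and explicitly defer ``the crux'' to an expectation that Jacobi, Leibniz and the morphism property of $\rho^\AA_\BB$ will suffice. The paper itself warns that this direct route ``would be quite involved''; announcing the expected ingredients is not the same as carrying out the reduction, and nothing in your sketch rules out an obstruction surfacing in, say, the mixed linear--core identities.

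For contrast, the paper's proof is genuinely different and complete, which is why it appears only after Theorem \ref{thm:prolongationLAgroupoid}: one picks a (local) integration $\GG\toto M$ of $\AA$ (which always exists), proves by elementary bracket computations on the generators $t^*v$, $s^*v$ that $(p^!_\BB\GG;\BB,\GG;M)$ is an LA-groupoid, and then invokes Mackenzie's framework (\cite{mackenzie1997}, Definition 1.16), in which an LA-groupoid differentiates to a double Lie algebroid. The only remaining check is that the differentiated structure coincides with the prolongation structure on $p^!_\BB\AA$, which is immediate on star and core sections: $(t^*v+s^*v,v)$ is a star section with $\AA(t^*v+s^*v)=T_\AA v$, and $\AA[t^*v+s^*v,t^*w+s^*w]=T_\AA[v,w]$, while core-core brackets vanish on both sides. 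To salvage your route you would need to cite the equivalence of the super-geometric and duality definitions explicitly and then perform the full commutator computation; otherwise, replace it with the integration--differentiation argument, which converts the ``quite involved'' compatibility check into a short comparison of brackets.
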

Due to the complicated definition of a double Lie algebroid, a direct proof of Theorem \ref{thm:prolongationdouble} would be quite involved, and significantly lengthen the paper. Instead, we will think of double Lie algebroids as the infinitesimal counterparts of LA-groupoids, as developed by Mackenzie in \cite{mackenzie1992,mackenzie1997}. We will prove in the next section that the above square is the infinitesimal object associated to an LA-groupoid.

\subsubsection{The pullback groupoid over a Lie algebroid} Let us return to the setting in which $\AA\Rightarrow M$ is a Lie algebroid, and $\BB\Rightarrow M$ is a Lie algebroid anchored to $\AA$. Suppose that $\GG\toto M$ is a (local) integration of $\AA$. Then $p_\BB^!\GG\toto \BB$ is a (local) VB-groupoid integrating $p_\BB^!\AA\Rightarrow \BB$. Explicitly, the pullback groupoid is given by
\[
p_\BB^!\GG=\{ (v, g, w)\in \BB\times \GG\times \BB: p_\BB(v)=t(g), p_\BB(w)=s(g)\}
\]
whose source and target maps $\tilde{s}, \tilde{t}:p^!_\BB\GG\to \BB$ are the projections on the first and last $\BB$-components, and multiplication is given by $(v, g, w)\cdot (v', h, w')=(v, gh, w')$ whenever the tuples are composable. As a vector bundle, it is isomorphic go $t^*\BB\oplus s^*\BB\to \GG$.

We now describe the Lie algebroid structure on $p_\BB^!\GG\to \GG$.

\textbf{The anchor.} For $(v, g, w)\in p_\BB^!\GG$, we set the anchor $\varrho:p^!_\BB\GG\to T\GG$  to be
\[
\varrho(v, g, w)=T_{t(g)}R_g (\rho_\BB^\AA(v))+T_{s(g)}L_g T_{s(g)} i (\rho_\BB^\AA(w))\in T_g\GG. 
\]
where we identified $\AA\cong \ker Ts\vert_M$. It can be readily checked to be a morphism of VB-groupoids, covering the map $\rho_\AA^\BB\circ\rho_\BB=\rho_\BB: \BB\to TM$ on the unit spaces and $\rho_\BB^\AA:\BB\to \AA$ on the cores.

\textbf{The Lie bracket.} First, sections of the form $t^*v$ and $s^*v$, with $t^*v(g)=(v(t(g)), g, 0))$ and $s^*v(g)=(0, g, v(s(g))$, generate $\Gamma(p^!_\BB\GG\to \GG)$. We define a Lie bracket on those generators by
\[
[t^*v, t^*w]=t^*[v, w], \quad [s^*v, s^*w]=s^*[v, w], \quad [t^*v, s^*w]=0, 
\]
and extend it via the Leibniz rule. This is possible, because the equations are compatible: if $f\in C^\infty(M)$ and $v, w\in \Gamma(\BB)$, then
\begin{align*}
[t^*v, s^*(fw)]&=	[t^*v, s^*f s^*w]=\LL_{\varrho(t^*u)}(s^*f) s^*w=0,\\
[t^*v, t^*(fw)]&=[t^*v, t^*f t^*w]=\LL_{t^*v}(t^*f) t^*w+(t^*f) t^*[v, w]\\
&=t^* \LL_v(f)t^*w+t^*(f[v,w])=t^*([v, fw]).
\end{align*}
One readily verifies that the anchor $\varrho$ preserves the bracket. The Jacobi identity can be proven from the following claim that can be verified by a straightforward computation. Let 
\[
\operatorname{Jac}(U, V, W)=[[U,V],W]+[[V, W], U]+[[W, U], V].
\]
for $U, V, W\in \Gamma(p^!_\BB\GG\to \GG)$. Then for all $f\in C^\infty(\GG)$ we have
\[
\operatorname{Jac}(U, V, fW)=f\operatorname{Jac}(U, V, W)+\LL_{\varrho\left([U, V]\right)}(f)-\LL_{\left[\varrho(U), \varrho(V)\right]}(f). 
\]
Since the anchor $\varrho$ is already bracket-preserving, it follows that $\operatorname{Jac}$ is $C^\infty(\GG)$-multilinear, and so it vanishes on all sections whenever it is zero on a set of generators. It is readily checked that it vanishes on sections of the form $t^*u$ and $s^*u$, so the above construction gives a Lie algebroid over $\GG$.

\begin{theorem}\label{thm:prolongationLAgroupoid}
	Let $\AA, \BB$ be Lie algebroids over $M$, and let $\GG\toto M$ be an integration of $\AA$. Suppose that $\BB$ is anchored to $\AA$. Then, with the Lie algebroid structure described above, the square
	\[
	\begin{tikzcd}
		p^!_\BB\GG \arrow[r, Rightarrow]\arrow[d, shift left] \arrow[d, shift right] & \GG \arrow[d, shift left]\arrow[d, shift right]\\
		\BB \arrow[r, Rightarrow] & M
	\end{tikzcd}
	\]
	is an LA-groupoid that differentiates to the double Lie algebroid $(p^!_\BB\AA; \BB, \AA; M)$.
\end{theorem}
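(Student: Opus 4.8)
The plan is to verify the two defining conditions of an LA-groupoid and then to identify its infinitesimal object. An LA-groupoid $(p^!_\BB\GG; \BB, \GG; M)$ requires that the source and target maps $\tilde s, \tilde t : p^!_\BB\GG \to \BB$ be morphisms of Lie algebroids (over $s, t : \GG \rightrightarrows M$), together with the compatibility that the groupoid multiplication is a Lie algebroid morphism from the fibre product to $p^!_\BB\GG$; equivalently, that the double structure is that of a groupoid object internal to the category of Lie algebroids. Most of the double-vector-bundle axioms and the groupoid axioms are already built into the explicit description of $p^!_\BB\GG = \{(v,g,w)\}$ given above, so the content to check is the interaction between the Lie bracket on $\Gamma(p^!_\BB\GG \to \GG)$ and the groupoid structure maps. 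First I would verify that $\tilde s$ and $\tilde t$ are algebroid morphisms. On the generating sections $t^*v$ and $s^*v$ this is immediate: $\tilde t_* (t^*v) = v$ while $\tilde t_*(s^*v)=0$, and dually for $\tilde s$, so the bracket relations $[t^*v,t^*w]=t^*[v,w]$, $[s^*v,s^*w]=s^*[v,w]$, $[t^*v,s^*w]=0$ are exactly what is needed for $\tilde t_*$ and $\tilde s_*$ to respect brackets. That the anchor $\varrho$ covers $\rho_\BB^\AA$ on cores and $\rho_\BB$ on units was already noted, giving compatibility of anchors over the groupoid structure.

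Next I would establish multiplicativity of the Lie algebroid structure, i.e. that the bracket is compatible with the partial multiplication $(v,g,w)\cdot(v',h,w')=(v,gh,w')$. The clean way to do this is to use star/multiplicative sections: a section $\sigma$ of $p^!_\BB\GG \to \GG$ is multiplicative when it is $\tilde s$- and $\tilde t$-projectable and its values multiply along composable arrows. I would check that $t^*v$ and $s^*v$ are each multiplicative with respect to the groupoid structure on $p^!_\BB\GG \rightrightarrows \BB$ (using that $t^*v$ is supported on the target leg and is right-invariant under the multiplication $(v,gh,w')=(v,g,w)\cdot(\ast,h,w')$, and symmetrically for $s^*v$), and that the bracket of two multiplicative sections is again multiplicative with the expected projection. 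Because the generators $t^*v, s^*v$ span $\Gamma$ over $C^\infty(\GG)$ and the bracket is determined by its values on them via the Leibniz rule, multiplicativity on generators propagates to all sections; the $C^\infty(\GG)$-multilinearity argument already used to reduce the Jacobi identity to generators is the same mechanism that lets me reduce multiplicativity to generators.

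For the differentiation statement, I would apply the Lie functor to the VB-groupoid $p^!_\BB\GG \rightrightarrows \BB$ in the horizontal direction and show the resulting VB-algebroid over $\BB$ is canonically $p^!_\BB\AA$, with the prolongation bracket of Section \ref{subsec:prolongationsoverLiealgebroids}. Concretely, the Lie algebroid of $\GG \rightrightarrows M$ is $\AA$, and differentiating the pullback groupoid $p^!_\BB\GG$ along its source $\tilde s$ yields precisely the pullback algebroid $p^!_\BB\AA$ as a bundle over $\BB$; under this identification the right-invariant vector fields generated by $t^*v$ differentiate to the sections $T_\AA b$ (complete/tangent lifts) and $s^*v$ to the core lifts $\widehat{b}$. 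I would then match the three bracket relations $[t^*v,t^*w]=t^*[v,w]$, $[t^*v,s^*w]=0$, $[s^*v,s^*w]=s^*[v,w]$ against the prolongation relations $[T_\AA b_1,T_\AA b_2]=T_\AA[b_1,b_2]$, $[T_\AA b_1,\widehat{b_2}]=\widehat{[b_1,b_2]}$, $[\widehat{b_1},\widehat{b_2}]=0$, confirming the infinitesimal object is the double Lie algebroid of Theorem \ref{thm:prolongationdouble}.

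The main obstacle I anticipate is the bookkeeping in the differentiation step: the two legs $t^*v$ and $s^*v$ do not differentiate symmetrically (one picks up the inverse map $i$ through the anchor formula, consistent with the left-translation term $T L_g T i\,(\rho_\BB^\AA(w))$), and one must track carefully how the groupoid source fibration, the VB-groupoid core, and the horizontal algebroid structure align so that the sign conventions and the role of $\widehat{b}$ versus $T_\AA b$ come out correctly. Verifying multiplicativity at the level of the bracket — rather than only at the level of the anchor, which is easy — is where the genuine compatibility between the groupoid multiplication and the prolonged bracket must be used, and I expect that to require the explicit composable-pair description of $p^!_\BB\GG$ most heavily.
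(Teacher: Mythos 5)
Your overall architecture matches the paper's: first show the groupoid structure maps of $p^!_\BB\GG\toto\BB$ are Lie algebroid morphisms (your source/target check on the generators $t^*v$, $s^*v$ is essentially the paper's argument, which makes it rigorous by invoking Proposition 3.4.8 of \cite{mackenzie2005}: a fibrewise surjective bundle map compatible with anchors that preserves relatedness of sections is a Lie algebroid morphism), then differentiate and match brackets on generators. However, both of your key steps fail as stated. For the multiplication step, the sections $t^*v$ and $s^*v$ are \emph{not} multiplicative: at a composable pair $(g,h)$ the values $t^*v(g)=(v(t(g)),g,0)$ and $t^*v(h)=(v(t(h)),h,0)$ are not even composable in $p^!_\BB\GG$, since $\tilde{s}(t^*v(g))=0$ while $\tilde{t}(t^*v(h))=v(t(h))$. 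The genuinely multiplicative (star) sections are the sums $t^*v+s^*v$. Moreover, your propagation argument is unsound: multiplicativity of sections is not a $C^\infty(\GG)$-linear condition (multiplying a multiplicative section by an arbitrary function destroys it), so it cannot be spread from generators the way the Jacobiator argument works; a generic section of an LA-groupoid is not multiplicative at all. What the paper does instead is prove directly that $\tilde{m}:(p^!_\BB\GG)^{(2)}\to p^!_\BB\GG$ is a Lie algebroid morphism, by exhibiting generators $\pr_1^*t^*u+\pr_1^*s^*v+\pr_2^*t^*v+\pr_2^*s^*w$ of $(p^!_\BB\GG)^{(2)}\to\GG^{(2)}$ that are $\tilde{m}$-related to $t^*u+s^*w$, computing their brackets componentwise, and again applying the fibrewise-surjectivity criterion.

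Your differentiation dictionary is also wrong, in a way that makes the proposed bracket-matching inconsistent. You assign $t^*v\mapsto T_\AA v$ and $s^*v\mapsto\widehat{v}$; then $[t^*v,s^*w]=0$ would have to correspond to $[T_\AA v,\widehat{w}]=\widehat{[v,w]}$, which is nonzero, so the brackets do not match under your correspondence. The correct dictionary (used in the paper's proof of Theorem \ref{thm:prolongationdouble}, via Mackenzie's differentiation procedure recalled in Appendix \ref{sec:multiplicativeformsLAgroupoids}) is: the \emph{star sections} $(t^*v+s^*v,v)$ differentiate to the linear sections, $\AA(t^*v+s^*v)=T_\AA v$, while the \emph{core sections} $t^*v$ give the core lifts $\widehat{v}$. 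With this dictionary all three brackets match: $[t^*v+s^*v,t^*w+s^*w]=t^*[v,w]+s^*[v,w]$ differentiates to $T_\AA[v,w]$; $[t^*v+s^*v,t^*w]=t^*[v,w]$ restricts along the units to $[v,w]$, matching $[T_\AA v,\widehat{w}]=\widehat{[v,w]}$; and core-core brackets vanish on both sides by definition of both structures. So the asymmetry you flag as a ``bookkeeping obstacle'' is real, but it is resolved by taking the diagonal sections $t^*v+s^*v$ as the star generators, not by differentiating $t^*v$ and $s^*v$ separately.
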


\begin{proof}
	Following the definition in \cite{mackenzie1997}, the only thing that is left to show is that the groupoid structure maps are Lie algebroid morphisms. The unit map is clearly a Lie algebroid morphism. For the others, we make use of the following lemma.
	\begin{lemma}[Proposition 3.4.8 in \cite{mackenzie2005}] If $(\varphi, f):\AA\to \AA'$ is a vector bundle morphism such that
		\begin{itemize}[noitemsep, topsep=0em]
			\item[1.] $\varphi$ is fibrewise surjective,
			\item[2.] $\rho_{\AA'}\circ \varphi=Tf\circ\rho_\AA$,
			\item[3.] If $v,w\in \Gamma(\AA)$ are $\varphi$-related to $v', w'\in\Gamma(\AA')$, then $[v,w]$ is $\varphi$-related to $[v',w']$,
		\end{itemize}
	then $(\varphi,f)$ is a Lie algebroid morphism.
	\end{lemma}
It follows immediately that the source and target maps are Lie algebroid morphisms. Inversion is also straightforward. Finally, for multiplication, notice that the vector bundle $(p_\BB^!\GG)^{(2)}\to \GG^{(2)}$ is generated by sections of the form
\[
(u,v,w):=\pr_1^*t^*u+\pr_1^*s^*v+\pr_2^*t^*v+\pr_2^*t^*w
\]
for $u,v,w\in \Gamma(\BB)$, with bracket $[(u_1, v_1, w_2), (u_2, v_2, w_2)]=([u_1, u_2], [v_1, v_2], [w_1, w_2])$. It is now clear that the multiplication map, sending $(u, v, w)$ to $t^*u+s^*w$, is bracket preserving. 
\end{proof}

\begin{proof}[Proof of Theorem \ref{thm:prolongationdouble}]
Let $\GG\toto M$ be a (local) integration of $\AA$.  Then by \cite{mackenzie1997}, Definition 1.16, it is enough to show that the Lie algebroid structure on $p^!_\BB\GG\Rightarrow \GG$ differentiates to the prolongation algebroid structure on $p^!_\BB\AA\Rightarrow \AA$. The differentiation procedure in \cite{mackenzie1997} is briefly summarized in Appendix \ref{sec:multiplicativeformsLAgroupoids}. 

It is clear that the anchor of $p^!_\BB\GG\Rightarrow \GG$ differentiates to the anchor of $p^!_\BB\AA\Rightarrow \AA$. For the bracket, notice that the core sections of $p^!_\BB\GG$ are generated by $t^*v$ for $v\in \Gamma(\BB)$. Moreover, for $v\in \Gamma(\BB)$ the pair $(t^*v+s^*v, v)$ is a star section, and star sections of this form, together with the core sections already generate $\Gamma(p^!_\BB\GG\Rightarrow \GG)$. Furthermore, in the notation of Section \ref{sec:multiplicativeformsLAgroupoids}, $\AA(t^*v+s^*v)=T_\AA v$. The bracket obtained by differentiation corresponds with the Lie bracket of the prolongation in Theorem \ref{thm:prolongationdouble} because
\begin{align*}
\AA[t^*v+s^*v, t^*w+s^*w]&=\AA(t^*[v, w]+s^*[v, w])=T_\AA[v,w],\\
[t^*v+s^*v, t^*w]\vert_M&=[v, w]
\end{align*}
and the bracket between core sections are zero for both.
\end{proof}

\section{Linearization of Poisson groupoids}\label{sec:linearizationofpoissongroupoids}

In this section we investigate the linearizability of several classes of Poisson groupoids. Before we do so, we first prove a linearization result for Lagrangian transversals of (co)symplectic Lie algebroids.

\subsection{Lagrangian neighbourhood theorem for cosymplectic Lie algebroids}\label{sec:linearization}

We generalize Weinstein's Lagrangian neighbourhood theorem \cite{Weinstein1971} to the setting of cosymplectic Lie algebroids. The proof relies on a Moser argument, as well as a suitable homotopy operator for Lie algebroid cohomology along transversal submanifolds, which is made possible by means of the splitting theorem for Lie algebroids by Bursztyn, Lima and Meinrenken \cite{Bursztyn2016Splitting}.

\subsubsection{An $\AA$-cosymplectic Moser lemma}
The $\AA$-symplectic version of the lemma below will also appear in \cite{sjamaarpreparation}. 

\begin{lemma}[Cosymplectic Moser lemma]\label{lem:moserlemma}
	Let $\AA\Rightarrow M$ be an algebroid with cosymplectic structures $(\alpha_1, \dots, \alpha_k, \omega)$ and $(\tilde{\alpha}_1,\dots\tilde{\alpha}_k,  \tilde{\omega})$ that agree along a transverse submanifold $L$ in the sense that $\alpha_j\vert_L=\tilde{\alpha}_j\vert_L$ and $\omega\vert_L=\tilde{\omega}\vert_L$. Then there exists a Lie algebroid isomorphism $(\tilde{\varphi}, \varphi): \AA\vert_{U}\to \AA\vert_{\tilde{U}}$ over open neighbourhoods $U$, $\tilde{U}$ of $L$ in $M$ such that the following hold:
	\begin{itemize}[noitemsep, topsep=0em]
		\item $\tilde{\varphi}^*\tilde{\alpha}_j=\alpha_j$ for $j=1, \dots, k$;  
		\item $\tilde{\varphi}^*\tilde{\omega}=\omega+\sum_{j=1}^k d(f^j\alpha_j)$ for some functions $f^j$. 
		\item $\tilde{\varphi}\vert_{\AA\vert_L}$ is the identity.
	\end{itemize} 
	In particular, $(\tilde{\varphi}, \varphi)$ is an $\AA$-Poisson isomorphism for the underlying Poisson structures.
\end{lemma}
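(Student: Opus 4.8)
The plan is to run a Moser deformation argument adapted to the cosymplectic setting, using the $\AA$-cohomological homotopy operator along the transversal as the replacement for the relative Poincaré lemma. First I would linearly interpolate the two structures by setting $\alpha_j^t = \alpha_j + t(\tilde\alpha_j - \alpha_j)$ and $\omega^t = \omega + t(\tilde\omega - \omega)$ for $t \in [0,1]$. Each $\alpha_j^t$ and $\omega^t$ is $d_\AA$-closed, and along $L$ they coincide with $\alpha_j$ and $\omega$; in particular the cosymplectic flat map $\flat_t \colon \AA \to \AA^*$, $v \mapsto \iota_v\omega^t + \sum_i \alpha_i^t(v)\alpha_i^t$, restricts on $\AA\vert_L$ to the original isomorphism $\flat$. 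By openness of the non-degeneracy condition together with compactness of $[0,1]$, the map $\flat_t$ is therefore an isomorphism on some neighbourhood $U$ of $L$, uniformly in $t$. I would then seek a time-dependent section $X_t \in \Gamma(\AA\vert_U)$ whose Lie algebroid flow $(\tilde\varphi_t, \varphi_t)$ satisfies $\tilde\varphi_t^*\alpha_j^t = \alpha_j$ and $\tilde\varphi_t^*\omega^t = \omega + (\text{exact correction})$.

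Differentiating these conditions in $t$ and using Cartan's formula together with closedness reduces them to the pointwise equations (a) $\alpha_j^t(X_t) = -g_j$ and (b) $\iota_{X_t}\omega^t + \lambda \in \operatorname{span}(\alpha_1^t, \dots, \alpha_k^t)$, where $g_j$ and $\lambda$ are $d_\AA$-primitives of the closed forms $\tilde\alpha_j - \alpha_j$ and $\tilde\omega - \omega$. To produce these primitives I would invoke the homotopy operator $Q$ for $\AA$-cohomology along the transversal $L$ furnished by the splitting theorem: since $\tilde\alpha_j - \alpha_j$ and $\tilde\omega - \omega$ are closed and vanish along $L$, the operator yields $g_j = Q(\tilde\alpha_j - \alpha_j)$ and $\lambda = Q(\tilde\omega - \omega)$ with $d_\AA g_j = \tilde\alpha_j - \alpha_j$, $d_\AA\lambda = \tilde\omega - \omega$, and --- this is the crucial quantitative point --- with $g_j$ and $\lambda$ vanishing to \emph{second} order along $L$, because $Q$ raises the order of vanishing by one.

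With the primitives in hand I would solve (a)--(b) using the flat map. Writing $X_t = \flat_t^{-1}\bigl(-\lambda + \sum_i c_i^t\alpha_i^t\bigr)$ for unknown functions $c_i^t$, condition (b) holds automatically, while (a) becomes the linear system $\sum_i \langle \alpha_j^t, \flat_t^{-1}(\alpha_i^t)\rangle\, c_i^t = -g_j + \langle \alpha_j^t, \flat_t^{-1}(\lambda)\rangle$. Its coefficient matrix reduces to the identity along $L$ (there $\flat_t^{-1}(\alpha_i^t) = R_i$ and $\alpha_j(R_i) = \delta_{ij}$), hence is invertible on a possibly smaller neighbourhood, giving a unique smooth solution $c_i^t$. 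Since $g_j$ and $\lambda$ vanish to second order, so do the $c_i^t$ and hence $X_t$. Feeding (b) back through the Moser identity and integrating from $0$ to $1$ produces $\tilde\varphi := \tilde\varphi_1$ with $\tilde\varphi^*\tilde\alpha_j = \alpha_j$ and $\tilde\varphi^*\tilde\omega = \omega + \sum_j d(f^j\alpha_j)$, where $f^j = \int_0^1 \tilde\varphi_t^* h_j^t\, dt$ and $h_j^t$ are the coefficients appearing in (b); the identities $\tilde\varphi_t^*\alpha_j^t = \alpha_j$ are precisely what let me pull the $\alpha_j^t$ out of the integral as the fixed forms $\alpha_j$.

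Finally, the second-order vanishing of $X_t$ is exactly what forces $\tilde\varphi\vert_{\AA\vert_L} = \operatorname{id}$: the $1$-jet of $X_t$ vanishes along $L$, so $[X_t, w]\vert_L = 0$ for every $w \in \Gamma(\AA)$, and the defining flow equation $\tfrac{d}{dt}\tilde\varphi_t^* w = \tilde\varphi_t^*[X_t, w]$ then keeps $\tilde\varphi_t^* w\vert_L$ constant, i.e. equal to $w\vert_L$. The remaining Poisson statement is a clean consequence: $\tilde\varphi$ carries the foliation $\FF = \bigcap_i \ker\alpha_i$ to $\bigcap_i \ker\tilde\alpha_i$, and since $d(f^j\alpha_j) = df^j \wedge \alpha_j$ restricts to zero on $\FF$, the map $\tilde\varphi$ intertwines $\omega\vert_\FF$ with $\tilde\omega$ on the image foliation, hence the induced $\AA$-Poisson structures. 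I expect the main obstacle to be the homotopy-operator step: one must verify that the splitting-theorem retraction produces $\AA$-primitives that genuinely vanish to \emph{second} order along $L$, since only this (and not mere vanishing) guarantees the identity condition $\tilde\varphi\vert_{\AA\vert_L} = \operatorname{id}$ and keeps the Moser flow defined on a neighbourhood of $L$.
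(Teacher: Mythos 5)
Your proof is correct, and it rests on the same pillars as the paper's own argument: a Moser deformation along the linear interpolation, primitives supplied by the relative Poincar\'e lemma for Lie algebroid transversals (Lemma \ref{lem:poincare}, itself a consequence of the splitting theorem), the refinement that a closed form vanishing along $L$ (not merely pulling back to zero on $i^!\AA$) admits a primitive whose first $\AA$-derivatives also vanish along $L$, and the observation that this second-order vanishing of the Moser section forces its flow to fix $\AA\vert_L$. The difference is organizational: the paper runs the Moser argument in \emph{two} steps, first correcting only the one-forms by \emph{any} section $X_t$ with $\alpha_{t,j}(X_t)=-h_j$ vanishing to second order (an underdetermined system; no flat map is needed), and then, once the one-forms literally agree near $L$, correcting the two-form by a section $Y_t\in\Gamma(\FF)$ of the now-fixed foliation $\FF=\cap_j\ker\alpha_j$ satisfying $\iota_{Y_t}\omega_t=-\varphi+\sum_j f^j_t\alpha_j$; tangency to $\FF$ makes preservation of the $\alpha_j$ automatic in that second step. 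You instead correct everything with a single flow, pinning $X_t$ down uniquely by inverting the interpolated flat map $\flat_t$ and solving a $k\times k$ linear system whose coefficient matrix is the identity along $L$ (via $\alpha_j(R_i)=\delta_{ij}$). Your route is more unified and produces an explicit $X_t$, at the price of checking uniform-in-$t$ invertibility of $\flat_t$ and solvability of the linear system; the paper's route needs only linear independence of the interpolated one-forms in step one and non-degeneracy of $\omega_t$ on $\FF$ in step two. Two points you compress are indeed fine: the coefficients in your condition (b) are produced explicitly by the ansatz as $c_i^t-\alpha_i^t(X_t)$, so no pointwise independence of the $\alpha_i^t$ is needed to define them; and second-order vanishing in the $\AA$-sense ($[X_t,Y]\vert_L=0$ for all $Y\in\Gamma(\AA)$) does pass through the bundle isomorphism $\flat_t^{-1}$, since in any local frame it is a condition on the coefficient functions and their derivatives along the image of the anchor, and these are matched frame-to-frame by $\flat_t$.
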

The proof requires a few ingredients.
\begin{theorem}[Splitting theorem for Lie algebroid transversals, \cite{Bursztyn2016Splitting}]\label{thm:splitting} Let $\AA\Rightarrow M$ be a Lie algebroid, and $i:L\hookrightarrow M$ a closed, embedded submanifold transverse to $\AA$. Then there exists a neighbourhood $U$ of $L$ in $M$ and a Lie algebroid isomorphism
	\[
	p_{NL}^!i^!\AA\to\AA\vert_U\
	\] 
	covering a tubular neighbourhood $NL\to U$. 
\end{theorem}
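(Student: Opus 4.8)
The plan is to follow the Euler-like vector field method of Bursztyn, Lima and Meinrenken, which underlies the cited result. The skeleton is: (i) an Euler-like vector field $\mathcal E$ along $L$ (vanishing on $L$, with linear approximation equal to the fibrewise scaling vector field on $NL$) determines a canonical tubular neighbourhood linearizing $\mathcal E$ to scaling; (ii) for the Lie algebroid one upgrades this to a section $\varepsilon\in\Gamma(\AA)$ over a neighbourhood of $L$ whose anchor is Euler-like and whose flow consists of Lie algebroid automorphisms; (iii) the flow of $\varepsilon$ simultaneously produces the tubular neighbourhood on $M$ and trivializes $\AA$ onto the pullback $p_{NL}^!i^!\AA$.

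First I would record the transversality bookkeeping. Because $L$ is transverse to $\AA$, the anchor restricts to a surjection $\AA|_L\to NL$ with kernel $i^!\AA|_L$, so there is a (non-canonical) splitting $\AA|_L=i^!\AA|_L\oplus C$ with $\rho_\AA\colon C\xrightarrow{\sim}NL$. I would call a section $\varepsilon\in\Gamma(\AA|_U)$ \emph{Euler-like along $L$} if $\varepsilon|_L=0$ and its first-order jet along $L$, transported to $NL$ through this splitting, is the canonical scaling section. The key construction is to produce such an $\varepsilon$. Locally, where $\AA$ trivializes and $L$ is a coordinate subspace, one writes down an explicit Euler-like section by hand using $C\xrightarrow{\sim}NL$; globally one patches these local models with a partition of unity subordinate to a cover of $L$. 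Euler-likeness constrains only $\varepsilon|_L=0$ and the linearization, both preserved under convex combinations, so the patched section is again Euler-like, and by construction its anchor $\mathcal E=\rho_\AA(\varepsilon)$ is an Euler-like vector field near $L$.

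Next I would exploit the flow of $\varepsilon$. Exactly as in the definition of complete lifts earlier in the paper, $\varepsilon$ integrates to a path of Lie algebroid isomorphisms $(\Phi_t,\phi_t)\colon\AA\to\AA$ defined near $L$, with $\phi_t$ the flow of $\mathcal E$. Since $\mathcal E$ is Euler-like, $\phi_t$ contracts a neighbourhood $U$ of $L$ onto $L$ as $t\to-\infty$, and the linearization lemma for contracting flows identifies $\phi_t$, after rescaling, with the scaling flow on $NL$; this is precisely the tubular neighbourhood $\psi\colon NL\to U$ sending scaling to $\mathcal E$. On the model side, the scaling vector field of $NL$, viewed as a section $(0,\mathcal E_{NL})$ of $p_{NL}^!i^!\AA$ via the zero of $i^!\AA$, is a canonical Euler-like section whose flow is the prolongation of the scaling flow. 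Both $\varepsilon$ and this canonical section are Euler-like with the same linearization, so I would run a flow-limit argument: transport $\varepsilon$ through $\psi$, compare it to the canonical Euler section, and integrate the resulting difference to an algebroid automorphism, producing an isomorphism $p_{NL}^!i^!\AA\xrightarrow{\sim}\AA|_U$ covering $\psi$ and restricting to the identity on $i^!\AA$ over $L$.

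The main obstacle is the convergence of the rescaled algebroid flow as $t\to-\infty$: one must show that $\lim_{t}(\text{rescaling})\circ\Phi_t$ exists and defines a smooth vector-bundle, indeed Lie algebroid, isomorphism onto the linearized model, rather than merely a map of total spaces. This needs uniform estimates on the flow near $L$, which are supplied precisely by the Euler-like hypothesis, since the contraction rate matches the fibrewise scaling weight. A secondary technical point is ensuring that $\varepsilon$ can be built with its full $1$-jet along $L$ equal to that of the canonical Euler section; this is what guarantees the limiting map is nondegenerate along $L$ and hence a genuine isomorphism on a neighbourhood $U$.
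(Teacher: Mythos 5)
This theorem is quoted from Bursztyn--Lima--Meinrenken \cite{Bursztyn2016Splitting}; the paper gives no proof of its own, so the benchmark is the cited argument, and your proposal correctly reconstructs exactly that argument: an Euler-like section $\varepsilon\in\Gamma(\AA)$ obtained from local models and the convexity of the Euler-like condition under partitions of unity, the canonical tubular neighbourhood determined by the Euler-like anchor $\rho_\AA(\varepsilon)$, and the rescaled-flow limit (which Bursztyn--Lima--Meinrenken handle by showing the rescaled family extends smoothly to the limiting parameter, rather than by estimates) trivializing $\AA\vert_U$ onto $p_{NL}^!i^!\AA$. Since this is essentially the same approach as the source the paper relies on, there is nothing further to compare.
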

The following result can be seen is a direct application of the splitting theorem (see \cite{Weinstein1971}, Equation 3.1, for a similar result for the tangent bundle only; the underlying principles are very much the same).  
\begin{lemma}[Relative Poincar\'e lemma for Lie algebroid transversals]\label{lem:poincare} Let $\AA\Rightarrow M$ be a Lie algebroid, and $i:L\hookrightarrow M$ a submanifold transverse to $\AA$. Let $\omega\in \Omega^{k+1}(\AA)$ be a closed $(k+1)$-form such that $0=i^*\omega\in \Omega^{k+1}(i^!\AA)$. Then there is an open neighbourhood $U$ of $L$ in $M$ and a $k$-form $\varphi\in\Omega^k(\AA\vert_U)$ such that $d\varphi=\omega\vert_U$ and $\varphi\vert_L=0$. If moreover $\omega\vert_L=0$, then all first $\AA$-derivatives of $\varphi$ vanish along $L$.
\end{lemma}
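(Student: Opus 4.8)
The statement is the relative Poincaré lemma for $\AA$-forms, and the plan is to deduce it from the splitting theorem (Theorem \ref{thm:splitting}) together with a fibrewise-scaling homotopy operator built from the pullback-algebroid machinery of Section 2. First I would apply Theorem \ref{thm:splitting} to obtain a neighbourhood $U$ of $L$ and a Lie algebroid isomorphism $\Psi\colon p_{NL}^! i^!\AA \to \AA\vert_U$ covering a tubular neighbourhood $NL\to U$ and restricting to the canonical identification on $i^!\AA$ over the zero section $L$. Since $\Psi$ is an algebroid morphism, $\Psi^*$ commutes with the differential, so it suffices to solve the problem for the closed form $\Psi^*\omega$ on the linear model $p_{NL}^! i^!\AA\Rightarrow NL$; the hypotheses $i^*\omega=0$ (and, for the refinement, $\omega\vert_L=0$) transfer verbatim because $\Psi$ is the identity over $L$, and a primitive constructed on the model pushes forward to one on $\AA\vert_U$ with the same restriction behaviour along $L$.

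On the model I would run the standard scaling homotopy, now internal to the algebroid. The scalar multiplications $m_\lambda\colon NL\to NL$ induce Lie algebroid morphisms $T_{i^!\AA}m_\lambda\colon p_{NL}^!i^!\AA\to p_{NL}^!i^!\AA$ (Section 2), forming a path from the projection to the zero section at $\lambda=0$ to the identity at $\lambda=1$, generated by the Euler section $\EE\in\Gamma(p_{NL}^!i^!\AA)$ whose anchor is the radial vector field on $NL$ (concretely the vertical lift of the tautological section). Because each $T_{i^!\AA}m_\lambda$ is an algebroid morphism, pullback commutes with the algebroid differential $d$, so Cartan's magic formula applies in the complex $\Omega^\bullet(p_{NL}^!i^!\AA)$. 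For the closed form $\omega$ I would then set
\[
\varphi=\int_0^1 \bigl(T_{i^!\AA}m_\lambda\bigr)^*\bigl(\iota_{\EE}\,\omega\bigr)\,\frac{d\lambda}{\lambda},
\]
so that differentiating the scaling flow and integrating yields $d\varphi=\omega-(T_{i^!\AA}m_0)^*\omega$. The map $m_0$ factors through $L$, hence $(T_{i^!\AA}m_0)^*\omega$ is the pullback along $p_{NL}$ of $i^*\omega=0$ and vanishes, giving $d\varphi=\omega$ on $U$.

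Two convergence and restriction points then need checking. The Euler section vanishes linearly along $L$, so $\iota_{\EE}\omega$ vanishes on $L$; combined with the $\lambda$-rescaling this makes the integrand $\lambda^{-1}\bigl(T_{i^!\AA}m_\lambda\bigr)^*(\iota_{\EE}\omega)$ bounded as $\lambda\to0$, so the integral converges and $\varphi\vert_L=0$. For the final claim I would track vanishing orders: if in addition $\omega\vert_L=0$, then $\omega$ vanishes to first order along the zero section, so $\iota_{\EE}\omega$ vanishes to second order; the scaling then shows the integrand vanishes to second order uniformly in $\lambda$, forcing $\varphi$ to vanish to second order along $L$ and hence all its first $\AA$-derivatives to vanish there.

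The routine verifications—Cartan's formula in the algebroid complex and the identification $(T_{i^!\AA}m_0)^*\omega = p_{NL}^*\,i^*\omega$—I expect to be straightforward given Section 2. The main obstacle is the last assertion: making the order-of-vanishing argument rigorous requires a careful local description of the pullback algebroid near $L$, choosing adapted frames in which the Euler section, the scaling action, and the differential $d$ can be written explicitly, so that \emph{vanishing to second order} is a well-defined condition on $\AA$-forms and is preserved both by the contraction $\iota_{\EE}$ and by the homotopy integral.
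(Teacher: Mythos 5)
Your proposal is correct and follows essentially the same route as the paper: reduce to the linear model $p_{NL}^!i^!\AA$ via the splitting theorem, then run the scaling homotopy generated by the Euler section (viewed as a section of the pullback algebroid), arriving at exactly the primitive $\varphi=\int_0^1 \lambda^{-1}\left(T_{i^!\AA}m_\lambda\right)^*(\iota_{\EE}\omega)\,d\lambda$. The final refinement you flag as the main obstacle is handled in the paper by the same order-of-vanishing observation, phrased simply as $\LL_X(\iota_{\EE}\omega)\vert_L=0$ for every section $X\in\Gamma(p_{NL}^!i^!\AA)$, whence $\LL_X(\varphi)\vert_L=0$; no adapted frames are needed.
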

\begin{proof}
	Using the splitting theorem, we can pass on to a tubular neighbourhood, and assume that $M=E$ is the total space of a vector bundle $p:E\to M$ and $\AA=p^!i^!\AA$. For notational purposes, we set $\BB=i^!\AA$. Because $\ker Tp$ sits naturally inside $p^!\BB$, the Euler vector field $\mathcal{E}$ on $E$, being vertical, can be regarded as a section of $p^!\BB$. Its Lie algebroid flow $(\Phi, \varphi)$ is exactly $(T_{\BB} m_{e^t}, m_{e^t})$.

Start with the following:
	\begin{align*}
		\ddt \left( T_\BB m_t\right)^* \omega&= \ddt \left( \Phi_{\log t} \right)^* \omega = \frac{1}{t}\left(\Phi_{\log t}\right)^*\left( \LL_{\mc{E}}(\omega)\right)\\
		&=\frac{1}{t}\left(\Phi_{\log t}\right)^* d(\iota_{\mc{E}} \omega)= d\left( \frac{1}{t} \left(T_\BB m_t\right)^*\left(\iota_{\mc{E}} \omega\right)\right).
	\end{align*}
	The form $(1/t) \left(T_\BB m_t\right)^* (\iota_{\mc{E}} \omega)$ extends smoothly to $t=0$, and for all $t\in [0,1]$, it vanishes along $L$.
	
	Because $T_\BB m_0=T_\BB(i\circ p)=T_\BB i\circ T_\BB p$ factors through the inclusion $T_\BB i:\BB \hookrightarrow p^!\BB$, we can write
	\begin{align*}
		\omega&=\left(T_\BB m_1\right)^*\omega -\left(T_\BB m_0\right)^* \omega=\int_0^1 \ddt \left(T_\BB m_t\right)^* \omega dt\\
		&= d \left(\int_0^1 \frac{1}{t} \left(T_\BB m_t\right)^*(\iota_{\mc{E}} \omega) dt\right)=d\varphi.
	\end{align*}	
	Finally, assume that $\omega\vert_L=0$. Then for all sections $X\in \Gamma(p^!i^!\AA)$ we have $\LL_X(\iota_{\mc{E}}\omega)\vert_L=0$. Therefore, $\LL_X(\varphi)\vert_L=0$, so all the first $\AA$-derivatives of $\varphi$ vanish along $L$.
\end{proof}
\begin{proof}[Proof of Lemma \ref{lem:moserlemma}]
	The proof is a two step Moser argument. First, we adjust the cosymplectic structures so that we can assume that the one-forms agree in a neighbourhood of $L$. 
	In the second step we make the two-forms agree up to the `gauge equivalence' in the statement of the lemma.
	
	\textit{Step 1.} The one-forms $\alpha_j$ and $\tilde{\alpha}_j$ agree along $L$, so by the Poincar\'e Lemma \ref{lem:poincare} there are functions $h_j$ such that $\tilde{\alpha}_j-\alpha_j=dh_j$ in a neighbourhood $U$ of $L$, where $h_j$ and its first $\AA$-derivatives vanish along $L$. 
	
	Consider the one-forms $\alpha_{t,j}=\alpha_j+t(\tilde{\alpha}_j-\alpha_j)=\alpha_0+tdh_j$. Shrinking $U$ if necessary, we can assume that $\alpha_{t,j}$ are linearly independent on $U$ for each $t\in [0,1]$. 
	
	We can find a time-dependent section $X_t\in \Gamma(\AA\vert_U)$, vanishing together with its $\AA$-derivatives along $L$ 
	(meaning that $[X_t, Y]\vert_L=0$ for all $Y\in \Gamma(\AA)$), that satisfies $\alpha_{t,j}(X_t)=-h_j$ for all $j$. Its flow $(\tilde{\varphi}_t, \varphi_t)$ is, after shrinking, defined on $U$ for $t\in [0,1]$. Then
	\[
	\ddt (\tilde{\varphi}_t)^*(\alpha_{t,j})= \tilde{\varphi}_t^*\left(d\alpha_{t,j}(X_t)+dh_j\right)=0,
	\]
	so at $t=1$ we have $\tilde{\varphi}_1^*\tilde{\alpha}_j=\alpha_j$. Because $X_t$ and its first $\AA$-derivatives vanish along $L$, the isomorphism $\tilde{\varphi}_t$ restricts to the identity on $\AA\vert_L$. Therefore, $\tilde{\varphi}_t^*\tilde{\omega}\vert_L=\tilde{\omega}\vert_L$, so the two-forms still agree along $L$.
	
	\textit{Step 2.} By Step 1, we can assume that $\alpha_j=\tilde{\alpha_j}$ for all $j=1, \dots, k$. By the Poincar\'e lemma, there is a one-form $\varphi\in \Omega^{1}(\AA\vert_U)$ with $d\varphi=\tilde{\omega}-\omega$ and $\varphi\vert_L=0$.
	
	As usual, let $\omega_t=\omega+t(\tilde{\omega}-\omega)$, which is non-degenerate on $\FF=\ker \alpha_1\cap\dots \cap \ker \alpha_k$ in a neighbourhood of $L$. Because of this, we can find $Y_t\in \Gamma(\FF)$ with $\iota_{Y_t}\omega_t=-\varphi+ \sum_{j=1}^k(f_t^j\alpha_j)$ for some functions $f_t^j\in C^\infty(U)$. The flow $(\tilde{\psi}_t, \psi_t)$ is defined for $t\in [0,1]$ in a neighbourhood $U$ of $L$, and on this neighbourhood it satisfies
	\begin{align*}
		\ddt(\tilde{\psi}_t)^*\omega_t&=(\tilde{\psi}_t)^* \left( d(\iota_{Y_t}\omega_t) + d\varphi\right)= \sum_{j=1}^kd\left(\tilde{\psi^*_t}\left(f^j_t\right)\tilde{\psi}^*_t\left(\alpha_j\right)\right) =\sum_{j=1}^kd\left(\tilde{\psi^*_t}\left(f^j_t\right)\alpha_j\right), \\
		\ddt(\tilde{\psi}_t)^*\alpha_j&=(\tilde{\psi}_t)^*\left(d(\iota_{Y_t} \alpha_j)\right)=0.
	\end{align*}
	Hence at time $t=1$, we find
	\[
	\tilde{\psi}_1^*\alpha_j=\alpha_j, \quad \tilde{\psi}_1^*\tilde{\omega}=\omega+ \sum_{j=1}^kd\left(\int_0^1 \tilde{\psi}_t^*f_t^j dt \right)\alpha_j.
	\]
	The statement is proven by taking $\tilde{\varphi}=\tilde{\psi}_1\circ\tilde{\varphi}_1$.
\end{proof}
\subsubsection{An $\AA$-cosymplectic Lagrangian neighbourhood theorem}\label{subsubsec:lagrangiannbhd}
The next result is a generalization of Weinstein's Lagrangian neighbourhood theorem \cite{Weinstein1971} to the setting of cosymplectic Lie algebroids.

\begin{theorem}[Cosymplectic Lagrangian neighbourhood theorem]\label{thm:lagrangiannbhd1}
	Let $\AA\Rightarrow M$ be a Lie algebroid equipped with two  $k$-cosymplectic structures $(\alpha_1, \dots, \alpha_k, \omega)$ and $(\tilde{\alpha}_1, \dots, \tilde{\alpha}_k, \tilde{\omega})$. Suppose that a transversal $i:L\hookrightarrow M$ for $\AA$ is a minimal Lagrangian transversal for both $k$-cosymplectic structures. 
	Then there exists a Lie algebroid isomorphism $(\tilde{\varphi}, \varphi): \AA\vert_{U}\to \AA\vert_{\tilde{U}}$ over open neighbourhoods $U$, $\tilde{U}$ of $L$ in $M$ such that the following hold:
	\begin{itemize}[noitemsep, topsep=0em]
		\item $\tilde{\varphi}^*\tilde{\alpha}_j=\alpha_j$ for $j=1, \dots, k$;  
		\item $\tilde{\varphi}^*\tilde{\omega}=\omega+\sum_{j=1}^k d(f^j\alpha_j)$ for some functions $f^j$. 
		\item $\tilde{\varphi}$ restricts to the identity on $i^!\AA\to L$.
	\end{itemize} 
	In particular, the Lie algebroid isomorphism $(\tilde{\varphi}, \varphi)$ is an $\AA$-Poisson isomorphism for the underlying Poisson structures.
\end{theorem}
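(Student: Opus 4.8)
The plan is to bootstrap from the Cosymplectic Moser Lemma (Lemma~\ref{lem:moserlemma}), whose hypothesis is the \emph{strong} agreement $\alpha_j\vert_L=\tilde\alpha_j\vert_L$ and $\omega\vert_L=\tilde\omega\vert_L$ as forms on the whole restricted bundle $\AA\vert_L$. The minimal Lagrangian hypothesis is weaker: it only forces both cosymplectic structures to \emph{vanish} on the subbundle $i^!\AA\subset\AA\vert_L$, and says nothing about the complementary (Reeb/normal) directions. So the heart of the argument is a preliminary normal-form step that upgrades ``both vanish on $i^!\AA$'' to ``the two structures are equal on all of $\AA\vert_L$'', after which Moser finishes the job. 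I expect the genuine obstacle to be not the linear algebra but the prolongation of the resulting fibrewise isomorphism to an honest Lie algebroid isomorphism on a neighbourhood, for which the Splitting Theorem~\ref{thm:splitting} is the key tool.

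\textbf{Step 1 (fibrewise normal form).} Fix $x\in L$ and write $V=\AA_x$, $K=(i^!\AA)_x$. The conditions $i^*\alpha_j=0$ give $K\subset\FF_x=\bigcap_j\ker\alpha_j$, while $i^*\omega=0$ says $K$ is isotropic for $\omega$; since $\operatorname{im}\pi_\AA^\sharp=\FF$, the $\AA$-Lagrangian condition $K^{\bot_{\pi_\AA}}=K\cap\operatorname{im}\pi_\AA^\sharp$ becomes $K^{\omega}\cap\FF_x=K$, i.e.\ $K$ is \emph{Lagrangian} in the symplectic vector space $(\FF_x,\omega)$, so that $\operatorname{rk} i^!\AA=n$. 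The same holds for the tilde-structure with the \emph{same} $K$. I would then run symplectic linear algebra: choose Lagrangian complements to $K$ inside $\FF_x$ and $\tilde\FF_x$, and use the canonical Reeb frames $R_j=\flat^{-1}(\alpha_j)$, $\tilde R_j=\tilde\flat^{-1}(\tilde\alpha_j)$, to bring both structures into the standard block form adapted to $K$. Matching these standard forms produces a linear isomorphism of $V$ equal to the identity on $K$ that pulls $\tilde\alpha_j$ back to $\alpha_j$ and $\tilde\omega$ back to $\omega$ (this uses precisely that $K$ is isotropic for \emph{both} two-forms and Lagrangian in both $\FF_x,\tilde\FF_x$). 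Carried out smoothly in $x$ (smooth Lagrangian complements, e.g.\ via compatible complex structures, together with the canonical Reeb frames), this yields a vector bundle isomorphism $\Psi\colon\AA\vert_L\to\AA\vert_L$ over $\operatorname{id}_L$, equal to the identity on $i^!\AA$, with $\Psi^*(\tilde\alpha_j\vert_L)=\alpha_j\vert_L$ and $\Psi^*(\tilde\omega\vert_L)=\omega\vert_L$.

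\textbf{Step 2 (prolongation to a neighbourhood).} I would promote $\Psi$ to a Lie algebroid isomorphism $\psi\colon\AA\vert_U\to\AA\vert_{\tilde U}$ restricting to $\Psi$ along $L$. For this I invoke Theorem~\ref{thm:splitting} to model $\AA$ near $L$ as the pullback algebroid $p^!\,i^!\AA$ over the normal bundle $NL$, and I arrange in Step~1 that $\Psi$ is block-diagonal with respect to the model splitting $\AA\vert_L=i^!\AA\oplus NL$, i.e.\ $\Psi=\operatorname{id}\oplus\phi$ for a bundle automorphism $\phi$ of $NL$. Then the functoriality of the pullback construction (a fibre-preserving diffeomorphism $\phi$ of $NL$ induces the Lie algebroid automorphism $T_{i^!\AA}\phi$ of $p^!\,i^!\AA$, acting as $\operatorname{id}\oplus\phi$ along $L$) prolongs $\phi$ to the desired $\psi$, which is automatically the identity on $i^!\AA$ along $L$. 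After transporting back, $(\psi^*\tilde\alpha_j,\psi^*\tilde\omega)$ is a cosymplectic structure agreeing with $(\alpha_j,\omega)$ along \emph{all} of $\AA\vert_L$. This interface --- making the fibrewise map smooth \emph{and} compatible with the splitting model so that it prolongs as a genuine algebroid map fixing $i^!\AA$ --- is the delicate point of the argument.

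\textbf{Step 3 (Moser and conclusion).} Now Lemma~\ref{lem:moserlemma} applies to $(\alpha_j,\omega)$ and $(\psi^*\tilde\alpha_j,\psi^*\tilde\omega)$, producing a Lie algebroid isomorphism $\chi$ that is the identity on $\AA\vert_L$ with $\chi^*\psi^*\tilde\alpha_j=\alpha_j$ and $\chi^*\psi^*\tilde\omega=\omega+\sum_j d(f^j\alpha_j)$. Setting $\tilde\varphi=\psi\circ\chi$ gives all three claims: the one-form and two-form identities are immediate, and $\tilde\varphi\vert_{i^!\AA}=\Psi\vert_{i^!\AA}\circ\operatorname{id}=\operatorname{id}$ since $\chi$ fixes $\AA\vert_L$ and $\Psi$ fixes $i^!\AA$. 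The final ``$\AA$-Poisson isomorphism'' assertion is inherited directly from Lemma~\ref{lem:moserlemma}: the gauge term $\sum_j d(f^j\alpha_j)=\sum_j df^j\wedge\alpha_j$ restricts to zero on $\FF=\bigcap_j\ker\alpha_j$, so it does not alter the underlying $\AA$-Poisson structure determined by $\omega\vert_\FF$ and the $\alpha_j$.
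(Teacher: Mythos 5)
Your Step 1 and Step 3 are correct and coincide with the paper's proof: your ``matching of standard block forms'' is precisely the paper's bundle map $\Psi$, built from the isomorphisms $(i^!\AA)^*\oplus\RR^k\cong\LL\oplus\langle R_1,\dots,R_k\rangle$ that each cosymplectic structure determines along $L$, and the Moser lemma is invoked identically. The genuine gap is the opening claim of your Step 2: $\Psi$ can in general \emph{not} be arranged to be block-diagonal with respect to a single splitting $\AA\vert_L=i^!\AA\oplus NL$. Your own Step 1 makes $\Psi$ carry the complement $\mc{C}=\LL\oplus\langle R_1,\dots,R_k\rangle$ adapted to $(\alpha_j,\omega)$ onto the complement $\tilde{\mc{C}}=\tilde{\LL}\oplus\langle\tilde{R}_1,\dots,\tilde{R}_k\rangle$ adapted to $(\tilde{\alpha}_j,\tilde{\omega})$, and these are different subbundles in general; moreover, no cleverer choice of $\Psi$ or of the splitting helps. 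Take $\AA=TM$, $M=\RR^4$, $L=\{p_1=p_2=0\}$, $\omega=dq_1\wedge dp_1+dq_2\wedge dp_2$ and $\tilde{\omega}=\omega+dp_1\wedge dp_2$ (so $k=0$ and $L$ is a minimal Lagrangian for both). Pairing against $TL$ forces any bundle map $\Psi$ fixing $TL$ pointwise with $\Psi^*(\tilde{\omega}\vert_L)=\omega\vert_L$ to have the form $\Psi(\del_{p_1})=\del_{p_1}+\lambda\del_{q_1}+\mu\del_{q_2}$, $\Psi(\del_{p_2})=\del_{p_2}+\rho\del_{q_1}+\sigma\del_{q_2}$; then $\tilde{\omega}(\Psi\del_{p_1},\Psi\del_{p_2})=1-\rho+\mu$, so the intertwining condition forces $\rho-\mu=1$. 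Hence the shear block $NL\to TL$ is never zero, and a one-line check (if $\mc{D}'$ is a $\Psi$-invariant complement, then $\Psi(d)-d\in\mc{D}'\cap TL=0$ for $d\in\mc{D}'$, forcing $\lambda=\mu=\rho=\sigma=0$) shows such a $\Psi$ preserves \emph{no} complement of $TL$ whatsoever. Since the prolongation $T_{i^!\AA}\phi$ of a linear automorphism $\phi$ of $NL$ restricts along $L$ to $\id\oplus\phi$ (identity on the side algebroid, $\phi$ on the core), the extension your Step 2 produces restricts along $L$ to a block-diagonal map, hence \emph{not} to an intertwiner; the pulled-back structure then agrees with $(\alpha_j,\omega)$ only on $i^!\AA$, not on all of $\AA\vert_L$, and Lemma \ref{lem:moserlemma} cannot be applied.

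What is missing is a mechanism for realizing the unavoidable shear by a Lie algebroid automorphism. One such mechanism: if $X\in\Gamma(\AA)$ vanishes along $L$, its time-one flow is a Lie algebroid automorphism covering a diffeomorphism fixing $L$, and its restriction to $\AA\vert_L$ is $\exp(D_X)$ with $D_X(Y)=[X,Y]\vert_L$; sections of the form $X=\sum_a f_aZ_a$ with $f_a\vert_L=0$ and $Z_a\vert_L\in\Gamma(i^!\AA)$ realize in this way every shear $\id+B$ with $B:\AA\vert_L\to i^!\AA$ killing $i^!\AA$. Composing your prolongation extension (which handles the block-diagonal part) with one such flow (which handles the shear) gives an algebroid automorphism restricting exactly to $\Psi$ along $L$, after which your Step 3 closes the argument. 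For comparison, the paper routes this step differently: it prolongs the map $\varphi:\mc{C}\to\tilde{\mc{C}}$ between the two adapted complements, using the canonical identifications $\mc{C}\cong N_\AA L\cong NL\cong\tilde{\mc{C}}$, rather than an automorphism of a fixed $NL$. But note that the paper's composite, too, only restricts to $\Psi$ if the splitting-theorem isomorphisms at the two ends are taken adapted to $\mc{C}$ on the source and to $\tilde{\mc{C}}$ on the target --- two different splittings, themselves related by exactly such a shear flow --- so this ingredient is tacitly needed there as well. You correctly flagged this interface as the delicate point of the proof, but block-diagonality is not the way through it.
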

We first look at  some consequences of this theorem. In the following, the map $p_*:(i^!\AA)^*\to L$ is the bundle projection, not to be confused with the projection map of the normal bundle $p_{NL}:NL\to L$ (cf. Remark \ref{rk:normalbundleimplicit}).

\begin{definition}
	Consider the Lie algebroid $p_*^!i^!\AA\times T\RR^k\Rightarrow(i^!\AA)^*\times \RR^k$, equipped with the standard $k$-cosymplectic structure $(dt_1, \dots, dt_k, \omega_{\can})$, where $\omega_{\can}$ is the canonical symplectic structure on $p_*^!i^!\AA$ pulled back along the map $p_*^!i^!\AA\times T\RR^k\to p_*^!i^!\AA$. This is the \textit{local model} of the $k$-cosymplectic structure around the minimal Lagrangian transversal $L$. 
\end{definition}
\begin{remark}
	The standard cosymplectic structure on the Lie algebroid $p_*^!i^!\AA\times T\RR^k\to (i^!\AA)^*\times \RR^k$ induces the linear Poisson structure on $(i^!\AA)^*\times \RR^k$ dual to the Lie algebroid $i^!\AA\oplus \RR^k$. 
\end{remark}

\begin{corollary}[Linearization]\label{cor:lagrangiannbhdlocalmodel}
	Let $(\AA, \alpha_1, \dots, \alpha_k, \omega) \Rightarrow M$ be a $k$-cosymplectic Lie algebroid and $i:L\hookrightarrow M$ a transversal for $\AA$ that is a minimal Lagrangian. Then there are neighbourhoods $U$ of $L$ in $M$ and $V$ of $L$ in $(i^!\AA)^*\times \RR^k$ and a Lie algebroid isomorphism $(\tilde{\varphi}, \varphi): \AA\vert_U \to p_*^!i^!\AA\times T\RR^k\vert_V$ such that 
	\begin{itemize}[noitemsep, topsep=0em]
		\item $\tilde{\varphi}^*dt_j=\alpha_j$ for $j=1, \dots, k$;  
		\item $\tilde{\varphi}^*\omega_{\mathrm{can}}=\omega+\sum_{j=1}^k d(f^j\alpha_j)$ for some functions $f^j$. 
		\item $\tilde{\varphi}$ restricts to the identity on $i^!\AA\to L$.
	\end{itemize} 
	In particular, $(\tilde{\varphi}, \varphi)$ is an isomorphism between the underlying Poisson algebroids, so that $\varphi$ is a Poisson linearization around $L$. 
\end{corollary}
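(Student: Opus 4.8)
The plan is to deduce this from the Cosymplectic Lagrangian neighbourhood theorem (Theorem \ref{thm:lagrangiannbhd1}) by transporting the problem onto the model algebroid $p_*^!i^!\AA\times T\RR^k$ and comparing the given structure with the standard one there. The first step is to invoke the splitting theorem for Lie algebroid transversals (Theorem \ref{thm:splitting}) to obtain a Lie algebroid isomorphism $\AA\vert_U\cong p_{NL}^!i^!\AA$ over a tubular neighbourhood of $L$ in the normal bundle $NL$. This replaces $\AA$ near $L$ by a pullback of $i^!\AA$ over $NL$, bringing it into the same shape as the model.

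Next I would construct a canonical identification $NL\cong(i^!\AA)^*\oplus\RR^k$; this is the heart of the argument and the only place where the minimal Lagrangian hypothesis is really used. Since $i^*\alpha_j=0$, the subbundle $i^!\AA$ lies inside the $\AA$-foliation $\FF=\cap_j\ker\alpha_j$ along $L$, and being Lagrangian it is a Lagrangian subbundle of $(\FF\vert_L,\omega)$; hence $\omega$ induces an isomorphism $\FF\vert_L/i^!\AA\cong(i^!\AA)^*$. The Reeb sections $R_1,\dots,R_k$ satisfy $\alpha_i(R_j)=\delta_{ij}$, so they are transverse to $i^!\AA$ and span a complement to $\FF\vert_L$ in $\AA\vert_L$ modulo $i^!\AA$. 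Combining these gives a canonical isomorphism $N_\AA L=\AA\vert_L/i^!\AA\cong(i^!\AA)^*\oplus\RR^k$, and composing with the canonical isomorphism $N_\AA L\cong NL$ yields the desired identification, restricting to the identity on $L$. A direct fibrewise computation then shows that, under $NL\cong(i^!\AA)^*\times\RR^k$, the pullback algebroid becomes $p_{NL}^!i^!\AA\cong p_*^!i^!\AA\times T\RR^k$, i.e.\ the model algebroid: the $T\RR^k$-directions are exactly the tangent directions of the $\RR^k$-factor, on which the anchor of the pullback algebroid imposes no constraint.

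With these identifications in place, the given cosymplectic structure $(\alpha_1,\dots,\alpha_k,\omega)$ is transported to a $k$-cosymplectic structure on the model algebroid $p_*^!i^!\AA\times T\RR^k$, and since the minimal Lagrangian property of $L$ is intrinsic it is preserved by Lie algebroid isomorphisms. It therefore remains to check that $L$, embedded as the zero section over the origin, is also a minimal Lagrangian transversal for the standard structure $(dt_1,\dots,dt_k,\omega_{\can})$: the forms $dt_j$ manifestly pull back to zero because the constraint from the anchor forces $i^!(p_*^!i^!\AA\times T\RR^k)=i^!\AA$ to carry no $\RR^k$-component, while $i^*\omega_{\can}=0$ together with the Lagrangian condition follows from Corollary \ref{cor:closedoneformlagrangian} applied to the zero section. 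Now both structures are $k$-cosymplectic structures on the same algebroid sharing $L$ as a minimal Lagrangian transversal, so Theorem \ref{thm:lagrangiannbhd1} produces an isomorphism $(\tilde\varphi,\varphi)$ with exactly the three stated properties.

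Finally, the theorem guarantees that $(\tilde\varphi,\varphi)$ is an $\AA$-Poisson isomorphism, so the base map $\varphi$ is a Poisson diffeomorphism from $(M,\pi)$ near $L$ onto the underlying Poisson manifold of the model. By the remark following the definition of the local model, this underlying Poisson structure on $(i^!\AA)^*\times\RR^k$ is the linear Poisson structure dual to $i^!\AA\oplus\RR^k$, which is precisely the linearization $\pi_{\mathrm{lin}}$ of $\pi$ around $L$; hence $\varphi$ is the sought Poisson linearization. I expect the main obstacle to be the normal bundle identification of the second step and the verification that it intertwines the pullback algebroid with the model algebroid, since everything afterwards is a direct application of Theorem \ref{thm:lagrangiannbhd1}.
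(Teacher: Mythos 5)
Your proposal is correct and follows essentially the route the paper intends: the paper states this corollary without an explicit proof, and your identification $NL\cong(i^!\AA)^*\times\RR^k$ via $\omega$ and the Reeb sections is precisely the step that Remark \ref{rk:normalbundleimplicit} declares to be implicit, with the rest being the splitting theorem, the verification (via Corollary \ref{cor:closedoneformlagrangian}) that the zero section is a minimal Lagrangian transversal for the standard structure, and an application of Theorem \ref{thm:lagrangiannbhd1}. The only cosmetic difference is that you build the normal-bundle identification canonically through the quotient $\FF\vert_L/i^!\AA\cong(i^!\AA)^*$, whereas the proof of Theorem \ref{thm:lagrangiannbhd1} chooses a Lagrangian complement $\LL$; both yield the same result.
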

\begin{remark}\label{rk:normalbundleimplicit}
	Implicitly, the cosymplectic structure is used to identify $NL\cong (i^!\AA)^*\times \RR^k$, giving rise to an isomorphism $p^!_*i^!\AA\times T\RR^k\cong p_{NL}^!i^!\AA$. 
\end{remark}
In the case that $k=0$, the theorem reduces to a Lagrangian neighbourhood theorem for symplectic Lie algebroids.

\begin{corollary}[Symplectic Lie algebroids]
	Let $(\AA, \omega)\Rightarrow M$ be a symplectic Lie algebroid and $i:L\hookrightarrow M$ a Lagrangian transversal. Then there is a local isomorphism of symplectic Lie algebroids
	\[
	(\AA, \omega)\xrightarrow[\mathrm{local}]{\cong} (p_*^!i^!\AA, \omega_{\mathrm{can}})
	\]
	that restricts to the identity on $i^!\AA\Rightarrow L$. 
\end{corollary}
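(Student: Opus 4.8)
The plan is to deduce this as the $k=0$ specialization of Corollary \ref{cor:lagrangiannbhdlocalmodel}. First I would unwind what a $0$-cosymplectic structure is: with no defining one-forms and $\operatorname{rank}\AA = 2n$, the nonvanishing condition $\omega^n\neq 0$ says exactly that $\omega$ is a full-rank symplectic form on $\AA$. Thus a symplectic Lie algebroid is precisely a $0$-cosymplectic Lie algebroid, and its underlying $\AA$-Poisson structure $\pi_\AA$ is the inverse of $\omega$.

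Next I would check that the hypothesis ``$L$ is a Lagrangian transversal'' matches the ``minimal Lagrangian transversal'' demanded by the corollary. Minimality asks for $i^*\omega = 0 \in \Omega^2(i^!\AA)$ on top of $i^!\AA$ being $\AA$-Lagrangian. But when $\omega$ is nondegenerate, $\operatorname{im}\pi_\AA^\sharp = \AA\vert_L$, so the $\AA$-Lagrangian condition $(i^!\AA)^{\bot_{\pi_\AA}} = i^!\AA \cap \operatorname{im}\pi_\AA^\sharp$ reduces to $(i^!\AA)^{\bot_{\pi_\AA}} = i^!\AA$, the usual Lagrangian-subbundle condition. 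Since $\pi_\AA^\sharp\big((i^!\AA)^\circ\big)$ is just the $\omega$-orthogonal of $i^!\AA$, this says $i^!\AA$ is isotropic of half rank; being isotropic, $\omega$ restricts to zero on it, giving $i^*\omega = 0$ for free. Hence every Lagrangian transversal of a symplectic Lie algebroid is automatically minimal, and Corollary \ref{cor:lagrangiannbhdlocalmodel} applies.

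With the hypotheses aligned, I would invoke that corollary directly. For $k=0$ the local model $p_*^!i^!\AA \times T\RR^k$ over $(i^!\AA)^*\times \RR^k$ collapses to $p_*^!i^!\AA$ over $(i^!\AA)^*$, and its canonical $0$-cosymplectic structure is just the canonical symplectic form $\omega_{\can}$ of Theorem \ref{thm:canonicalform}. The corollary then produces a local Lie algebroid isomorphism $(\tilde{\varphi}, \varphi)$ restricting to the identity on $i^!\AA$ with $\tilde{\varphi}^*\omega_{\can} = \omega + \sum_{j=1}^k d(f^j\alpha_j)$.

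The final point---essentially the only thing to say---is that the gauge correction $\sum_{j=1}^k d(f^j\alpha_j)$ is an empty sum when $k=0$, hence vanishes identically. Therefore $\tilde{\varphi}^*\omega_{\can} = \omega$ on the nose, upgrading the $\AA$-Poisson isomorphism of the corollary to an honest isomorphism of symplectic Lie algebroids. Because the result is a pure specialization of an already-established corollary, there is no genuine obstacle; the only care needed is the bookkeeping that the $k=0$ conventions (the factor $T\RR^0$, the empty product of defining forms, the empty gauge sum) behave as expected.
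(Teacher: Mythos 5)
Your proposal is correct and is precisely the paper's (implicit) argument: the corollary is presented as the $k=0$ specialization of Corollary \ref{cor:lagrangiannbhdlocalmodel}, itself a consequence of Theorem \ref{thm:lagrangiannbhd1}. The two points you verify explicitly --- that a Lagrangian transversal of a symplectic Lie algebroid is automatically minimal (isotropy of $i^!\AA$ forces $i^*\omega=0$), and that the empty gauge sum for $k=0$ upgrades the $\AA$-Poisson isomorphism to an honest symplectomorphism --- are exactly the bookkeeping the paper leaves to the reader.
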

Now follows a list of examples.
\begin{example}[Cosymplectic manifolds]
	When $\AA=TM$ is the usual tangent bundle, equipped with a $k$-cosymplectic structure, we find that the underlying Poisson structure around a Lagrangian submanifold $L\subset M$ contained in a single symplectic leaf is linearizable. The linearization is Poisson diffeomorphic to Poisson structure underlying the cosymplectic structure $(dt_1, \dots, dt_k, \omega_{\mathrm{can}})$ on $T^*L\times \RR^k$, where the flat map associated to the $k$-cosymplectic structure is used to identify $T^*L\times \RR^k$ with $NL$. 
\end{example}
\begin{example}[Log-symplectic manifolds]\label{ex:blagrangiannbhd}
	Let $(M, \pi)$ be a log-symplectic manifold \cite{guilleminmirandapires2014} with degeneracy locus $Z$. If $L$ is a Lagrangian submanifold that is transverse to $Z$, then $\tilde{Z}=Z\cap L$ is hypersurface in $L$, and there is a local log-symplectomorphism 
	\[
	(T_{\tilde{Z}}^*L, \omega_{\can})\xrightarrow[\mathrm{local}]{\cong} (M, \omega).
	\]
	This appeared in the PhD-thesis of Kirchoff-Lukat \cite{Kirchoff2018phd}. 
	\end{example}
\begin{example}[$b^k$-symplectic manifolds]	
Likewise, Theorem \ref{thm:lagrangiannbhd1} can be applied to the $b^k$-symplectic manifolds, introduced by Scott in \cite{scott2016}. A $b^k$-symplectic structure on a smooth log-manifold $(M, Z)$ is a Poisson structure $\pi$ for which $\wedge^{\mathrm{top}}\pi$ vanishes to order $k$ over $Z$. The $(k-1)$-jet of $\wedge^{\mathrm{top}}\pi$ can be used to define a Lie algebroid, called the \textit{$b^k$-tangent bundle} ${}^{b^k}T_ZM$ to which $\pi$ lifts non-degenerately. The anchor of this Lie algebroid surjects onto $TZ$, and thus $\pi$ is linearizable around a Lagrangian submanifold transverse to $Z$.
\end{example}	
\begin{example}[Elliptic Poisson manifolds]
A Poisson structure $\pi$ on $M$ is \textit{elliptic} when $\left(\wedge^{\mathrm{top}}\pi\right)^{-1}(0)=D$ is a codimension-2 submanifold and the Hessian of $\wedge^{\mathrm{top}}\pi$ along $D$ is non-degenerate \cite{Cavalcantigualtieri2017}. These type of Poisson structures appear naturally in the context of generalized complex structures that are almost everywhere symplectic. The elliptic Poisson structure lifts to a symplectic structure on the \textit{elliptic tangent bundle} $T_{|D|}M$. Since the anchor of this Lie algebroid surjects onto $TD$, the elliptic Poisson structure $\pi$ is linearizable around any Lagrangian submanifold transverse to $D$. This elliptic version of the Lagrangian neighbourhood theorem also appeared in \cite{Cavalcantigualtieri2017}.
\end{example}
\begin{remark}
	The examples above all fit into the general class of Poisson structure \textit{of divisor type}, studied by Klaasse in \cite{klaasse2017, klaasse2018}. The $\AA$-Lagrangian neighbourhood theorem applies to many of the examples in \cite{klaasse2018}.
\end{remark}
\begin{example}[Symplectic foliations]
	Let $(\FF, \omega)\Rightarrow M$ be a symplectic foliation. A Lagrangian submanifold $i:L\hookrightarrow M$ transverse to $\FF$ has a neighbourhood in $M$ that is isomorphic to a neighbourhood of the zero section in $(i^!\FF)^*=(\FF\cap TL)^*$ equipped with the symplectic foliation $(p_{(i^!\FF)^*}^!(i^!\FF), \omega_{\mathrm{can}})$. 
\end{example}

\begin{proof}[Proof of Theorem \ref{thm:lagrangiannbhd1}] The Moser Lemma \ref{lem:moserlemma} can not be applied directly, we first have to make the cosymplectic structures agree along $L$. To do so, we repeatedly apply the splitting theorem for Lie algebroids.
	
	First, we decompose the cosymplectic information along $L$ in terms of subbundles. Let $\FF=\ker(\alpha_1)\cap \dots \cap \ker(\alpha_k)$. The Reeb sections $R_i$ associated to cosymplectic structure span the subbundle $\ker\omega=\langle R_1, \dots, R_k\rangle $ complementary to $\FF$. Next, because $\omega$ is non-degenerate on $\FF$, we can choose a Lagrangian complement $\LL$ to $i^!\AA$ in $\FF\vert_L$. Put together, there is a decomposition
	\[
	\AA\vert_L= \FF\vert_L\oplus \langle R_1, \dots, R_k\rangle\vert_L=i^!\AA\oplus \LL\oplus \langle R_1, \dots, R_k\rangle\vert_L.
	\]
	The cosymplectic structure induces an isomorphism $\psi:(i^!\AA)^*\oplus \RR^k\to \LL\oplus \langle R_1, \dots, R_k\rangle$ uniquely determined by
	\[
	\omega(\psi(\eta, e_i), R_j)=\delta_{ij}, \quad \omega(\psi(\eta, e_i), v)=\eta(v) \mbox{ for $\eta\in (i^!\AA)^*$ and $v\in i^!\AA$,}
	\]
	where $e_1, \dots, e_k$ is the standard basis of $\RR^k$. 
	
	Similarly, from the cosymplectic structure $(\tilde{\alpha}_1, \dots, \tilde{\alpha}_k, \tilde{\omega})$ we obtain a decomposition $\AA\vert_L=i^!\AA\oplus \tilde{\LL}\oplus\langle \tilde{R}_1, \dots, \tilde{R}_k\rangle$ and an isomorphism $\tilde{\psi}:(i^!\AA)^*\oplus \RR^k\to \tilde{\LL}\oplus \langle\tilde{R}_1, \dots, \tilde{R}_k\rangle$. For notational purposes, we set $\mc{C}=\LL\oplus \langle R_1, \dots, R_k\rangle$ and $\tilde{\mc{C}}=\tilde{\LL}\oplus\langle \tilde{R}_1, \dots, \tilde{R}_k\rangle$.
	
	Let $\varphi=\tilde{\psi}\circ \psi^{-1}$, and define a bundle isomorphism
	\[
	\begin{gathered}
		\Psi:\AA\vert_L=i^!\AA\oplus \mc{C} \to i^!\AA\oplus \tilde{\mc{C}}=\AA\vert_L\\
		(v,c)\mapsto (v, \varphi(c)).
	\end{gathered}
	\]
	Clearly, $\Psi^*(\tilde{\omega}\vert_L)=\omega\vert_L$, $\Psi(\tilde{\alpha}_j\vert_L)=\alpha_j\vert_L$ and $\Psi\vert_{i^!\AA}=\id\vert_{i^!\AA}$. It remains to extend $\Psi$ to a Lie algebroid isomorphism in a neighbourhood of $L$. To do so, we descend on to the linearization of $\AA$.
	
	Let $p:NL\to L$, $q:\mc{C}\to L$ and $\tilde{q}:\tilde{\mc{C}}\to M$ be the bundle projections. Choose an isomorphism $(\tilde{\psi}, \psi):\AA\vert_U\to p^!i^!\AA$ from the Splitting Theorem \ref{thm:splitting}. Then the identifications $NL\cong N_\AA L\cong \mc{C}$ and $NL\cong N_\AA L\cong \tilde{\mc{C}}$ (as in Section \ref{subsubsec:liealgebroidtransversals}) together with the map $\varphi$ induce Lie algebroid isomorphisms as depicted by the following diagram.
	\[
	\begin{tikzcd}
		\AA\vert_U \arrow[d] & p^!i^!\AA \arrow[l, "\tilde{\psi}"'] \arrow[d] & q^!i^!\AA \arrow[d] \arrow[r, "T_{i^!\AA}\varphi"] \arrow[l, "\cong"'] & \tilde{q}^!i^!\AA \arrow[d] \arrow[r, "\cong"] & p^!i^!\AA \arrow[r, "\tilde{\psi}"] \arrow[d] & \AA\vert_U \arrow[d] \\
		U                    & NL \arrow[l, "\psi"']                      & \mc{C} \arrow[r, "\varphi"] \arrow[l, "\cong"']              & \tilde{\mc{C}} \arrow[r, "\cong"]                 & NL \arrow[r, "\psi"]                      & U                   
	\end{tikzcd}
	\]
	From the canonical identification
	\[
	q^!i^!\AA\vert_L\cong(i^!\AA)\oplus \ker Tq\vert_L\cong (i^!\AA)\oplus \mc{C},
	\] 
	and the fact that $T_{i^!\AA}\varphi\vert_L:(i^!\AA)\oplus \mc{C}\to (i^!\AA)\oplus \tilde{\mc{C}}$ equals $\Psi$, it follows that the composite, denoted $(\tilde{\Phi}, \tilde{\varphi}):\AA\vert_U\to \AA\vert_U$, extends $\Psi$.
	
	To complete the proof, we apply the cosymplectic Moser lemma to the cosymplectic structures $(\alpha_1, \dots, \alpha_k, \omega)$ and $(\tilde{\Phi}^*\tilde{\alpha}_1, \dots, \tilde{\Phi}^*\tilde{\alpha}_k, \tilde{\Phi}^*\tilde{\omega})$ on $\AA\vert_U$, that we have made to agree along $L$.
\end{proof}

\subsection{Dual integrations of triangular Lie bialgebroids}

For ordinary Poisson manifolds, the cotangent algebroid integrates to a (local) symplectic groupoid, which is always linearizable by Weinstein's Lagrangian neighbourhood theorem. On the other hand, if $(\mf{g}, \mf{g}^*)$ is a triangular Lie bialgebra, then a Poisson Lie group $(G^*, \Pi)$ integrating $(\mf{g}^*, \mf{g})$ is linearizable by a result of Alekseev and Meinrenken \cite{alekseevmeinrenken2013} (in fact, their result holds for coboundary Lie bialgebras). The following theorem is a simultaneous generalization of the two.

\begin{theorem}\label{thm:dualintegrationlinearizable}
Let $(\AA, \AA^*, \pi_\AA)$ be a triangular Lie bialgebroid, and $(\GG^*, \Pi)\toto (M, \pi)$ a Poisson groupoid integrating $(\AA^*, \AA)$. Then $(\GG^*, \Pi)$ is linearizable around $M$.  
\end{theorem}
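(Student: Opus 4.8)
The plan is to reproduce, in the algebroid setting, the passage from a Poisson manifold to its (local) symplectic groupoid: realize $(\GG^*, \Pi)$ as the base of a symplectic LA-groupoid obtained by integrating the canonical symplectic form on a prolongation, and then linearize by the Lagrangian neighbourhood theorem for symplectic Lie algebroids. First I would record that, since $(\AA, \AA^*, \pi_\AA)$ is triangular, the bundle map $\pi_\AA^\sharp:\AA^*\to\AA$ is bracket preserving, hence a Lie algebroid morphism covering the identity (Remark~\ref{rk:exactbialgebroids}); thus $\AA^*$ is anchored to $\AA$. Theorem~\ref{thm:prolongationdouble} then yields the double Lie algebroid $(p_*^!\AA;\AA^*,\AA;M)$, whose vertical algebroid $p_*^!\AA\Rightarrow\AA^*$ carries the canonical symplectic form $\omega_{\can}$ of Theorem~\ref{thm:canonicalform}. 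That theorem also identifies the Poisson structure induced by $\omega_{\can}$ with the linear Poisson structure $\pi_{\mathrm{lin}}$ on $\AA^*$ dual to $\AA$, and Proposition~\ref{prop:omegacanim} records that $\omega_{\can}$ is infinitesimally multiplicative for the prolongation structure $p_*^!\AA\Rightarrow\AA$.

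Next I would move to the groupoid level. Because linearizability is a statement about a germ of $\Pi$ near the units, and any two integrations of $\AA^*$ are isomorphic as Poisson groupoids on a neighbourhood of $M$ (the multiplicative Poisson structure is determined by the bialgebroid $(\AA^*,\AA)$), it suffices to treat a fixed (local) source-$1$-connected integration $\GG^*\toto M$ of $\AA^*$. Theorem~\ref{thm:prolongationLAgroupoid} integrates the double Lie algebroid to an LA-groupoid $(\VV;\AA,\GG^*;M)$, and Corollary~\ref{cor:multiplicativecochainiso} integrates the closed IM form $\omega_{\can}$ to a closed multiplicative two-form $\Omega$ on the Lie algebroid $\VV\Rightarrow\GG^*$. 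I would then argue that $\Omega$ stays non-degenerate: it restricts to $\omega_{\can}$ over the units, and multiplicativity propagates non-degeneracy exactly as in the classical symplectic-groupoid construction. Hence $(\VV,\Omega)$ is a (local) symplectic LA-groupoid, and the Poisson structure it induces on the base $\GG^*$ is the multiplicative $\Pi$ integrating $(\AA^*,\AA)$.

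With this in hand, I would read $\VV\Rightarrow\GG^*$ as a symplectic Lie algebroid whose underlying Poisson structure is $\Pi$, and apply the symplectic case $(k=0)$ of the Lagrangian neighbourhood theorem (Theorem~\ref{thm:lagrangiannbhd1}, Corollary~\ref{cor:lagrangiannbhdlocalmodel}) to the unit section $i:M\hookrightarrow\GG^*$. The local model $p_*^!(i^!\VV)$ it produces, with its canonical symplectic form, has as underlying Poisson structure precisely the linearization of $\Pi$ along $M$; the resulting Lie algebroid isomorphism restricting to the identity over $M$ descends to the desired Poisson linearization of $(\GG^*,\Pi)$ around $M$.

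The hard part, and the step I would spend the most care on, is verifying that $M$ is a genuine Lagrangian \emph{transversal} for $(\VV,\Omega)$, i.e.\ that $i^!\VV=\rho_\VV^{-1}(TM)$ is a Lagrangian subbundle of $(\VV,\pi_\VV)$. This is strictly stronger than $M$ being a Lagrangian submanifold of $(\GG^*,\Pi)$: the latter is automatic, since the unit section of any Poisson groupoid is a clean Lagrangian, but the Remark following Lemma~\ref{lem:coisotropictransversal} shows that this converse implication genuinely fails in general. I expect to establish the transversal condition through the LA-groupoid structure, using the unit section of $\VV\toto\AA$ over $M$ to identify $i^!\VV$ with a subbundle of the prolongation along $M$, and then invoking the characterization of Lagrangian sections by $d_\AA$-closedness (Corollary~\ref{cor:closedoneformlagrangian}) together with the explicit values of $\omega_{\can}$ from Lemma~\ref{lem:canonicalformprolongation} to force $i^!\VV$ to be Lagrangian of the correct half-rank. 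Confirming the non-degeneracy of $\Omega$ away from the units is the remaining technical point.
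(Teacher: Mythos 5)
Your overall route is the paper's: realize $\Pi$, locally near the units, as the base Poisson structure of a (local) symplectic LA-groupoid $(\VV,\Omega)$ integrating $(p_*^!\AA,\omega_\can)$, verify that $M$ is a Lagrangian transversal for $(\VV,\Omega)$, and conclude with Theorem \ref{thm:lagrangiannbhd1}. The genuine gap is in how you produce $\VV$. You invoke Theorem \ref{thm:prolongationLAgroupoid}, but that theorem cannot give the LA-groupoid $(\VV;\AA,\GG^*;M)$ your argument needs. Applied with $\BB=\AA^*$ anchored to $\AA$ by $\pi_\AA^\sharp$, it requires an integration $\GG$ of $\AA$ --- which your hypotheses do not provide, since only $\AA^*$ is assumed integrable --- and even granting that, it produces the transposed square $(p_*^!\GG;\AA^*,\GG;M)$: its groupoid direction $p_*^!\GG\toto\AA^*$ integrates the \emph{pullback} structure $p_*^!\AA\Rightarrow\AA^*$, and its vector-bundle/algebroid direction sits over $\GG$, not over $\GG^*$. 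What is required (and what Corollary \ref{cor:dualintegrationsymplecticLAgroupoid} hypothesizes) is the opposite shape: a groupoid $\VV\toto\AA$ integrating the \emph{prolongation} structure $p_*^!\AA\Rightarrow\AA$, with linear direction over $\GG^*$; only then do multiplicative forms on $\VV\Rightarrow\GG^*$ correspond, via Corollary \ref{cor:multiplicativecochainiso}, to IM forms on the bundle $p_*^!\AA\to\AA^*$ such as $\omega_\can$. Whether the prolongation algebroid $p_*^!\AA\Rightarrow\AA$ is integrable is precisely what the paper does \emph{not} settle (see the remark following Corollary \ref{cor:dualintegrationsymplecticLAgroupoid}); this is why it works with a \emph{local} LA-groupoid integrating the double Lie algebroid $p_*^!\AA$, whose existence comes from the correspondence of \cite{mackenzie1997} between double Lie algebroids and local LA-groupoids, not from Theorem \ref{thm:prolongationLAgroupoid}. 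As written, this step of your proposal fails; it is repaired by passing to that local integration, which suffices since (as you note) linearizability only concerns the germ of $\Pi$ along $M$.

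Apart from this, your outline matches the paper, and the step you single out as the hard part --- that $M$ must be a Lagrangian \emph{transversal} for $(\VV,\Omega)$, not merely a Lagrangian submanifold of $(\GG^*,\Pi)$ --- is indeed the crux, but it admits a much shorter argument than the one you sketch. The anchor of $p_*^!\AA\Rightarrow\AA^*$ is the identity on the core $\AA^*$, hence so is $\rho_\VV:\VV\to T\GG^*$ on cores; since the core of $T\GG^*$ is $\ker Ts\vert_M$, which is complementary to $TM$, transversality follows, and moreover $u^!\VV=\rho_\VV^{-1}(TM)=\AA$ has exactly half the rank of $\VV$. Because the unit section of a Poisson groupoid is always coisotropic, Lemma \ref{lem:coisotropictransversal} makes $u^!\VV$ a coisotropic subbundle, and a half-rank coisotropic subbundle of a symplectic Lie algebroid is automatically Lagrangian --- no appeal to Corollary \ref{cor:closedoneformlagrangian} or to the formulas of Lemma \ref{lem:canonicalformprolongation} is needed. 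Finally, you flag non-degeneracy of the integrated form $\Omega$ as a remaining technical point; the paper in fact leaves this implicit in Corollary \ref{cor:dualintegrationsymplecticLAgroupoid}, so spelling it out (non-degeneracy along the units plus multiplicativity, or simply shrinking to a neighbourhood of the units, where the local statement suffices) would be an improvement rather than a deviation.
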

	
Let us first consider the case that $\AA=TM$, in which case $\pi_{TM}=\pi$ is a Poisson structure on $M$. An integration $(\GG, \Pi)\toto (M, \pi)$ of the triangular Lie bialgebroid $(T^*M, TM)$ is actually a symplectic groupoid. The unit section of a symplectic groupoid is always Lagrangian, and thus Weinstein's Lagrangian neighbourhood theorem provides a linearization.

The proof of the general case is surprisingly similar. We only have to replace the tangent bundles in the proof above by appropriate Lie algebroids. More specifically, an $\AA$-version of the symplectic groupoid of a Poisson Lie algebroid and an $\AA$-version of Weinstein's Lagrangian neighbourhood theorem \ref{thm:lagrangiannbhd1} are needed for the proof.

The algebroid version of a symplectic groupoid is a symplectic LA-groupoid, that we will introduce now.

\subsubsection{Symplectic LA-groupoids}
Let $(\mc{V};\AA, \GG;M)$ be an LA-groupoid. A $\VV$-form $\Omega\in \Omega^k(\VV\Rightarrow \GG)$ is \textit{multiplicative} when $\tilde{m}^*\Omega=\tilde{\pr}_1^*\Omega+\tilde{\pr}_2^*\Omega$. A more elaborate discussion on multiplicative forms on LA-groupoids can be found in Appendix \ref{app:multiplicativeforms}. 
\begin{definition}
	A \textit{symplectic LA-groupoid} is an LA-groupoid $(\VV; \AA, \GG; M)$ with a multiplicative symplectic $\VV$-form $\Omega\in \Omega^2(\VV\Rightarrow \GG)$.  
	
	A \textit{local} symplectic LA-groupoid is a local LA-groupoid with a multiplicative symplectic germ. 
\end{definition} 
Because the symplectic form $\Omega$ is multiplicative, it induces an isomorphism of VB-groupoids $(\VV; \AA, \GG; M)\rightarrow (\VV^*; C^*, \GG; M)$. The Poisson structure $\Pi$ on $\GG$ induced by $\Omega$ is also multiplicative, by the diagram below. This makes $(\GG, \Pi)$ into a Poisson groupoid.
\begin{center}
	\begin{tabular}{ccc}
		\begin{tikzcd}
			\VV^* \arrow[d, shift left]\arrow[d, shift right] \arrow[r] & \GG \arrow[d, shift left]\arrow[d, shift right]\\
			C^*\arrow[r] & M.
			\end{tikzcd}
	
	&
$\xleftarrow{\Omega^\flat}$ & 
		\begin{tikzcd}
		\VV \arrow[d, shift left] \arrow[d, shift right] \arrow[r] & \GG \arrow[d, shift left]\arrow[d, shift right]\\
		\AA \arrow[r] & M
	\end{tikzcd}
\\
$\uparrow$ & & $\downarrow$
\\
\begin{tikzcd}
	T^*\GG \arrow[r] \arrow[d, shift left] \arrow[d, shift right] & \GG \arrow[d, shift left]\arrow[d, shift right]\\
	\AA^*\GG \arrow[r] & M
\end{tikzcd}
& $\xrightarrow{\Pi^\sharp}$ &
\begin{tikzcd}
	T\GG \arrow[r]\arrow[d, shift left]\arrow[d, shift right] & \GG \arrow[d, shift left]\arrow[d, shift right]\\
	TM \arrow[r] & M
\end{tikzcd}
	\end{tabular}
\end{center}
Here, $C$ is the core of $\VV$. The restriction of $\Omega^\flat$ to the unit space or the core gives rise to isomorphisms 
\[
\Omega^\flat\big\vert_\AA:\AA\to C^*, \quad \Omega^\flat\big\vert_C:C\to \AA^*,
\]
which are anti-dual by skew-symmetry of $\Omega$. By means of the core anchors $\rho_C^\AA:C\to \AA$ and $\rho^{C^*}_{\AA^*}=\left(\rho_C^\AA\right)^*$, we can define a bundle morphism
\[
\pi^\sharp_\AA:=\rho_C^\AA\circ \left( \Omega^\flat\big\vert_C\right)^{-1}= - \left(\Omega^\flat\big\vert_{\AA}\right)^{-1}\circ\rho^{C^*}_{\AA^*}:\AA^*\to \AA,
\]
which inherits skew-symmetry from $\Omega$. When we equip $\VV^*$ with the Lie algebroid structure inherited from $\Omega$ (Remark \ref{rk:exactbialgebroids}), it becomes an LA-groupoid whose core is the Lie algebroid structure $\AA^*\Rightarrow M$ with bracket induced by $\pi_\AA$. The map $\pi^\sharp_\AA$ is becomes a Lie algebroid morphism with respect to this structure, and therefore $\pi_\AA$ is an $\AA$-Poisson structure by Remark \ref{rk:exactbialgebroids}.
\begin{remark}
	The triangular bialgebroid $(\AA, \AA^*, \pi_\AA)$ is anchored to the bialgebroid $(\AA\GG, \AA^*\GG)$ of $(\GG, \Pi)$ as follows:
	\[
	\begin{tikzcd}
		\AA^*\arrow[r, "\pi_\AA^\sharp"]  \arrow[d] & \AA\\
		\AA\GG & \AA^*\GG\arrow[u] 
	\end{tikzcd}
	\]
	where $\AA^*\to \AA\GG$ arises from restricting $\rho_V$ to the core $\AA^*$. This means that the core anchor $\AA^*\to \AA\GG$ is a morphism of Lie bialgebroids $(\AA^*, \AA)\to (\AA\GG, \AA^*\GG)$. 
\end{remark}

\begin{example}\label{ex:symplecticLAgroupoids}
	Let $(\AA, \omega)\Rightarrow M$ be a symplectic Lie algebroid, and $\GG\toto  M$ an integration of $\AA$. Then the LA-groupoid 
	\[
	\begin{tikzcd}
		p^!\GG \arrow[r, Rightarrow] \arrow[d, shift left]\arrow[d, shift right] & \GG \arrow[d, shift left]\arrow[d, shift right]\\
		\AA\arrow[r, Rightarrow] & M
	\end{tikzcd}
	\]
	carries a multiplicative symplectic form given by $\tilde{s}^*\omega-\tilde{t}^*\omega$. The underlying Poisson groupoid integrates the triangular bialgebroid $(\AA, \AA^*, \omega^{-1})$. 
	
	More generally, when $(\BB, \omega)\Rightarrow M$ is a symplectic Lie algebroid that is anchored to $\AA$, and when $\GG\toto M$ is an integration of $\AA$, then $p^!_{\BB}\GG\Rightarrow \GG$ carries a symplectic form given by $\tilde{s}^*\omega-\tilde{t}^*\omega$. 
\end{example}

\subsubsection{The symplectic LA-groupoid integrating an $\AA$-Poisson structure}

Let $(\AA, \AA^*, \pi_\AA)$ be a triangular Lie bialgebroid. Since $\pi_\AA$ anchors $\AA^*$ to $\AA$, the pullback algebroid $p_*^!\AA$ becomes a double Lie algebroid. Moreover, it comes with a canonical symplectic form $\omega_\can$ on the vertical side structure $p^!_*\AA\Rightarrow \AA^*$. Let $\VV\to \GG^*$ be a (possibly local) source 1-connected VB-groupoid integrating the horizontal structure. We wish to integrate $\omega_\can$ to $\VV$.
\begin{proposition}\label{prop:omegacanim}
	Let $(\AA, \AA^*, \pi_\AA)$ be a triangular Lie bialgebroid. Then $\omega_\can$ is infinitesimally multiplicative with respect to the horizontal Lie algebroid structure $p^!_* \AA \Rightarrow \AA$. 
\end{proposition}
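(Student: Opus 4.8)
The plan is to handle the two assertions of the proposition separately. Closedness is immediate: by definition $\omega_\can=-d\lambda_\can$, where $d$ is the Lie algebroid differential of the side structure $p_*^!\AA\Rightarrow\AA^*$, so $\omega_\can$ is exact and hence $d$-closed on $p_*^!\AA\Rightarrow\AA^*$. The substance of the proposition is therefore the infinitesimal multiplicativity of $\omega_\can$ with respect to the prolongation (horizontal) Lie algebroid structure $p_*^!\AA\Rightarrow\AA$. Unwinding the definition from Appendix \ref{app:multiplicativeforms}, this has two ingredients: that $\omega_\can$ be \emph{linear} with respect to the horizontal vector bundle structure $p_*^!\AA\to\AA$, and that it satisfy the infinitesimal cocycle condition pairing it with the prolongation bracket and horizontal anchor $\varrho$. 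The first ingredient is already in hand, since Lemma \ref{lem:canonicalformprolongation} states precisely that $\omega_\can$ is linear with respect to the horizontal structure; so the real work is the cocycle condition.

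My main strategy would be to exploit the primitive $\lambda_\can$ rather than attacking $\omega_\can$ directly. The infinitesimal multiplicativity condition is preserved by the side differential $d$ --- this is the infinitesimal shadow of the fact that $d$ preserves multiplicativity of groupoid forms, and it is built into the cochain-complex statement of Corollary \ref{cor:multiplicativecochainiso}, so that the IM forms constitute a subcomplex of $(\Omega^\bullet(p_*^!\AA\Rightarrow\AA^*),d)$. It therefore suffices to prove that the canonical one-form $\lambda_\can$ is itself IM, for then $\omega_\can=-d\lambda_\can$ is IM automatically. This reduction is attractive because $\lambda_\can$ is tautological: its behaviour is governed by the single identity $(T_\AA\alpha)^*\lambda_\can=\alpha$ for $\alpha\in\Omega^1(\AA)=\Gamma(\AA^*)$, together with its linearity (established inside the proof of Lemma \ref{lem:canonicalformprolongation}).

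To verify that $\lambda_\can$ is IM I would test the defining conditions on the generators $T_\AA\alpha$ and $\widehat{\alpha}$ of the horizontal algebroid $p_*^!\AA\Rightarrow\AA$ (Lemma \ref{lem:horizontalsections}), whose bracket is the prolongation of the dual bracket $[\cdot,\cdot]_{\pi_\AA}$, namely
\[
[T_\AA\alpha,T_\AA\beta]=T_\AA[\alpha,\beta]_{\pi_\AA},\quad [T_\AA\alpha,\widehat{\beta}]=\widehat{[\alpha,\beta]_{\pi_\AA}},\quad [\widehat{\alpha},\widehat{\beta}]=0.
\]
The tautological identity shows that the value of $\lambda_\can$ on horizontal generators reproduces the canonical pairing, while the naturality of $T_\AA$ under the bracket (it carries $[\alpha,\beta]_{\pi_\AA}$ to $T_\AA[\alpha,\beta]_{\pi_\AA}$) is exactly what the infinitesimal cocycle condition demands. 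Alternatively, one can bypass the reduction and check the IM condition for $\omega_\can$ directly, reading off every required evaluation from Lemma \ref{lem:canonicalformprolongation}: linearity supplies the vector-bundle part, while $\omega_\can(T_\AA\alpha(v),T_\AA\alpha(w))=(d\alpha)(v,w)$ and $\omega_\can(T_\AA 0(v),\widehat{\alpha}(w))=\langle\alpha,v\rangle$ supply the bracket-compatibility, the remaining mixed evaluations being obtained from additivity of $T_\AA$ together with the bilinearity of $\omega_\can$.

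I expect the main obstacle to be bookkeeping rather than conceptual: faithfully translating the abstract definition of an IM form on a double Lie algebroid (Appendix \ref{app:multiplicativeforms}) into the concrete language of the double vector bundle $p_*^!\AA$, and in particular checking the cocycle piece beyond mere linearity, where the interplay between $\omega_\can$, the prolongation bracket $[\cdot,\cdot]_{\pi_\AA}$ and the horizontal anchor must be matched term by term. The classical case $\AA=TM$, in which $\lambda_\can$ is the Liouville form on $T^*M$ and its multiplicativity on the cotangent groupoid is standard, would serve as a reliable guide for the signs and identifications.
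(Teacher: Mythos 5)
Your main strategy collapses at its key premise: the canonical one-form $\lambda_\can$ is \emph{not} infinitesimally multiplicative for the prolongation structure $p_*^!\AA\Rightarrow \AA$, so the reduction ``$\lambda_\can$ IM $\Rightarrow \omega_\can=-d\lambda_\can$ IM'' never gets started. Concretely, on the horizontal generators one has $\lambda_\can(T_\AA\alpha)=\ell_\alpha$ and $\lambda_\can(\widehat{\alpha})=0$, while the prolongation anchor sends $T_\AA\alpha$ to the complete lift of $\pi_\AA^\sharp(\alpha)$, whose flow is the Lie algebroid flow of $\pi_\AA^\sharp(\alpha)$; hence $\LL_{\varrho(T_\AA\alpha)}\left(\lambda_\can(T_\AA\beta)\right)=\ell_{\LL_{\pi_\AA^\sharp(\alpha)}(\beta)}$. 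The cocycle condition on the linear generators therefore demands
\[
\ell_{[\alpha,\beta]_{\pi_\AA}}=\ell_{\LL_{\pi_\AA^\sharp(\alpha)}(\beta)-\LL_{\pi_\AA^\sharp(\beta)}(\alpha)},
\]
which fails by exactly the term $-\ell_{d_\AA(\pi_\AA(\alpha,\beta))}$ coming from the last summand of the dual bracket (Remark \ref{rk:exactbialgebroids}). This defect is generically nonzero: already for $\AA=TM$, $M=\RR^2$, $\pi=f\,\del_x\wedge\del_y$ with $f$ non-constant, taking $\alpha=dx$, $\beta=dy$ gives $[dx,dy]_\pi=df$ whereas $\LL_{\pi^\sharp(dx)}(dy)-\LL_{\pi^\sharp(dy)}(dx)=2\,df$. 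The proposition is true for the \emph{two}-form precisely because this defect is exact and is killed upon applying $d$: in the linear--linear case one compares $d[\alpha,\beta]_{\pi_\AA}=d\LL_{\pi_\AA^\sharp(\alpha)}(\beta)-d\LL_{\pi_\AA^\sharp(\beta)}(\alpha)$ with the corresponding Lie derivatives. So the essential cancellation happens only after differentiating $\lambda_\can$, and your appeal to ``naturality of $T_\AA$ under the bracket'' is exactly where it is lost. (Consistently with this, the multiplicative symplectic form of a symplectic groupoid has, in general, no multiplicative primitive; exactness of $\omega_\can$ does not descend to the multiplicative category.)

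There is also a secondary problem with the reduction, independent of the above: your subcomplex claim is extracted from Corollary \ref{cor:multiplicativecochainiso}, which presupposes a source 1-connected (local) LA-groupoid integrating the double Lie algebroid $p_*^!\AA$. The paper never establishes such an integration --- its existence is a \emph{hypothesis} in Corollary \ref{cor:dualintegrationsymplecticLAgroupoid}, and the paper explicitly says integrability of the prolongation algebroid was not investigated --- whereas Proposition \ref{prop:omegacanim} is an unconditional, purely infinitesimal statement that is then \emph{used} to produce the symplectic LA-groupoid. Your fallback (checking the IM condition for $\omega_\can$ directly on generators) is indeed the paper's route, but as sketched it is not a proof: Lemma \ref{lem:canonicalformprolongation} only supplies the \emph{evaluations} of $c_{\omega_\can}$ on generators. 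The content of the cocycle condition sits in the Lie-derivative terms $\LL_{\varrho(X)}\bigl(c_{\omega_\can}(Y)\bigr)$, which require identifying the prolongation anchor on generators ($\varrho(T^2_\AA\alpha)$ is the pair of complete lifts of $\pi_\AA^\sharp(\alpha)$, with flow $T(\varphi_t\oplus\varphi_t)$; $\varrho(\widehat{\beta}^{(i)})$ is a vertical lift, whose flow is translation by $\pi_\AA^\sharp(\beta)$) and differentiating along these flows in the three cases linear--linear, linear--core, core--core; the last case moreover uses skew-symmetry of $\pi_\AA$. None of this follows from additivity of $T_\AA$ and bilinearity of $\omega_\can$ alone.
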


\begin{proof}
	Linearity of $\omega_{\can}$ was already part of Lemma \ref{lem:canonicalformprolongation}. To prove that $\omega_{\can}$ is an IM-form, we must show that the associated map $c_{\omega_{\can}}:p^!_*\AA\oplus_{\AA^*} p^!_*\AA\to \RR$ satisfies the cocycle condition
	\[
	c_{\omega_{\can}}([X, Y])=\LL_X(c_{\omega_{\can}}(Y))-\LL_Y(c_{\omega_{\can}}(X))
	\]
	for sections $X, Y\in \Gamma(p^!_*\AA\oplus_{\AA^*} p^!_*\AA\to \AA\oplus \AA)$. It is enough to verify this on the generators, which are of the form
	\[
	T^2_\AA\alpha=(T_\AA\alpha, T_\AA\alpha), \quad \widehat{\alpha}^{(1)}=(\widehat{\alpha}, T_\AA 0), \quad  \widehat{\alpha}^{(2)}=(T_\AA0, \widehat{\alpha}), 
	\]
	for $\alpha\in \Gamma(\AA^*)$, on which the Lie bracket is given by
	\[
	[T^2_\AA\alpha, T^2_\AA\beta]=T^2_\AA[\alpha, \beta], \quad [T^2_\AA\alpha, \widehat{\beta}^{(i)}]=\widehat{[\alpha, \beta]}^{(i)}, \left[ \widehat{\alpha}^{(i)}, \widehat{\beta}^{(j)}\right]=0.
	\]
	
	\textit{Linear-linear.} Let $\alpha, \beta\in \Gamma(\AA^*)$ and $v, w\in \Gamma(\AA)$. By Lemma \ref{lem:canonicalformprolongation}, we have
	\[
	c_{\omega_{\can}}([T^2_\AA\alpha, T^2_\AA\beta])(v, w)=d[\alpha, \beta](v, w).
	\]
	Using the definition of the bracket, we find that
	\[
	d[\alpha, \beta](v, w)=d\LL_{\pi_\AA^\sharp(\alpha)}(\beta)(v, w)-d\LL_{\pi_\AA^\sharp(\beta)}(\alpha)(v, w).
	\]
	On the other hand, note that the anchor $\oplus_{\AA^*} \varrho_{\AA^*}:p^!_*\AA\oplus_{\AA^*}p^!_*\AA\to T(\AA\oplus\AA)$, defined in Section \ref{subsec:prolongationsoverLiealgebroids}, sends $T^2_\AA\alpha$ to the complete lifts $\left(\widetilde{\pi^\sharp_\AA(\alpha)}, \widetilde{\pi^\sharp_\AA(\alpha)}\right)$, whose flow is given by $T(\varphi_t\oplus\varphi_t)$, with $(\varphi_t, \tilde{\varphi}_t):\AA\to \AA$ the flow of $\pi^\sharp(\alpha)$. Therefore, we find by Lemma \ref{lem:canonicalformprolongation}
	\begin{align*}
		\LL_{T^2_\AA\alpha}\left(c_{\omega_\can}(T^2_\AA\beta)\right)&=\ddt\big\vert_{t=0} \left(c_{\omega_{\can}}(T^2_\AA\beta)\right)(\varphi_t\circ v, \varphi_t\circ w)\\
		&=\ddt\big\vert_{t=0}\varphi^*_t(d\beta)(v, w)\\
		&=\LL_{\pi^\sharp_\AA(\alpha)}(d\beta)(v, w)=d(\LL_{\pi^\sharp_\AA(\alpha)}(\beta))(v, w).
	\end{align*}
	The cocycle condition now easily follows.
	
	\textit{Linear-core.} From Lemma \ref{lem:canonicalformprolongation}, it directly follows that
	\[
	c_{\omega_{\can}}([T^2_\AA\alpha, \widehat{\beta}^{(1)}])(v, w)=-\left\langle [\alpha, \beta], w\right\rangle.
	\]
	On the other hand, Let $\varphi_t$ be the flow of $\pi^\sharp(\alpha)$ as before. Then
	\[
	\LL_{T^2_\AA\alpha}\left(c_{\omega_{\can}}(\widehat{\beta}^{(1)})\right)(v, w)=\ddt\big\vert_{t=0}c_{\omega_{\can}}(\widehat{\beta}^{(1)})(\varphi_t\circ v, \varphi_t\circ w)=-\left\langle\LL_{\pi^\sharp_\AA(\alpha)}(\beta),w\right\rangle,
	\]
	and, 
	\[
	\LL_{\widehat{\beta}^{(1)}}(c_{\omega_{\can}}(T^2_\AA\alpha))(v, w)=\ddt\big\vert_{t=0}(d\alpha)(v+t\pi^\sharp_\AA(\beta), w)=(d\alpha) (\pi^\sharp_\AA(\beta), w).
	\]
	because $\varrho_{\AA^*}(\widehat{\beta}^{(1)})=((\pi^\sharp_\AA(\beta))^\uparrow, 0)\in \mf{X}(\AA\oplus\AA)$ (so its flow is just a translation by $\pi^\sharp_\AA(\beta)$). It follows that
	\begin{align*}
		\left(\LL_{T^2_\AA\alpha}(c_{\omega_{\can}}(\widehat{\beta}^{(1)}))-\LL_{\widehat{\beta}^{(1)}}(c_{\omega_{\can}}(T^2_\AA\alpha))\right)(v, w)&=-\left\langle \LL_{\pi^\sharp_\AA(\alpha)}(\beta), w\right\rangle +\left\langle\iota_{\pi^\sharp_\AA(\beta)}d\alpha, w\right\rangle\\
		&=-\left\langle[\alpha, \beta], w\right\rangle.
	\end{align*}
	For $\widehat{\beta}^{(2)}$, the argument is similar. 
	
	\textit{Core-core.} It is clear that 
	\[
	c_{\omega_{\can}}\left(\widehat{\alpha}^{(i)}(v), \widehat{\beta}^{(j)}(w)\right)=0.
	\]
	Also, using a similar calculation from above
	\[
	\LL_{\beta^{(1)}}\left(c_{\omega_{\can}}(\alpha^{(2)})\right)(v, w)=\ddt\big\vert_{t=0}\langle \alpha, v+t\pi^\sharp_\AA(\beta)\rangle=\langle \alpha, \pi^\sharp_\AA(\beta)\rangle,
	\] 
	and similar equations for the other combinations. The cocycle condition for the core-core pairing is now equivalent to $\pi_\AA$ being skew-symmetric. 
\end{proof}

\begin{corollary}\label{cor:dualintegrationsymplecticLAgroupoid}
Let $(\AA, \AA^*, \pi_\AA)$ be a triangular Lie bialgebroid, and $\GG^*\toto M$ a source 1-connected (local) groupoid integrating $\AA^*$. Suppose that 
\[
\begin{tikzcd}
	\VV \arrow[r, shift left]\arrow[r, shift right] \arrow[d, Rightarrow] & \AA \arrow[d, Rightarrow]\\
	\GG^* \arrow[r, shift left] \arrow[r, shift right] & M
\end{tikzcd}
\]
is a (local) LA-groupoid integrating $p_*^!\AA$. Then $\VV$ carries a unique $\VV$-symplectic structure making it into a (local) symplectic LA-groupoid, and such that the underlying Poisson structure on $\GG^*$ integrates the bialgebroid $(\AA^*, \AA)$. 	
\end{corollary}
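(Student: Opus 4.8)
The plan is to produce the $\VV$-symplectic structure by integrating the canonical symplectic form $\omega_\can$ from the double Lie algebroid $p_*^!\AA = \AA\VV$ to the LA-groupoid $\VV$, and then to verify non-degeneracy and to compute the induced Poisson structure on $\GG^*$. The three essential inputs are Proposition~\ref{prop:omegacanim}, which supplies the closedness and infinitesimal multiplicativity of $\omega_\can$; the differentiation isomorphism of Corollary~\ref{cor:multiplicativecochainiso}, which transports infinitesimal data on $p_*^!\AA$ to multiplicative data on $\VV$; and the explicit description of $\omega_\can$ in Theorem~\ref{thm:canonicalform}.

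First I would apply Corollary~\ref{cor:multiplicativecochainiso} to the source $1$-connected LA-groupoid $\VV$, whose double Lie algebroid is $(p_*^!\AA; \AA, \AA^*; M)$ with $\AA^* = \AA\GG^*$. By Proposition~\ref{prop:omegacanim}, $\omega_\can$ is a closed infinitesimally multiplicative $2$-form on $p_*^!\AA \Rightarrow \AA^*$, so the correspondence produces a unique germ of a multiplicative $2$-form $\Omega \in \Omega^2(\VV \Rightarrow \GG^*)$ that differentiates to $\omega_\can$. Since the correspondence is an isomorphism of cochain complexes, closedness of $\omega_\can$ forces $d_\VV \Omega = 0$. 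The same bijectivity delivers the asserted uniqueness: any multiplicative symplectic $\VV$-form whose underlying Poisson structure integrates $(\AA^*, \AA)$ must differentiate to an infinitesimally multiplicative form inducing the linear Poisson structure on $\AA^*$ dual to $\AA$, which by Theorem~\ref{thm:canonicalform} identifies that infinitesimal form as $\omega_\can$, and hence the multiplicative form as $\Omega$.

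The main obstacle is showing that $\Omega$ is non-degenerate as a $\VV$-form, i.e.\ that $\Omega^\flat : \VV \to \VV^*$ is an isomorphism over $\GG^*$. Since the statement is local, a germ around the units, non-degeneracy is an open condition and it suffices to verify it over the unit section $M \hookrightarrow \GG^*$. Over $M$ one has $\VV|_M \cong \AA \oplus C$, where $C$ is the core of the VB-groupoid $\VV \toto \AA$, and the restriction of $\Omega$ there is exactly the pairing encoded by its infinitesimal counterpart $\omega_\can$ along the units. As $\omega_\can$ is non-degenerate by Theorem~\ref{thm:canonicalform}, so is $\Omega|_M$, and openness then propagates non-degeneracy to a neighbourhood of $M$, making $(\VV, \Omega)$ a (local) symplectic LA-groupoid. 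The delicate point to make precise here is the identification of $\Omega$ restricted to $\VV|_M$ with the non-degenerate pairing determined by $\omega_\can$, which rests on the relationship between a multiplicative form and its infinitesimal data at the unit section.

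Finally I would identify the underlying Poisson structure. As explained in the discussion of symplectic LA-groupoids (and as in Example~\ref{ex:symplecticLAgroupoids}), the multiplicative symplectic form $\Omega$ induces a multiplicative Poisson structure $\Pi$ on $\GG^*$, so that $(\GG^*, \Pi)\toto M$ is a Poisson groupoid with Lie algebroid $\AA^* = \AA\GG^*$. Its Lie bialgebroid is $(\AA^*, (\AA^*)^*)$, where $(\AA^*)^* = \AA$ carries the Lie algebroid structure obtained by linearizing $\Pi$ along $M$. Since $\Omega$ differentiates to $\omega_\can$, the structure $\Pi$ linearizes to the Poisson structure that $\omega_\can$ induces on the base $\AA^*$, which by Theorem~\ref{thm:canonicalform} is precisely the linear Poisson structure dual to $\AA$. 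Hence the dual algebroid recovered by linearization is $\AA$, with bracket induced by $\pi_\AA$, and $(\GG^*, \Pi)$ integrates the bialgebroid $(\AA^*, \AA)$, as claimed.
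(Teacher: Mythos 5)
Your construction of $\Omega$ is essentially the paper's own proof: Proposition~\ref{prop:omegacanim} gives that $\omega_\can$ is closed and infinitesimally multiplicative, Corollary~\ref{cor:multiplicativecochainiso} integrates it to a unique multiplicative form on $\VV$, and Theorem~\ref{thm:canonicalform} identifies the induced Poisson structure on the base so that the underlying Poisson structure on $\GG^*$ integrates $(\AA^*,\AA)$. Your extra care about non-degeneracy is a genuine improvement over the paper, which passes over this point silently: checking non-degeneracy along the units, where $\VV\vert_M\cong \AA\oplus\AA^*$ carries the pairing determined by $\omega_\can$, and then invoking openness (equivalently: $\Omega^\flat:\VV\to\VV^*$ is a morphism of VB-groupoids whose differentiation is the isomorphism $\omega_\can^\flat$, hence $\Omega^\flat$ is invertible near $M$) is exactly what is needed for the germ-level statement.

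Your uniqueness argument, however, has a genuine gap. You claim that an infinitesimally multiplicative symplectic form on $p_*^!\AA\Rightarrow\AA^*$ inducing the linear Poisson structure $\pi_{\mathrm{lin}}$ must equal $\omega_\can$, citing Theorem~\ref{thm:canonicalform}. That theorem only says that $\omega_\can$ induces $\pi_{\mathrm{lin}}$; it does not assert injectivity of the map sending an IM symplectic form to its induced Poisson structure, and this map is not injective in general, since the anchor $p_*^!\rho_\AA$ has kernel $\ker\rho_\AA$. Concretely, take $\AA=\lie{g}$ an abelian Lie algebra and $\pi_\AA=0$, so that $\pi_{\mathrm{lin}}=0$: then for every $\lambda\neq 0$ the form $\lambda\,\omega_\can$ is a closed IM symplectic form inducing the zero Poisson structure on $\lie{g}^*$, and its integration $\lambda\,\Omega$ is a multiplicative symplectic form on $\VV$ whose underlying Poisson structure (again zero) integrates the bialgebroid $(\lie{g}^*,\lie{g})$. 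So the strong uniqueness you are trying to prove is false in degenerate cases, and the step where you invoke Theorem~\ref{thm:canonicalform} is precisely where the argument breaks. (Your preliminary claim that such a form must differentiate to an IM form inducing $\pi_{\mathrm{lin}}$ also needs justification, via the compatibility of differentiation of $\Omega'$ with linearization of its underlying Poisson structure, but that is a lesser issue.) The uniqueness intended in the corollary, and the one the paper actually proves, is the weaker statement that follows immediately from the bijectivity in Corollary~\ref{cor:multiplicativecochainiso}: $\Omega$ is the unique multiplicative $\VV$-symplectic form whose differentiation is $\omega_\can$. If you restate your uniqueness claim in that form, the rest of your proof goes through.
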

\begin{proof}
Since $\GG^*$ is source 1-connected, so is $\VV^*$. By Proposition \ref{prop:omegacanim}, the canonical form $\omega_{\can}$ on $p^!_*\AA$ is IM and symplectic, and thus it integrates by Corollary \ref{cor:multiplicativecochainiso} to a unique $\VV$-symplectic form $\Omega$. Since $\omega_{\can}$ induces the Poisson structure on $\AA^*$ dual to $\AA$, $\Omega$ induces the Poisson structure $\Pi$ on $\GG^*$ integrating the Lie bialgebroid $(\AA^*, \AA)$. 
\end{proof}

\begin{remark}
	Integrability of the algebroid $p^!_*\AA\Rightarrow \AA$ always implies integrability of $\AA^*\Rightarrow M$, but the converse is not immediate \cite{brahiccabreraortiz2018}. We haven't investigated integrability of the prolongation Lie algebroid, which is the reason why we work with local LA-groupoids. 
\end{remark}

\begin{proof}[Proof of Theorem \ref{thm:dualintegrationlinearizable}]
		We appeal to Corollary \ref{cor:dualintegrationsymplecticLAgroupoid}, and assume that (at least locally) the Poisson structure $\Pi$ is induced by the symplectic LA-groupoid $(\VV, \Omega)$ integrating the symplectic double Lie algebroid $(p^!_*\AA, \omega_{\can})$.
	We show that $M\subset \GG^*$ is a Lagrangian transversal for $(\VV, \Omega)$. 
	
	\textit{Tranversality.} The anchor $p^!_*\AA\to T\AA^*$ is certainly transverse to the zero section. In fact, it is the identity on the core. Therefore, the anchor of the integration $\rho_\VV:\VV\to T\GG^*$ is also the identity on the core. The core of $T\GG^*$ is transverse to $TM$, hence $M$ is a transversal for $\VV$. 
	
	\textit{Lagrangian. } let $u:M\hookrightarrow \GG^*$ be the inclusion. Since $M$ is always coisotropic, we deduce from Lemma \ref{lem:coisotropictransversal} that it is enough to show that $\dim u^!\VV=\frac{1}{2} \dim \VV$. But this is clear because $u^!\VV=\rho_\VV^{-1}(TM)=\AA$, as $\rho_\VV$ is the identity on the cores. 
	
	Linearizability follows from Theorem \ref{thm:lagrangiannbhd1}. 
\end{proof}
\subsection{Integrations of cosymplectic Lie algebroids}

Let $(\AA, \AA^*, \pi_\AA)$ be a triangular Lie bialgebroid, and $\GG\toto M$ an integration of $M$. Recall from \cite{liuxu1996} that $\lar{\pi_\AA}-\rar{\pi_\AA}$ defines a Poisson structure on $\GG$ integrating the Lie bialgebroid $(\AA, \AA^*)$. 

Suppose there exists an $\AA$-cosymplectic structure $(\alpha_1, \dots, \alpha_k, \omega)$ inducing $\pi_\AA$. Then, on the Lie algebroid $p^!\GG\Rightarrow \GG$, the cosymplectic structure
\[
\left(t^*\alpha_1, \dots, t^*\alpha_k, s^*\alpha_1, \dots, s^*\alpha_k, s^*\omega-t^*\omega\right)
\]
induces the Poisson structure $\lar{\pi_\AA}-\rar{\pi_\AA}$ on $\GG$. Note that $u:M\hookrightarrow \GG$ is \emph{not} a minimal Lagrangian for this cosymplectic structure, because $u^*t^*\alpha_j=\alpha_j$. To overcome this, define $\mathscr{F}=\cap_{j=1}^k\ker  t^*\alpha_j$; this is a wide subalgebroid of $p^!\GG$. The cosymplectic structure on $p^!\GG$ descends to a cosymplectic structure
\[
\left(s^*\alpha_1\big\vert_{\mathscr{F}}, \dots, s^*\alpha_k \big\vert_{\mathscr{F}}, (s^*\omega-t^*\omega)\big\vert_{\mathscr{F}}\right)
\] 
on $\mathscr{F}$. The unit space $u:M\hookrightarrow \GG$ is still transverse to $\mathscr{F}$, pulling it back to 
\[
u^!\mathscr{F}=\FF=\cap_{j=1}^k \ker \alpha_j.
\]
In this situation, $M$ becomes a minimal Lagrangian transversal for the cosymplectic Lie algebroid $\mathscr{F}$! After an application of Theorem \ref{thm:lagrangiannbhd1}, we have proven the following result.
\begin{theorem}\label{thm:cosymplecticpairgroupoidlinearizable}
	Let $(\AA, \AA^*, \pi_\AA)$ be a triangular Lie bialgebroid of cosymplectic type. Let $(\GG, \Pi)\toto M$ be a Poisson groupoid integrating $(\AA, \AA^*)$. Then $(\GG, \Pi)$ is linearizable around $M$. 
\end{theorem}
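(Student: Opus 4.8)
The plan is to realize $(\GG,\Pi)$ as the Poisson manifold underlying a cosymplectic Lie algebroid over $\GG$, for which the unit section becomes a \emph{minimal} Lagrangian transversal, and then to invoke the cosymplectic Lagrangian neighbourhood theorem (Theorem \ref{thm:lagrangiannbhd1}). First I would recall from \cite{liuxu1996} that, for an integration $\GG\toto M$ of $\AA$, the multiplicative Poisson structure integrating $(\AA,\AA^*)$ is $\Pi=\lar{\pi_\AA}-\rar{\pi_\AA}$, so it suffices to linearize this bivector around $M$. Fixing a $k$-cosymplectic structure $(\alpha_1,\dots,\alpha_k,\omega)$ on $\AA$ that induces $\pi_\AA$, I would transport it to the prolongation algebroid $p^!\GG\Rightarrow\GG$ as the type-$2k$ cosymplectic structure $(t^*\alpha_1,\dots,t^*\alpha_k,s^*\alpha_1,\dots,s^*\alpha_k,s^*\omega-t^*\omega)$, whose underlying Poisson structure on $\GG$ is exactly $\lar{\pi_\AA}-\rar{\pi_\AA}$ (in the spirit of Example \ref{ex:symplecticLAgroupoids}).

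The obstruction to applying the neighbourhood theorem directly is that $u:M\hookrightarrow\GG$ is not minimal for this structure: since $t\circ u=\id_M$, one has $u^*t^*\alpha_j=\alpha_j\neq 0$. The key reduction is to pass to the wide subalgebroid $\mathscr{F}=\bigcap_{j=1}^k\ker t^*\alpha_j\subset p^!\GG$, which is genuinely a Lie subalgebroid because each $t^*\alpha_j$ is $d$-closed and the kernel of a closed one-form is involutive. Restricting the remaining forms yields the candidate $k$-cosymplectic structure $\bigl(s^*\alpha_1|_{\mathscr{F}},\dots,s^*\alpha_k|_{\mathscr{F}},(s^*\omega-t^*\omega)|_{\mathscr{F}}\bigr)$; here I would check that closedness is inherited upon restriction to a subalgebroid and that the constant-rank and non-degeneracy conditions persist, so that this is cosymplectic of type $k$. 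Since the $\mathscr{F}$-foliation $\bigcap_j\ker(s^*\alpha_j|_{\mathscr{F}})$ coincides with the symplectic foliation of the original type-$2k$ structure, the induced Poisson structure on $\GG$ is unchanged, still $\lar{\pi_\AA}-\rar{\pi_\AA}$.

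It then remains to verify the three conditions of a minimal Lagrangian transversal for $(\mathscr{F},\pi_{\mathscr{F}})$. Transversality holds because, through its source-side component, the anchor of $\mathscr{F}$ covers a complement to $T_xM$ in $T_{1_x}\GG$ over the units, giving $T_xM+\rho_{\mathscr{F}}(\mathscr{F})=T_{1_x}\GG$; computing this anchor along the units, the constraint $\varrho(v,1_x,w)\in TM$ forces $v=w$, which together with the defining condition $v\in\FF_x$ identifies $u^!\mathscr{F}$ with a diagonally embedded copy of $\FF=\bigcap_{j=1}^k\ker\alpha_j$ inside $\AA\oplus\AA\cong p^!\GG|_M$. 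Using $s\circ u=t\circ u=\id_M$ together with $\FF\subset\ker\alpha_j$, both $u^*(s^*\alpha_j|_{\mathscr{F}})$ and $u^*((s^*\omega-t^*\omega)|_{\mathscr{F}})$ vanish in $\Omega^\bullet(u^!\mathscr{F})=\Omega^\bullet(\FF)$, while the rank count $\operatorname{rk}\FF=2n=\tfrac12(4n)$ exhibits $\FF$ as a Lagrangian inside the rank-$4n$ symplectic leaves; by Lemma \ref{lem:coisotropictransversal} the transversal is then Lagrangian, and in fact minimal. With $M$ a minimal Lagrangian transversal of $\mathscr{F}\Rightarrow\GG$, Theorem \ref{thm:lagrangiannbhd1} produces a Poisson linearization of $(\GG,\lar{\pi_\AA}-\rar{\pi_\AA})=(\GG,\Pi)$ around $M$, as desired.

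I expect the main obstacle to lie not in any Moser-type analysis, which is already packaged inside Theorem \ref{thm:lagrangiannbhd1}, but in the bookkeeping of the reduction to $\mathscr{F}$: confirming that discarding the $k$ target-side Reeb directions leaves a bona fide $k$-cosymplectic structure with the same underlying Poisson structure on $\GG$, and verifying by the explicit anchor computation along the units that the restriction forces $u^!\mathscr{F}=\FF$, thereby converting the a priori non-minimal Lagrangian unit section into a minimal one.
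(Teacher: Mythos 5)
Your proposal is correct and follows essentially the same route as the paper: realize $\Pi=\lar{\pi_\AA}-\rar{\pi_\AA}$ via Liu--Xu, equip $p^!\GG\Rightarrow\GG$ with the $2k$-cosymplectic structure $(t^*\alpha_1,\dots,t^*\alpha_k,s^*\alpha_1,\dots,s^*\alpha_k,s^*\omega-t^*\omega)$, cut down to the wide subalgebroid $\mathscr{F}=\bigcap_j\ker t^*\alpha_j$ to repair the failure of minimality caused by $u^*t^*\alpha_j=\alpha_j$, identify $u^!\mathscr{F}$ with a diagonal copy of $\FF$, and apply Theorem \ref{thm:lagrangiannbhd1}. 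Your explicit verifications (involutivity of $\mathscr{F}$, the anchor computation forcing $v=w$ along units, and the rank count) only spell out details the paper leaves implicit.
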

\begin{remark}
	The fundamental difference between Theorem \ref{thm:dualintegrationlinearizable} and Theorem \ref{thm:cosymplecticpairgroupoidlinearizable} is that the former considers integrations of $\AA^*$, while the latter concerns integrations of $\AA$. They coincide when $\pi_\AA$ is non-degenerate. 
\end{remark}
\begin{remark}[On cosymplectic groupoids]
There is no direct connection between the groupoids integrating Poisson algebroids of cosymplectic type and the cosymplectic groupoids as considered by Djiba and Wade in \cite{djibawade2015}. Both the one-form and two-form of a cosymplectic groupoid are multiplicative, which is not the case above, even after adjustments. It is interesting to note that cosymplectic groupoids are always linearizable as Poisson groupoids by the cosymplectic Lagrangian neighbourhood Theorem \ref{thm:lagrangiannbhd1}, because the unit section is automatically a minimal Lagrangian.
\end{remark}

\subsection{(Non)-linearizability of the pair Poisson groupoid}

For a Poisson manifold $(M, \pi)$, the product $(M\times M, \pi\times (-\pi))$ is rarely linearizable around the diagonal. 

\begin{theorem}\label{thm:nonlinearizable}
Let $(M, \pi)$ be a Poisson manifold. If $(M\times M, \pi\times (-\pi))$ is linearizable around the diagonal, then $\pi$ must be regular. 
\end{theorem}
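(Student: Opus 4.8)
The plan is to pin down the linear model explicitly, reduce to the transverse Poisson structure by a splitting argument, and then derive a contradiction from the pointwise rank function.

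\medskip

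\noindent\textbf{Setup and linear model.} The pair groupoid $M\times M\toto M$ has Lie algebroid $TM$, and its Lie bialgebroid is $(TM, T^*M)$, where $T^*M$ carries the cotangent algebroid structure of $(M,\pi)$. Hence $NM\cong TM$, and the linearization $\pi_{\mathrm{lin}}$ of $\pi\times(-\pi)$ around $\Delta$ is exactly the linear Poisson structure on $TM$ dual to the cotangent algebroid $T^*M$. Linearizability provides a germ of Poisson diffeomorphism $\Phi\colon (TM,\pi_{\mathrm{lin}})\dashrightarrow (M\times M,\pi\times(-\pi))$ restricting to the identity on the zero section $\cong\Delta$. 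First I would fix the invariant that $\Phi$ must preserve: the pointwise rank $\operatorname{rank}_p$ (the dimension of the symplectic leaf through $p$), which is lower semicontinuous. On $M\times M$ it equals $\operatorname{rank}_y\pi+\operatorname{rank}_z\pi$; on the zero section over $x$ it equals $2\operatorname{rank}_x\pi$ (consistent, since $\Phi=\operatorname{id}$ there); and on $TM$ it is invariant under the fibre scalings $m_\lambda$, so all its sublevel sets are scaling-invariant cones with vertex set along the zero section.

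\medskip

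\noindent\textbf{Reduction to the transverse model.} Suppose $\pi$ is not regular and choose $x_0$ where $\operatorname{rank}\pi$ is not locally constant; by lower semicontinuity $\rho_0:=\operatorname{rank}_{x_0}\pi$ is a local minimum. Applying Weinstein splitting at $x_0$ gives $(M,\pi)\cong(\RR^{2\rho_0},\omega_{\can})\times(N,\pi_N)$ with $\pi_N(0)=0$ and $\pi_N\not\equiv 0$ near $0$. Using the splitting theorem for Lie algebroids (Theorem \ref{thm:splitting}) for the cotangent algebroid, I would check two facts: the transverse structure of $\pi\times(-\pi)$ at $(x_0,x_0)$ is $\pi_N\times(-\pi_N)$, while the transverse structure of $\pi_{\mathrm{lin}}$ at $0_{x_0}$ is the linear model of the pair groupoid of $(N,\pi_N)$, i.e.\ the linear Poisson structure dual to the cotangent algebroid $T^*N$ (the dimensions $2(n-\rho_0)$ agree on both sides). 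Since $\Phi$ maps the leaf through $0_{x_0}$ to the leaf through $(x_0,x_0)$, it identifies these transverse structures, and the whole statement reduces to: \emph{if the pair groupoid of $(N,\pi_N)$ with $\pi_N(0)=0$ is linearizable around its diagonal, then $\pi_N\equiv 0$ near $0$.}

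\medskip

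\noindent\textbf{The contradiction from the rank stratification.} In the transverse model, let $2\mu$ be the smallest positive value of $\operatorname{rank}\pi_N$ near $0$ and $\Sigma=\{\operatorname{rank}\pi_N=0\}$. Near $(0,0)$ the sublevel set $\{\operatorname{rank}(\pi_N\times(-\pi_N))\le 2\mu\}$ is the union of two slabs $(\Sigma\times N)\cup(N\times\Sigma)$ crossing transversally along the deepest stratum $\Sigma\times\Sigma$; deleting $\Sigma\times\Sigma$ leaves two disjoint pieces interchanged by the swap involution. The corresponding set on $(TN,\pi_{\mathrm{lin}})$ is a scaling-invariant cone. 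I would extract a local topological invariant preserved by $\Phi$ — for instance the number of connected components of (sublevel set) $\setminus$ (deepest stratum) in a small punctured neighbourhood, equivalently the number of local branches along the deepest stratum — and show that the product side always carries the symmetric pair of branches, strictly more than the scaling-invariant linear side can produce (one verifies this factor-two discrepancy directly on the models $\pi=x\,\partial_x\wedge\partial_y$ and the Lie--Poisson $\mathfrak{so}(3)^*$). This forces $\pi_N\equiv 0$, hence $\pi$ is regular.

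\medskip

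\noindent The step I expect to be the main obstacle is precisely this last topological comparison. The rank strata of the linear Poisson structure dual to $T^*N$ can themselves be singular cones, so it is not enough to invoke a naive ``manifold versus non-manifold'' dichotomy; one must show that \emph{none} of them reproduces the specific two-branch crossing forced by the product structure. Making this count rigorous in general — via the link of the deepest stratum, or a local connected-component / local-homology computation that is manifestly halved in passing from $\pi_N\times(-\pi_N)$ to $\pi_{\mathrm{lin}}$ — is the technical heart of the argument.
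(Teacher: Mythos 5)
Your setup and the reduction to the transverse model are sound: identifying the linear model as the Poisson structure dual to the cotangent algebroid, and using Weinstein splitting to reduce to $(N,\pi_N)$ with $\pi_N(0)=0$, $\pi_N\not\equiv 0$, both work (the splitting of $(TM,\pi_{\mathrm{lin}})$ into a symplectic factor times $(TN,(\pi_N)_{\mathrm{lin}})$ is essentially the same coordinate computation the paper carries out). The gap is the final step, which you yourself flag as the ``technical heart'': it is not a removable technicality but the entire content of the theorem, and as stated it does not go through. Concretely: first, the sublevel set $\{\operatorname{rank}(\pi_N\times(-\pi_N))\le 2\mu\}$ is not $(\Sigma\times N)\cup(N\times\Sigma)$ but $(\Sigma\times N_{\le 2\mu})\cup(N_{\le 2\mu}\times\Sigma)$ with $N_{\le 2\mu}=\{\operatorname{rank}\pi_N\le 2\mu\}$. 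Second, and more seriously, for a merely smooth (non-analytic) Poisson structure the sets $\Sigma$ and $N_{\le 2\mu}$ are arbitrary closed sets (take $\pi_N=f(x)\,\del_x\wedge\del_y$ with $f^{-1}(0)$ a Cantor set), so your proposed invariant --- the number of local branches of the sublevel set along the deepest stratum --- need not be finite and need not stabilize as the neighbourhood shrinks; it is simply not a well-defined invariant without stratification-regularity hypotheses you do not have. Third, even where the count makes sense, the claimed factor-two discrepancy is unproved: in splitting coordinates the linear-side sublevel set is $\{(y,w): y\in\Sigma,\ \operatorname{rank}\left(\tfrac{\del\theta^{ij}}{\del y^k}(y)\,w^k\right)_{ij}\le 2\mu\}$, a cone cut out by \emph{first-derivative} data of $\pi_N$ along $\Sigma$; nothing in your argument controls how many branches such a cone has, and checking two examples does not exclude that some $\pi_N$ produces a linear cone with exactly the branch count of the product side.

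It is worth contrasting this with how the paper evades these issues. It never counts components of rank sublevel sets; it works instead with the locus $\Sing_\pi(M)$ of points where the rank is not \emph{locally constant}, and with the formal tangent space $\mc{T}_xS$ of an arbitrary subset --- an invariant defined for any set and preserved by any diffeomorphism, with no finiteness or regularity needed. Its Claim 2 shows $\Sing_\Pi(M\times M)=\Sing_\pi(M)\times M\cup M\times\Sing_\pi(M)$, while $\Sing_{\pi_{\mathrm{lin}}}(TM)=TM\vert_{\Sing_\pi(M)}$ (this is where the derivative terms $\tfrac{\del\theta^{ij}}{\del y^k}w^k$ are dealt with, once and for all). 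Hence at a diagonal singular point the formal tangent space of the singular locus is everything on the product side, but equals $\mc{T}_x\Sing_\pi(M)\times T_xM$ on the linear side; the elementary nearest-point argument of Claim 3 then produces a singular point with $\mc{T}_x\Sing_\pi(M)\neq T_xM$, giving the contradiction. If you want to salvage your branch-counting scheme, you would at minimum have to restrict to a class of structures where the counts are defined (say analytic ones) and then actually prove the comparison; the formal-tangent-space invariant is the robust substitute that makes the argument work for all smooth $\pi$.
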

At first sight the theorem can be quite unexpected, since it implies the following. 
\begin{corollary}
	Let $\mf{g}$ be a Lie algebra. Then $(\mf{g}^*\times \mf{g}^*, \pi_\mf{g}\times (-\pi_\mf{g}))$ is linearizable around the diagonal if and only if $\mf{g}$ is abelian. 
\end{corollary}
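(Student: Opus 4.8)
The plan is to obtain the corollary directly from Theorem \ref{thm:nonlinearizable}, applied to the Lie--Poisson manifold $(M,\pi)=(\mf{g}^*,\pi_{\mf{g}})$, so that the only genuine work is to pin down exactly when the linear Poisson structure $\pi_{\mf{g}}$ is regular. Both implications of the stated equivalence then fall out quickly.

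For the reverse implication, I would argue that if $\mf{g}$ is abelian, then its bracket is trivial, so the Lie--Poisson bivector vanishes identically, $\pi_{\mf{g}}\equiv 0$, and hence $\pi_{\mf{g}}\times(-\pi_{\mf{g}})\equiv 0$ on $\mf{g}^*\times\mf{g}^*$. The zero bivector is already linear (it is fixed by every scaling $m_\lambda$), so the diagonal is trivially linearizable, its local model coinciding with the structure itself.

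For the forward implication, I would assume the diagonal in $(\mf{g}^*\times\mf{g}^*,\pi_{\mf{g}}\times(-\pi_{\mf{g}}))$ is linearizable and invoke Theorem \ref{thm:nonlinearizable} with $M=\mf{g}^*$, which forces $\pi_{\mf{g}}$ to be regular, i.e. $\pi_{\mf{g}}^\sharp$ has constant rank on $\mf{g}^*$. I would then evaluate this rank at the origin: writing $\pi_{\mf{g}}=\tfrac12\sum c_{ij}^k\,x_k\,\partial_i\wedge\partial_j$ in linear coordinates, every coefficient vanishes at $x=0$, so $\pi_{\mf{g}}(0)=0$ and the rank there is $0$ (equivalently, the coadjoint orbit through $0\in\mf{g}^*$ is a point). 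By constancy of the rank, $\pi_{\mf{g}}^\sharp$ then has rank $0$ everywhere, so $\pi_{\mf{g}}\equiv 0$, which is exactly the statement that the bracket of $\mf{g}$ is trivial, i.e. $\mf{g}$ is abelian.

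Since the real content is carried by Theorem \ref{thm:nonlinearizable}, I do not expect a serious obstacle here; the proof is essentially the observation that a \emph{regular} linear Poisson structure must have rank equal to its value at the origin, namely $0$. The only point I would state carefully is this collapse: regularity (globally constant rank) together with the vanishing of $\pi_{\mf{g}}$ at the single fixed point $0$ already forces $\pi_{\mf{g}}\equiv 0$. It is worth emphasizing in the exposition how strong the regularity conclusion of Theorem \ref{thm:nonlinearizable} is for a linear structure, collapsing all the way to the abelian case.
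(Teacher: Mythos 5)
Your proof is correct and is exactly the argument the paper intends: the paper states this corollary as an immediate consequence of Theorem \ref{thm:nonlinearizable} without writing out details, and the implicit reasoning is precisely yours — linearizability forces $\pi_{\mf{g}}$ to be regular, a linear Poisson structure vanishes at the origin, so constant rank collapses it to $\pi_{\mf{g}}\equiv 0$, i.e. $\mf{g}$ abelian, while the abelian case is trivially linearizable since the product structure is identically zero.
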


\begin{remark}
	When $\pi$ is regular, and of $k$-cosymplectic type, then $(M\times M, \pi\times (-\pi))$ is linearizable around the diagonal according to Theorem \ref{thm:cosymplecticpairgroupoidlinearizable}.
\end{remark}
Yet, our proof of Theorem \ref{thm:nonlinearizable} is very geometric and intuitive. 

\begin{proof}[Proof of theorem \ref{thm:nonlinearizable}] 
	Suppose $\pi$ is not regular. We have to show that any neighbourhood of the diagonal of in $(M\times M, \pi\times(-\pi))$ can not be Poisson diffeomorphic to a neighbourhood of the zero section in $(TM, \pi_{\mathrm{lin}})$. To this end, we define, for a Poisson manifold $(M, \pi)$, the sets of regular and singular points of $M$:
	\begin{align*}
	\Reg_\pi(M)&=\{ x\in M: \mbox{$\pi$ is regular in a neighbourhood of $x$}\},\\ \Sing_\pi(M)&=M\setminus \Reg_\pi(M).
	\end{align*}
	\begin{claim}{1}
	The set $\Reg_\pi(M)$ is an open and dense subset of $M$, while $\Sing_\pi(M)$ is nowhere dense and closed. 
	\end{claim}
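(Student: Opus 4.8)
The plan is to phrase everything in terms of the \emph{rank function} $r\colon M\to \ZZ_{\geq 0}$, $r(x)=\operatorname{rank}\pi^\sharp_x=\dim\Im\pi^\sharp_x$, which records the dimension of the symplectic leaf through $x$. By definition of regularity, $\pi$ is regular near $x$ exactly when $r$ is locally constant at $x$, so $x\in\Reg_\pi(M)$ precisely when $r$ is constant on some neighbourhood of $x$. The entire claim will then reduce to a single elementary fact: the function $r$ is lower semicontinuous.

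To establish lower semicontinuity I would argue locally. Choosing a local frame for $TM$, the bivector $\pi$ is represented by a smooth skew-symmetric matrix-valued function, and $r(x)\geq j$ holds if and only if some $j\times j$ minor of this matrix is nonzero at $x$. Since non-vanishing of a continuous function is an open condition, each superlevel set $\{x\in M : r(x)\geq j\}$ is open, which is exactly the statement that $r$ is lower semicontinuous. Openness of $\Reg_\pi(M)$ is then immediate: if $r$ is constant on a neighbourhood $V$ of $x$, then $r$ is locally constant at \emph{every} point of $V$, so $V\subseteq \Reg_\pi(M)$.

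For density I would show that every nonempty open set $U\subseteq M$ meets $\Reg_\pi(M)$. Since $r$ is integer-valued and bounded above by $\dim M$, its supremum $m$ over $U$ is attained at some point $x_0\in U$. Lower semicontinuity supplies a neighbourhood $W\subseteq U$ of $x_0$ on which $r\geq m$; but $m$ is the maximum of $r$ on $U$, forcing $r\equiv m$ on $W$. Hence $r$ is locally constant at $x_0$, i.e. $x_0\in \Reg_\pi(M)\cap U$, proving that $\overline{\Reg_\pi(M)}=M$. Finally $\Sing_\pi(M)=M\setminus\Reg_\pi(M)$ is closed as the complement of an open set, and it is nowhere dense because its interior equals $M\setminus\overline{\Reg_\pi(M)}=\emptyset$ by the density just obtained.

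I do not expect any serious obstacle: the only ingredient with real content is the lower semicontinuity of the rank, which is standard linear algebra via the minor characterization. The one point requiring a word of care is that the supremum of the (integer-valued, bounded) rank function is genuinely \emph{attained} on each open set $U$, which is what guarantees the existence of a point of locally constant rank inside every open set and thereby drives the density argument.
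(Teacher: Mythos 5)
Your proof is correct. Note that the paper actually states Claim 1 without any proof, treating it as a standard fact about Poisson structures; your argument via lower semicontinuity of the rank function (minor characterization, openness of the superlevel sets $\{r\geq j\}$, and attainment of the maximal rank on any open set) is precisely the standard justification, so it fills this gap completely and there is nothing in the paper to contrast it with.
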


\begin{claim}{2}
	Denote the Poisson structure $\pi\times (-\pi)$ on $M\times M$ by $\Pi$. We have 
	\begin{align*}
	\Sing_\Pi(M\times M)&=\Sing_\pi(M)\times M\cup M\times \Sing_\pi(M).\\
	\Sing_{\pi_{\mathrm{lin}}}(TM)&=\left( TM\right)\vert_{\Sing_\pi(M)}.
	\end{align*}
\end{claim}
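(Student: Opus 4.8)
The plan is to read both identities as computations of where the rank of a Poisson bivector fails to be locally constant, the engine throughout being lower semicontinuity of the rank function $y \mapsto \operatorname{rk}(\pi)_y = \operatorname{rk}(\pi^\sharp_y)$ together with its integrality.

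For the first identity I would start from the pointwise splitting $\Pi^\sharp_{(x,y)} = \pi^\sharp_x \oplus (-\pi^\sharp_y)$, which gives $\operatorname{rk}(\Pi)_{(x,y)} = \operatorname{rk}(\pi)_x + \operatorname{rk}(\pi)_y$. I then claim $\Pi$ is regular at $(x,y)$ if and only if $\pi$ is regular at both $x$ and $y$. One direction is immediate: if the rank of $\pi$ is constant near $x$ and near $y$, the sum is constant on a product neighbourhood. For the converse I argue contrapositively — if $\pi$ is not regular at $x$, lower semicontinuity forces a sequence $x_n \to x$ with $\operatorname{rk}(\pi)_{x_n} > \operatorname{rk}(\pi)_x$, whence $(x_n, y) \to (x,y)$ has strictly larger $\Pi$-rank, so $\Pi$ is singular at $(x,y)$. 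Taking complements yields $\Sing_\Pi(M\times M) = \Sing_\pi(M)\times M \cup M\times\Sing_\pi(M)$.

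For the second identity the key input is the description of $\pi_{\mathrm{lin}}$ as the linear Poisson structure on $TM$ dual to the cotangent algebroid $(T^*M, \pi^\sharp, [\cdot,\cdot]_\pi)$. Using the structure equations for a linear Poisson structure recalled in Section \ref{subsubsec:canonicalform}, I would compute the rank of $\pi_{\mathrm{lin}}$ at a point $\xi \in T_xM$ in a local frame; the outcome I expect is the clean formula
\[
\operatorname{rk}(\pi_{\mathrm{lin}})_\xi = 2\operatorname{rk}(\pi)_x + \dim \mc O_{\bar\xi},
\]
where $\mf g_x = \ker\pi^\sharp_x$ is the isotropy Lie algebra at $x$ (with bracket induced by $[\cdot,\cdot]_\pi$), $\bar\xi = \xi|_{\mf g_x} \in \mf g_x^*$, and $\mc O_{\bar\xi}$ is its coadjoint orbit. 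The inclusion $\Sing_{\pi_{\mathrm{lin}}}(TM) \subseteq (TM)|_{\Sing_\pi(M)}$ then follows over the regular locus: where $\pi$ has locally constant rank, $\ker\pi^\sharp$ is a subbundle and the formula for $[\cdot,\cdot]_\pi$ shows the bracket vanishes on it, so $\mf g_y$ is abelian, all orbits $\mc O_{\bar\eta}$ are points, and $\operatorname{rk}(\pi_{\mathrm{lin}}) \equiv 2\operatorname{rk}(\pi)$ is locally constant — hence $\pi_{\mathrm{lin}}$ is regular over $\Reg_\pi(M)$.

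The reverse inclusion, that every $\xi$ over a singular $x$ is singular for $\pi_{\mathrm{lin}}$, is where the real work lies and is the main obstacle. Here I would compare the maximal fibre value $\max_{\xi\in T_xM}\operatorname{rk}(\pi_{\mathrm{lin}})_\xi = 2\operatorname{rk}(\pi)_x + \omega_x$ (with $\omega_x$ the largest coadjoint orbit dimension of $\mf g_x$) against the value $2m$ carried by the points lying over regular points of locally maximal rank $m$, which accumulate at $x$. Since $x$ is singular, lower semicontinuity gives $m > \operatorname{rk}(\pi)_x$, so writing $m_N = m - \operatorname{rk}(\pi)_x > 0$ for the maximal rank of the transverse Poisson structure near its zero, it suffices to prove the strict inequality $\omega_x < 2m_N$. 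This I obtain from $\omega_x \le m_N$ — the rank of the transverse structure dominates that of its linearization, whose maximal rank is exactly $\omega_x$, by a scaling argument combined with lower semicontinuity — together with $m_N > 0$. The chain $\operatorname{rk}(\pi_{\mathrm{lin}})_\xi \le 2\operatorname{rk}(\pi)_x + \omega_x \le 2\operatorname{rk}(\pi)_x + m_N < 2m$ then shows that arbitrarily close to any $\xi \in T_xM$ there are points (over the accumulating regular points) of strictly larger $\pi_{\mathrm{lin}}$-rank, so $\xi \in \Sing_{\pi_{\mathrm{lin}}}(TM)$. The delicate point to get right is precisely this transverse comparison $\omega_x \le m_N$, and the fact that the isotropy data $\mf g_x$ governs both the fibrewise rank of $\pi_{\mathrm{lin}}$ and the first-order behaviour of $\pi$ near $x$.
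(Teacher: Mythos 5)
Your proposal is correct, and while its computational backbone coincides with the paper's, the decisive step is organized along a genuinely different route. The common ground: the paper also disposes of the first identity by noting $\Reg_\Pi(M\times M)=\Reg_\pi(M)\times\Reg_\pi(M)$, and its Weinstein-splitting computation of $\pi_{\mathrm{lin}}$ in coordinates $(q^i,p_i,y^i)$ gives exactly your rank formula: over $x$ (i.e.\ at $y=0$) it reads $2\operatorname{rk}\pi_x+\operatorname{rk}\bigl(\partial_k\theta^{ij}(0)w^k\bigr)=2\operatorname{rk}(\pi)_x+\dim\mathcal{O}_{\bar\xi}$. The divergence is in how the rank jump over a singular point $x$ is produced. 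The paper stays inside one splitting chart and, with the fibre coordinate $w$ fixed, compares the rank at $(0,w)$ with the rank at nearby $(y,w)$, invoking only semicontinuity of $\operatorname{rk}(\partial\theta(y)\cdot w)$ together with the jumping of $\operatorname{rk}\theta(y)$; since for the relevant block matrix one only has $\operatorname{rk}\begin{pmatrix}0 & \theta\\ -\theta^{T} & \partial\theta\cdot w\end{pmatrix}\ge\max\bigl(2\operatorname{rk}\theta,\ \operatorname{rk}(\partial\theta\cdot w)\bigr)$, and a nonzero $\theta$ block need not raise the rank above $\operatorname{rk}(\partial\theta\cdot w)$ in general, this inference is only airtight once one bounds the linearized (isotropy) ranks by the nearby transverse ranks --- which is precisely your inequality $\omega_x\le m_N$, left implicit in the paper. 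You instead compare the rank at $\xi\in T_xM$ with the value $2m$ attained over nearby regular points of locally maximal rank (such points accumulate at $x$ by Claim 1 and semicontinuity, and over them the isotropy algebras are abelian, so no coadjoint-orbit term appears), and then close the argument with the chain $\operatorname{rk}(\pi_{\mathrm{lin}})_\xi\le 2\operatorname{rk}(\pi)_x+\omega_x\le 2\operatorname{rk}(\pi)_x+m_N<2m$, the middle inequality being proved by scaling plus lower semicontinuity. So your route buys a chart-light argument in which the one delicate point is isolated as a clean, correctly proved lemma on linearizations of transverse structures; the paper's route is shorter and fully explicit in coordinates, but its concluding sentence is terse and really relies on the same transverse comparison you supply.
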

\begin{proof}[Proof of Claim 2]
	It is clear that $\Reg_\Pi(M\times M)= \Reg_\pi(M)\times \Reg_\pi(M)$, so the first equality follows. The second follows from Weinstein's splitting theorem for Poisson structures \cite{weinstein1983}. There are coordinates $(q^i, p_i, y^i)$ on $M$ for which $\pi$ is given by (using Einstein's summation convention)
	\[
	\pi=\deldel{q^i}\wedge\deldel{p_i} + \theta^{ij}(y)\deldel{y^i}\wedge\deldel{y^j},
	\]
	where $\theta^{ij}(0)=0$. 
	Denoting the linear functions $u^i, v_i, w^i$ on $TM$ dual to $dq^i, dp_i, dy^i$, respectively, the Poisson bracket on $TM$ is determined by
	\begin{align*}
	\{ u^i, p_j\}&=\delta^i_j, &  \{v_i, q^j\}&=-\delta^j_i, &
	\{ w^i, y^j\}&=\theta^{ij}, & \{ w^i, w^j\}&=\frac{\del \theta^{ij}}{\del y^k} w^k,
	\end{align*}
and all the other coordinate combinations vanish. 
It follows that the rank of $\pi_{\mathrm{lin}}$ is given by
\[
\operatorname{rk} \pi_{\mathrm{lin}}(y)=2\operatorname{rk}\pi\vert_{y=0}+\operatorname{rk} \begin{pmatrix}
	0 & \theta^{ij}(y)\\
	\theta^{ji}(y) & \frac{\del\theta^{ij}}{\del y^k}(y) w^k
\end{pmatrix},
\]
which jumps whenever $\operatorname{rk}\left(\theta^{ij}\right)_{ij}$ does: around a singular point in $M$, and fixing coordinates $w^k$, the matrix $\left(\frac{\del \theta^{ij}}{\del y^k}(y) w^k\right)_{ij}$ does not decrease rank in a neighbourhood of $y=0$, while $(\theta^{ij})_{ij}$ jumps rank in every neighbourhood. The second equality follows.   
\end{proof}

Let $S\subset M$ be any subset. The \textit{formal tangent space} of $S$ at a point $s\in S$ is defined to be 
\[
\mc{T}_s S=\operatorname{span}\left\{ \dot{\gamma}(0)\big\vert \mbox{$\gamma\in C^\infty(\RR, M)$ s.t. $\gamma(0)=x$ and $\gamma(\RR)\subset S$}\right\}. 
\]
Note that if $\varphi:M\to M$ is a diffeomorphism with $\varphi(S)=S$, then $T_s\varphi$ sends $\mc{T}_s S$ isomorphically to $\mc{T}_{\varphi(s)}S$.
\begin{claim}{3}
Let $U\subset \RR^n$ be an open subset with non-empty boundary. Then there exists an $x\in \del U$ such that $\mc{T}_x(\del U)\neq T_x \RR^n$. In fact, if $V\subset \RR^n$ is any open such that $V\cap \del U\neq \emptyset$, then $x$ can be chosen to be in $V$.
\end{claim}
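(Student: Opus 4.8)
The plan is to realize $\del U$ as an obstacle against which a ball contained in $U$ can be pushed, so that the contact point acquires a supporting hyperplane that traps the formal tangent space. The guiding picture is that of rolling a ball up to the boundary: at a point where an interior ball touches $\del U$, no curve lying inside $\del U$ can have a velocity pointing into the ball, so the formal tangent space is confined to the tangent hyperplane of the ball.

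Concretely, I would first localize. Given an open $V$ with $V\cap\del U\neq\emptyset$, pick a boundary point $b\in V\cap\del U$ and a radius $\delta>0$ with $B(b,3\delta)\subset V$. Since $b$ lies on the boundary of the open set $U$, the neighbourhood $B(b,\delta)$ meets $U$, so there is an interior point $a\in U\cap B(b,\delta)$. Set $\rho=\operatorname{dist}(a,\del U)$; as $a$ is interior we have $\rho>0$, and since $b\in\del U$ we have $\rho\le|a-b|<\delta$. The set $\del U$ is closed, so the distance is attained at some $x\in\del U$ with $|x-a|=\rho$, and the triangle inequality gives $|x-b|\le\rho+|a-b|<2\delta$, hence $x\in B(b,3\delta)\subset V$, placing the contact point inside $V$ as required. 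A short connectedness argument then shows $B(a,\rho)\subset U$: this open ball misses $\del U$ by the definition of $\rho$, it is connected and contains the interior point $a$, so it cannot meet the exterior of $U$.

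The geometric heart is to show $\mc{T}_x(\del U)\subseteq(x-a)^\perp$, a proper hyperplane since $x-a\neq 0$. Let $\gamma\in C^\infty(\RR,\RR^n)$ be any curve with $\gamma(0)=x$ and $\gamma(\RR)\subset\del U$. Because $\del U$ is disjoint from the open ball $B(a,\rho)$, the smooth function $f(t)=|\gamma(t)-a|^2$ satisfies $f(t)\ge\rho^2=f(0)$ for all $t$, so $t=0$ is a global minimum and $f'(0)=2\langle\dot\gamma(0),x-a\rangle=0$. Thus every admissible velocity $\dot\gamma(0)$ is orthogonal to the nonzero vector $x-a$, and therefore their span $\mc{T}_x(\del U)$ lies inside the hyperplane $(x-a)^\perp\neq T_x\RR^n$, which is exactly the desired conclusion.

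I expect the only delicate bookkeeping to be keeping the contact point $x$ inside the prescribed open set $V$, which the choice of radii $\delta$ and $3\delta$ above is designed to handle. Once $B(a,\rho)\subset U$ is established, the supporting argument $f'(0)=0$ is immediate, and all remaining ingredients are elementary metric geometry in $\RR^n$.
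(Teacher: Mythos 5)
Your proof is correct and follows essentially the same strategy as the paper's: pick an interior point near the prescribed boundary point, take a nearest point $x$ of $\del U$, and use the fact that $t\mapsto|\gamma(t)-a|^2$ has a global minimum at $t=0$ to trap $\mc{T}_x(\del U)$ inside the hyperplane $(x-a)^\perp$. The only difference is cosmetic: your connectedness argument that $B(a,\rho)\subset U$ is not actually needed, since the inequality $|\gamma(t)-a|\ge\rho$ already follows from the definition of $\rho=\operatorname{dist}(a,\del U)$.
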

It can often happen that $\mc{T}_x (\del U)=T_x \RR^n$ for some $x\in \del U$. For instance, take $U$ to be the complement of the union of all coordinate axes in $\RR^n$. Then $\mc{T}_0 (\del U)=\RR^n$. Claim 3 ensures that this is not the case for a dense subset of the boundary.
\begin{proof}[Proof of Claim 3]
	Choose any $y\in U$, and let $r=d(y, \del U)>0$. There exists $x\in \del U$ such that $r=d(y, x)$. Let $\gamma:\RR\to \del U$ be a curve with $\gamma(0)=x$. Then $d(y, \gamma(t))$ attains its minimum at $t=0$, from which follows that $\dot{\gamma}(0)$ is tangent to the sphere centred around $y$ with radius $r$. For the last part, we pick $\tilde{x}\in V\cap \del U$, pick $\epsilon>0$ such that $B(\tilde{x}, 2\epsilon)\subset V$, and then pick $y\in B(\tilde{x}, \epsilon)\cap U$. This ensures that $x$ as above is in $V$.
\end{proof}

\begin{claim}{4}
	Let $x\in \Sing_\pi(M)$. Then
	\begin{align*}
	\mc{T}_{(x, x)} \Sing_\Pi (M\times M)&=T_{(x, x)}(M\times M)\\ 
	\mc{T}_x \Sing_{\pi_{\mathrm{lin}}}(TM)&=\mc{T}_x \Sing_{\pi}(M)\times T_xM,
	\end{align*}
where we used the canonical identification $T_x (TM)=T_x M\times \ker Tp_{TM}\vert_M=T_xM\times T_x M$ for $x\in M$.
\end{claim}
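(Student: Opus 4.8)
The plan is to verify the two displayed identities separately, using the explicit descriptions of the singular loci supplied by Claim 2 together with the elementary behaviour of formal tangent spaces under products and under the projection $p_{TM}:TM\to M$. Throughout I fix $x\in\Sing_\pi(M)$; then the constant curve at $x$ already certifies $0\in\mc{T}_x\Sing_\pi(M)$ and places $(x,x)$ in $\Sing_\Pi(M\times M)$ and $0_x$ in $\Sing_{\pi_{\mathrm{lin}}}(TM)$, so that both sides of each identity are defined.

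For the first identity I would use $\Sing_\Pi(M\times M)=\Sing_\pi(M)\times M\cup M\times\Sing_\pi(M)$. The formal tangent space of a union contains that of each piece, so it suffices to produce enough curves inside the two slabs. Holding the first coordinate fixed at $x$ and letting the second sweep out an arbitrary curve through $x$ exhibits every vector $(0,w)$ with $w\in T_xM$ as a velocity of a curve in $\{x\}\times M\subset\Sing_\pi(M)\times M$; symmetrically, every $(v,0)$ with $v\in T_xM$ arises from a curve in $M\times\{x\}\subset M\times\Sing_\pi(M)$. Since $(v,w)=(v,0)+(0,w)$, the span of these velocities is already all of $T_{(x,x)}(M\times M)$, and the reverse inclusion is automatic. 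Note this step uses nothing about $\Sing_\pi(M)$ beyond the fact that $x$ lies in it.

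For the second identity I would use $\Sing_{\pi_{\mathrm{lin}}}(TM)=p_{TM}^{-1}(\Sing_\pi(M))$ together with the canonical splitting $T_{0_x}(TM)=T_xM\times T_xM$ into its horizontal (zero-section) and vertical (fibre) parts. Any curve $\gamma$ in $p_{TM}^{-1}(\Sing_\pi(M))$ through $0_x$ projects under $p_{TM}$ to a curve in $\Sing_\pi(M)$ through $x$, so the horizontal component $Tp_{TM}(\dot\gamma(0))$ lies in $\mc{T}_x\Sing_\pi(M)$ while the vertical component is unconstrained; as $\mc{T}_x\Sing_\pi(M)\times T_xM$ is a linear subspace, this gives the inclusion $\subseteq$. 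For $\supseteq$, the fibrewise dilation $t\mapsto tw$ (which projects to the constant $x$) realises every vertical vector $(0,w)$, and the zero-section lift $t\mapsto 0_{c(t)}$ of a curve $c$ in $\Sing_\pi(M)$ with $\dot c(0)=v$ realises every horizontal vector $(v,0)$ with $v\in\mc{T}_x\Sing_\pi(M)$. Their span is exactly $\mc{T}_x\Sing_\pi(M)\times T_xM$.

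The computation is otherwise routine, and I do not expect a genuine obstacle; the only point requiring care is the asymmetry between the two identities, which is precisely the mechanism that will later force the contradiction with linearizability. On the product side the two slabs $\Sing_\pi(M)\times M$ and $M\times\Sing_\pi(M)$ together contribute a full copy of $T_xM$ in \emph{each} factor, so the formal tangent space fills the whole tangent space; on the linearized side $\Sing_{\pi_{\mathrm{lin}}}(TM)$ is a full sub-bundle over $\Sing_\pi(M)$, contributing a full fibre $T_xM$ vertically but only $\mc{T}_x\Sing_\pi(M)$ horizontally. The subtle bookkeeping point is thus to keep the horizontal/vertical decomposition honest, in particular to observe that the fibre directions of $TM$ are invisible to the defining condition $p_{TM}\circ\gamma\subset\Sing_\pi(M)$ and so contribute a full copy of $T_xM$ no matter how singular $x$ is.
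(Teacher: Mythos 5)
Your proof is correct, and it takes exactly the route the paper intends: the paper's own proof of Claim 4 is the single line ``this easily follows from the description of the sets in question in Claim 2,'' and your argument is precisely that deduction carried out in detail, with curves in the two slabs $\Sing_\pi(M)\times M$ and $M\times\Sing_\pi(M)$ filling $T_{(x,x)}(M\times M)$, and the vertical dilations plus zero-section lifts identifying $\mc{T}_{0_x}\bigl(p_{TM}^{-1}(\Sing_\pi(M))\bigr)$ with $\mc{T}_x\Sing_\pi(M)\times T_xM$. The only cosmetic caution is that elements of $\mc{T}_x\Sing_\pi(M)$ are spans of curve velocities rather than individual velocities, but since both sides of your identities are linear spans it suffices to realise generators, which is what your construction does.
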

\begin{proof}[Proof of Claim 4]
	This easily follows from the description of the sets in question in Claim 2.
\end{proof}
Now we are in a position to complete the argument. Suppose $(M, \pi)$ is not regular. Then, according to Claim 3, there exist an element $x\in \Sing_\pi(M)=\del \Reg_\pi(M)$ for which $\mc{T}_x \Sing_\pi(M)\neq T_x M$. But then the sets $\mc{T}_x \Sing_{\pi_{\mathrm{lin}}}(TM)$ and $\mc{T}_{(x,x)}\Sing_\Pi(M\times M)$ can not be isomorphic by Claim 4. Hence, there exists no (local) Poisson diffeomorphism $(TM, \pi_{\mathrm{lin}})\dashrightarrow (M\times M, \pi\times (-\pi))$ that extends the diagonal embedding. 
\end{proof}

\appendix

\section{Multiplicative forms on LA-groupoids}\label{app:multiplicativeforms}
We recall the definitions of several combined structures encountered in this paper. For more details, we refer to \cite{bursztyncabrerahoyo2016, mackenzie1992, mackenzie1997, mackenzie2005}. 
\begin{itemize}[noitemsep, topsep=0em]
	\item  A Lie bialgebroid $(\AA, \AA^*)$ is a Lie algebroid $\AA\Rightarrow M$ with a Lie algebroid structure on $\AA^*\Rightarrow M$ such that
	\[
	d_{\AA^*}[v, w]_\AA=[d_{\AA^*}v, w]_\AA +[v, d_{\AA^*}w]_\AA, \quad \mbox{ for all $v, w\in \Gamma(\AA)$.}
	\] 
	\item A \textit{double vector bundle} $(D; A, B; M)$ is a square
	\[
	\begin{tikzcd}
	D \arrow[r] \arrow[d]  & B \arrow[d] \\
	A \arrow[r] & M
	\end{tikzcd}
\]
with a vector bundle structure associated to each edge of the diagram for which the scalar multiplication of the vector bundle structure on $D\to B$ and $D\to A$ commute. The \textit{core} $C$ of $D$ is the intersection of the kernels of the projections $D\to A$ and $D\to M$. It becomes a vector bundle $C\to M$ by using the scalar multiplication induced by either $D\to A$ or $D\to B$.
\item A \textit{VB-algebroid} is a double vector bundle $(D; A, B;M)$ with Lie algebroid structures $D\Rightarrow B$ and $A\Rightarrow M$ for which the scalar multiplication on $D\to A$ is a Lie algebroid morphism. In this case, we say that the VB-algebroid $(D;A, B;M)$ has a \textit{horizontal} Lie algebroid structure. One can also consider a double vector bundle $(D;A, B;M)$ with Lie algebroid structures $D\Rightarrow A$ and $B\Rightarrow M$ for which the scalar multiplication on $D\to B$ is a Lie algebroid morphism. This is also called a VB-algebroid, but now it has a \textit{vertical} Lie algebroid structure.
\item A \textit{VB-groupoid} $(V; A, \GG; M)$ 
(with a vertical groupoid structure) is a square
\[
\begin{tikzcd}
	V \arrow[d, shift left]\arrow[r]\arrow[d, shift right]& \GG \arrow[d, shift left] \arrow[d, shift right] \\
	A \arrow[r] & M
\end{tikzcd}
\]
for which the horizontal arrows are vector bundle structures, and the vertical arrows are groupoid structures, that are compatible in the sense that the scalar multiplication of the vector bundle $V\to \GG$ is a groupoid morphism from $\VV\toto A$ to $\VV\toto A$. One can define a VB-groupoid with a horizontal groupoid structure in a similar way.
\item An \textit{LA-groupoid} is a $VB$-groupoid $(\VV; \AA, \GG; M)$ with Lie algebroid structures $\VV\Rightarrow \GG$ and $\AA\Rightarrow M$ for which the groupoid structure maps of $\VV\toto \AA$ are Lie algebroid morphisms. 
\item A \textit{double Lie algebroid} is a double vector bundle $(\mc{D}; \AA, \BB; M)$ with a horizontal and vertical VB-algebroid structures
\[
\begin{tikzcd}
	\mc{D} \arrow[r, Rightarrow] \arrow[d]  & \BB \arrow[d] \\
	\AA \arrow[r, Rightarrow] & M,
\end{tikzcd}
\begin{tikzcd}
	\mc{D} \arrow[r] \arrow[d, Rightarrow]  & \BB \arrow[d, Rightarrow] \\
	\AA \arrow[r] & M,
\end{tikzcd}
\]
that are compatible in the following way. The anchors of the VB-algebroid structures must be Lie algebroid morphisms for the other, and that under the duality of double vector bundles, the pair $(D_A^*, D_B^*)$ is a Lie bialgebroid over the core dual $C^*$. Because this definition is quite lengthy, we think of double Lie algebroids in this paper as the infinitesimal counterpart of a (local) LA-groupoid, as explained in \cite{mackenzie1997} (or \cite{bursztyncabrerahoyo2016} via the duality between Lie algebroids and linear Poisson structures). This makes some proofs shorter and more conceptual. The differentiation procedure is recalled in section \ref{sec:multiplicativeformsLAgroupoids}.
\end{itemize}
\begin{remark}
	The notation $(D;A, B;M)$, whether a double vector bundle or an LA-groupoid, always refers to the following \textit{arrangement} of the diagram
	\[
	\begin{tikzcd}
		D \arrow[r, no head, dashed] \arrow[d, no head, dashed] & B \arrow[d, no head, dashed]\\
		A \arrow[r, no head, dashed] & M.
	\end{tikzcd}
	\]
	Whether the groupoid/algebroid structure is vertical or horizontal will be clear from the context or otherwise explicitly mentioned.
\end{remark}
\subsection{Multiplicative tensors on VB-groupoids}
We give a brief account on the differentiation and integration procedure of multiplicative tensors on VB-groupoids, as discussed in \cite{egea2016}. 
\subsubsection{Multiplicative functions}
Let $\GG$ be a Lie groupoid, with Lie algebroid $\AA\GG=\ker Ts\vert_M$. Given $f\in C^\infty(\GG)$, one can differentiate $f$ to obtain
\[
\AA f: \AA\GG\to\RR, \quad v\mapsto \left\langle df\vert_M, v\right \rangle.
\]
Clearly, $\AA f$ is linear and thus corresponds to a section of $\AA^*\GG$. 
\begin{definition}
	A function $f:\GG\to \RR$ on a (local) Lie groupoid $\GG\toto M$ is multiplicative when $f(gh)=f(g)+f(h)$ for all $g, h\in \GG$ composable.
\end{definition}
Given a multiplicative function $f$ on $\GG$, it differentiates to a Lie algebroid morphism $\AA f: \AA\GG\to \RR$, where $\RR\to \{*\}$ is regarded as a Lie algebroid with trivial bracket.
\begin{definition}
	Let $\AA\Rightarrow M$ be a Lie algebroid. A function $f:\AA\to \RR$ is called \textit{IM (infinitesimally multiplicative)} when it is linear and a Lie algebroid morphism. Concretely, this means that $f$ is linear and satisfies the cocycle condition:
	\[
	f([v, w])=\LL_{\rho(v)}(f(w))-\LL_{\rho(w)}(f(v))
	\]
	for sections $v, w\in \AA$. Equivalently, $d_{\AA\GG}  f=0$, when the linear function $f$ is regarded as a section of $\AA^*$. 
\end{definition}

\subsubsection{Multiplicative forms} By a $k$-form on a vector bundle $E\to M$ we mean an element of $\Gamma(\wedge^kE^*)$. Let $(\mc{V}; \AA, \GG; M)$ be a VB-groupoid (with vertical groupoid structure). A $k$-form $\tau\in \Gamma(\wedge^k\mc{V}^*)$ induces a function
\[
c_{\tau}:\oplus^k_\GG \mc{V}\to \RR
\]
that is $k$-linear and skew-symmetric. The space $\oplus^k_\GG \VV$ is naturally VB-groupoid over $\oplus^k\AA$. In the following, the groupoid structure maps of a VB-groupoid are decorated with a tilde.
\begin{proposition}
	Let $(\VV; \AA, \GG; M)$ be an VB-groupoid (with a vertical groupoid structure) and $\tau\in \Gamma(\wedge^k\VV^*)$. Then the following are equivalent:
	\begin{itemize}[noitemsep, topsep=0em]
		\item $\tau$ is multiplicative, i.e. $\tilde{m}^*\tau=\tilde{\pr}_1^*\tau+\tilde{\pr}_2^*\tau$;
		\item $c_\tau:\oplus^k\VV\to \RR$ is multiplicative;
		\item $\tau^\flat: \oplus^{k-1}\VV\to \VV^*$ is a groupoid morphism. 
	\end{itemize}
\end{proposition}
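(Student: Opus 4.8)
The plan is to recognise that all three conditions encode a single identity for the fibrewise $k$-linear form $\tau$, namely its additivity over composition in $\VV\toto\AA$:
\[
\tau(v_1\cdot w_1,\dots,v_k\cdot w_k)=\tau(v_1,\dots,v_k)+\tau(w_1,\dots,w_k),
\]
required to hold whenever all $v_i$ lie in a common fibre $\VV_g$, all $w_i$ in a common fibre $\VV_h$, and each pair $(v_i,w_i)$ is composable in the groupoid $\VV\toto\AA$. The first step is bookkeeping: I would record the recovery formulas $c_\tau(v_1,\dots,v_k)=\tau(v_1,\dots,v_k)$ and $\langle\tau^\flat(v_1,\dots,v_{k-1}),w\rangle=\tau(v_1,\dots,v_{k-1},w)$, and note that the Whitney-sum groupoid $\oplus^k_\GG\VV\toto\oplus^k\AA$ has componentwise structure maps, so that a composable pair of $k$-tuples multiplies to $(v_1\cdot w_1,\dots,v_k\cdot w_k)$, while composable $(k-1)$-tuples behave analogously.

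For $(1)\Leftrightarrow(2)$ I would argue by evaluation. An element of the fibre of $\VV^{(2)}\to\GG^{(2)}$ over a composable pair $(g,h)$ is exactly a composable pair $(v,w)$ with $v\in\VV_g$, $w\in\VV_h$; feeding $k$ such elements $(v_1,w_1),\dots,(v_k,w_k)$ into the bundle-form identity $\tilde{m}^*\tau=\tilde{\pr}_1^*\tau+\tilde{\pr}_2^*\tau$ yields $\tau(v_1\cdot w_1,\dots,v_k\cdot w_k)=\tau(v_1,\dots,v_k)+\tau(w_1,\dots,w_k)$, since $\tilde{m}$, $\tilde{\pr}_1$, $\tilde{\pr}_2$ act componentwise on the tuple. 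As a bundle form is determined by its values on such tuples, this is precisely the additivity of $c_\tau$ over the componentwise multiplication of $\oplus^k_\GG\VV$, i.e. condition (2). Thus (1) and (2) are literally the same identity read on tuples.

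For $(2)\Leftrightarrow(3)$ I would unwind the dual VB-groupoid $(\VV^*;C^*,\GG;M)$, whose multiplication is characterised by $\langle\xi\cdot\eta,\,v\cdot w\rangle=\langle\xi,v\rangle+\langle\eta,w\rangle$ for composable $v,w$ and $\xi\in\VV^*_g$, $\eta\in\VV^*_h$. Saying $\tau^\flat$ is a groupoid morphism means $\tau^\flat(v_1\cdot w_1,\dots,v_{k-1}\cdot w_{k-1})=\tau^\flat(v_1,\dots,v_{k-1})\cdot\tau^\flat(w_1,\dots,w_{k-1})$; pairing both sides against an extra composable vector $v_k\cdot w_k$ and using the characterisation of the dual product turns this into exactly the additivity identity of condition (2). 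Conversely, that identity forces the two elements of $\VV^*_{gh}$ to agree on all such products.

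The step I expect to be the main obstacle is closing this last equivalence rigorously rather than formally. Two points need care. First, an element of $\VV^*_{gh}$ is not determined by its pairings with products $v_k\cdot w_k$ alone: these span only a complement to the core $C\subset\VV$, so I must also check that the two sides agree on core vectors, which amounts to verifying that $\tau^\flat$ intertwines source, target and unit maps and covers the correct base map $\oplus^{k-1}\AA\to C^*$ induced by the core pairing of $\tau$. Second, I should confirm that for a vector-bundle morphism over $\mathrm{id}_\GG$ between VB-groupoids the multiplicativity identity indeed upgrades to a full groupoid morphism, so that preservation of units, source and target is automatic. Once the dual-groupoid structure (including its core) is recalled precisely, both verifications are routine, and assembling $(1)\Leftrightarrow(2)\Leftrightarrow(3)$ completes the proof.
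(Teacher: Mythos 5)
The paper itself gives no proof of this proposition: it is recalled in Appendix A as background on multiplicative tensors, quoted from the thesis \cite{egea2016}, so there is nothing internal to compare your argument against. On its own merits your proposal is correct in structure and is the standard argument: $(1)\Leftrightarrow(2)$ is indeed a literal rereading, because a point of the fibre of $\VV^{(2)}\to\GG^{(2)}$ over a composable pair $(g,h)$ is exactly a composable pair $(v,w)\in\VV_g\times\VV_h$, the structure maps of $\oplus^k_\GG\VV\toto\oplus^k\AA$ act componentwise, and both conditions reduce to the additivity identity $\tau(v_1\cdot w_1,\dots,v_k\cdot w_k)=\tau(v_1,\dots,v_k)+\tau(w_1,\dots,w_k)$; and $(2)\Leftrightarrow(3)$ follows by pairing against products once the dual VB-groupoid structure on $(\VV^*;C^*,\GG;M)$ is written out.

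One factual correction, which actually works in your favour. Your claim that the products $v_k\cdot w_k$ ``span only a complement to the core'' of $\VV_{gh}$ is false: in a VB-groupoid the source map $\tilde{s}\colon\VV_h\to\AA_{s(h)}$ is fibrewise surjective, so any $u\in\VV_{gh}$ factors as $u=(u\cdot w^{-1})\cdot w$ upon choosing $w\in\VV_h$ with $\tilde{s}(w)=\tilde{s}(u)$. Hence the fibrewise multiplication $\VV^{(2)}_{(g,h)}\to\VV_{gh}$ is onto, and pairing against products of composable vectors already determines an element of $\VV^*_{gh}$ completely; no separate check ``on core vectors'' is needed for the equality of the two covectors. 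What genuinely remains---and you do flag it---is the composability of $\tau^\flat(v_1,\dots,v_{k-1})$ and $\tau^\flat(w_1,\dots,w_{k-1})$ in $\VV^*\toto C^*$, i.e.\ that $\tau^\flat$ intertwines source and target and covers the induced base map $\oplus^{k-1}\AA\to C^*$. This is not automatic for an arbitrary bundle map over $\id_\GG$, but it does follow from condition $(2)$ itself: apply the additivity identity to the tuples $(v_i,w_i)_{i<k}$ together with the degenerate composable pair $\left(0_g\cdot\overline{c}^{\,-1},\,\overline{c}\cdot 0_h\right)$, whose product is $0_{gh}$ for any core element $\overline{c}$; the left-hand side then vanishes by multilinearity, and the resulting relation $\langle\tau^\flat(v_1,\dots,v_{k-1}),0_g\cdot\overline{c}^{\,-1}\rangle=-\langle\tau^\flat(w_1,\dots,w_{k-1}),\overline{c}\cdot 0_h\rangle$ is (up to the sign conventions in the dual structure maps) precisely the statement $\tilde{s}_{\VV^*}\bigl(\tau^\flat(v_1,\dots,v_{k-1})\bigr)=\tilde{t}_{\VV^*}\bigl(\tau^\flat(w_1,\dots,w_{k-1})\bigr)$. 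With that observation your outline closes completely, and the verifications you deferred are indeed routine.
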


One can differentiate a multiplicative form $\tau\in \Gamma(\wedge^k \VV^*)$ by differentiating $c_\tau$ to a Lie algebroid morphism
\[
\AA c_\tau:\AA\left(\oplus^k_\GG \VV\right)\cong \oplus^k_{\AA\GG}\AA\VV\to \RR,
\]
which is $k$-linear, skew-symmetric. Therefore, it must be induced by some linear $k$-form $\AA\tau\in \Gamma(\wedge^k(\AA\VV)^*_{\AA\GG})$.

\begin{definition}
	Let $(\mc{D}; \AA, \BB; M)$ be a VB-algebroid (with a vertical Lie algebroid structure). A form $\tau\in \Gamma(\wedge^k(\mc{D})^*_\BB)$ on the horizontal vector bundle $\mc{D}\to \mc{B}$ is \textit{IM (infinitesimally multiplicative)} with respect to the \textit{vertical} Lie algebroid structure when 
	\[
	c_\tau: \oplus_\BB^k \mc{D} \to \RR
	\]
	is an IM function on the Lie algebroid $\oplus^k_\BB \mc{D}\Rightarrow \oplus^k \AA$. 
\end{definition}
\begin{theorem}[\cite{egea2016}, Theorem 2.35]\label{thm:integrationIMforms}
	If $(\VV; \AA, \GG; M)$ is a source 1-connected (local) VB-groupoid integrating the VB-algebroid $(\AA\VV; \AA, \AA\GG; M)$. The assignment $\tau\mapsto \AA\tau$ gives rise to a one-to-one correspondence between (germs of) multiplicative forms on $\VV \rightarrow \GG$ and IM forms on $\AA\VV\rightarrow\AA\GG$. 
\end{theorem}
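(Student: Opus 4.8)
The plan is to reduce the correspondence at the level of forms to the classical bijection between multiplicative functions on a Lie groupoid and IM functions on its Lie algebroid, and then to invoke Lie's second theorem. Recall from the Proposition preceding the definition of IM forms that a $k$-form $\tau\in\Gamma(\wedge^k\VV^*)$ is multiplicative precisely when the induced function $c_\tau\colon\oplus^k_\GG\VV\to\RR$ is multiplicative on the groupoid $\oplus^k_\GG\VV\toto\oplus^k\AA$, and that $\tau$ is IM precisely when $c_\tau$ is an IM function on the Lie algebroid $\oplus^k_\AA\AA\VV\Rightarrow\oplus^k\AA\GG$. Using the canonical identification $\AA(\oplus^k_\GG\VV)\cong\oplus^k_{\AA\GG}\AA\VV$ recalled above, the statement therefore follows at once from a bijection, for the groupoid $\mc{H}:=\oplus^k_\GG\VV$, between (germs of) multiplicative functions on $\mc{H}$ and IM functions on $\AA\mc{H}$, compatible with $\tau\mapsto\AA\tau$.

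First I would treat this function-level statement. A multiplicative function $f\colon\mc{H}\to\RR$ is exactly a morphism of Lie groupoids from $\mc{H}$ to the additive group $(\RR,+)$, and its differential $\AA f$ is by construction a morphism of Lie algebroids $\AA\mc{H}\to\RR$, i.e.\ an IM function. Injectivity of $f\mapsto\AA f$ is immediate once $\mc{H}$ is source-connected, since a groupoid morphism out of a source-connected groupoid is determined by its restriction to a neighbourhood of the units, hence by its infinitesimalisation. Surjectivity is precisely Lie's second theorem: every Lie algebroid morphism $\AA\mc{H}\to\RR$ integrates to a groupoid morphism $\mc{H}\to\RR$ provided $\mc{H}$ is source $1$-connected. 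Here $\mc{H}=\oplus^k_\GG\VV$ is source $1$-connected because $\VV$ is---the source fibres of a VB-groupoid are vector bundles over those of $\GG$, hence share its connectivity, and the Whitney sum preserves this---while in the local case one works with germs along the units, where Lie II holds for local groupoids.

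It remains to check that the multiplicative function obtained by integration actually comes from a genuine $k$-form, i.e.\ is fibrewise $k$-linear and alternating. Linearity and alternation are detected by equivariance under the vector-bundle automorphisms of $\oplus^k_\GG\VV$---the fibrewise scalar multiplications, the diagonal additions, and the permutations of the $k$ summands---all of which are VB-groupoid morphisms covering the analogous automorphisms of $\oplus^k_\AA\AA\VV$. Since differentiation and its inverse are natural with respect to such morphisms (again by the uniqueness half of Lie II), the integrated function inherits exactly the equivariance of the given IM function $c_\tau$; hence it is again linear and alternating, and defines the sought multiplicative $k$-form $\tau$ with $\AA\tau$ the prescribed IM form.

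The main obstacle I expect is the surjectivity (integration) step together with the bookkeeping around the identification $\AA(\oplus^k_\GG\VV)\cong\oplus^k_{\AA\GG}\AA\VV$: one must verify that this isomorphism intertwines the multiplicative and IM structures coherently, so that applying Lie II to $\mc{H}$ and then recognising the output as a form is legitimate. The source $1$-connectedness hypothesis enters precisely here; dropping it leaves only the injective differentiation map. The refinement to an isomorphism of cochain complexes in Corollary \ref{cor:multiplicativecochainiso} requires the further, similar check that $\tau\mapsto\AA\tau$ commutes with the respective differentials, but this is not needed for the bijection itself.
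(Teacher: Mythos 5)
The paper does not prove this statement itself---it imports it verbatim from Egea's thesis \cite{egea2016}, and your argument reconstructs essentially the same strategy used there: identify multiplicative (resp.\ IM) $k$-forms with multiplicative (resp.\ IM) functions via $\tau\mapsto c_\tau$ and the identification $\AA(\oplus^k_\GG\VV)\cong\oplus^k_{\AA\GG}\AA\VV$, integrate IM functions by Lie's second theorem for morphisms into $(\RR,+)$, and recover fibrewise linearity and antisymmetry from the uniqueness half of Lie II applied to the scalar-multiplication, addition and permutation morphisms. The only nitpick is that source fibres of a VB-groupoid are \emph{affine} (not vector) bundles over those of $\GG$, but this does not affect the homotopy argument, so your proof is correct and matches the cited source's approach.
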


\begin{remark}
	The definitions and results in this section are formulated for vertical VB-groupoids and VB-algebroids. By flipping all the diagrams, one do the exact same things with horizontal VB-groupoids and VB-algebroids. For example, an IM-form for a horizontal VB-algebroid $(\mc{D}; \AA, \BB;M)$ is a section $\tau\in \Gamma(\wedge^k (\mc{D})^*_\AA)$ (so a form on the vector bundle $\mc{D}\to \AA$) for which 
	\[
	c_\tau: \oplus^k_\BB \mc{D} \to \RR
	\]
	is an IM function on the Lie algebroid $\oplus^k_\AA \mc{D}\Rightarrow \oplus^k \BB$.
\end{remark}

\subsection{Multiplicative forms on LA-groupoids}\label{sec:multiplicativeformsLAgroupoids}

Let $(\VV;\AA,\GG;M)$ be an LA-groupoid. What we are after is the interaction between the Lie algebroid structure on $\VV\Rightarrow \GG$, the Lie algebroid structure $\AA\VV\Rightarrow \AA\GG$ and the differentiation process of the multiplicative forms. 

The main result of this section is the following theorem. 
\begin{theorem}\label{thm:differentiationcochainmap}
	Let $(\VV; \AA, \GG; M)$ be a (local) LA-groupoid differentiating to the double Lie algebroid $(\AA\VV; \AA, \AA\GG; M)$. The map $\tau \mapsto \AA\tau$ intertwines the Lie algebroid differentials:
	\[
	d_{\AA\VV}\circ \AA=\AA\circ d_\VV.
	\]
	Here, $d_\VV$ is the Lie algebroid differential on $\Omega^\bullet (\VV\Rightarrow \GG)$ and $d_{\AA\VV}$ is the Lie algebroid differential on $\Omega^\bullet (\AA\VV\Rightarrow \AA \GG)$. 
\end{theorem}
For an LA-groupoid $(\VV;\AA, \GG; M)$, we denote by $\Omega^\bullet_M(\VV\Rightarrow \GG)$ the space of multiplicative forms. Likewise, the space of IM forms on $(\mc{D}; \AA, \BB; M)$ is denoted by $\Omega^\bullet_{IM}(\mc{D}\Rightarrow \AA)$. 
\begin{corollary}\label{cor:multiplicativecochainiso}
	If $(\VV; \AA, \GG; M)$ is a source 1-connected LA-groupoid, then the differentiation map gives an isomorphism of cochain complexes from $(\Omega_M^\bullet(\VV\Rightarrow \GG), d_\VV)$ to $(\Omega_{IM}^\bullet(\AA\VV\Rightarrow \AA\GG), d_{\AA\VV})$. 
\end{corollary}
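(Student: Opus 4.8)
The plan is to deduce the corollary directly from the two preceding results, Theorem \ref{thm:integrationIMforms} (integration of multiplicative forms on VB-groupoids) and Theorem \ref{thm:differentiationcochainmap} (compatibility of differentiation with the Lie algebroid differentials); the corollary is essentially their conjunction. First I would check that the two sides are honest cochain complexes. For the multiplicative forms this amounts to showing that $d_\VV$ preserves multiplicativity. The point is that multiplicativity reads $\tilde{m}^*\tau=\tilde{\pr}_1^*\tau+\tilde{\pr}_2^*\tau$, and since $(\VV;\AA,\GG;M)$ is an LA-groupoid, the structure maps $\tilde{m},\tilde{\pr}_1,\tilde{\pr}_2$ of $\VV\toto\AA$ are morphisms of the Lie algebroids in the $\Rightarrow$ direction. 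As $d_\VV$ commutes with pullback along Lie algebroid morphisms, applying $d_\VV$ to the multiplicativity identity shows $d_\VV\tau$ is again multiplicative, so $(\Omega^\bullet_M(\VV\Rightarrow\GG),d_\VV)$ is a subcomplex of the full Lie algebroid complex of $\VV\Rightarrow\GG$.

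Next I would invoke Theorem \ref{thm:integrationIMforms}, applied to the underlying VB-groupoid of the LA-groupoid (temporarily forgetting the $\Rightarrow$-direction Lie algebroid structures). Source $1$-connectedness guarantees that the differentiation map $\tau\mapsto\AA\tau$ is, in each degree $k$, a bijection between germs of multiplicative $k$-forms on $\VV\Rightarrow\GG$ and IM $k$-forms on $\AA\VV\Rightarrow\AA\GG$. This yields a degree-wise linear isomorphism $\AA\colon\Omega^\bullet_M(\VV\Rightarrow\GG)\to\Omega^\bullet_{IM}(\AA\VV\Rightarrow\AA\GG)$ of graded vector spaces.

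Then I would use Theorem \ref{thm:differentiationcochainmap}, which asserts precisely $d_{\AA\VV}\circ\AA=\AA\circ d_\VV$, to conclude that this isomorphism intertwines the two differentials. As a by-product it also confirms that $d_{\AA\VV}$ preserves the IM condition: writing an arbitrary IM form as $\AA\tau$ with $\tau$ multiplicative gives $d_{\AA\VV}\AA\tau=\AA d_\VV\tau$, which is again the image under $\AA$ of a multiplicative form and hence IM; thus $(\Omega^\bullet_{IM}(\AA\VV\Rightarrow\AA\GG),d_{\AA\VV})$ is a genuine cochain complex. Finally, a linear bijection that is a cochain map is automatically an isomorphism of cochain complexes, since from $\AA\circ d_\VV=d_{\AA\VV}\circ\AA$ one obtains $\AA^{-1}\circ d_{\AA\VV}=d_\VV\circ\AA^{-1}$, so the inverse is a cochain map too. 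This completes the proof.

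The construction is just an assembly of the two cited theorems, so there is no serious obstacle internal to the corollary: the substantive work has already been carried out in Theorem \ref{thm:integrationIMforms} (the bijection, resting on source $1$-connectedness) and Theorem \ref{thm:differentiationcochainmap} (the cochain-map identity). The one point that genuinely needs care is the well-definedness of the two subcomplexes, i.e.\ that $d_\VV$ and $d_{\AA\VV}$ restrict to $\Omega^\bullet_M$ and $\Omega^\bullet_{IM}$ respectively; I would dispatch this exactly as indicated above, using that the groupoid structure maps are Lie algebroid morphisms on the one side and the surjectivity of $\AA$ on the other.
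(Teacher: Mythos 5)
Your proposal is correct and follows essentially the same route as the paper, which states the corollary without separate proof precisely because it is the conjunction of Theorem \ref{thm:integrationIMforms} (the degree-wise bijection coming from source $1$-connectedness) and Theorem \ref{thm:differentiationcochainmap} (the identity $d_{\AA\VV}\circ\AA=\AA\circ d_\VV$). Your extra verifications --- that $d_\VV$ preserves multiplicativity because the structure maps of $\VV\toto\AA$ are Lie algebroid morphisms, and that $d_{\AA\VV}$ preserves the IM condition via surjectivity of $\AA$ --- are a sound fleshing-out of routine points the paper leaves implicit.
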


Before we proceed to the proof, we must first describe the structure of the Lie algebroid $\AA\VV\Rightarrow \AA\GG$, as was done in \cite{mackenzie1997}.
\begin{definition}
	Let $(\VV;\AA, \GG;M)$ be an LA-groupoid, that is possibly local.  A \textit{star section} is an element $\xi\in \Gamma(\VV\to \GG)$ for which there exists a section $X\in \Gamma(\AA)$ satisfying $\tilde{s}\circ \xi=X\circ s$ and $\xi\circ u=\tilde{u}\circ X$. 
\end{definition}
The Lie bracket of two star sections is again a star section, and any star section $(\xi, X)$ of $\VV$ differentiates to a linear section $(\AA\xi, X)$ of $\AA\VV\to \AA\GG$. Additionally, every core section $c\in \Gamma(C)$ lifts to a section $\widehat{c}\in \Gamma(\AA\VV\to \AA\GG)$ via (cf. equation (\ref{eq:corelift}))
\[
\widehat{c}(v)=\AA(0)(v)+\overline{c(p_{\AA\GG}(v))}. 
\] 
\begin{proposition}[\cite{mackenzie1997}, Proposition 1.11]\label{prop:generatorsprolongation}
	The space of sections of $\AA\VV\to \AA\GG$ is generated (as a $C^\infty(\AA\GG)$-module) by sections of the form $(\AA\xi, X)$ for $(\xi, X)$ a star section and lifts of core sections $\widehat{c}$ for $c\in \Gamma(C)$. 
\end{proposition}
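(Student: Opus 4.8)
The plan is to deduce the statement purely from the double vector bundle structure of $\AA\VV$, independently of its Lie algebroid structure: generation of a section module is a linear-algebraic fact, so only the double vector bundle $(\AA\VV;\AA,\AA\GG;M)$ together with the two distinguished families of sections enter. First I would invoke the general structure theory of double vector bundles (\cite{mackenzie2005}): for any double vector bundle $(\mc{D};\AA,B;M)$ with core $C$, the $C^\infty(B)$-module $\Gamma(\mc{D}\to B)$ is generated by the \emph{linear} sections (those $\sigma:B\to\mc{D}$ that are vector bundle morphisms over $M$, covering a section of the side $\AA$) together with the core lifts $\widehat{c}$, $c\in\Gamma(C)$. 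Concretely, a local splitting $\mc{D}\cong B\times_M(\AA\oplus C)$ decomposes an arbitrary section of $\mc{D}\to B$ into a $C^\infty(B)$-combination of linear sections (the $\AA$-directions) and a $C^\infty(B)$-combination of core lifts (the $C$-directions). Applying this to $(\AA\VV;\AA,\AA\GG;M)$, whose core is the core $C$ of $\VV$ — differentiating the vertical groupoid $\VV\toto\AA$ leaves $C$, which sits over $M$ in the kernels of both bundle projections, untouched — reduces the proposition to identifying these two families.

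The core lifts are handled immediately: by the defining formula $\widehat{c}(v)=\AA(0)(v)+\overline{c(p_{\AA\GG}(v))}$, they are exactly the sections named in the statement. For the linear sections I would argue as follows. The text already records that a star section $(\xi,X)$ differentiates to a linear section $(\AA\xi,X)$; it remains to see that these span the linear sections modulo core lifts. Since the claim is local over $M$, choose a local frame $X_1,\dots,X_r$ of $\AA$ and, for each, a star section $\xi_j$ covering $X_j$; such star sections exist because the two defining conditions $\widetilde{s}\circ\xi_j=X_j\circ s$ and $\xi_j\circ u=\widetilde{u}\circ X_j$ constitute a lifting problem for the surjective submersion $\widetilde{s}:\VV\to\AA$ with a compatible constraint along the units, solvable locally. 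Given an arbitrary linear section $\sigma$ of $\AA\VV\to\AA\GG$ covering $X=\sum_j g_j X_j$ with $g_j\in C^\infty(M)$, the difference $\sigma-\sum_j(p_{\AA\GG}^*g_j)\,\AA\xi_j$ is again linear but now covers the zero section of $\AA$. A linear section covering $0$ has image in $\ker(\AA\VV\to\AA)\cong(\AA\GG)\times_M C$ and is therefore a bundle map $\AA\GG\to C$ over $M$, i.e.\ of the form $\sum_k\ell_{\beta_k}\,\widehat{c_k}$ with $\beta_k$ fiberwise-linear and $c_k$ a local frame of $C$; this lies in the $C^\infty(\AA\GG)$-span of the core lifts. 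Hence every section lies in the span of $\{\AA\xi_j\}\cup\{\widehat{c_k}\}$, which is the assertion.

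The main obstacle is this third step, and inside it two points deserve care. The first is producing enough star sections: verifying that a star section covering a prescribed $X\in\Gamma(\AA)$ genuinely exists locally, which I would settle by the lifting argument above (or, equivalently, by right translation of the unit values $\widetilde{u}\circ X$ in the VB-groupoid $\VV\toto\AA$). The second is the identification of linear sections covering $0$ with the core-linear combinations $\sum_k\ell_{\beta_k}\,\widehat{c_k}$, which rests on the description of $\ker(\AA\VV\to\AA)$ as the pullback of the core $C$ to $\AA\GG$. Everything else is a direct application of the general double vector bundle decomposition and the definitions recalled before the statement; in particular, the Lie algebroid structures on $\AA\VV$ and $\AA\GG$ play no role in the generation statement itself, only in its later use.
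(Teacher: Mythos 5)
Your proposal cannot be checked against an in-paper argument, because the paper does not prove this statement: Proposition \ref{prop:generatorsprolongation} is imported verbatim from Mackenzie (\cite{mackenzie1997}, Proposition 1.11) and is used as a black box. Judged on its own merits, your proof is correct, and it is essentially the standard argument one finds in the literature on double vector bundles. The reduction to the double vector bundle $(\AA\VV;\AA,\AA\GG;M)$ is legitimate (a module-generation statement sees no brackets); the fact that for a double vector bundle $(\mc{D};\AA,B;M)$ with core $C$ the $C^\infty(B)$-module $\Gamma(\mc{D}\to B)$ is generated by linear sections and core lifts is standard (it follows from the existence of local decompositions, cf.\ \cite{bursztyncabrerahoyo2016, mackenzie2005}); the identification of the core of $\AA\VV$ with the core $C$ of $\VV$ is the standard compatibility of the Lie functor with cores; and your two ``delicate'' steps both hold: a linear section over the zero section of $\AA$ takes values in $\ker(\AA\VV\to\AA)\cong\AA\GG\times_M C$ and is hence a $C^\infty(\AA\GG)$-combination $\sum_k\ell_{\beta_k}\widehat{c_k}$, and star sections covering a prescribed $X\in\Gamma(\AA)$ do exist, since $\tilde{s}^{-1}(X\circ s)\to\GG$ is an affine bundle whose sections can be prescribed along the closed submanifold $u(M)$.

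Two small points of care. First, the existence of star sections rests on $\tilde{s}:\VV\to\AA$ being \emph{fiberwise} surjective, which you should justify rather than just call $\tilde{s}$ a surjective submersion: it follows because $\tilde{s}$, being the source map of the Lie groupoid $\VV\toto\AA$, is a submersion, and its differential at a point $0_g$ of the zero section is block diagonal with respect to the splittings $T_{0_g}\VV\cong T_g\GG\oplus\VV_g$ and $T_{0_{s(g)}}\AA\cong T_{s(g)}M\oplus\AA_{s(g)}$, equal to $T_gs\oplus\tilde{s}\vert_{\VV_g}$; surjectivity of the total differential then forces surjectivity of $\tilde{s}\vert_{\VV_g}$. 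Your parenthetical alternative ``by right translation of the unit values'' does not literally work (the relevant products with zeros or units are not composable unless $X=0$), so the affine-bundle lifting should be regarded as the actual argument. Second, since frames of $\AA$ and $C$ exist only locally, your argument as written gives generation in the local sense; this is exactly how the paper (and Mackenzie) use such statements — compare the analogous Lemma \ref{lem:horizontalsections} — and a partition-of-unity argument upgrades it if one insists on global finite combinations. Neither point is a genuine gap.
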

We now describe the Lie algebroid structure. First, recall that the canonical involution $J_\GG:T(T\GG)\to T(T\GG)$ restricts to an isomorphism of double vector bundles $j_\GG: \AA(T\GG)\to T(\AA\GG)$. 

\textbf{The anchor map.} The anchor of the prolonged Lie algebroid structure on $\AA\VV$ is given by $\varrho=j_\GG^{-1}\circ \AA(\rho_\VV):\AA \VV\to T(\AA\GG)$. It is double linear, and covers the Lie anchor map $\rho_\AA:\AA\to TM$ on the side bundle, the identity on $\AA\GG$ on the other side bundle and the core morphism is the core anchor $\rho_C^{\AA\GG}:C\to \AA\GG$. Also, notice that $\varrho(\widehat{c})=\rho_C^{\AA\GG}(c)^\uparrow$ for sections $c\in \Gamma(C)$. 

\textbf{The Lie bracket.} The Lie bracket is specified on the derivatives of star sections $\AA\xi$ and lifts $\widehat{c}$ of core sections $c\in \Gamma(C)$, and extended via the Leibniz rule. Let $\xi_1, \xi_2$ be star sections and $c_1, c_2\in \Gamma(C)$. 
\begin{itemize}[noitemsep, topsep=0em]
	\item (Star-star.) The Lie bracket of $\AA\xi_1$ and $\AA\xi_2$ is set to
	\[
	[\AA\xi_1, \AA\xi_2]=\AA([\xi_1, \xi_2]).
	\]
	\item (Star-core.) Since $\xi_1$ is $\tilde{s}$-related to $X_1$, and $\rar{c_1}$ is $\tilde{s}$ related to $0$, it follows that $\tilde{s}([\xi_1, \rar{c_1}])=0$. Therefore, we can set
	\[
	[\AA\xi_1, \widehat{c}_1]=\widehat{D_{\xi_1}(c_1)},
	\]
	with $D_{\xi_1}(c)=[\xi_1, \rar{c_1} ]\circ u\in \Gamma(C)$. 
	\item (Core-core.) The bracket of $\widehat{c}_1$ and $\widehat{c}_2$ is simply set to zero.
\end{itemize} 
One of the main results in \cite{mackenzie1997} is that this Lie algebroid structure makes $(\AA\VV; \AA\GG, \AA;M)$ into a double Lie algebroid. 
\begin{proof}[Proof of theorem \ref{thm:differentiationcochainmap}] First we consider functions.
	\begin{claim}{1}
		Let $f:\GG\to \RR$, be a function, $(\xi, X)$ a star section and $c\in \Gamma(C)$. Then 
		\begin{align*}
			\LL_{\AA\xi}(\AA f)&=\AA(\LL_\xi(f)),& \quad \LL_{\widehat{c}}(\AA f)&=p_{\AA\GG}^*u^*(\LL_{\rar{c}}(f)),\\
			\LL_{\AA\xi}(p_{\AA\GG}^* u^*f)&=p_{\AA\GG}^*u^*(\LL_\xi(f)), & \LL_{\widehat{c}}(p_{\AA\GG}^*u^*f)&=0.
		\end{align*}
	\end{claim}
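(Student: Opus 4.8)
The four identities compare the Lie derivative on the Lie algebroid $\VV\Rightarrow\GG$ (downstairs) with the Lie derivative on the prolongation $\AA\VV\Rightarrow\AA\GG$ (upstairs), the two being linked by the differentiation map $f\mapsto\AA f$. The plan is to reduce every identity to the explicit anchor $\varrho=j_\GG\circ\AA(\rho_\VV)$ evaluated on the two families of generators of Proposition \ref{prop:generatorsprolongation}, keeping in mind that $\AA f$ is a \emph{fibrewise-linear} function on $\AA\GG$ (with $\AA f(v)=\langle df\vert_M, v\rangle=v(f)$, so that it corresponds to a section of $\AA^*\GG$), while $p_{\AA\GG}^*u^*f$ is \emph{basic}, i.e. pulled back from $M$ along $p_{\AA\GG}$. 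The two core-lift identities will be elementary; the two star-section identities require a short flow argument and carry the real content.

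First I would dispose of the core lifts (second and fourth identities). Here $\varrho(\widehat c)=\rho_C^{\AA\GG}(c)^\uparrow$ is the vertical lift of $w:=\rho_C^{\AA\GG}(c)\in\Gamma(\AA\GG)$, so its flow is the fibrewise translation $v\mapsto v+t\,w(p_{\AA\GG}(v))$. Since a basic function is constant along such translations, $\LL_{\widehat c}(p_{\AA\GG}^*u^*f)=0$ is immediate. For the linear function $\AA f$, the derivative of a linear function along a vertical lift gives $\LL_{\widehat c}(\AA f)=p_{\AA\GG}^*\langle\AA f,w\rangle$, and it only remains to identify $\langle\AA f,\rho_C^{\AA\GG}(c)\rangle=u^*(\LL_{\rar{c}}f)$. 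This holds because the core anchor is the restriction of $\rho_\VV$ to the core, so that for $m\in M$,
\[
\langle\AA f,\rho_C^{\AA\GG}(c)\rangle(m)=\langle df\vert_{u(m)},\rho_\VV(c(m))\rangle=\rho_\VV(\rar{c})(f)(u(m))=u^*(\LL_{\rar{c}}f)(m),
\]
using $\rar{c}(u(m))=c(m)$.

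Next I would treat the star sections (first and third identities). Set $Y:=\rho_\VV(\xi)\in\mathfrak{X}(\GG)$. Because $\rho_\VV$ is a morphism of LA-groupoids and $\xi$ is a star section over $X$, transporting the two star conditions through $\rho_\VV$ shows that $Y$ is a star section of the tangent groupoid $T\GG\toto TM$ over $\bar Y:=\rho_\AA(X)$, i.e. $Ts\circ Y=\bar Y\circ s$ and $Y\circ u=Tu\circ\bar Y$. Hence the ordinary flow $\phi_t$ of $Y$ is $s$-projectable, covering the flow $\bar\phi_t$ of $\bar Y$ on $M$, and preserves the unit submanifold $u(M)$ (as $Y$ is tangent to it). It follows that $T\phi_t$ restricts to a well-defined flow $\Phi_t:=T\phi_t\vert_{\AA\GG}$ on $\AA\GG=\ker Ts\vert_M$, covering $\bar\phi_t$. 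Invoking the anchor formula together with the naturality of differentiation, $\AA(\rho_\VV)(\AA\xi)=\AA Y$, this $\Phi_t$ is precisely the flow of $\varrho(\AA\xi)=j_\GG(\AA Y)$. A direct computation from $\AA f(v)=v(f)$ then gives $\Phi_t^*(\AA f)=\AA(\phi_t^*f)$ and $\Phi_t^*(p_{\AA\GG}^*u^*f)=p_{\AA\GG}^*\bar\phi_t^*u^*f$; differentiating at $t=0$ and using $\LL_\xi f=Y(f)$ together with $\bar Y(u^*f)=u^*(Y(f))$ yields the first and third identities, respectively.

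The hard part will be justifying that $\varrho(\AA\xi)$ integrates to $\Phi_t=T\phi_t\vert_{\AA\GG}$, that is, the compatibility of the canonical involution $j_\GG\colon\AA(T\GG)\to T(\AA\GG)$ with the flow of a star vector field on $\GG$. This is the tangent-prolongation statement underlying the very definition of $\varrho$, and it is where the geometric input sits; once it is in place, the remaining steps are the routine translation and pullback computations indicated above.
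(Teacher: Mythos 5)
Your proof is correct and takes essentially the same approach as the paper: the core-lift identities via the vertical-lift observation $\varrho(\widehat{c})=\rho_C^{\AA\GG}(c)^\uparrow$, and the star-section identities via the fact that the flow of $\varrho(\AA\xi)$ is the restriction $T\phi_t\vert_{\AA\GG}$ of the tangent flow of $\rho_\VV(\xi)$. The one ingredient you flag as "the hard part" — that $\varrho(\AA\xi)$ integrates to $T\phi_t\vert_{\AA\GG}$ — is precisely what the paper disposes of by citing \cite{mackenziexu1998}, so your argument matches the paper's modulo that reference.
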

	\begin{proof}[Proof of claim]
		For the first part, let $\varphi_\epsilon$ be the flow of $\rho_V(\xi)$ covering the flow of $\rho_\AA(X)$. Then $\AA(\varphi_\epsilon)$ is the flow of $\varrho(\AA(\xi))$ \cite{mackenziexu1998}. Therefore, given $v\in \AA\GG$ with $v=\dot{\gamma}_0$, we have
		\begin{align*}
			\AA(\LL_\xi(f))(v)&=\frac{d}{d\lambda}\big\vert_{\lambda=0} \LL_\xi(f)(\gamma_\lambda)=\frac{d}{d\lambda}\big\vert_{\lambda=0} \frac{d}{d\epsilon}\big\vert_{\epsilon=0}f\circ \varphi_\epsilon\circ\gamma_\lambda\\
			&= \frac{d}{d\epsilon}\big\vert_{\epsilon=0} \left\langle \AA(f), \AA\varphi_\epsilon(v)\right\rangle =\LL_{\AA\xi}(\AA f)(v).
		\end{align*}
		Next, the flow $\tilde{\varphi}_\epsilon$ of $\rho_\AA(X)$ is covered both by $\AA\varphi_\epsilon$ and $\varphi_\epsilon$. Therefore
		\[
		\LL_{\AA\xi}(p^*_{\AA\GG}u^*f)=p^*_{\AA\GG}\left(\LL_{\rho(X)}u^* f\right)=p^*_{\AA\GG}u^*\left(\LL_\xi(f)\right).
		\] 
		Recalling that $\varrho(\widehat{c})=(\rho_C^{\AA\GG}(c))^\uparrow$, we have
		\[
		\LL_{\hat{c}}(\AA f)=p_{\AA\GG}^*\left\langle \AA f, \rho^{\AA\GG}_C(c)\right\rangle=p^*_{\AA\GG}u^*(\LL_{\rar{c}}(f)).
		\]
		The last identity is obvious because $\varrho(\widehat{c})$ is vertical.
	\end{proof}
	\begin{claim}{2}
		Let $\tau\in \Omega^k(\VV\Rightarrow \GG)$ be multiplicative, $(\xi_1,X_1), \dots, (\xi_k, X_k)$ star sections, and $c_1, c_2\in \Gamma(C)$. Then
		\begin{align*}
			(\AA\tau)(\AA\xi_1, \dots, \AA\xi_k)&=\AA(\tau(\xi_1, \dots, \xi_k)), \\ (\AA\tau)(\AA\xi_1, \dots \AA\xi_{k-1}, \widehat{c}_1)&=p_{\AA\GG}^* u^*\left(\tau(\xi_1, \dots, \xi_{k-1}, \rar{c_1})\right), \\ \iota_{\widehat{c}_1}\iota_{\widehat{c}_2}(\AA\tau)&=0. 
		\end{align*}
	where $u:M\hookrightarrow \GG$ is the unit map.
	\end{claim}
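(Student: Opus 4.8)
The plan is to push everything through the associated multiplicative function: by construction (the differentiation procedure preceding Theorem~\ref{thm:integrationIMforms}, applied to the VB-groupoid $\oplus^k_\GG\VV\toto\oplus^k\AA$), $\AA\tau$ is the unique linear $k$-form whose associated function satisfies $c_{\AA\tau}=\AA(c_\tau)$ under the canonical identification $\AA(\oplus^k_\GG\VV)\cong\oplus^k_{\AA\GG}\AA\VV$. Thus $(\AA\tau)(V_1,\dots,V_k)=\AA(c_\tau)\circ(V_1,\dots,V_k)$ for sections $V_i$ of $\AA\VV\to\AA\GG$, and the three identities correspond exactly to the three families of generators. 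The one extra ingredient I would isolate first is a chain rule: for a star section $\Sigma$ of the groupoid $\oplus^k_\GG\VV\toto\oplus^k\AA$ and any function $F$ on its total space, $\langle\AA F,\AA\Sigma\rangle=\AA(F\circ\Sigma)$. This is proved by the same representing-curve computation as in Claim~1: writing $v\in\AA\GG$ as $v=\frac{d}{d\lambda}\big|_{\lambda=0}\gamma_\lambda$ for a curve $\gamma_\lambda$ in an $s$-fibre through a unit, one has $\AA\Sigma(v)=\frac{d}{d\lambda}\big|_{\lambda=0}\Sigma(\gamma_\lambda)$, so that $\langle\AA F,\AA\Sigma(v)\rangle=\frac{d}{d\lambda}\big|_{\lambda=0}F(\Sigma(\gamma_\lambda))=\AA(F\circ\Sigma)(v)$.

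With this in hand the first identity is immediate: the tuple $(\xi_1,\dots,\xi_k)$ is a star section of $\oplus^k_\GG\VV\toto\oplus^k\AA$ whose differentiation is $(\AA\xi_1,\dots,\AA\xi_k)$ (naturality of the Lie functor with respect to Whitney sums), and $c_\tau\circ(\xi_1,\dots,\xi_k)=\tau(\xi_1,\dots,\xi_k)$, so the chain rule gives $(\AA\tau)(\AA\xi_1,\dots,\AA\xi_k)=\AA(\tau(\xi_1,\dots,\xi_k))$.

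The second identity is the step I expect to be the main obstacle, since its last slot carries a core lift $\widehat{c_1}$ rather than the derivative of a star section, so the chain rule does not apply verbatim. Here I would split $\widehat{c_1}(v)=\AA 0(v)+\overline{c_1(p_{\AA\GG}(v))}$ and use linearity of $c_{\AA\tau}$ in the last argument. The $\AA 0$ part turns $(\AA\xi_1,\dots,\AA\xi_{k-1},\AA 0)$ into the differentiation of the star section $(\xi_1,\dots,\xi_{k-1},0)$, which by the chain rule contributes $\AA(\tau(\xi_1,\dots,\xi_{k-1},0))=0$. It remains to evaluate $\AA(c_\tau)$ on the pure core insertion $\overline{c_1}$ in the last slot, and this is precisely the form-level analogue of the second formula of Claim~1. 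Since $\varrho(\widehat{c_1})=(\rho^{\AA\GG}_C(c_1))^\uparrow$ is vertical, differentiating the fibrewise-linear $c_\tau$ in this direction produces its fibre derivative in the last slot; the delicate point is to identify the core element $\overline{c_1}$ of $\AA\VV$ with the right-invariant section $\rar{c_1}$ of $\VV\to\GG$ through the core anchor $\rho^{\AA\GG}_C$, exactly via the identity $\langle\AA g,\rho^{\AA\GG}_C(c)\rangle=u^*(\LL_{\rar c}\,g)$ already exploited in Claim~1. Carrying this out should yield $(\AA\tau)(\AA\xi_1,\dots,\AA\xi_{k-1},\widehat{c_1})=p^*_{\AA\GG}u^*\big(\tau(\xi_1,\dots,\xi_{k-1},\rar{c_1})\big)$.

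For the third identity I would argue structurally. Being the differentiation of a multiplicative form, $\AA\tau$ is IM and in particular a \emph{linear} form on the VB-algebroid $\AA\VV\to\AA\GG$; a characterising property of linear forms on double vector bundles is that their complete contraction with two core sections vanishes, whence $\iota_{\widehat{c_1}}\iota_{\widehat{c_2}}(\AA\tau)=0$. Equivalently, this falls out of the same curve computation used above: both core lifts anchor to vertical directions, so the second fibre derivative of the fibrewise-linear $c_\tau$ that would appear is identically zero. Together the three identities pin down $\AA\tau$ on all generators of $\AA\VV\to\AA\GG$ listed in Proposition~\ref{prop:generatorsprolongation}, which is what the claim asserts.
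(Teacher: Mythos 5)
Your first identity is correct and is exactly the paper's argument (the chain rule $c_{\AA\tau}=\AA(c_\tau)=dc_\tau$ applied to curve representatives). The genuine gap is in the second identity, at precisely the step you flag as delicate and then defer with ``carrying this out should yield''. Two problems sit there. First, the decomposition $\widehat{c_1}(v)=\AA(0)(v)+\overline{c_1(p_{\AA\GG}(v))}$ is a sum taken in the bundle $\AA\VV\to\AA$, i.e.\ it is addition of tangent vectors to $\VV$ at the point $0_{u(m)}$: the summand $\AA(0)(v)$ lies in the fibre of $\AA\VV\to\AA\GG$ over $v$, whereas the core element $\overline{c_1}$ lies in the fibre over $0_m$. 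So ``linearity of $c_{\AA\tau}$ in the last argument'' --- fibrewise multilinearity of the form $\AA\tau$ over $\AA\GG$ --- cannot justify the split, because the two summands do not lie in a common fibre over $\AA\GG$. What does justify it is that $\AA(c_\tau)$ is the restriction of the differential $dc_\tau$, hence additive on tangent vectors at a common point (linearity over $\oplus^k\AA$, not over $\AA\GG$); this is repairable, but it is a different linearity from the one you invoke. Second, and more seriously, after splitting you must still evaluate $\AA(c_\tau)$ on the tuple $\left(0_{\xi_1(u(m))},\dots,0_{\xi_{k-1}(u(m))},\overline{c_1(m)}\right)$, and the tool you point to is a category error: the Claim~1 identity $\LL_{\widehat{c}}(\AA f)=p^*_{\AA\GG}u^*(\LL_{\rar{c}}(f))$ computes a \emph{Lie derivative} along the vertical vector field $\varrho(\widehat{c_1})=(\rho^{\AA\GG}_C(c_1))^\uparrow$, so it only ever sees the anchor image $\rho^{\AA\GG}_C(c_1)\in\Gamma(\AA\GG)$; what must be computed here is the \emph{evaluation} of $dc_\tau$ on the core element $\overline{c_1}\in\AA\VV$ itself, which the anchor does not determine. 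Since this evaluation is the entire content of the second identity, the proof as written is incomplete.

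The missing computation --- and the paper's way of handling the splitting and the core slot in one stroke --- is to represent the core insertion by a curve inside a single fibre of $\VV\to\GG$ and use fibrewise linearity of the \emph{original} form $\tau$: writing $v=\dot{\gamma}_0$, one has $\widehat{c_1}(v)=\frac{d}{d\epsilon}\big\vert_{\epsilon=0}\,\epsilon\left(0_{\gamma_\epsilon}\cdot c_1(\gamma_0)\right)$, so $k$-linearity of $\tau$ pulls the factor $\epsilon$ out of $c_\tau$, and the product rule leaves $c_\tau\left(\xi_1(u(m)),\dots,\xi_{k-1}(u(m)),\rar{c_1}(u(m))\right)=u^*\left(\tau(\xi_1,\dots,\xi_{k-1},\rar{c_1})\right)(m)$, which is the second identity. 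With two core slots the same representation produces a factor $\epsilon^2$, whose $\epsilon$-derivative at $0$ vanishes, giving the third identity directly --- no appeal to an external characterisation of linear forms is needed (the property you cite is true, but it is proved nowhere in the paper, and your sketch of it again refers to the anchors of the core lifts rather than to the core elements themselves). If you carry out your plan with these corrections, it becomes essentially the paper's proof, with the single combined curve $\epsilon\left(0_{\gamma_\epsilon}\cdot c_1(\gamma_0)\right)$ replaced by a split into a zero part plus a core part.
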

	\begin{proof}[Proof of claim]
		The first holds via the identification $\AA(\oplus^k_\GG\VV)=\oplus^k_{\AA\GG}\AA\VV$: if $v=\dot{\gamma}_0\in \AA\GG$, then 
		\[
		(\AA\xi_1, \dots, \AA\xi_k)(v)=\frac{d}{d\epsilon}\vert_{\epsilon=0}(\xi_1, \dots, \xi_k)(\gamma_\epsilon),
		\]
		so it follows that
		\[
		\AA c_\tau(\AA\xi_1, \dots, \AA\xi_k)(v)=\frac{d}{d\epsilon}\big\vert_{\epsilon=0} c_\tau(\xi_1(\gamma_\epsilon)), \dots,\xi_k(\gamma_\epsilon))=\AA(\tau(\xi_1, \dots, \xi_k))(v).
		\]
		For the second and third identities, let $c\in \Gamma(c)$. Note that, given $v=\dot{\gamma}_0$ with $s(\gamma_\epsilon)=\gamma_0$, the map $(\lambda, \epsilon)\mapsto 0_{\gamma_\epsilon} \cdot \lambda c(\gamma_0)$ is well-defined for all $\lambda, \epsilon$. Therefore,
		\[
		\frac{d}{d\epsilon}\big\vert_{\epsilon=0}0_{\gamma_\epsilon}\cdot \epsilon c(\gamma_0)=\frac{d}{d\epsilon}\big\vert_{\epsilon=0} 0_{\gamma_\epsilon}\cdot 0_{\gamma_0} + \frac{d}{d\epsilon}\big\vert_{\epsilon=0} 0_{\gamma_0}\cdot \epsilon c(\gamma_0)=\AA(0)(v)+\overline{c(q_{\AA\GG}(v))}.
		\]
		where we used that $0_{u(m)}=\tilde{u}(0_m)$. It follows that
		\[
		\hat{c}(v)=\frac{d}{d\epsilon}\big\vert_{\epsilon=0} 0_{\gamma_\epsilon}\cdot \epsilon c(\gamma_0)=\frac{d}{d\epsilon}\big\vert_{\epsilon=0} \epsilon \left(0_{\gamma_\epsilon}\cdot c(\gamma_0)\right). 
		\]
		Now the second and third identity follow from $k$-linearity of $\tau$. 
	\end{proof}
	
	Finally, we check that $d(\AA\tau)=\AA(d_\VV\tau)$ on the generators, making use of the previous claims. 
	
	\textit{(Star-star.)} Let $\xi_0, \dots, \xi_k$ be star sections. Then
	\begin{align*}
		d_{\AA\VV}(\AA\tau)(\AA\xi_0, \dots, \AA\xi_k)&=\sum_{i=0}^k(-1)^i \LL_{\AA\xi_i}(\AA\tau(\AA\xi_0, \overset{\widehat{\AA\xi}_i}{\dots}, \AA\xi_k))\\
		&\phantom{=}+\sum_{i<j}(-1)^{i+j}\AA\tau([\AA\xi_i, \AA\xi_j], \AA\xi_0, \overset{\widehat{\AA\xi}_i}{\dots}, \overset{\widehat{\AA\xi}_j}{\dots}, \AA\xi_k)\\
		&=\AA\left( d_\VV\tau(\xi_0, \dots, \xi_k)\right)=\AA\left(d_\VV\tau\right)(\AA\xi_0, \dots, \AA\xi_k).
	\end{align*} 

	\textit{(Star-core.)} Let $\xi_0, \dots, \xi_{k-1}$ be star sections and $c\in \Gamma(C)$. Then
	\begin{align*}
		d_{\AA\VV}(\AA\tau)(\AA\xi_0, \dots, \AA\xi_{k-1}, \hat{c})&=\sum_{i=0}^{k-1}(-1)^i\LL_{\AA\xi}\left( \AA\tau(\AA\xi_0, \overset{\widehat{\AA\xi}_i}{\dots}, \AA\xi_{k-1}, \hat{c})\right)\\
		&\phantom{=}+(-1)^k \LL_{\hat{c}}\left(\AA\tau(\AA\xi_0, \dots, \AA\xi_{k-1})\right)\\
		&+ \sum_{i<j}(-1)^{i+j}\AA\tau(\AA[\xi_i, \xi_j], \AA\xi_0, \overset{\widehat{\AA\xi}_i}{\dots},\overset{\widehat{\AA\xi}_j}{\dots}, \AA\xi_{k-1}, \hat{c})\\
		&\phantom{=}+\sum_i(-1)^{i+k}\AA\tau(\widehat{D_{\xi_i}(c)}, \AA\xi_0, \overset{\widehat{\AA\xi}_i}{\dots}, \AA\xi_{k-1}).
	\end{align*}
	We inspect this term by term.
	\begin{itemize}[noitemsep, topsep=0em]
		\item[1.] Using claim 2 and then claim 1, the first term evaluates to 
		\begin{align*}
			\sum_{i=0}^{k-1}(-1)^i p^*_{\AA\GG}u^*\left(\LL_{\widehat{\xi}_i}\left(\tau(\xi_0, \overset{\xi_i}{\dots}, \xi_{k-1}, \rar{c})\right)\right).
		\end{align*}
		\item[2.] For the second one, we get
		\[
		(-1)^kp^*_{\AA\GG}u^*\left(\LL_{\rar{c}}\left(\tau(\xi_0, \dots, \xi_{k-1})\right)\right).
		\]
		\item[3.] The third term equals
		\[
		\sum_{i<j}(-1)^{i+j}p^*_{\AA\GG}u^*\left( \tau([\xi_i, \xi_i],\xi_0, \overset{\widehat{\xi}_i}{\dots}, \overset{\widehat{\xi}_j}{\dots}, \xi_{k-1}, \rar{c})\right)
		\]
		\item[4.] Similarly, we have for the fourth term
		\[
		\sum_{i}(-1)^{i+k}p^*_{\AA\GG}u^* \left(\tau([\xi_i, \rar{c}], \xi_0, \overset{\widehat{\xi}_i}{\dots}, \xi_{k-1})\right).
		\]
	\end{itemize}
	It follows that
	\begin{align*}
		d_{\AA\VV}(\AA\tau)(\AA\xi_0, \dots, \AA\xi_{k-1}, \hat{c})&=p^*_{\AA\GG}u^* \left( (d_\VV \tau)(\xi_0, \dots, \xi_{k-1}, \rar{c})\right)\\
		&=\AA(d_\VV\tau)(\AA\xi_0, \dots, \AA\xi_{k-1}, \hat{c}).
	\end{align*}
	
	\textit{(Core-core).} If $c_1, c_2\in \Gamma(C)$, then it can be checked by the Koszul formula and the claims that
	\[
	\iota_{\widehat{c_1}}\iota_{\widehat{c_2}}d_{\AA\VV}(\AA\tau)=0.
	\]
	Since $\iota_{\widehat{c_1}}\iota_{\widehat{c_2}}\AA(d_\VV\tau)$ also vanishes, the result follows.
\end{proof}

 \small
\bibliographystyle{abbrv}

\begin{thebibliography}{10}
	
	\bibitem{alekseevmeinrenken2013}
	A.~Alekseev and E.~Meinrenken.
	\newblock Linearization of {P}oisson {L}ie group structures.
	\newblock {\em Journal of Symplectic Geometry}, 14(1):227 -- 267, 2016.
	
	\bibitem{bursztyncabrerahoyo2016}
	H.~Bursztyn, A.~Cabrera, and M.~{del Hoyo}.
	\newblock Vector bundles over {L}ie groupoids and algebroids.
	\newblock {\em Advances in Mathematics}, 290:163 -- 207, 2016.
	
	\bibitem{Bursztyn2016Splitting}
	H.~Bursztyn, H.~Lima, and E.~Meinrenken.
	\newblock Splitting theorems for {P}oisson and related structures.
	\newblock {\em Journal f{\"u}r die reine und angewandte Mathematik (Crelles
		Journal)}, 2019:281 -- 312, 2016.
	
	\bibitem{brahiccabreraortiz2018}
	A.~Cabrera, O.~Brahic, and C.~Ortiz.
	\newblock Obstructions to the integrability of $\mathcal{VB}$-algebroids.
	\newblock {\em Journal of Symplectic Geometry}, 16(2):439–483, 2018.
	
	\bibitem{Cavalcantigualtieri2017}
	G.~R. Cavalcanti and M.~Gualtieri.
	\newblock Stable generalized complex structures.
	\newblock {\em Proceedings of the London Mathematical Society},
	116(5):1075–1111, 2017.
	
	\bibitem{conn1985}
	J.~F. Conn.
	\newblock Normal forms for smooth {P}oisson structures.
	\newblock {\em Annals of Mathematics}, 121(3):565--593, 1985.
	
	\bibitem{crainicfernandes2011}
	M.~Crainic and R.~L. Fernandes.
	\newblock A geometric approach to {C}onn's linearization theorem.
	\newblock {\em Annals of Mathematics}, 173(2):1121--1139, 2011.
	
	\bibitem{crainicmarcut2012}
	M.~Crainic and I.~Marcut.
	\newblock A normal form theorem around symplectic leaves.
	\newblock {\em Journal of Differential Geometry}, 92:417--461, 11 2012.
	
	\bibitem{djibawade2015}
	S.~A. Djiba and A.~Wade.
	\newblock On cosymplectic groupoids.
	\newblock {\em Comptes Rendus Mathematique}, 353(9):859 -- 863, 2015.
	
	\bibitem{egea2016}
	L.~G. Egea.
	\newblock {VB}-groupoids cocycles and their applications to multiplicative
	structures.
	\newblock {\em Ph.D. thesis, Instituto de Matem\'atica Pura e Aplicada}, 2016.
	
	\bibitem{ginzburgweinstein1992}
	V.~Ginzburg and A.~Weinstein.
	\newblock Lie-{P}oisson structure on some {P}oisson {L}ie groups.
	\newblock {\em Journal of the American Mathematical Society}, 5:445--453, 1992.
	
	\bibitem{guilleminmirandapires2014}
	V.~Guillemin, E.~Miranda, and A.~R. Pires.
	\newblock Symplectic and {P}oisson geometry on b-manifolds.
	\newblock {\em Advances in Mathematics}, 264:864 -- 896, 2014.
	
	\bibitem{Kirchoff2018phd}
	C.~Kirchoff-Lukat.
	\newblock Aspects of generalized geometry: Branes with boundary, blow-ups,
	brackets and bundles.
	\newblock {\em Ph.D. thesis, University of Cambridge}, 2018.
	
	\bibitem{klaasse2017}
	R.~Klaasse.
	\newblock Geometric structures and {L}ie algebroids.
	\newblock {\em Ph.D. thesis, Utrecht University}, 2017.
	
	\bibitem{klaasse2018}
	R.~Klaasse.
	\newblock Poisson structures of divisor type,
	\href{https://arxiv.org/abs/1811.04226}{arXiv:1811.04226}, 2018.
	
	\bibitem{leonmarreromartinez2005}
	M.~d. León, J.~C. Marrero, and E.~Martínez.
	\newblock Lagrangian submanifolds and dynamics on {L}ie algebroids.
	\newblock {\em Journal of Physics A: Mathematical and General},
	38(24):R241–R308, 2005.
	
	\bibitem{sjamaarpreparation}
	Y.~Lin, Y.~Loizides, R.~Sjamaar, and Y.~Song.
	\newblock Symplectic reduction and {D}arboux-{M}oser-{W}einstein theorems for
	symplectic {L}ie algebroids.
	\newblock in preparation.
	
	\bibitem{liuxu1996}
	Z.-J. Liu and P.~Xu.
	\newblock Exact {L}ie bialgebroids and {P}oisson groupoids.
	\newblock {\em Geometric {\&} Functional Analysis}, 6(1):138--145, 1996.
	
	\bibitem{mackenzie1992}
	K.~Mackenzie.
	\newblock Double {L}ie algebroids and second-order geometry, {I}.
	\newblock {\em Advances in Mathematics}, 94(2):180 -- 239, 1992.
	
	\bibitem{mackenzie1997}
	K.~Mackenzie.
	\newblock Double {L}ie algebroids and second-order geometry, {II}.
	\newblock {\em Advances in Mathematics}, 154(1):46--75, 2000.
	
	\bibitem{mackenzie2005}
	K.~Mackenzie.
	\newblock {\em General Theory of Lie Groupoids and Lie Algebroids}.
	\newblock London Mathematical Society Lecture Note Series. Cambridge University
	Press, 2005.
	
	\bibitem{mackenziexu1994}
	K.~Mackenzie and P.~Xu.
	\newblock Lie bialgebroids and Poisson groupoids.
	\newblock {\em Duke Mathematical Journal}, 73(2): 415-452, 1994.
	
	\bibitem{mackenziexu1998}
	K.~Mackenzie and P.~Xu.
	\newblock Classical lifting processes and multiplicative vector fields.
	\newblock {\em Quarterly Journal of Mathematics}, 49:59--85, 1998.
	
	\bibitem{scott2016}
	G.~Scott.
	\newblock {The geometry of $b^k$-manifolds}.
	\newblock {\em Journal of Symplectic Geometry}, 14:71--95, 2016.
	
	\bibitem{smilde2021liegroups}
	W.~Smilde.
	\newblock Lie groups of {P}oisson diffeomorphisms, \href{https://arxiv.org/abs/2108.11490}{arXiv:2108.11490}, 2021.
	
	\bibitem{vaisman1994}
	I.~Vaisman.
	\newblock {\em Lectures on the {G}eometry of {P}oisson {M}anifolds}.
	\newblock Birkh\"auser Basel, 1994.
	
	\bibitem{vorobiev2000}
	Y.~Vorobiev.
	\newblock Coupling tensors and {P}oisson geometry near a single symplectic
	leaf.
	\newblock {\em Banach Center Publications}, 54, 2000.
	
	\bibitem{Weinstein1971}
	A.~Weinstein.
	\newblock Symplectic manifolds and their {L}agrangian submanifolds.
	\newblock {\em Advances in Mathematics}, 6(3):329--346, 1971.
	
	\bibitem{weinstein1981}
	A.~Weinstein.
	\newblock {Symplectic geometry}.
	\newblock {\em Bulletin (New Series) of the American Mathematical Society},
	5(1):1 -- 13, 1981.
	
	\bibitem{weinstein1983}
	A.~Weinstein.
	\newblock {The local structure of Poisson manifolds}.
	\newblock {\em Journal of Differential Geometry}, 18(3):523 -- 557, 1983.
	
	\bibitem{weinstein1988}
	A.~Weinstein.
	\newblock Coisotropic calculus and {P}oisson groupoids.
	\newblock {\em Journal of the Mathematical Society of Japan}, 40(4):705--727,
	1988.
	
	\bibitem{wittephd}
	A.~Witte.
	\newblock Between generalized complex and {P}oisson geometry.
	\newblock {\em Ph.D. thesis, Utrecht University}, 2021.
	
\end{thebibliography}

\end{document}